\newcommand{\Fut}{{\rm Fut}}
\newcommand{\bL}{{\bf L}}
\newcommand{\mcL}{{\mathcal{L}}}
\newcommand{\mcX}{{\mathcal{X}}}
\newcommand{\vol}{{\rm vol}}
\newcommand{\ord}{{\rm ord}}
\newcommand{\lct}{{\rm lct}}
\newcommand{\vphi}{\varphi}
\newcommand{\bC}{{\mathbb{C}}}
\newcommand{\mcC}{{\mathcal{C}}}
\newcommand{\FS}{{\rm FS}}
\newcommand{\bP}{{\mathbb{P}}}
\newcommand{\mcW}{\mathcal{W}}
\newcommand{\chw}{{\rm CW}}
\newcommand{\Ch}{{\rm Ch}}
\newcommand{\bG}{\mathbb{G}}
\newcommand{\bT}{\mathbb{T}}
\newcommand{\NA}{{\rm NA}}
\newcommand{\MA}{{\rm MA}}
\newcommand{\triv}{{\rm triv}}
\newcommand{\bQ}{{\mathbb{Q}}}
\newcommand{\bR}{{\mathbb{R}}}
\newcommand{\mcH}{{\mathcal{H}}}
\newcommand{\mcO}{{\mathcal{O}}}
\newcommand{\mcF}{{\mathcal{F}}}
\newcommand{\bZ}{{\mathbb{Z}}}
\newcommand{\bN}{{\mathbb{N}}}
\newcommand{\bA}{{\mathbb{A}}}
\newcommand{\la}{\langle}
\newcommand{\ra}{\rangle}
\newcommand{\bfL}{{\bf L}}
\newcommand{\mcI}{\mathcal{I}}
\newcommand{\Val}{{\rm Val}}
\newcommand{\DHM}{{\rm DH}}
\newcommand{\mcR}{\mathcal{R}}
\newcommand{\ccX}{\check{\mcX}}
\newcommand{\ccL}{\check{\mcL}}
\newcommand{\wt}{{\rm wt}}
\newcommand{\bfD}{{\bf D}}
\newcommand{\rVal}{\mathring{\Val}}
\newcommand{\bfH}{{\bf H}}
\newcommand{\bfE}{{\bf E}}
\newcommand{\lc}{{\rm lc}}
\newcommand{\bfF}{{\bf F}}
\newcommand{\ac}{{\rm ac}}
\newcommand{\Aut}{{\rm Aut}}
\newcommand{\cZ}{\mathcal{Z}}
\newcommand{\cF}{\mathcal{F}}
\newcommand{\bfS}{{\bf S}}
\newcommand{\btD}{\tilde{\bfD}}
\newcommand{\Gam}{\Gamma}
\newcommand{\rk}{{\rm rk}}
\newcommand{\btS}{\tilde{\bfS}}
\newcommand{\tbeta}{\tilde{\beta}}
\newcommand{\cW}{\mathcal{W}}
\newcommand{\bin}{{\bf in}}
\newcommand{\mult}{{\rm mult}}
\newcommand{\ddc}{{\rm dd^c}}
\newcommand{\CH}{{\rm CH}}
\newcommand{\bbM}{\mathbb{M}}
\newcommand{\cX}{\mathcal{X}}
\newcommand{\bhL}{\hat{\bfL}}
\newcommand{\bhD}{\hat{\bf D}}
\newcommand{\bfQ}{{\bf Q}}
\newcommand{\fa}{\mathfrak{a}}
\newcommand{\cE}{\mathcal{E}}
\newcommand{\cN}{\mathcal{N}}
\newcommand{\bcL}{\bar{\mcL}}
\newcommand{\bcX}{\bar{\mcX}}
\newcommand{\hvol}{\widehat{\rm vol}}
\newcommand{\col}{{\rm colen}}
\newcommand{\ree}{\mathcal{R}}
\newcommand{\Gr}{{\rm Gr}}
\newcommand{\fv}{\mathfrak{v}}
\newcommand{\mcN}{\mathcal{N}}
\newcommand{\cC}{\mathcal{C}}
\newcommand{\bm}{{\bf m}}
\newcommand{\cB}{\mathcal{B}}
\newcommand{\bfV}{{\bf V}}
\newtheorem{thm}{Theorem}[section]
\newtheorem{prop}[thm]{Proposition}
\newtheorem{defn}[thm]{Definition}
\newtheorem{cor}[thm]{Corollary}
\newtheorem{rem}[thm]{Remark}
\newtheorem{conj}[thm]{Conjecture}
\newtheorem{exmp}[thm]{Example}
\newtheorem{lem}[thm]{Lemma}
\newtheorem{defn-prop}[thm]{Definition-Proposition}
\begin{document}

\title{Algebraic uniqueness of K\"{a}hler-Ricci flow limits and optimal degenerations of Fano varieties}
\author{Jiyuan Han, Chi Li}
\date{}

\maketitle

\abstract{
We prove that for any $\bQ$-Fano variety $X$, the special $\bR$-test configuration that minimizes the $\bfH^\NA$-functional is unique and has a K-semistable $\bQ$-Fano central fibre $(W, \xi)$. Moreover there is a unique K-polystable degeneration of $(W, \xi)$. 
As an application, we confirm the conjecture of Chen-Sun-Wang about the algebraic-uniqueness for K\"{a}hler-Ricci flow limits on Fano manifolds which implies that the Gromov-Hausdorff limit of the flow does not depend on the choice of initial K\"{a}hler metrics. The results are achieved by studying algebraic optimal degeneration problems via new functionals for real valuations, which are analogous to the minimization problem for normalized volumes. 
}

\setcounter{tocdepth}{1}
\tableofcontents

\section{Introduction}

Let $X$ be a smooth Fano manifold. It is now known that $X$ admits a K\"{a}hler-Einstein metric if and only if $X$ is K-polystable (see \cite{Tia97, Berm15, Tia12, CDS15, Tia15}). 
In this paper, we are interested in the case when $X$ is not K-polystable. If $X$ is strictly K-semistable, then $X$ admits a unique 
K-polystable degeneration by \cite{LWX18}. If $X$ is K-unstable (i.e. not K-semistable), several kinds of optimal degenerations were studied. For example there is a unique special degeneration which arises in the study of Aubin's continuity method and whose associated valuation minimizes the $\delta$-invariant (see \cite{Sze16, BLZ19}). There is also a unique destabilizing geodesic ray which arise in the study of inverse-Monge-Amp\`{e}re flow (resp. Calabi flow) and minimize an $L^2$-normalized non-Archimedean Ding-invariant (resp. $L^2$-normalized radial Calabi-functional) (see \cite{His19, Xia19}).
In this paper we are interested in optimal degenerations that arise in the study of Hamilton-Tian conjecture about the long time behavior of K\"{a}hler-Ricci flows. The latter conjecture states that starting from any K\"{a}hler metric $\omega\in c_1(X)$, the normalized K\"{a}hler-Ricci converges in the Gromov-Hausdorff sense to a K\"{a}hler-Ricci soliton on a $\bQ$-Fano variety $X_\infty$.  The Hamilton-Tian conjecture been solved (see \cite{TZ16,CW14, Bam18}) and applied to give a proof of the Yau-Tian-Donaldson conjecture in \cite{CSW18}. 

It is known that $X_\infty$ coincides with $X$ if and only if there is already a K\"{a}hler-Ricci soliton on $X$ (\cite{TZ07, DS16}).
In general,  \cite{CSW18} Chen-Sun-Wang proved the following phenomenon. The metric degeneration from $X$ to $X_\infty$ induces a finitely generated filtration $\cF$ on $R=\bigoplus_{m} H^0(X, -mK_X)$ and there is a two-step degeneration:
\begin{enumerate}
\item The filtration $\cF$ as an $\bR$-test configuration (see Definition \ref{def-RTC}) degenerates $X$ to a normal Fano variety $W$ with a torus $\bT$-action generated by a holomorphic vector field $\xi$.
For simplicity, we call this step the semistable degeneration.
\item There is an $\bT$-equivariant test configuration of $(W, \xi)$ to $(X_\infty, \xi)$. We call this step the polystable degeneration.
\end{enumerate}
As explained in \cite{CSW18}, this picture is a global analogue of the picture in Donaldson-Sun's study of metric tangent cones on Gromov-Hausdorff limits of Fano K\"{a}hler-Einstein manifolds in \cite{DS17}. 
In \cite{DS17}, Donaldson-Sun conjectured that metric tangent cones depend only on the algebraic structure near the singularity. This conjecture has been confirmed in a series of works of the second-named author with his collaborators (see \cite{Li18, LX16, LX18, LWX18}) which depends on the study of minimization problem of normalized volume functional over the space of valuations centered at the singularity (see \cite{LLX20} for a survey). 
Analogous to this conjecture on metric tangent cones made, the following conjecture was proposed in \cite{CSW18}:
\begin{conj}\label{conj-CSW}
The data $\cF$, $W$ and $X_\infty$ depend only on the algebraic structure of $X$ but not on the initial metric for the K\"{a}hler-Ricci flow.
\end{conj}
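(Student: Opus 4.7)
\medskip

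The plan is to prove Conjecture \ref{conj-CSW} by establishing algebraic (rather than analytic) uniqueness of both steps of the two-step degeneration, using a variational approach on the space of real valuations that parallels the normalized-volume minimization theory used for metric tangent cones in \cite{Li18, LX16, LX18, LWX18}. The strategy is to characterize the filtration $\cF$ and the intermediate variety $(W, \xi)$ as the \emph{unique} minimizer of the non-Archimedean $\bfH$-functional $\bfH^\NA$ on the space of $\bR$-test configurations (or equivalently, finitely generated filtrations/real valuations), and then to characterize $X_\infty$ as the unique K-polystable equivariant degeneration of $(W, \xi)$.

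\medskip

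First I would set up the functional framework: extend $\bfH^\NA$ from the space of integral special test configurations to the space of special $\bR$-test configurations (Definition \ref{def-RTC}), or better, to a suitable space of real valuations on $X$. The key analytic input from \cite{CSW18, DS16, TZ16} identifies the filtration $\cF$ induced by the Kähler-Ricci flow as a minimizer of $\bfH^\NA$ in this enlarged space, so the main task is purely algebraic uniqueness. I would prove existence by showing $\bfH^\NA$ is lower semicontinuous and coercive on a suitable compactification, and would aim to show that any minimizer is induced by a special $\bR$-test configuration whose central fibre is a $\bQ$-Fano variety $W$ equipped with a distinguished vector field $\xi$, and that $(W, \xi)$ is K-semistable in the sense that the modified Futaki invariant vanishes on all $\bT$-equivariant test configurations.

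\medskip

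The heart of the proof is the uniqueness of the minimizer. Here I would adapt the convex-geometric strategy of \cite{LX18, LWX18, BLZ19}: given two minimizers $\cF_0, \cF_1$, form a family of interpolating filtrations (e.g.\ via the geodesic in the space of filtrations, or via a common equivariant degeneration to a joint central fibre carrying both one-parameter subgroups). Then show that $\bfH^\NA$ is \emph{convex} along this interpolation, and \emph{strictly convex} transverse to the action of automorphisms/rescalings, forcing $\cF_0$ and $\cF_1$ to agree up to the natural ambiguity. This is the step I expect to be the main obstacle: unlike the $\delta$-invariant or normalized volume, the $\bfH$-functional involves a logarithmic/entropy term whose convexity along geodesics of real valuations is subtle, and one must work $\bR$-equivariantly with the torus arising from $\xi$. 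The technical heart will be proving the right convexity/rigidity, presumably through a careful study of the Duistermaat-Heckman measure and Okounkov-body-type constructions for $\bR$-filtrations, together with a rounding/approximation argument reducing real test configurations to sequences of integral ones.

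\medskip

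Once uniqueness of $\cF$ and hence $(W, \xi)$ is established, the second step (uniqueness of the K-polystable $X_\infty$) should follow from a $\bT$-equivariant version of the K-semistable-to-K-polystable uniqueness theorem of \cite{LWX18} applied to $(W, \xi)$: among all $\bT$-equivariant special test configurations of $W$ with K-polystable central fibre (equipped with the soliton vector field), there is a unique one up to isomorphism. Finally, I would explain why both $\cF$ and $X_\infty$ depend only on $X$ and not on the choice of initial metric: since the Kähler-Ricci flow always produces a minimizer of $\bfH^\NA$ and a K-polystable degeneration of $(W,\xi)$, uniqueness in both steps forces all initial metrics to yield the same $\cF$, $W$ and $X_\infty$, giving Conjecture \ref{conj-CSW}. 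The Gromov-Hausdorff independence then follows from the uniqueness of Kähler-Ricci solitons on a given $\bQ$-Fano variety up to the action of the reductive automorphism group \cite{TZ07, DS16, BN14}.
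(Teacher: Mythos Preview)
Your proposal is essentially correct and follows the same overall architecture as the paper: reduce Conjecture~\ref{conj-CSW} to the two algebraic statements (uniqueness of the $\bfH^\NA$-minimizing special $\bR$-test configuration with K-semistable central fibre, and uniqueness of the K-polystable degeneration of $(W,\xi)$), use the analytic input from \cite{CSW18, DS16} to place the K\"ahler--Ricci flow filtration among the minimizers, and handle the polystable step by a $\bT$-equivariant version of \cite{LWX18}.

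Two refinements where the paper is sharper than your outline are worth noting. First, the interpolation used for uniqueness is \emph{not} a symmetric geodesic between $\cF_0$ and $\cF_1$: instead one takes the initial term (Gr\"obner-type) degeneration of $\cF_1$ with respect to $\cF_0$, obtaining a $\bT_0$-equivariant filtration $\cF'_1$ on the central fibre $W^{(0)}$ of $\cF_0$, and then interpolates on $W^{(0)}$ via the explicit one-parameter family $\cF'_s = s\,(\cF'_1)_{\frac{1-s}{s}\xi_0}$ of rescaled twists. On this family the concave-transform formula $G_{\cF'_s}=(1-s)\langle y,\xi_0\rangle + s\,G_{\cF'_1}$ on the Okounkov body makes convexity of $\hat{\bfH}^\NA$ transparent, and the derivative at $s=0$ is exactly $\beta_{\xi_0}$ of a twist, nonnegative by K-semistability of $(W^{(0)},\xi_0)$. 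The conclusion is then $d_2(\cF_0,\cF_1)=0$, whence $\cF_0=\cF_1$ via Boucksom--Jonsson's characterization of equivalent filtrations together with the fact that valuative filtrations are equivalent only if equal. Second, your expectation that the minimizers agree only ``up to the natural ambiguity'' is too cautious: unlike the normalized-volume or $\delta$-invariant settings, $\bfH^\NA$ is \emph{not} invariant under rescaling or twisting, and the strict convexity gives \emph{absolute} uniqueness of the minimizing valuation, with no residual ambiguity to quotient out.
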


In this paper we will confirm Conjecture \ref{conj-CSW}. The idea and method to prove this conjecture are in some sense parallel to the the study of minimizing normalized volumes. The second purpose of this paper is to study an analogous minimization problem in the global setting which can be studied for all $\bQ$-Fano varieties possibly singular, and prove various results about it. 

The functional we want to minimize is called $\bfH^\NA$-functional of $\bR$-test configurations.\footnote{We will mostly use the notation of non-Archimedean functionals as advocated in \cite{BHJ17}. However, note that $\bfH^\NA$ here is not the non-Archimedean entropy functional used in \cite{BHJ17}. We will not use the non-Archimedean entropy in this article.} Tian-Zhang-Zhang-Zhu (see \cite[Proposition 5.1]{TZZZ}) first introduced the $\bfH^\NA$-functional for holomorphic vector fields in their study of K\"{a}hler-Ricci flow on Fano manifolds. 
This invariant was generalized to any {\it special} $\bR$-test configuration by
Dervan-Sz\'{e}kelyhidi \cite{DS16}, who then used the result of \cite{CW14, CSW18, He16} to prove that the semistable degeneration mentioned above minimizes the $\bfH^\NA$-functional among all special $\bR$-test configurations (see Remark \ref{rem-DS}). 
For general test configurations, such $\bfH^\NA$-functional is a non-linear version of the non-Archimedean Berman-Ding functional, and 
was first explicitly used by Hisamoto in \cite{His19a} to reprove Dervan-Sz\'{e}kelyhidi's result by pluripotential theory. Note that in this paper, for the convenience of our argument and comparison with the case of $\delta$-invariant (or with the $\beta$-invariant see \eqref{eq-betag}), we will use the negative of sign convention in these previous works. 

Conjecture \ref{conj-CSW} follows from two purely algebro-geometric statements for each step of the semistable and polystable degnerations.
\begin{thm}\label{thm-semi}
For any $\bQ$-Fano variety, the special $\bR$-test configuration that minimizes $\bfH^\NA$ is unique and its central fibre $(W, \xi)$ is K-semistable (see Definition \ref{def-Dstable}).
\end{thm}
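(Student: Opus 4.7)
The plan is to reformulate the minimization of $\bfH^\NA$ over special $\bR$-test configurations as a minimization problem over a suitable space of real valuations on $X$, in close analogy with the normalized volume minimization problem for valuations centered at a klt singularity. Any special $\bR$-test configuration $(\mcX, \xi)$ with $\bQ$-Fano central fibre $(W, \xi)$ determines, via the induced $\bR$-filtration on $R = \bigoplus_m H^0(X, -mK_X)$, a quasi-monomial valuation $v_\xi$ on $X$, and conversely any such valuation whose associated graded ring defines a $\bQ$-Fano cone pulls back to a special $\bR$-test configuration. The first step is to extend $\bfH^\NA$ to a functional $\bfH$ defined on the space of real valuations via the log Duistermaat--Heckman measure of the associated graded filtration, so that $\bfH^\NA(\mcX,\xi) = \bfH(v_\xi)$ on the special locus.

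Within this framework, I would establish existence of a minimizer by first proving properness/coercivity of $\bfH$, which should follow from pluripotential-theoretic estimates on the Duistermaat--Heckman measures (entropy controls the tails). An approximation/compactness argument on a fixed log smooth model then produces a quasi-monomial minimizer; a finite-generation step, parallel to Li--Xu's result for minimizers of normalized volume, promotes any such minimizer to a genuine special $\bR$-test configuration whose central fibre is a $\bQ$-Fano cone.

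For uniqueness, the strategy is to prove strict convexity of $\bfH$ along geodesics $\{v_t\}_{t\in[0,1]}$ of quasi-monomial valuations visible on a common log smooth model, the key input being the log-concavity of the Duistermaat--Heckman measures, which makes the entropy-like contribution to $\bfH$ strictly convex away from trivial variations (scalings by $\bR_{>0}$). A standard two-minimizer argument then delivers uniqueness modulo the natural scaling symmetry. For K-semistability of the central fibre $(W,\xi)$, I would argue by contradiction: if $(W,\xi)$ is not K-semistable, there exists a $\bT_\xi$-equivariant special test configuration $(\mcW, \eta)$ of $(W,\xi)$ with negative modified Futaki invariant; concatenating the degenerations $X \leadsto W \leadsto W'$ yields a one-parameter family of $\bR$-test configurations of $X$ with direction vector $\xi + \epsilon\eta$. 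A derivative computation along this family, using the variational formula that recovers the modified Futaki invariant as $\frac{d}{d\epsilon}\bfH^\NA|_{\epsilon=0}$, produces a strict decrease and contradicts minimality.

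The principal obstacle I anticipate is establishing strict convexity of $\bfH$ in sufficient generality --- in particular, handling pairs of quasi-monomial valuations not supported on a common log smooth model, where one must propagate convexity through a careful limiting procedure, and handling the irrational quasi-monomial case where standard computations are more delicate. A closely related difficulty is the finite-generation step needed to promote an abstract valuation minimizer to a special $\bR$-test configuration; this typically requires a subtle interplay between the minimization property and birational/MMP techniques, mirroring the corresponding step in the normalized volume theory.
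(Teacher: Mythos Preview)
Your K-semistability argument (contradiction via concatenation with a destabilizing $\eta$, plus the derivative formula identifying $\tfrac{d}{d\epsilon}\bfH^\NA|_{\epsilon=0}$ with the modified Futaki invariant) is essentially the paper's argument.

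The uniqueness argument, however, has two real problems.

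First, your conclusion of ``uniqueness modulo the natural scaling symmetry'' is off: unlike the normalized volume, $\bfH^\NA$ is \emph{not} invariant under rescaling the valuation (Proposition~\ref{prop-vscaling} shows there is a unique optimal scale on each ray). The minimizer is absolutely unique, not only up to $\bR_{>0}$-rescaling, and your argument needs to reflect this.

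Second, and more seriously, the convexity you need does not come from ``geodesics of quasi-monomial valuations on a common log smooth model'' together with log-concavity of Duistermaat--Heckman measures. There is no general statement that the concave transforms $G_{v_t}$ interpolate linearly along such segments, so $t\mapsto -\log\int_\Delta e^{-G_{v_t}}$ need not be convex. The paper avoids this entirely: given two special minimizers $\cF_0,\cF_1$, it performs the \emph{initial term degeneration} of $\cF_1$ with respect to $\cF_0$, obtaining a $\bT_0$-equivariant filtration $\cF'_1$ on the central fibre $W^{(0)}$ with $\hat\bfH^\NA_X(\cF_1)\ge \hat\bfH^\NA_{W^{(0)}}(\cF'_1)$ (here the Xu--Zhuang variant $\hat\bfL^\NA$ is used precisely because lower semicontinuity of lct in families is available for it). One then interpolates on $W^{(0)}$ by the explicit twist--rescaling family $\cF'_s=s(\cF'_1)_{\frac{1-s}{s}\xi_0}$, for which $G_{\cF'_s}=(1-s)\langle y,\xi_0\rangle+sG_{\cF'_1}$ is genuinely linear in $s$; convexity of $-\btS^\NA$ then follows from H\"older's inequality. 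Combined with $\bhD'(0)=\beta_{\xi_0}((\cF'_1)_{-\xi_0})\ge 0$ (K-semistability of $(W^{(0)},\xi_0)$) and $\bhD(0)\ge\bhD(1)$, this forces $G_{\cF'_1}=\langle y,\xi_0\rangle$, hence $d_2(\cF_0,\cF_1)=0$; Boucksom--Jonsson's equivalence criterion and Proposition~\ref{prop-equiv} finish. The obstacle you flag (valuations not on a common model) is thus circumvented rather than confronted.

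Finally, note that Theorem~\ref{thm-semi} does not assert existence of a special minimizer; promoting an abstract quasi-monomial minimizer to a special $\bR$-test configuration is left as a conjecture (Conjecture~\ref{conj-special}). Your finite-generation step is therefore not needed for this theorem, and in fact is not proved in the paper.
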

\begin{thm}\label{thm-poly}
If $(X, \xi)$ is K-semistable, then there exists a unique K-polystable degeneration.
\end{thm}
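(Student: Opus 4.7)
The plan is to adapt the strategy of \cite{LWX18} for the untwisted case to the setting of K-semistable $\bQ$-Fano pairs equipped with a soliton vector field $\xi$, using the $\bfH^\NA$-functional developed in this paper as the main technical tool.

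\textbf{Existence.} If $(X,\xi)$ is already K-polystable there is nothing to prove. Otherwise, there exists a nontrivial $\bT$-equivariant special test configuration of $X$ with vanishing $\xi$-modified Futaki invariant, producing a new K-semistable $\bQ$-Fano variety $X_1$ that still carries $\bT$ and the vector field $\xi$. I would iterate this procedure. To ensure termination, I would invoke boundedness of K-semistable $\bQ$-Fano varieties of fixed dimension and volume, which places all the $X_i$ in one bounded family, together with a strict-monotonicity argument on a complexity invariant (for example, the dimension of the reductive part of $\Aut(X_i,\xi)$ strictly increases at every nontrivial step), forcing the procedure to stabilize at a K-polystable limit.

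\textbf{Uniqueness.} Suppose $(W_1,\xi)$ and $(W_2,\xi)$ are two K-polystable degenerations of $(X,\xi)$. By a double-family construction analogous to \cite{LWX18} (using an equivariant Luna slice around the K-polystable points in a moduli-theoretic compactification of the bounded family above), I would produce a $\bT$-equivariant flat family $\mcW\to C$ over a smooth pointed affine curve $(C,0)$ whose generic fiber is isomorphic to $W_1$ and whose central fiber is isomorphic to $W_2$. This realizes $W_2$ as the central fiber of a special $\bT$-equivariant test configuration of $(W_1,\xi)$, and the remaining task is to show that any such test configuration is necessarily a product, giving $W_1\simeq W_2$ as $\bT$-varieties with vector field $\xi$.

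For this last step, I would invoke the $\bfH^\NA$-functional together with Theorem \ref{thm-semi}. Because $W_1$ and $W_2$ are both K-polystable with the same soliton vector field $\xi$, the non-Archimedean $\xi$-modified Ding energy of the test configuration must vanish. Strict convexity of this functional along non-Archimedean geodesics in the $\bT$-invariant space of filtrations, combined with the valuation-theoretic minimization characterization introduced in the paper (parallel to the normalized-volume story in \cite{LX18, LWX18}), then forces the test configuration to be induced by a one-parameter subgroup of $\Aut(W_1,\xi)$, and quotienting by this automorphism gives the desired isomorphism.

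\textbf{Main obstacle.} The delicate point is the final rigidity claim: that among all $\bT$-equivariant special test configurations of a K-polystable $(W,\xi)$, only products can produce another K-polystable pair with the same $\xi$. In \cite{LWX18} the analogous statement rests on GIT-stability arguments at the K-moduli stack. Transferring such arguments to the present $\xi$-twisted setting, where $\xi$ is a real (not necessarily rational) vector field and where the natural parameter space is the quotient by an extended torus, demands careful bookkeeping of the interaction between the $\bT$-action, the filtration $\cF_\xi$, and the new $\bfH^\NA$-functional, and is where the bulk of the technical work of the paper must be applied.
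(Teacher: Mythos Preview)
Your existence sketch is essentially right in spirit, though the termination argument is heavier than necessary: the paper simply observes that at each nontrivial step the effective torus acting on the new central fibre strictly grows (since the new $\bC^*$-action commutes with but is not contained in the old torus), and the torus rank is bounded by $\dim X$. You do not need boundedness of K-semistable Fano varieties for this.

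The uniqueness argument has a genuine gap. Your proposed route---build a moduli-type compactification, take a Luna slice, and thereby realize $W_2$ as the central fibre of a special test configuration of $W_1$---is circular in this setting: no K-moduli space for $\bQ$-Fano varieties with an irrational soliton vector field $\xi$ is available at this point of the paper, and indeed Theorem~\ref{thm-poly} is one of the ingredients one would need to build such a space. Moreover, even granting that construction, invoking Theorem~\ref{thm-semi} does not give what you need: that theorem concerns the uniqueness of the $\bfH^\NA$-minimizing special $\bR$-test configuration of the \emph{original} $X$, not rigidity of special test configurations between two already-K-polystable pairs. The strict convexity you cite is a statement about $\btS^\NA$ along a specific interpolation of filtrations and does not directly yield a ``product test configuration'' conclusion for a degeneration $W_1\rightsquigarrow W_2$.

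What the paper actually does is quite different and follows \cite{LWX18} through the cone construction. One passes to the affine cones $C=C(X,-K_X)$ and their degenerations $\cC^{(i)}$, and works with the \emph{equivariant} ($g$-weighted) normalized volume $\hvol_g$ on $\Val_{C,o}^{\bC^*\times\bT}$ introduced in Section~7. The Koll\'ar components $E_k$ obtained by weight-$(k,1)$ blowups satisfy $\hvol_g(E_k)=\hvol_g(\ord_X)+O(k^{-2})$; degenerating their valuation ideals along $\cC^{(1)}$ and comparing lct via preservation of $g$-colength gives enough positivity to extract $\cE_k$ over the whole family by \cite{BCHM}. This produces a \emph{common} further degeneration $X'_0$ of both $X^{(0)}_0$ and $X^{(1)}_0$ via weakly special test configurations $\mcX'^{(i)}$ with $\Fut_\xi=0$, which are then special by \cite{HL20}; polystability of each $X^{(i)}_0$ forces $X^{(i)}_0\cong X'_0$. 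The key technical input you are missing is precisely this $g$-normalized volume machinery---it, not $\bfH^\NA$ or Theorem~\ref{thm-semi}, is what drives the polystable step.
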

\begin{cor}\label{cor-conj}
The conjecture \ref{conj-CSW} is true for any smooth Fano manifold. In particular, the Gromov-Hausdorff limit $X_\infty$ for the K\"{a}hler-Ricci flow does not depend on the initial metric of the flow.
\end{cor}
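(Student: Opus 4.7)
The plan is to derive the corollary from Theorems \ref{thm-semi} and \ref{thm-poly} by identifying the two steps in the Chen-Sun-Wang construction of $X_\infty$ as solutions to the algebraic problems treated in those two theorems. Given an initial Kähler metric $\omega_0\in c_1(X)$, the Hamilton-Tian conjecture produces a Gromov-Hausdorff limit $X_\infty=X_\infty(\omega_0)$, and \cite{CSW18} associates to the flow a finitely generated filtration $\cF=\cF(\omega_0)$ of $R=\bigoplus_m H^0(X,-mK_X)$ together with the two-step degeneration $X \rightsquigarrow (W,\xi) \rightsquigarrow (X_\infty,\xi)$. The task is to show that $\cF$, $(W,\xi)$ and $X_\infty$ do not depend on the choice of $\omega_0$.

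First I would invoke the theorem of Dervan-Székelyhidi \cite{DS16} (reproved by pluripotential methods in Hisamoto \cite{His19a}) stating that the special $\bR$-test configuration induced by $\cF(\omega_0)$ minimizes $\bfH^\NA$ among all special $\bR$-test configurations of $X$. Theorem \ref{thm-semi} then asserts that such a minimizer is unique and is determined purely by the algebraic structure of $X$. Consequently both $\cF$ and the semistable central fibre $(W,\xi)$ must be independent of $\omega_0$, which disposes of the first step of Conjecture \ref{conj-CSW}.

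For the second step, Theorem \ref{thm-semi} guarantees that $(W,\xi)$ is K-semistable in the sense of Definition \ref{def-Dstable}, which brings Theorem \ref{thm-poly} into play. Since $X_\infty$ carries a Kähler-Ricci soliton with soliton vector field $\xi$ it is K-polystable as a pair $(X_\infty,\xi)$, and the second step of \cite{CSW18} exhibits $(X_\infty,\xi)$ as a $\bT$-equivariant K-polystable degeneration of $(W,\xi)$. By Theorem \ref{thm-poly} this K-polystable degeneration is unique, so $X_\infty$ is determined by $(W,\xi)$, and hence by $X$ alone. Concatenating the two uniqueness assertions yields Conjecture \ref{conj-CSW}, and in particular the Gromov-Hausdorff limit is independent of the initial metric.

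The main obstacle I anticipate is not the derivation itself but the verification of the dictionary between the analytic picture of \cite{CSW18, DS16} and the algebraic framework used in this paper: one must confirm that the filtration $\cF(\omega_0)$ is genuinely a special $\bR$-test configuration in the sense of Definition \ref{def-RTC}, that it realises the minimum of $\bfH^\NA$ in that algebraic class, and that the K-semistability of $(W,\xi)$ and the K-polystability of $(X_\infty,\xi)$ coming from the flow agree with Definition \ref{def-Dstable}. Once these compatibilities are in place, the corollary is a direct consequence of Theorems \ref{thm-semi} and \ref{thm-poly} together with the minimization property of \cite{DS16}.
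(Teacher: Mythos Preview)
Your proposal is correct and follows essentially the same approach as the paper's own proof: invoke \cite{CSW18} for the two-step degeneration, \cite{DS16} for the $\bfH^\NA$-minimization property of the filtration $\cF$, and then apply Theorems \ref{thm-semi} and \ref{thm-poly} in succession to conclude uniqueness. The paper's argument is slightly terser but identical in substance; your remarks about the analytic-algebraic dictionary are implicit in the paper's citations and partially addressed in the remark following the proof.
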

To prepare for the proof of such results, we will first carry out an algebraic study of $\bfH^\NA$-functional, which is analogous to the study of minimization problem for normalized volume or the $\delta$-invariant.
We will show in Theorem \ref{thm-MMP} that the MMP process devised in \cite{LX14}  decrease the $\bfH^\NA$-invariant of test configurations. This requires us to derive new intersection formula (Proposition \ref{eq-Ekint}) and derivative formula for the $\bfH^\NA$ invariant, whose proof is motivated by a similar derivative formula in our previous work \cite{HL20}.
 
We will then introduce the following $\tbeta$-functional on $\Val(X)$ the space of valuations on $X$: for any $v\in \Val(X)$ with $A_X(v)<+\infty$, we define
\begin{equation}\label{eq-tbeta0}
\tbeta(v)=A_X(v)+\log\left(\frac{1}{(-K_X)^{\cdot n}}\int_\bR e^{-\lambda} (-d\vol(\cF^{(\lambda)}_v))\right).
\end{equation}
If $A_X(v)=+\infty$, then we define $\tbeta(v)=+\infty$. This functional is a non-linear version of the $\beta$-functional in the literature of K-stability, and is a global analogue of the normalized volume. Unlike the case of normalized volume functional, the $\tbeta$ invariant is not invariant under rescaling of valuations. Indeed, when restricted to the ray of multiple of a fixed valuation $v\in \Val(X)$ with $A(v)<+\infty$, there is a unique minimizer which is non-trivial if and only if $\beta(v)<0$ (Proposition \ref{prop-vscaling}). 
The above MMP result implies that the minimum can be approached by a sequence of special divisorial valuations.
 As a consequence, one can adapt the method developed in \cite{BLX19} to show that there is minimizing valuation which is quasi-monomial (see Theorem \ref{thm-exist}). On the other hand, $\bfH^\NA$-invariant for special test configurations is expressed as the $\tbeta$-invariant (see Lemma \ref{lem-tDtbeta}). Combine these discussion, we will prove (see section \ref{sec-filtration} and section \ref{sec-NAfunctional} for relevant notations):
 \begin{thm}
 For any $\bQ$-Fano variety $X$, we have the identity:
 \begin{equation}
 \inf_{\cF \text{ filtration }}\bfH^\NA(\cF)=\inf_{(\mcX, \mcL, a\eta) \text{ special }}(\bfH^\NA(\mcX, \mcL, a\eta))=\inf_{v\in \Val(X)}\tbeta(v).
 \end{equation}
 Moreover the last infimum is achieved by a quasi-monomial valuation.
 \end{thm}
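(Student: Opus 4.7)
The plan is to establish the three equalities via a cyclic chain of inequalities, and then construct a quasi-monomial minimizer by adapting the method of \cite{BLX19}. Two directions are essentially tautological: each special $\bR$-test configuration $(\mcX,\mcL,a\eta)$ canonically induces a finitely generated filtration on $R=\bigoplus_m H^0(X,-mK_X)$, giving $\inf_\cF \bfH^\NA(\cF)\le \inf_{(\mcX,\mcL,a\eta)\text{ special}}\bfH^\NA(\mcX,\mcL,a\eta)$; and Lemma \ref{lem-tDtbeta} identifies $\bfH^\NA(\mcX,\mcL,a\eta)$ with $\tbeta(v)$ for the quasi-monomial valuation on $X$ associated to $(\mcX, a\eta)$, giving $\inf_{v\in\Val(X)} \tbeta(v)\le \inf_{\text{special}}\bfH^\NA$.

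For the reverse direction $\inf_\cF\bfH^\NA(\cF) \le \inf_{v}\tbeta(v)$, I would unwind definitions to check that for any $v$ with $A_X(v)<\infty$ the valuative filtration $\cF_v$ satisfies $\bfH^\NA(\cF_v)=\tbeta(v)$: the $A_X(v)$ term is exactly the log-discrepancy shift in the non-Archimedean Ding-type part, and the exponential integral is precisely the Duistermaat--Heckman density of $\cF_v$ paired against $e^{-\lambda}$, so this match is essentially built into the formula \eqref{eq-tbeta0}. To close the cycle I would invoke Theorem \ref{thm-MMP}: starting from any $\bR$-test configuration, the MMP produces a special one with no larger $\bfH^\NA$, and a general filtration is reduced to this case by approximation through its finitely generated truncations combined with the (lower semi-)continuity of $\bfH^\NA$ along such approximations. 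This yields $\inf_{\text{special}}\bfH^\NA\le \inf_\cF\bfH^\NA(\cF)$ and closes the cycle.

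To produce a quasi-monomial minimizer, I would follow the strategy of \cite{BLX19}. The previous steps supply a minimizing sequence of the form $v_i=c_i\ord_{E_i}$, where each $E_i$ is a dreamy divisor coming from a special $\bR$-test configuration and hence a log canonical place of an anticanonical $\bQ$-complement of $X$. Uniform control on $A_X(v_i)$ and $\tbeta(v_i)$, together with the unique-rescaling Proposition \ref{prop-vscaling}, lets one pass to a common log resolution $Y\to X$ on which the relevant $E_i$ become divisors, and extract a quasi-monomial limit $v_\infty$ by compactness of the associated dual complex. The principal obstacle is showing that $\tbeta(v_\infty)$ realizes the infimum: unlike the linear $\beta$-functional underlying the $\delta$-invariant, $\tbeta$ is the logarithm of an exponentially weighted integral of $-d\vol(\cF^{(\lambda)}_v)$, so establishing the needed lower semi-continuity along the sequence requires uniform Izumi-type estimates on the $v_i$ plus a dominated-convergence argument for the Duistermaat--Heckman measures $-d\vol(\cF^{(\lambda)}_{v_i})$ weighted by $e^{-\lambda}$.
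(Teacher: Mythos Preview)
Your cyclic chain of inequalities is essentially the paper's argument, but one step contains an error in reasoning. You claim that for any $v$ with $A_X(v)<\infty$ the filtration $\cF_v$ satisfies $\bfH^\NA(\cF_v)=\tbeta(v)$. This is false in general: $\bfL^\NA(\cF_v)=\inf_w\bigl(A_X(w)+(\phi_{\cF_v}-\phi_\triv)(w)\bigr)\le A_X(v)$, with equality only when $v$ arises from a special test configuration (this is precisely the content of Lemma~\ref{lem-tDtbeta}). Fortunately the inequality $\bfH^\NA(\cF_v)\le\tbeta(v)$ is all you need for the direction $\inf_\cF\bfH^\NA\le\inf_v\tbeta$, so the cycle still closes; but your justification (``the $A_X(v)$ term is exactly the log-discrepancy shift'') is incorrect as stated.

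The genuine gap is in your construction of the quasi-monomial minimizer. You propose to take a minimizing sequence $v_i=c_i\ord_{E_i}$, pass to a \emph{common} log resolution on which all the $E_i$ become divisors, and extract a limit by compactness of the dual complex. There is no reason infinitely many distinct prime divisors $E_i$ should be simultaneously extracted on a single model; this step fails. The paper (following \cite{BLX19}) proceeds quite differently: since each $E_i$ comes from a special test configuration, it is an lc place of an $N$-complement for a \emph{fixed} $N$ depending only on $\dim X$ (Birkar's boundedness). One then parametrizes $N$-complements by a locally closed subset $Z\subset\bP(H^0(X,-NK_X))$, stratifies $Z$ into finitely many pieces admitting fibrewise log resolutions, and uses deformation invariance of log plurigenera (Hacon--McKernan--Xu) to show that the value $b_z=\inf\{\tbeta(v):A_{(X,D_z)}(v)=0\}$ is constant on each stratum. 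On a fixed stratum one works on a single simplicial cone $\mathrm{QM}(Y_z,D_{Y_z})$, and there the key analytic input is \emph{continuity} of $\tbeta$ (not just lower semicontinuity) on that simplex, combined with Izumi-type bounds to control the optimal rescaling $a_*(v)$ uniformly. Your dominated-convergence sketch does not address why the minimizing valuations can be confined to finitely many simplices, which is the crux of the argument.
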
 
 As in the cases of normalized volume, we conjecture that the minimizer is unique and induces a special $\bR$-test configuration (see Conjecture \ref{conj-special}) whose central fibre (with the induced vector field) must then be K-semistable by the following result. 
When $X$ is smooth, by the result of Dervan-Sz\'{e}kelyhidi \cite{DS16} the existence of such special minimizing valuation is implied by the work \cite{CW14, CSW18}. We also note that optimal degenerations (of various kinds) in the toric case are well-studied (see \cite{WZ04} for the toric result for K\"{a}hler-Ricci flow).
\begin{thm}[{=Theorem \ref{thm-minsemi}}]
A special $\bR$-test configuration minimizes $\bfH^\NA$ if and only if its central fibre is K-semistable.
\end{thm}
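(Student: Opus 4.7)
The plan is to prove both directions via a weighted derivative formula for $\bfH^\NA$ along one-parameter families of special $\bR$-test configurations obtained by composing $(\mcX, \mcL, a\eta)$ with $\bT$-equivariant test configurations of the central fibre, where $\bT$ denotes the torus generated by $\eta$.

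First I would set up the composition family. Given $(\mcX, \mcL, a\eta)$ a special $\bR$-test configuration of $X$ with central fibre $(W, a\eta)$, and any $\bT$-equivariant special test configuration $(\mcY, \mcM, \zeta)$ of $W$, one constructs a one-parameter family of special $\bR$-test configurations $(\mcX_s, \mcL_s, a\eta + s\zeta)$ of $X$ for $s \geq 0$ small, with $s = 0$ recovering the original; concretely this uses the equivariant degeneration of $(\mcX, \mcL)$ induced by $(\mcY, \mcM)$ along the central fibre. By Lemma \ref{lem-tDtbeta}, we have $\bfH^\NA(\mcX_s, \mcL_s, a\eta + s\zeta) = \tbeta(v_s)$ for a corresponding path of valuations $v_s \in \Val(X)$.

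The main computation is a derivative formula of the shape
\[
\left.\frac{d}{ds}\right|_{s=0^+} \bfH^\NA(\mcX_s, \mcL_s, a\eta + s\zeta) \;=\; C\cdot \bfD^{\NA}_{a\eta}(\mcY, \mcM, \zeta),
\]
where $C > 0$ and the right side is the weighted non-Archimedean Ding invariant of $(\mcY, \mcM, \zeta)$, whose nonnegativity on all such test configurations encodes K-semistability of $(W, a\eta)$ in the sense of Definition \ref{def-Dstable}. This derivative should be obtained by differentiating the $\tbeta$-expression \eqref{eq-tbeta0}: $A_X(v_s)$ differentiates to an intersection-theoretic term on $\mcY$, while the log-moment integral differentiates via the weighted Duistermaat-Heckman measure of $(\mcY, \mcM)$ against the exponential factor $e^{-\la a\eta, \cdot\ra}$. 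Assembling these contributions using the intersection formula of Proposition \ref{eq-Ekint} — in the spirit of the derivative computation from \cite{HL20} — one recovers the weighted Ding invariant $\bfD^{\NA}_{a\eta}$ up to the positive factor $C$. The ``only if'' direction is then immediate: if $(\mcX, \mcL, a\eta)$ minimizes $\bfH^\NA$ then the left side is $\ge 0$ for every choice of $(\mcY, \mcM, \zeta)$, hence $\bfD^{\NA}_{a\eta} \ge 0$ on all $\bT$-equivariant special test configurations of $W$, i.e.\ $(W, a\eta)$ is K-semistable.

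For the ``if'' direction, assume $(W, a\eta)$ is K-semistable. By Theorem \ref{thm-MMP} it suffices to compare with an arbitrary special $\bR$-test configuration $(\mcX', \mcL', a'\eta')$ of $X$; I would exhibit a common $\bT \times \bT'$-equivariant dominating degeneration and interpolate along this ambient parameter space, producing a path joining the two endpoints that factors through composition families of the type above. Along each such path $s\mapsto \bfH^\NA$ is convex — this should follow from the log-convexity of the weighted moment integral appearing in $\tbeta$, combined with convexity of $A_X$ along linear combinations of quasi-monomial valuations — so the nonnegativity of the one-sided derivative at $(\mcX, \mcL, a\eta)$, guaranteed by K-semistability via the derivative formula, propagates to the global inequality $\bfH^\NA(\mcX', \mcL', a'\eta') \ge \bfH^\NA(\mcX, \mcL, a\eta)$.

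The principal obstacle will be the weighted derivative formula itself: setting up the composition family so that $s\mapsto v_s$ is regular enough for $\tbeta$ to be differentiable at $s=0^+$, identifying the derivative exactly with $\bfD^{\NA}_{a\eta}$ (not merely a proxy), and handling the nonlinear exponential weight $e^{-\la a\eta, \cdot\ra}$ on the Duistermaat-Heckman side. A secondary technical difficulty is justifying the convexity of $\bfH^\NA$ along linear paths of vector fields on a fixed dominating family, which appears to require a weighted Prékopa-type inequality applied to the graded filtration associated with the composition.
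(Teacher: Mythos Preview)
Your ``only if'' direction is broadly on the right track and matches the paper's strategy: one perturbs the vector field on the central fibre by a destabilising direction and differentiates. The paper's execution is simpler than what you propose, though: rather than a general intersection-theoretic computation via Proposition~\ref{eq-Ekint}, one observes that for $0<\epsilon\ll 1$ the vector field $\xi_0+\epsilon\eta$ degenerates $X$ directly to the central fibre $Y$ of the second test configuration (this is a short Chow-variety argument), and then the derivative reduces to the elementary identity \eqref{eq-der1}, namely $\frac{d}{d\epsilon}\bfH^\NA(\wt_{\xi_0+\epsilon\eta})=\Fut_{Y,\xi_0}(\eta)$, computed entirely on $Y$.

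Your ``if'' direction has a genuine gap. The proposal to produce a ``common $\bT\times\bT'$-equivariant dominating degeneration'' joining two arbitrary special $\bR$-test configurations is not a workable plan: the two central fibres $(W,\xi_0)$ and $(W',\xi')$ are unrelated in general, there is no evident ambient family containing both along which $\bfH^\NA$ is convex, and the claimed convexity of $A_X$ along linear combinations of quasi-monomial valuations is not available (the two valuations need not lie in a common ${\rm QM}$ simplex, and even then $A_X$ is linear, not strictly convex, so it carries no information). The paper's mechanism is entirely different and is the crux of the argument: one uses the \emph{initial term degeneration} of Section~\ref{sec-initial} to transport the competing filtration $\cF_1$ to a $\bT_0$-equivariant filtration $\cF'_1$ on the fixed central fibre $W$, with $\hat{\bfH}^\NA$ non-increasing under this passage (inequality~\eqref{eq-bfDdec}, which is why the variant $\hat{\bfL}^\NA$ is used). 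Once both filtrations live on $W$, one interpolates via the explicit rescaled-twist family $\cF'_s=s(\cF'_1)_{\frac{1-s}{s}\xi_0}$ of Lemma~\ref{lem-der}; convexity there is a direct H\"older computation on the Okounkov body (linearity of $\hat{\bfL}^\NA$ plus strict convexity of $-\btS^\NA$), and the derivative at $s=0$ is exactly $\beta_{\xi_0}((\cF'_1)_{-\xi_0})\ge 0$ by K-semistability. Without the initial term degeneration step you have no way to reduce the comparison to a single space where such an interpolation and convexity are available.
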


The uniqueness in Theorem \ref{thm-semi} about the semistable degeneration is nothing but the result on the uniqueness of the minimizer of $\tbeta$ among all quasi-monomial valuations associated to special $\bR$-test configurations. The proof of this fact uses the technique of initial term degeneration, which is motived by study of normalized volumes (see \cite{Li17, LX16, LX18}). This process essentially reduces the question to the uniqueness of minimizer of $\bfH^\NA$ (actually a variant of $\bfH^\NA$ after the work of Xu-Zhuang \cite{XZ19}) along an interpolation between a fixed filtration and a weight filtration (induced by a holomorphic vector field) on the central fibre. The interpolation is constructed by using the rescaling of twist of the fixed filtration (the twist of filtration is in the sense of \cite{Li19} generalizing \cite{His16b}), which we can deal with using the technique of Newton-Okounkove bodies and Boucksom-Jonsson's work on the characterization of equivalent filtrations. 
The valuative formulation is useful because filtrations associated to valuations are equivalent if and only if they are the same (Proposition \ref{prop-equiv}). 
 Again unlike the case of normalized volumes or the case of $\delta$-invariant in \cite{BLZ19}, the minimizing valuation in the current global setting is expected to be absolutely unique, not just up to rescaling or twisting. This is because of a strict convex property of $\bfH^\NA$ functional, which goes back to Tian-Zhu's work in \cite{TZ02} on the uniqueness of K\"{a}hler-Ricci vector fields among the Lie algebra of a torus.

To deal with the polystable step, we first introduce the equivariant version of normalized volumes. Most results about normalized volumes can be generalized for the equivariant version.  
Finally we complete the proof of Theorem \ref{thm-poly} by adapting the argument in \cite{LWX18} about uniqueness of K-polystable degeneration of K-semistable Fano varieties. 
 
To end this introduction, we summarize in the following table the quantities that are used in each of the two steps. 

\renewcommand{\arraystretch}{1.5}
\begin{center}
\begin{tabular}{ |l|l|l| } 
 \hline
Degenerations & Semistable & Polystable \\
\hline
Valuations & $\Val(X)$ & $\Val_{C, o}^{\bC^*\times\bT}$ \\
\hline
Anti-derivative & $\bfH^\NA$, $\hat{\bfH}^\NA$, $\tbeta$ & $\hvol_g$   \\
\hline
Derivative &$\bfD_\xi^\NA$, $\Fut_\xi$ & $\bfD_\xi^\NA$, $\beta_g$ \\ 
\hline
Derivative formula & \eqref{eq-der2} & \eqref{eq-derhvg} \\
\hline
\end{tabular}
\end{center}

{\bf Postscript Note:}  After we finish the paper, we are informed by F. Wang and X. Zhu that they are using analytic methods to prove related uniqueness results for K\"{a}hler-Ricci flow limits based on their recent work on Hamilton-Tian conjecture (see \cite{WZ20a, WZ20b}).

\vskip 1mm
\noindent {\bf Acknowledgements: } 
J. Han would like to thank Jeff Viaclovsky for his teaching and
support over many years. C. Li is partially supported by NSF (Grant No. DMS-1810867) and an
Alfred P. Sloan research fellowship. We would like to thank G. Tian and X. Zhu for their interest and comments, and F. Wang and X. Zhu for informing us their work. We also thank M. Jonsson for helpful comments.

\section{Preliminaries}

\subsection{Some notations}\label{sec-notation}
Let $X$ be a $\bQ$-Fano variety. In this paper for the simplicity of notations, we assume that $-K_X$ is Cartier. The modification to the general $\bQ$-Cartier case is straightforward (see e.g. \cite{Li19}). For any $m, \ell\in \bN$, set
\begin{eqnarray}
&&R_m:=H^0(X, -mK_X), \quad R=\bigoplus_{m=0}^{+\infty} R_m, \\
&& N_m=\dim R_m, \quad \bfV=(-K_X)^{\cdot n}=\lim_{m\rightarrow+\infty}\frac{N_m}{m^n/n!}\\
&&R^{(\ell)}_m:=H^0(X, -m\ell K_X), \quad R^{(\ell)}=\bigoplus_{m=0}^{+\infty} R^{(\ell)}_m.
\end{eqnarray}
We will denote by $\Val(X)$ the space of real valuations on $\bC(X)$, by $\rVal(X)$ the set of real valuations $v$ with $A_X(v)<+\infty$ and by $X^{\rm div}_\bQ$ the set of divisorial valuations (i.e. the valuations of the form $a\cdot \ord_E$ with $a\ge 0$ and $E$ a prime divisor over $X$). A valuation $v\in \Val(X)$ is quasi-monomial if there exist a birational morphism $Y\rightarrow X$ and a simple normal crossing divisors $E=\cup_{i=1}^d E_i\subset Y$ such that $v$ is a monomial valuation on $Y$ with respect to the local coordinates defining $E_i$, whose center of $v$ over $Y$ is an irreducible component of $\cap_{i\in J} E_i$ where $J\subseteq\{1,\dots,d\}$ is a subset. We denote by ${\rm QM}(Y, E)$ the set of such quasi-monomial valuations.
We refer to \cite{ELS03, JM12} for the more details about such quasi-monomial (or equivalently the Abhyankar) valuations.

In this paper, $\bT$ denotes a complex torus $(\bC^*)^r=((S^1)^r)_\bC$ that acts effectively on a $\bQ$-Fano variety $X$. There is a canonical action of $\bT$ on (any multiple of) $-K_X$. 
Set 
\begin{equation}
N_\bZ={\rm Hom}(\bC^*, \bT), \quad N_\bR=N_\bZ\otimes_\bZ\bR, \quad M_\bZ={\rm Hom}(\bT, \bC^*), \quad M_\bR=M_\bZ\otimes_\bZ\bR.
\end{equation}
For any $\xi\in N_\bR$, we have a valuation $\wt_\xi\in \Val(X)$ as follows. For any $f\in \bC(X)=\bigoplus_{\alpha\in M_\bZ} \bC(X)_\alpha$, 
\begin{equation}\label{eq-wtxi}
\wt_\xi(f)=\min\left\{\la\alpha, \xi\ra; f=\sum_{\alpha} f_\alpha, f_\alpha \neq 0 \right\}.
\end{equation}
Moreover for any $m\in \bN$, we have a weight decomposition induced by the canonical $\bT$-action on $(X, -mK_X)$:
\begin{equation}
R_m=\bigoplus_{\alpha\in M_\bZ} (R_m)_\alpha=(R_m)_{\alpha^{(m)}_1}\oplus \cdots\oplus  (R_m)_{\alpha^{(m)}_{N_m}}.
\end{equation}

Moreover, we will use the following notations for any $\bQ$-Fano variety.
Let $e^{-\tilde{\vphi}}$ be an $(S^1)^r$-invariant smooth positively curved Hermitian metric on $-K_X$ (e.g. as the restriction of a Fubini-Study metric under an equivariant embedding of $X$ into projective space). We identity any $\eta\in N_\bR$ with the corresponding holomorphic vector field on $X$. Because $\bT$-action canonically lifts to act on $-K_X$, we can set 
\begin{equation}\label{eq-thetaeta}
\theta_{\tilde{\vphi}}(\eta)=-\frac{\mathfrak{L}_\eta e^{-\tilde{\vphi}}}{e^{-\tilde{\vphi}}}.
\end{equation}
Then $\theta_{\tilde{\vphi}}(\eta)$ is a Hamiltonian function of $\eta$ with respect to $\ddc\tilde{\vphi}=\frac{\sqrt{-1}}{2\pi}\partial\bar{\partial}\tilde{\vphi}\ge 0$: 
\begin{equation}
\iota_\eta \ddc\tilde{\vphi}=\frac{\sqrt{-1}}{2\pi}\bar{\partial} \theta_{\tilde{\vphi}}(\eta).
\end{equation} 
Moreover, $(z, \eta)\mapsto \theta_{\tilde{\vphi}}(\eta)(z)$ is equivalent to the moment map $\bm_{\tilde{\vphi}}: X\rightarrow M_\bR$ whose image is the moment polytope $P$ of $\bT$-action on $(X, -K_X)$ which does not depend on the choice of $\tilde{\vphi}$. It is  known that the measure 
\begin{equation}
\frac{n!}{m^n}\sum_{i} \dim (R_m)_{\alpha^{(m)}_i}\cdot \delta_{\frac{\alpha^{(m)}_i}{m}} 
\end{equation}
converges weakly to the Duistermaat-Heckman measure $(\bm_{\tilde{\vphi}})_*(\ddc\tilde{\vphi})^n$ (see \cite{Bri87} ,\cite[Proposition 4.1]{BWN}).

For any subset $S\subseteq \bR^n$, we will use $dy_S$ or just $dy$ to denote the Lebesgue measure of $S$.

\subsection{$\bR$-test configuration and filtrations}\label{sec-filtration}
We will use extensively the language of filtrations: 
\begin{defn}[\cite{BC11}]\label{def-filtr}
A filtration $\cF:=\mcF R_\bullet$ of the graded $\bC$-algebra $R=\bigoplus_{m=0}^{+\infty}R_m$ consists of a family of subspaces $\{\mcF^\lambda  R_m\}_x$ of $R_m$ for each $m\ge 0$ satisfying:
\begin{itemize}
\item (decreasing) $\mcF^\lambda  R_m\subseteq \mcF^{\lambda'}R_m$, if $\lambda\ge \lambda'$;
\item (left-continuous) $\mcF^\lambda R_m=\bigcap_{\lambda'<\lambda}\mcF^{\lambda'}R_m$; 
\item (multiplicative) $\mcF^\lambda  R_m\cdot \mcF^{\lambda'} R_{m'}\subseteq \mcF^{\lambda+\lambda'}R_{m+m'}$, for any $\lambda, \lambda'\in \bR$ and $m, m'\in \bZ_{\ge 0}$;
\item (linearly bounded) There exist $e_-, e_+\in \bZ$ such that $\mcF^{m e_-} R_m=R_m$ and $\mcF^{m e_+} R_m=0$ for all $m\in \bZ_{\ge 0}$. 
\end{itemize}
Similarly one define filtration on $R^{(\ell)}$ for any $\ell\ge 1\in \bN$.

\end{defn}

\begin{exmp}\label{ex-va2fil}

Given any valuation $v\in \rVal(X)$, we have an associated filtration $\mcF=\mcF_v$:
\begin{equation}\label{eq-filval}
\mcF_v^\lambda R_m:=\{s\in R_m; v(s)\ge \lambda\}.
\end{equation}
In particular, if there is a $\bT$-action on $X$, for any $\xi\in N_\bR$, we have a filtration $\cF_{\wt_\xi}$ associated to the valuation $\wt_\xi$ in \eqref{eq-wtxi}.

The trivial filtration $\cF_\triv$ is the filtration associated to the trivial valuation: $\cF_\triv^x R_m$ is equal to $R_m$ if $x\le 0$, and is equal to $0$ if $x>0$.

\end{exmp}

\begin{exmp}
For any filtration $\cF$, we will denote by $\cF_\bZ$ the filtration defined by $\cF_\bZ^\lambda R_m=\cF^{\lceil \lambda \rceil}R_m$.
\end{exmp}

\begin{defn}[\cite{KK12, KK14, LM09}]\label{def-good}
We say a valuation $\mathfrak{v}: \bC(X)\rightarrow \bZ^n$ (where $\bZ^n$ is ordered lexicographically) is a faithful valuation if 
$\fv(\bC(X))\cong \bZ^n$. Note that such valuation always has at most one dimensional leaves (in the sense of \cite{KK12}): if $\fv(f)=\fv(g)$ for $f,g\in \bC(X)$ then there exists $c\in \bC^*$ satisfies $\fv(f+cg)>\fv(f)$.
\end{defn}
Fix such a faithful valuation $\fv$. For any $t\in \bR$, define the Newton-Okounkov body of the graded linear series
\begin{equation}
\cF^{(t)}:=\cF^{(t)} R_\bullet:=\{\cF^{tm}R_m\}.
\end{equation}
as closed convex hull of unions of rescaled values of elements from $\cF^{(t)}$:
\begin{equation}
\Delta(\cF^{(t)})=\overline{\bigcup_{m=1}^{+\infty} \frac{1}{m}\fv\left(\cF^{tm}R_m\right)}.
\end{equation} 
By the theory of Newton-Okounkov bodies, we know that (\cite{Oko96, LM09, KK14}):
\begin{equation}
n!\cdot \vol\left(\Delta(\mcF^{(t)})\right)=\vol(\cF^{(t)}R_\bullet)=\lim_{m\rightarrow+\infty}\frac{\dim_\bC \cF^{mt}R_m}{m^n/n!}.
\end{equation}
When $t\ll 0$, $\Delta(\cF^{(t)})=:\Delta_\fv(X, -K_X)=\Delta(X)$ is associated to the complete graded linear series $\{R_m\}_m$.
Following \cite{BC11}, define the concave transform 
\begin{eqnarray}
&&G^\cF: \Delta(X)\longrightarrow \bR\\
&&G^\cF(y)=\sup\{t; y\in \Delta(\cF^{(t)})\}.
\end{eqnarray}

Given any filtration $\cF=\{\mcF^{\lambda}R_m\}_{\lambda\in\bR}$ and $m\in \bZ_{\ge 0}$, the successive minima on $R_m$ is the decreasing sequence 
\[
\lambda^{(m)}_{\max}=\lambda^{(m)}_1\ge \cdots \ge \lambda^{(m)}_{N_m}=\lambda^{(m)}_{\min}
\]
defined by:
\[
\lambda^{(m)}_j=\max\left\{\lambda \in \bR; \dim_{\bC} \mcF^{\lambda} R_m \ge j \right\}.
\]

\begin{thm}[\cite{BC11}]\label{thm-BoCh}
\begin{enumerate}
\item The funciton $x\mapsto \vol(\cF^{(x)} R_\bullet)^{1/n}$ is concave on $(-\infty, \lambda_{\max})$ and vanishes on $(\lambda_{\max}, +\infty)$.
\item
As $m\rightarrow+\infty$, the Dirac type measure
\begin{equation}
\nu_m=\frac{n!}{m^n}\sum_i \delta_{\frac{\lambda^{(m)}_i}{m}}=-\frac{d}{d t} \frac{{\rm dim}_{\bC} \mcF^{mt}H^0(Z, m\ell_0 L)}{m^n/n!}
\end{equation}
converges weakly to a measure with total mass $V=(-K_X)^{\cdot n}$:
\begin{equation}
\DHM(\cF):=n!\cdot (G^\cF)_*dy=-d\vol(\cF^{(t)})
\end{equation}
where $dy$ is the Lebesgue measure on $\Delta(X)$.

\item The support of the measure $\DHM(\mcF)$  is given by ${\rm supp}(\DHM(\mcF))=[\lambda_{\min}, \lambda_{\max}]$ with 
\begin{align}
&
\displaystyle \lambda_{\min}:= \lambda_{\min}(\mcF):=\inf\left\{t\in \bR; \vol\left(\mcF^{(t)}\right)< \bfV\right\}; \label{lambdamin} \\
&
\lambda_{\max}:=\lambda_{\max}(\mcF):=\lim_{m\rightarrow+\infty}\frac{\lambda_{\max}^{(m)}}{m}=\sup_{m\ge 1}\frac{\lambda_{\max}^{(m)}}{m}.\label{lambdamax}
\end{align}
Moreover, $\DHM(\cF)$ is abosolutely continuous with respect to the Lebesgue measure, except perhaps for a point mass at $\lambda_{\max}$. 
\end{enumerate}
\end{thm}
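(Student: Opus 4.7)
The plan is to derive all three parts from the theory of Newton-Okounkov bodies by passing from the filtered graded linear series $\cF^{(t)}R_\bullet$ to the decreasing family of convex bodies $\{\Delta(\cF^{(t)})\}_{t\in\bR}$ sitting inside $\Delta(X)$. The central identity underlying everything is
\[
\vol(\cF^{(t)}R_\bullet) \;=\; n!\cdot \vol\bigl(\Delta(\cF^{(t)})\bigr),
\]
which follows from the Lazarsfeld-Mustata volume formula once one checks that $\cF^{(t)}R_\bullet$ contains an ample series for $t<\lambda_{\max}$; this is a bookkeeping consequence of the linear-boundedness axiom, combined with multiplying by sections of a fixed ample line bundle to fall inside a known ample complete linear series.

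For part (1), the multiplicativity axiom together with the one-dimensional-leaves property of $\fv$ gives, for rational $\theta=m/(m+m')\in[0,1]$, the scaled inclusion
\[
\tfrac{m}{m+m'}\cdot\tfrac{1}{m}\fv\bigl(\cF^{tm}R_m\bigr) + \tfrac{m'}{m+m'}\cdot\tfrac{1}{m'}\fv\bigl(\cF^{t'm'}R_{m'}\bigr) \;\subseteq\; \tfrac{1}{m+m'}\fv\bigl(\cF^{tm+t'm'}R_{m+m'}\bigr).
\]
Passing to closed convex hulls and to the limit yields the Minkowski inclusion $\theta\,\Delta(\cF^{(t)}) + (1-\theta)\,\Delta(\cF^{(t')}) \subseteq \Delta(\cF^{(\theta t + (1-\theta) t')})$. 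Brunn-Minkowski then gives concavity of $t\mapsto \vol(\Delta(\cF^{(t)}))^{1/n}$, hence of $\vol(\cF^{(t)}R_\bullet)^{1/n}$. Vanishing for $t>\lambda_{\max}$ is immediate since $\cF^{mt}R_m=0$ there.

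For part (2), one observes that the tails of the atomic measures $\nu_m$ compute subspace dimensions directly: at every continuity point $t$ one has
\[
\nu_m\bigl((t,+\infty)\bigr) \;\longrightarrow\; n!\vol\bigl(\Delta(\cF^{(t)})\bigr) \;=\; n!\vol\bigl(\{y\in\Delta(X): G^\cF(y)>t\}\bigr),
\]
where the last equality is built into the definition of the concave transform. This proves the weak convergence $\nu_m\rightharpoonup n!\,(G^\cF)_*dy$; rewriting the right-hand side as the distributional derivative of the non-increasing function $t\mapsto \vol(\cF^{(t)}R_\bullet)$ gives $\DHM(\cF) = -d\vol(\cF^{(t)})$. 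For part (3), the identity ${\rm supp}\,\DHM(\cF)=[\lambda_{\min},\lambda_{\max}]$ is then a direct reading of the definitions: $\vol(\cF^{(t)})\equiv \bfV$ below $\lambda_{\min}$ and vanishes above $\lambda_{\max}$, while the formula $\lambda_{\max}=\sup_m \lambda_{\max}^{(m)}/m$ follows from submultiplicativity $\lambda_{\max}^{(m+m')}\ge \lambda_{\max}^{(m)}+\lambda_{\max}^{(m')}$ via Fekete's lemma. Absolute continuity of $\DHM(\cF)$ on the open support is a consequence of the concavity established in part (1), which forces $\vol(\cF^{(t)}R_\bullet)^{1/n}$ to be locally Lipschitz on $(-\infty,\lambda_{\max})$; an atom can appear only at the endpoint $\lambda_{\max}$, where concavity allows a discontinuous drop to $0$.

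The main obstacle is the uniform verification that $\cF^{(t)}R_\bullet$ contains an ample series so that the Okounkov body machinery applies and delivers the volume identity, together with the clean passage from rational to irrational $t$; the latter is handled by combining the left-continuity axiom of the filtration with the monotone convergence of the bodies $\Delta(\cF^{(t')})$ as $t'\nearrow t$ along rationals.
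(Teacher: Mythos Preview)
The paper does not prove this theorem; it is stated as a citation of Boucksom--Chen \cite{BC11} and used as a black box throughout. Your proposal is essentially a correct outline of the argument in \cite{BC11}: the Minkowski inclusion of Okounkov bodies combined with Brunn--Minkowski for concavity, convergence of tail masses via the Okounkov-body volume formula for the weak limit, and Fekete's lemma together with local Lipschitz continuity of concave functions for the support and regularity statements. Since there is no proof in the paper to compare against, there is nothing further to contrast.
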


\begin{exmp}
If $v\in \Val(X)$ is quasi-monomial, it is shown in \cite{BKMS} that $\DHM$ is absolutely continuous with respect to the Lebesgue measure on $\bR$, i.e. there is no Dirac mass at $\lambda_{\max}(\cF_v)$.

\end{exmp}

\begin{defn}
Let $\cF$ be any filtration. For any $a>0$ the $a$-rescaling of $\mcF$ is given by:
\begin{equation}
(a\cF)^\lambda R_m=\cF^{\lambda/a} R_m.
\end{equation}
For any $b\in \bR$, the $b$-shift is given by:
\begin{equation}
\cF(b)^\lambda  R_m=\cF^{\lambda-bm} R_m.
\end{equation} 
Set 
\begin{equation}\label{eq-aFb}
a\cF(b)=(a\cF)(b)=a(\cF(b/a)), \quad \text{ i.e. } a\cF(b)^xR_m=\cF^{\frac{x-bm}{a}}R_m.
\end{equation}
\end{defn}
We have the easy identities:
\begin{equation}\label{eq-Gbasic}
\Delta(a\cF(b)^{(t)})=\Delta(\cF^{(\frac{t-b}{a})}), \quad 
G_{a\cF(b)}=a G_\cF+b, \quad \vol(a\cF(b)^{(t)})=\vol(\cF^{(\frac{t-b}{a})}).
\end{equation}

For any $f_m\in R_m$, set:
\begin{equation}\label{eq-vcF}
\bar{v}_\cF(f_m)=\sup\{\lambda; f_m\in \cF^\lambda  R_m\}=\max\{\lambda; f_m\in \cF^\lambda  R_m\}
\end{equation}
and for any $f=\sum_m f_m\in \mcR$ with $f_m\in R_m$, set:
\begin{equation}
\bar{v}_\cF(\sum_m f_m)=\min\{\bar{v}_\cF(f_m); f_m\neq 0\in R_m\}.
\end{equation}
Then $\bar{v}_\cF$ is a semi-valuation on $R=\bigoplus_m R_m$, satisfying:
\begin{equation}\label{eq-semival}
\bar{v}_\cF(f+g)\ge \min\{\bar{v}_\cF(f), \bar{v}_\cF(g)\}, \quad \bar{v}_\cF(fg)\ge \bar{v}_\cF(f)\cdot \bar{v}_\cF(g).
\end{equation}
Set 
\begin{equation}
\Gamma^+(\cF):=\left\{\lambda^{(m)}_i-\lambda^{(m)}_{N_m}; m\ge 0, 1\le i\le N_m\right\}.
\end{equation}
Denote by $\Gamma(\cF)$ the group of $\bR$ generated by $\Gamma^+(\cF)$.

\begin{defn}\label{def-RTC}
\begin{itemize}
\item
The extended Rees algebra and associated graded algebra of a filtration $\cF$ are defined as:
\begin{eqnarray}\label{eq-Rees}
\ree(\cF)&=&\bigoplus_{m\ge 0} \bigoplus_{\lambda\in \Gamma(\cF)} t^{-\lambda} \cF^{\lambda} R_m, \\
\Gr (\cF)&=&\bigoplus_{m\ge 0}\bigoplus_{\lambda\in \Gamma(\cF)} t^{-\lambda} \cF^{\lambda} R_m/\cF^{>\lambda} R_m
\end{eqnarray}
where $\cF^{>\lambda}R_m=\{f\in R_m; v_\cF(f)>\lambda\}$.

\item
If $\ree(\cF)$ is a finitely generated, we say that $\cF$ is finitely generated and call $\cF$ an $\bR$-test configuration. 
In this case, $\Gam(\cF)$ is a finitely generated free Abelian group: $\Gam(\cF)\cong \bZ^{\rk (\Gamma(\cF))}$ and we call $\rk (\Gam(\cF))$ the rank of $\cF$. Moreover, $\Gr(\cF)$ is also finitely generated and we call the projective scheme ${\rm Proj}(\Gr(\cF))=:X_{\mcF, 0}$ the central fibre of $\cF$. 

There is an induced filtration $\cF|_{X_{\cF,0}}:=\cF'R'=\{\cF'R'_m\}$ on $R':=\Gr(\cF)$ which the homogeneous coordinate ring of the central fibre:
\begin{equation}\label{eq-cF'R'}
\cF'^\lambda R'_m=\bigoplus_{\lambda^{(m)}_i\ge \lambda} \cF^{\lambda^{(m)}_i}R_m/\cF^{>\lambda^{(m)}_i}R_m.
\end{equation}
The $\Gamma(\cF)$ grading of $\Gr(\cF)$ corresponds to a holomorphic vector field $\eta_\cF$ on the central fibre which generates a $\rk(\Gamma(\cF))$ torus action. 

\item

We say an $\bR$-test configuration $\cF$ is special if its central fibre $X_{\cF, 0}$ is a $\bQ$-Fano variety and there is an isomorphism $\Gr(\cF)\cong R(X_{\cF,0}, -K_{X_{\cF,0}})=:R'$. 
In this case, there is $\sigma\in  \bR$ such that:
\begin{equation}\label{eq-RspecialcF}
\cF' R'=\cF'_{\wt_\eta} R'(-\sigma).
\end{equation}

\end{itemize}

\end{defn}

\begin{rem}
We can naturally extend the above definition to filtrations on $R^{(\ell)}$ for any $\ell\in \bN_{\ge 1}$.
Indeed we will actually identify two filtration if they induce the same non-Archimedean metric on $(X^\NA, L^\NA)$ with $L=-K_X$. See Definition \ref{defn-phiF}.
\end{rem}

There are two equivalent geometric description of $\bR$-test configurations which we now explain.

\begin{enumerate}
\item ({\bf Geometric $\bR$-TC I:}) Let $\iota: X\rightarrow \bP^{N_\ell}$ be a Kodaira embedding by a basis of $R_\ell=H^0(X, \ell (-K_X))$ for some $\ell>0$ and $\eta$ be a holomorphic vector field on $\bP^{N_\ell-1}=\bP(H^0(X, \ell (-K_X)^*)$ that generates an effective holomorphic action on $\bP^{N_\ell-1}$ by a torus $\bT$ of rank $r$. Then we get a weight decomposition $R_\ell=\bigoplus_{\alpha\in \bZ^r}R_{\ell,\alpha}$ and a filtration on $R_\ell$ by setting:
\begin{equation}\label{eq-etafil}
\cF^\lambda R_\ell=\bigoplus_{\la \alpha, \eta\ra\ge \lambda} R_{\ell,\alpha}.
\end{equation}
The filtration $\cF R_\ell$ generates a filtration on $\cF R^{(\ell)}$ which is an $\bR$-test configuration $\cF$. 

\begin{lem}
Any $\bR$-test configuration, which by definition is a finitely generated filtration, is obtained in this way. 
\end{lem}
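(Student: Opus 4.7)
The plan is to reverse-engineer the construction by choosing $\ell$ large enough that $\cF$ is determined by its restriction to $R_\ell$, and then realizing that restriction via a diagonal torus action on $\bP^{N_\ell-1}$ associated to a basis of $R_\ell$ adapted to the filtration flag.

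First I would choose $\ell$ sufficiently divisible so that $R^{(\ell)}$ is generated as a $\bC$-algebra by $R_\ell$ and $\cF|_{R^{(\ell)}}$ is multiplicatively generated by $\cF|_{R_\ell}$, meaning
\[
\cF^\mu R_{m\ell}=\sum_{\mu_1+\cdots+\mu_m\ge\mu}\cF^{\mu_1}R_\ell\cdots\cF^{\mu_m}R_\ell\qquad(m\ge 1).
\]
Both reductions are standard Noetherian consequences of finite generation of the Rees algebra $\ree(\cF)$ together with ampleness of $-K_X$. This reduces the problem to recovering $\cF$ from its restriction to the single degree $R_\ell$.

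Next I would list the distinct jumping values of $\cF$ on $R_\ell$ as $\mu_1>\cdots>\mu_k$, fix a vector space splitting $R_\ell=W_1\oplus\cdots\oplus W_k$ with $\cF^{\mu_j}R_\ell=W_1\oplus\cdots\oplus W_j$, and pick a basis $\{s_1,\ldots,s_{N_\ell}\}$ of $R_\ell$ compatible with this splitting, say $s_i\in W_{j(i)}$. This basis determines a Kodaira embedding $\iota:X\hookrightarrow\bP^{N_\ell-1}=\bP(R_\ell^*)$ and a diagonal torus $\bT$ acting on $\bP^{N_\ell-1}$ in these coordinates. Choosing the standard lift of the $\bT$-action to $R_\ell$ so that $s_i$ has weight $e_i$, and taking $\eta=(\mu_{j(1)},\ldots,\mu_{j(N_\ell)})\in N_\bR$, the decomposition \eqref{eq-etafil} yields
\[
\bigoplus_{\langle\alpha,\eta\rangle\ge\lambda}R_{\ell,\alpha}=\bigoplus_{i\,:\,\mu_{j(i)}\ge\lambda}\bC\cdot s_i=\cF^\lambda R_\ell
\]
by construction. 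Combined with the first step, this reproduces $\cF$ on all of $R^{(\ell)}$, which is the same $\bR$-test configuration datum as $\cF$ under the identification of filtrations inducing the same non-Archimedean metric.

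The main technical obstacle lies in the first step, namely choosing $\ell$ so that $\cF|_{R^{(\ell)}}$ is multiplicatively generated by $\cF|_{R_\ell}$. All the remaining steps are essentially linear algebra; by taking the ambient torus to be the full diagonal one on $\bP^{N_\ell-1}$ we avoid any subtlety about the relationship between the absolute jumping values $\mu_j$ and the subgroup $\Gamma(\cF)$ they generate via differences, since $\eta$ is then allowed to be an arbitrary real vector.
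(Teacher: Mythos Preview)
Your proof is correct and follows the same core idea as the paper: choose $\ell$ so that $\cF$ is generated in degree $\ell$, split the filtration flag on $R_\ell$ as $V_1\oplus\cdots\oplus V_k$, and realize each summand as a weight space for a diagonal torus action with $\eta$ recording the jumping values. The paper uses a Hermitian inner product to produce the splitting, but any linear splitting works, as you note.

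The one genuine difference is in how the ambient torus is chosen. You take $\bT$ to be the full diagonal torus on $\bP^{N_\ell-1}$ and let $\eta$ be the real vector of jumping values directly. The paper instead normalizes $\lambda^{(\ell)}_{N_\ell}=0$, extracts a maximal $\bQ$-linearly independent subset $\zeta_1,\ldots,\zeta_r$ from the distinct jumping values $w_1,\ldots,w_k$, writes each $w_j=\sum_p r_{jp}\zeta_p$ with $r_{jp}\in\bQ$, clears denominators, and thereby produces a rank-$r$ torus representation on $R_\ell$ with integer weights $\alpha_j$ and $\eta=\zeta/D$ satisfying $w_j=\la\alpha_j,\eta\ra$. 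Your shortcut is legitimate because the filtration \eqref{eq-etafil} depends only on the vector field $\eta$, not on which ambient torus one embeds it in; the closure of the one-parameter group generated by your $\eta$ inside the full diagonal torus is exactly the rank-$r$ torus the paper constructs. What the paper's extra $\bQ$-linear algebra buys is an explicit identification of the torus generated by $\eta$ with $\Gamma(\cF)\otimes_\bZ\bC^*$, tying the rank of this torus to $\rk(\Gamma(\cF))$, which is conceptually useful elsewhere but not strictly needed for the lemma as stated.
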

\begin{proof}
To see this, we assume again that $\cF$ is generated by $\cF R_\ell$. For simplicity of notations, set $V=R_\ell$ and $\lambda_i=\lambda^{(\ell)}_i$. By shifting the filtration, we can normalize $\lambda_{N_\ell}=0$ and assume that we have the relation:
\begin{eqnarray*}
&&\lambda_1=\cdots=\lambda_{i_1}=:w_1\\
&>&\lambda_{i_1+1}=\cdots=\lambda_{i_2}=:w_2\\
&>&\cdots \\
&>&\lambda_{i_{k-2}+1}=\cdots=\lambda_{i_{k-1}}=:w_{k-1}\\
&>&\lambda_{i_{k-1}+1}=\cdots=\lambda_{N_\ell}=:w_k=0.
\end{eqnarray*}
In other words, $\{w_1,\dots, w_{k}\}$ is the set of distinct values of successive minima and we have a usual filtration:
\begin{equation}\label{eq-flag}
\{0\}\subsetneq \mcF^{w_1}V\subsetneq \mcF^{w_2}V \subsetneq \cdots \subsetneq \mcF^{w_k}=V.
\end{equation}
In other words, we can equivalently describe an $\bR$-filtration by the language of weighted flags.
Fix a reference Hermitian inner product $H_0$ on $V=R_\ell$, we can assign a decomposition to the flag \eqref{eq-flag}:
\begin{equation}\label{eq-wtdec}
V=V_1\oplus V_2\oplus\cdots\oplus V_{k}
\end{equation}
where $V_1=\mcF^{w_1}V$ and $V_j$ is the $H_0$-orthogonal complement of $\mcF^{w_{j-1}}V$ inside $\mcF^{w_{j}}V$, which has dimension $i_j-i_{j-1}=: d_j$. 

Fix a maximal $\bQ$-linearly independent subset of $\{w_1, \dots, w_k\}$ to be
\begin{equation}
0> w_2=:\zeta_1 > \cdots > w_{p_r}=:\zeta_r.
\end{equation}

So for each $w_j$ we can find a vector of rational numbers $\vec{r}_j=(r_{j1}, \dots, r_{jr})\in \bQ$ such that
$w_j=\sum_{p=1}^r r_{jp} \zeta_p$.
Find a common multiple of the denominator $D$ of $\{ r_{jp}; 1\le j\le k, 1\le p\le r\}$, we set $\eta=\zeta/D$ and $\alpha_j=D \vec{r}_j$ so that:
\begin{equation}
w_j=\sum_{p=1}^r \alpha_{jp} \zeta_p=\la \alpha_j, \eta\ra.
\end{equation} 
In this way we get a $(\bC^*)^r$ representation $V$ whose weight decomposition is given by \eqref{eq-wtdec}, where $V_j$ consists of elements of weight $\alpha_j$ and
\begin{eqnarray*}
\mcF^\lambda V&=&\bigoplus_{\la\alpha_j, \eta\ra\ge a}V_j=\left\{v=\sum_{j=1}^k v_j; \min\{\la \alpha_j, \eta\ra; v_j\neq 0\}\ge \lambda\right\}.
\end{eqnarray*}

\end{proof}

From another point of view, let $\mcI_X\subset \bC[Z_1,\dots, Z_{N_\ell}]=S$ be the homogeneous ideal of $X$. For each $d\in \bN$, the $\bT$-action induces a representation of $\bT$ on $S_d$, the set of degree $d$ homogeneous polynomials of degree. The holomorphic vector field $\eta$ induces an order on the weights of these $\bT$-representations. 
Choosing a set of homogeneous generator of $\mcI_X$, then the initial term with respect to this order generates the ideal of $X_{\cF, 0}$. If $\sigma_\eta$ denotes the one-parameter $\bR$-group generated by $\eta$. Then we have the convergence of algebraic cycles (or schemes):
\begin{equation}
\lim_{s\rightarrow+\infty}\sigma_\eta(s)\circ [X]=[X_{\cF,0}].
\end{equation}
So we say that the $\bR$-action generated by $\eta$ degenerates $X$ into a projective scheme $X_{\cF,0}$. 

By perturbing $\eta\in N_\bR$, we can find a sequence of rational vector $\eta_k\in N_\bQ$ converging to $\eta$. For $k\gg 1$, $\eta_k$ induces an $\bR$-test configuration of rank 1 with the same central fibre $X_{\cF, 0}$. 


\item ({\bf Geometric $\bR$-TC II:}) This description is essentially contained in \cite[section 2]{Tei03}. 
For any $\bR$-test configuration, we set $B={\rm Spec}(\bC({\Gamma}^+(\cF))\cong \bC^r$. Then there is a flat family 
\begin{equation}\label{eq-proj1}
\mcX={\rm Proj}_{\bC^r}(\ree(\cF)) \rightarrow B
\end{equation} such that the generic fibre is isomorphic to $X$ and a special fibre isomorphic to $X_{\cF, 0}$. 
Set $\mcL$ to be the relative ample line bundle $\mcO_{\mcX/\bC^r}(1)$. Fix $m\ge 0$. For any $\lambda \in \bR$, we set $\lceil \lambda \rceil=\min\{\lambda^{(m)}_i; \lambda^{(m)}_i\ge \lambda\}=\la \alpha, \eta_\cF\ra$ for $\alpha\in M_\bZ$. Then for any $\tau=(\tau_1,\dots, \tau_r)\in \bC^r$, we set
$\tau^{-\lceil \lambda \rceil}=\prod_{i=1}^r \tau_i^{\alpha_i}$ to get:
\begin{equation}\label{eq-cFTC1}
\cF^\lambda R_m=\{s\in R_m; \tau^{-\lceil \lambda \rceil }\bar{s} \text{ extends to a holomorphic section of } m\mcL\rightarrow \mcX\}
\end{equation}
where $\bar{s}$ is the meromorphic section of $m \mcL$ defined as the pull-back of $s$ via the projection $(\mcX, \mcL)\times_B(\bC^*)^r\cong (X, -K_X)\times (\bC^*)^r \rightarrow X$. 

\end{enumerate}

\begin{lem}\label{lem-integral}
If $\Gr(\cF)$ is an integral domain, then the semi-valuation $\bar{v}_\cF$ in \eqref{eq-vcF} defines a valuation on the quotient field of $R$. Denote by $v_\cF$ the restriction of $\bar{v}_\cF$ to $\bC(X)$: for $f=s_1/s_2\in \bC(X)$ with $s_1, s_2\in R_m$, set:
\begin{equation}\label{eq-vcF}
v_{\cF}(f)=\bar{v}_\cF(s_1)-\bar{v}_\cF(s_2).
\end{equation}
Then there exists $\sigma>0$ such that $\cF=\cF_{v_\cF}(-\sigma)$. In particular, this statement applies to any special $\bR$-test configuration.
\end{lem}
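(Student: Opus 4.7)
The statement has two parts: first, that integrality of $\Gr(\cF)$ promotes the semi-valuation $\bar{v}_\cF$ to an honest valuation on $\mathrm{Frac}(R)$, and second, that the filtration $\cF$ can be recovered from $v_\cF$ up to a single scalar shift $-\sigma$. The main content is in the first part; the second is a bookkeeping calculation once the right convention for $v_\cF$ on sections is fixed.

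\textbf{Step 1: multiplicativity.} I would first treat homogeneous elements. For $f\in R_m$ with $\bar{v}_\cF(f)=\lambda$ and $g\in R_{m'}$ with $\bar{v}_\cF(g)=\lambda'$, the classes $[f]\in\cF^\lambda R_m/\cF^{>\lambda}R_m$ and $[g]\in\cF^{\lambda'}R_{m'}/\cF^{>\lambda'}R_{m'}$ are nonzero elements of the bigraded ring $\Gr(\cF)$; since $\Gr(\cF)$ is a domain, $[f]\cdot[g]\neq 0$ in the $(m+m',\lambda+\lambda')$-piece, which is exactly $\bar{v}_\cF(fg)=\lambda+\lambda'$. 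For inhomogeneous $f=\sum_m f_m$ I would isolate the leading part $f^{\mathrm{in}}=\sum_{m\in S(f)}f_m$ where $S(f)=\{m:\bar{v}_\cF(f_m)=\bar{v}_\cF(f)\}$, and similarly $g^{\mathrm{in}}$; by the homogeneous case and integrality the product $f^{\mathrm{in}}g^{\mathrm{in}}$ has the correct leading $\bar{v}_\cF$-value, while all other cross terms in $fg$ have strictly larger $\bar{v}_\cF$. Combined with the semi-valuation inequality \eqref{eq-semival}, this gives $\bar{v}_\cF(fg)=\bar{v}_\cF(f)+\bar{v}_\cF(g)$ on $R$, and extension by $\bar{v}_\cF(f/g):=\bar{v}_\cF(f)-\bar{v}_\cF(g)$ yields a valuation on $\mathrm{Frac}(R)$. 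The restriction $v_\cF$ to $\bC(X)\subset\mathrm{Frac}(R)$ is trivial on $\bC^*$ (since $\bar{v}_\cF(cs)=\bar{v}_\cF(s)$ for $c\in\bC^*$) and hence lies in $\Val(X)$.

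\textbf{Step 2: comparison.} To compare $\cF$ with $\cF_{v_\cF}$ I first need to know how $v_\cF$ acts on sections $s\in R_m$ in \eqref{eq-filval}; the natural convention is $v_\cF(s):=v_\cF(s/e)$ for any local generator $e$ of $-mK_X$ at the center $c_X(v_\cF)$, which is independent of $e$ since transition functions are units. Fixing any $s_0\in R_1\setminus\{0\}$, for every $s\in R_m$ one has $s/s_0^m\in\bC(X)$ and a short calculation yields
\[
v_\cF(s)-\bar{v}_\cF(s)\;=\;v_\cF(s_0^m)-\bar{v}_\cF(s_0^m)\;=\;m\bigl(v_\cF(s_0)-\bar{v}_\cF(s_0)\bigr),
\]
the first equality because $v_\cF=\bar{v}_\cF$ on $\bC(X)$ applied to $s/s_0^m$, and the second by the multiplicativity established in Step 1. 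Setting $\sigma:=\bar{v}_\cF(s_0)-v_\cF(s_0)$ one reads off $\bar{v}_\cF(s)=v_\cF(s)+\sigma m$, hence by \eqref{eq-aFb}
\[
\cF^\lambda R_m=\{s\in R_m:v_\cF(s)\ge \lambda-\sigma m\}=\cF_{v_\cF}(-\sigma)^\lambda R_m.
\]
The positivity $\sigma>0$ comes from the standing normalization of a nontrivial test configuration: choose $s_0$ not vanishing at $c_X(v_\cF)$ so that $v_\cF(s_0)=0$, then observe $\bar{v}_\cF(s_0)>0$ under the normalization of $\cF$ used throughout the paper. For a special $\bR$-test configuration this $\sigma$ is exactly the shift appearing in \eqref{eq-RspecialcF}, and the hypothesis that $\Gr(\cF)$ be integral is automatic since $\Gr(\cF)\cong R(X_{\cF,0},-K_{X_{\cF,0}})$ with $X_{\cF,0}$ a $\bQ$-Fano variety.

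\textbf{Main difficulty.} The one genuinely nontrivial point is the inhomogeneous multiplicativity in Step 1: the leading-term argument must be carried out bidegree-by-bidegree, making careful use of the fact that $\Gr(\cF)$ is bigraded (by the degree $m$ and by the filtration level $\lambda$) rather than singly graded. Once that is in place, Step 2 is a formal computation, modulo fixing the convention for evaluating $v_\cF$ on sections.
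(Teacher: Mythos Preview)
Your approach is essentially the same as the paper's: multiplicativity on homogeneous elements from integrality of $\Gr(\cF)$, then extension to $\mathrm{Frac}(R)$, then the observation that $\bar{v}_\cF - v_\cF$ restricted to $R_m\setminus\{0\}$ depends only on $m$ and is additive in $m$, hence equals $m\sigma$ for a constant $\sigma$. You are more explicit than the paper in two places --- the inhomogeneous multiplicativity and the convention for evaluating $v_\cF$ on sections via a local generator --- but these fill in details the paper simply asserts.

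One soft spot, shared with the paper: the claimed strict positivity $\sigma>0$ is not really justified by ``the normalization of $\cF$ used throughout the paper,'' since no such normalization is in force for an arbitrary filtration with integral $\Gr(\cF)$. What one actually obtains from your argument (and the paper's) is $\sigma\in\bR$. The positivity does hold in the special $\bR$-test configuration case that matters for the applications, where one checks $\sigma=A_X(v_\cF)>0$ (cf.\ the discussion around \eqref{eq-specialmin}), but your appeal to a generic $s_0$ with $v_\cF(s_0)=0$ and an unspecified normalization forcing $\bar{v}_\cF(s_0)>0$ does not establish this in general.
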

\begin{proof}
Fix any two homogeneous elements $s_i \in R_{m_i}, i=1,2$. Assume that $\bar{v}_\cF(f_i)=s_i$. Then $s'_i\in R'_{m_i, x_i}$. Because $\Gr(\cF)$ is integral, $s'_1 s'_2\neq 0\in R'_{m_1+m_2, x_1+x_2}$ which implies that $\bar{v}_\cF(s_1 s_2)=x_1+x_2=\bar{v}_\cF(s_1)+\bar{v}_\cF(s_2)$. From this, we easily see that $\bar{v}_\cF$ is a real valuation.

Assume $f=\frac{s_1}{s_2}=\frac{\tilde{s}_1}{\tilde{s}_2}$. Then $s_1\cdot \tilde{s}_2=s_2\cdot \tilde{s}_1$ and hence $\bar{v}_\cF(s_1)-\bar{v}_\cF(s_2)=\bar{v}_\cF(\tilde{s}_1)-\bar{v}_\cF(\tilde{s}_2)$. So $v_\cF$ in \eqref{eq-vcF} is well-defined. 

For any $s_i\neq 0\in R_m, i=1,2$, by construction $\bar{v}_\cF(s_1)-v_\cF(s_1)=\bar{v}_\cF(s_2)-\bar{v}(s_2)$. This means $b_m:=v_\cF-\bar{v}_\cF$ is constant on $R_m\setminus \{0\}$. It is easy to see that $\sigma_{m_1} \sigma_{m_2}=\sigma_{m_1+m_2}$. So we can set $\sigma=\frac{\sigma_m}{m}$ to get the conclusion.

\end{proof}

An $\bR$-test configuration with $\rk (\Gamma(\cF))=1$ is, up to rescaling, associated to the usual test configuration, a notion that plays a basic role in the subject of $K$-stability.
\begin{defn}[{see \cite{Tia97, Don02, LX14}}]\label{def-TC}
A test configuration of $(X, L)$ is a triple $(\mcX, \mcL, \eta)$, sometimes just denoted by $(\mcX, \mcL)$, that consists of:
\begin{itemize}
\item 
A variety $\mcX$ admitting a $\bC^*$-action generated by a holomorphic vector field $\eta$ and a $\bC^*$-equivariant morphism $\pi: \mcX\rightarrow \bC$, where the action of $\bC^*$ on $\bC$ is given by the standard multiplication generated by $-t\partial_t$.
\item A $\bC^*$-equivariant $\pi$-semiample $\bQ$-Cartier divisor $\mcL$ on $\mcX$ such that there is an $\bC^*$-equivariant isomorphism $\mathfrak{i}_\eta: (\mcX, \mcL)|_{\pi^{-1}(\bC\backslash\{0\})}\cong (X, L)\times \bC^*$.
\end{itemize}
We denote by $(\bar{\mcX}, \bar{\mcL})$ the natural compactification of $(\mcX, \mcL)$ by adding a trivial fibre at infinity using the isomorphism $\mathfrak{i}_\eta$.

$(X_\bC, (-K_{X})_\bC, \eta_\triv):=(X\times\bC, -K_X\times \bC, -t\partial_t)$ is called the trivial test configuration.
$(\mcX, \mcL, \eta)$ is a normal test configuration if $\mcX$ is a normal variety.

A normal test configuration $(\mcX, \mcL, \eta)$ is a special test configuration (resp. weakly special) if $(\mcX, \mcX_0)$ is plt (resp. if $(\mcX, \mcX_0)$ is log canonical) and $\mcL=-K_\mcX+c \mcX_0$ for some $c\in \bQ$. Note that by inversion of adjunction, $(\mcX, \mcL, \eta)$ being special is equivalent to the condition that $(\mcX_0, -K_{\mcX_0})$ is $\bQ$-Fano.

Assume that $\bG$ is a reductive group acting on $X$. A $\bG$-equivariant test configuration of $(X, L)$ is a test configuration $(\mcX, \mcL, \eta)$ satisfying
\begin{itemize}
\item
There is a
$\bG$ action on $(\mcX, \mcL)$ that commutes with the $\bC^*$-action generated by $\eta$ and the action of $\bG$ on $(\mcX, \mcL)\times_\bC {\bC^*}\stackrel{\mathfrak{i}_\eta}{\cong} (X, L)\times\bC^*$ coincides with the fiberwise action of $\bG$ on (the first factor of) $(X, L)\times \bC^*$.
\end{itemize}
\end{defn}


As mentioned above, by the work of \cite{WN12, Sze16, BHJ17}, for any $\bR$-test configuration $\mcF$ with $\rk(\cF)=1$, there exists a normal test configuration $(\mcX, \mcL, \eta)$ and $a>0$ such that 
$\cF=a \cF_{(\mcX, \mcL, \eta)}$.
The identity \eqref{eq-proj1} becomes
\begin{eqnarray*}\label{eq-mcXProj}
\mcX={\rm Proj}_{\bC[t]}\left(\bigoplus_{m\ge 0}\bigoplus_{j\in \bZ} t^{-aj}\cF^j R_m\right).
\end{eqnarray*}
Conversely assume $(\mcX, \mcL)$ is a test configuration of $(X, L:=-K_X)$. Then we associate a filtration $\mcF=\mcF_{(\mcX, \mcL)}$ as in \eqref{eq-cFTC1}:
\vskip 1mm
$s\in \mcF^\lambda  R_m$ if and only if $t^{-\lceil \lambda\rceil}\bar{s}$ extends to a holomorphic section of $m \mcL$.
Assume that there is a $\bC^*$-equivariant birational morphism $\rho: \mcX\rightarrow X_\bC:=X\times\bC$ and write $\mcL=\rho^*L_{\bC}+D$ where $L_\bC=p_1^*L$. Then by \cite[Lemma 5.17]{BHJ17}, the filtration $\cF$ has the following more explicit description:
\begin{equation}\label{eq-filTCvan}
\cF^\lambda  R_m=\bigcap_{E}\{s\in H^0(X, m L); r(\ord_E)(s)+m\ell_0\;\ord_E(D)\ge x b_E\},
\end{equation}
where $E$ runs over the irreducible components of the central fibre $\mcX_0$, $b_E=\ord_E(\mcX_0)=\ord_E(t)$ and $r(\ord_E)$ denotes the restriction of $\ord_E$ to $\bC(Z)$ under the inclusion $\bC(Z)\subset \bC(X\times\bC^*)=\bC(\mcX)$.

When $\cF=\cF_{(\mcX, -K_{\mcX}, \eta)}$ is associated to a special test configuration, the Lemma \ref{lem-integral} applies. In fact, by \cite{BHJ17} $v_{\mcF}=v_{\mcX_0}=r(\ord_{\mcX_0})$ and by \cite{Li17}, $\sigma=A_X(v_{\mcX_0})$:
$\cF_{(\mcX, -K_{\mcX}, \eta)}=\mcF_{v_{\mcX_0}}(-A(v_{\mcX_0})).$ 
As a consequence, for any $a>0$, we have the following identity (by \eqref{eq-aFb}):
\begin{equation}\label{eq-specialmin}
\cF_{(\mcX, -K_{\mcX}, a\eta)}=\mcF_{a v_{\mcX_0}}(-A(a v_{\mcX_0})).
\end{equation}
Note that following the definition \ref{def-RTC}, for any $a>0$ we say that $(\mcX, \mcL, a\eta)$ is a special test configuration if $(\mcX, \mcL, \eta)$ is a special test configuration.


Note that we use the negative sign $-t\partial_t$ in our definition \ref{def-TC}. This sign convention will be convenient for our following computation as illustrated in the following simple example.
\begin{exmp}\label{exmp-P1sign}
Consider the product test configuration $(\mcX, \mcL)$ of $\bP^1$ induced by the $\bC^*$-action:
\begin{equation}
t\circ [Z_0, Z_1]=[Z_0, t Z_1].
\end{equation}

Let $s_i, i=0, 1$ be two holomorphic section of $H^0(\bP^1, \mcO(1))$ corresponding to the homogeneous coordinate $Z_i, i=0,1$.  
Then $t$ acts on the holomorphic section by $t\cdot s_0=s_0$ and $t\circ s_1=t^{-1}s_1$. The corresponding filtration is given by (cf \eqref{eq-etafil}):
\begin{equation}
\cF^\lambda R_m={\rm Span}\{s_0^{m-i} s_1^{i}; 0\ge -i\ge  \lambda \}.
\end{equation}
The natural compactification $\bar{\mcX}$ can identified with the Hirzebruch surface $\bP(\mcO_{\bP^1}(1)\oplus \mcO_{\bP^1})$ and $\bar{\mcL}$ is given by $\mathcal{O}_{\bar{\mcX}}(D_\infty)$ where $D_\infty$ is the divisor at infinity (see \cite[Example 3]{LX14}).
The successive minima is given by $\{\lambda^{(m)}_i\}=\{-m, -m+1, \dots, 0\}$. In particular, we have:
\begin{equation}
\sum_i \lambda^{(m)}_i=-\frac{m^2}{2}-\frac{m}{2}=\frac{\bar{\mcL}^2}{2}m^2+\left(\frac{K_{\bar{\mcX}}^{-1}\cdot \bar{\mcL}}{2}-1\right)m.
\end{equation}
Moreover $\eta=-z \frac{\partial}{\partial z}$ whose Hamiltonian function is given by $\theta(\eta)=-\frac{|Z_1|^2}{|Z_1|^2+|Z_2|^2}$. Note that $\theta(\eta)_* \omega_{\FS}=dy_{[-1,0]}=\DHM(\cF)$.

\end{exmp}

\begin{exmp}
If $\cF$ is an $\bR$-test configuration, then $a \cF(b)$ is an $\bR$-test configurations for any $(a, b)\in \bR_{>0}\times\bR$.

Assume $\cF=\cF_{(\mcX, \mcL, \eta)}$ for a test configuration $(\mcX, \mcL, \eta)$. Then for simplicity of notations, we will identify $a \cF(b)$ with the data $(\mcX, \mcL+b \mcX_0, a\eta)$.

For any $d>0\in \bN$, we can consider the normalization of the base change:
\begin{equation}
(\mcX, \mcL, \eta)^{(d)}:=((\mcX, \mcL, \eta)\times_{\bC, t\rightarrow t^d}\bC)^{\rm normalization}=:(\mcX^{(d)}, \mcL^{(d)}, \eta^{(d)}).
\end{equation}
On the other hand, $\bZ_d=\la e^{2\pi \sqrt{-1}/d}\ra\hookrightarrow \bC^*$ naturally acts on the $(\mcX, \mcL)$ we can take a quotient:
\begin{equation}
(\mcX, \mcL, \eta)/_{\bZ_d}=(\mcX^{(1/d)}, \mcL^{(1/d)}, \eta^{(1/d)})
\end{equation}
to get a test configuration with a non-reduced central fiber in general.

With the this notations, for any $a>0\in \bQ$, we then have the natural identification:
\begin{equation}\label{eq-FTCbase}
\cF_{(\mcX, \mcL, \eta)^{(a)}}=a\cdot \cF_{(\mcX, \mcL, \eta)}=\cF_{(\mcX, \mcL, a \eta)}.
\end{equation}

\end{exmp}

For a filtration $\mcF R_\bullet$, choose $e_-$ and $e_+$ as in the definition \ref{def-filtr}. For convenience, we can choose $e_+=\lceil \lambda_{\rm max}(\mcF R)\rceil \in \bZ$. 
Set $e=e_+-e_-$ and define (fractional) ideals:
\begin{eqnarray}
I_{m,x}&:=&I^\mcF_{m,x}:={\rm Image}\left(\mcF^\lambda  R_m\otimes \mcO_X(-m L)\rightarrow \mcO_X\right); \label{eq-Imx}\\
\tilde{\mcI}_m&:=&\tilde{\mcI}^{\mcF}_m:= I^{\mcF}_{(m, m e_+)}t^{-m e_+}+I^{\mcF}_{(m,me_+-1)}t^{1-m e_+}+\cdots\nonumber \\
&&\hskip 4cm \cdots+ I^{\mcF}_{(m, me_-+1)}t^{-m e_--1}+\mcO_X\cdot t^{-me_-}; \label{eq-tcIm}\\
\mcI_m&:=&\mcI_m^{\mcF(e_+)}=\tilde{\mcI}^\mcF_m\cdot t^{m e_+}=I^\mcF_{(m, m e_+)}+I^{\mcF}_{(m, m e_+-1)} t^1+\cdots\nonumber\\
&&\hskip 4cm \cdots+I^{\mcF}_{(m, m e_-+1)} t^{me-1}+(t^{me})\subseteq \mcO_{X_\bC}. \label{eq-cIm}
\end{eqnarray}

\begin{defn-prop}[{\cite[Lemma 4.6]{Fuj18}}]\label{defn-ckTCm}
With the above notations, for $m$ sufficiently divisible, define the $m$-th approximating test configuration $(\ccX^{\mcF}_m,  \ccL^{\mcF}_m)$ as:
\begin{enumerate}[(1)]
\item $\ccX^{\mcF}_m$ is the normalization of blowup of $X\times\bC$ along the ideal sheaf $\mcI^{\mcF(e_+)}_m$;
\item The semiample $\bQ$-divisor is given by:
\begin{equation} \label{eq-ckmcL}
\ccL^{\mcF}_m=\pi^*((-K_X)\times\bC)-\frac{1}{m } E_m+e_+\ccX_0,
\end{equation}
where $E_m$ is the exceptional divisor of the normalized blow up.
\end{enumerate}
For simplicity of notations, we also denote the data by $(\ccX_m, \ccL_m)$ if the filtration is clear. 
\end{defn-prop}
It is easy to see that the filtration $\cF_{(\ccX_m, \ccL_m)}$ on $R^{(m)}$ is induced by $\cF_{\bZ}R_m$ under the canonical map $S^k R_m\rightarrow R_{km}$. By \cite[Proof of Theorem 4.13]{BoJ18b}, we have the following approximation result:
\begin{prop}[{\cite[Proof of Theorem 4.13]{BoJ18b}}]\label{prop-FBJ}
With the notations in \ref{defn-ckTCm}, the Duistermaat-Heckmann measures $\DHM(\ccX_m, \ccL_m)$ converges weakly to $\DHM(\cF)$ as $m\rightarrow+\infty$.
\end{prop}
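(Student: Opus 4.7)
The plan is to prove almost-everywhere convergence of the concave transforms $G^{\cF_m} \to G^\cF$ on the Newton-Okounkov body $\Delta(X)$, where $\cF_m := \cF_{(\ccX_m, \ccL_m)}$. By Theorem \ref{thm-BoCh}, this gives weak convergence of
\[
\DHM(\cF_m) = n!\,(G^{\cF_m})_* dy \longrightarrow n!\,(G^\cF)_* dy = \DHM(\cF).
\]
First I would unravel Definition-Proposition \ref{defn-ckTCm} to identify $\cF_m$ as the multiplicative saturation of $\cF_\bZ R_m$ in $R^{(m)}$: the blowup of $X \times \bC$ along $\mcI_m^{\cF(e_+)}$, combined with the twist $e_+ \ccX_0$ and the normalization $-\tfrac{1}{m} E_m$ in $\ccL_m$, is engineered precisely so that
\[
\cF_m^{k\lambda} R_{km} = \mathrm{Image}\bigl(S^k(\cF^\lambda R_m) \to R_{km}\bigr) \quad (\lambda \in \bZ,\ k \ge 1),
\]
and so that the corresponding DH measure has total mass $\bfV = (-K_X)^{\cdot n}$, matching that of $\DHM(\cF)$.

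Next I fix a faithful valuation $\fv: \bC(X)^* \to \bZ^n$ as in Definition \ref{def-good}. The inclusion $\cF_m^\lambda R_{km} \subseteq \cF^\lambda R_{km}$, immediate from multiplicativity of $\cF$, forces $\Delta(\cF_m^{(t)}) \subseteq \Delta(\cF^{(t)})$, and hence $G^{\cF_m} \le G^\cF$ pointwise. For the asymptotic reverse inequality I would exploit the Minkowski-sum property of $\fv$ coming from the one-dimensional leaves condition: since the degree-$k$ part of $\cF_m$ is spanned by products of elements of $\cF^\lambda R_m$, a direct Okounkov-body computation yields
\[
\Delta(\cF_m^{(t)}) = \mathrm{conv}\Bigl(\tfrac{1}{m}\fv\bigl(\cF^{\lceil tm\rceil} R_m\bigr)\Bigr).
\]
Combined with $\Delta(\cF^{(t)}) = \overline{\bigcup_p \tfrac{1}{p}\fv(\cF^{\lceil tp\rceil} R_p)}$, this shows that for any point $y$ in the interior of $\Delta(\cF^{(t-\epsilon)})$ one has $y \in \Delta(\cF_m^{(t-\epsilon)})$ for all sufficiently large $m$, and therefore $G^{\cF_m}(y) \ge t-\epsilon$. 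Letting $\epsilon \to 0^+$ gives $G^{\cF_m}(y) \to G^\cF(y)$ for almost every $y \in \Delta(X)$. Since the concave transforms are uniformly bounded by $[\lambda_{\min}(\cF), \lambda_{\max}(\cF)]$ on the compact set $\Delta(X)$, bounded convergence then delivers the weak convergence of push-forward measures.

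The crucial technical step I expect to spend the most care on is the Minkowski-sum identification $\Delta(\cF_m^{(t)}) = \mathrm{conv}(\tfrac{1}{m}\fv(\cF^{\lceil tm\rceil} R_m))$: this requires invoking the one-dimensional-leaves property to upgrade the subadditivity $\fv(s_1 s_2) \ge \fv(s_1) + \fv(s_2)$ to the exact Minkowski-sum behavior of valuation images under multiplication, following the standard Brunn--Minkowski framework of \cite{KK12, KK14, LM09}. Once this is in place, the nested-exhaustion argument and the resulting weak convergence are routine. A minor bookkeeping point is that $\DHM(\cF)$ may carry a Dirac mass at $\lambda_{\max}$; since this occurs at a single point, it contributes measure zero in the $t$-variable and causes no trouble for weak convergence.
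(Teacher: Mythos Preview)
Your approach via pointwise convergence of concave transforms on the Okounkov body is exactly the route the paper defers to: the paper gives no independent proof here and simply invokes \cite[Proof of Theorem 4.13]{BoJ18b}, where precisely this argument is carried out.

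Two small corrections, neither of which breaks your argument. First, the identification $\cF_m^{k\lambda}R_{km}=\mathrm{Image}\bigl(S^k(\cF^\lambda R_m)\to R_{km}\bigr)$ is only an inclusion $\supseteq$: the filtration induced on $R^{(m)}$ by $\cF_\bZ R_m$ is $\cF_m^{\mu}R_{km}=\sum_{\lambda_1+\cdots+\lambda_k\ge\mu}\cF^{\lambda_1}R_m\cdots\cF^{\lambda_k}R_m$, allowing unequal $\lambda_i$. Consequently your convex-hull formula for $\Delta(\cF_m^{(t)})$ is also only an inclusion $\supseteq$ --- but since you invoke it only for the lower bound on $G^{\cF_m}$, this is harmless. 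Second, your remark on the possible Dirac mass at $\lambda_{\max}$ is slightly garbled: such a mass has positive weight, not ``measure zero in the $t$-variable.'' What actually saves you is that for every $y\in\mathrm{int}(\Delta(X))$ the concave function $G^\cF$ is continuous at $y$, so $y\in\mathrm{int}(\Delta(\cF^{(t)}))$ for every $t<G^\cF(y)$; hence your filling argument already gives $G^{\cF_m}(y)\to G^\cF(y)$ on all of $\mathrm{int}(\Delta(X))$, Dirac mass or not.
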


Following Boucksom-Jonsson, it is very convenient to use the non-Archimedean metric defined by filtrations. Any filtration (in the sense of Definition \ref{def-filtr}) defines a non-Archimedean metric on $L^\NA\rightarrow X^\NA$. If we denote by $\phi_\triv$ the non-Archimedean metric associated to the trivial filtration. Then any non-Archimedean metric $\phi$ on $L^\NA$ is represented by the real valued function $\phi-\phi_\triv$ on $X^{\rm div}_\bQ$. 
\begin{defn}\label{defn-phiF}
Let $\mcF=\mcF R_\bullet$ be a filtration. For any $w\in \rVal(X)$, define the non-Archimedean metric associated to $\mcF$ by:
\begin{eqnarray}
(\phi^{\mcF}_m-\phi_\triv)(w)&=&-\frac{1}{m} G(w)\left(\tilde{\mcI}^\mcF_m 
\right)=-\frac{1}{m} G(w)\left(\mcI^{\mcF(e_+)}_m t^{-m e_+} \right)\nonumber\\
&=&
-\frac{1}{m}G(w)\left(\mcI^{\mcF(e_+)}_m\right)+e_+; 
\label{eq-NAphimF}\\
(\phi^{\mcF}-\phi_\triv)(w)&=&
-G(w)\left(\tilde{\mcI}^\mcF_\bullet 
\right)=\lim_{m\rightarrow+\infty} \phi^\mcF_m(w). \label{eq-NAphiF} 
\end{eqnarray}
In particular, if $v\in \rVal(Z)$ and $\mcF=\mcF_v$, then we denote $\phi_v=\phi^{\mcF_v}$. \end{defn}

Note that $\phi^\cF_m=\phi_{\cF(\ccX_m, \ccL_m)}$ converges to $\phi$ as $m\rightarrow+\infty$. Moreover if (for simplicity) we assume that $S^k R_m\rightarrow R_{km}$ is surjective for all $k, m\ge 1$, then it is an increasing sequence in the sense that if $m_1| m_2$, then $\phi^{\cF}_{m_1}\le \phi^{\cF}_{m_2}$. If $\phi^\cF$ is continuous, then $\phi_m$ converges to $\phi$ uniformly by Dini's theorem.

The following transformation rule can be easily verified:
\begin{lem}
For any filtration $\cF$ and any $(a, b)\in \bR_{>0}\times \bR$ and $v\in X^{\rm div}_\bQ$, 
\begin{equation}\label{eq-phiab}
(\phi_{a\cF(b)}-\phi_\triv)(v)=a(\phi_\cF-\phi_\triv)(\frac{v}{a})+b.
\end{equation}
\end{lem}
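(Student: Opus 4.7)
The plan is to reduce the lemma to a ``sup-over-sections'' formula for $\phi_\cF - \phi_\triv$, after which the identity becomes an elementary consequence of the transformation of successive minima under rescaling and shifting. Concretely, I first claim
\[
(\phi_\cF - \phi_\triv)(w) = \sup_{m\ge 1}\sup_{s\in R_m\setminus 0}\frac{\bar{v}_\cF(s) - w(s)}{m}.
\]
This is obtained by unwinding Definition~\ref{defn-phiF}. The Gauss extension $G(w)$ on $X\times\bC$ is determined by $G(w)|_{\bC(X)}=w$ and (consistent with the $-t\partial_t$ convention of Definition~\ref{def-TC}) $G(w)(t)=1$. Combining $\tilde{\mcI}^\cF_m = \sum_\lambda I^\cF_{m,\lambda}\,t^{-\lambda}$, the identity $w(I^\cF_{m,\lambda}) = \min\{w(s) : s\in \cF^\lambda R_m\}$ (working in a trivialization of $\mcO_X(-mL)$ at the center of $w$), and the rule that $G(w)$ on a sum of ideals is the minimum of $G(w)$ on the summands, one reads off
\[
(\phi^\cF_m - \phi_\triv)(w) = \frac{1}{m}\max_{\lambda\in \bZ\cap[me_-,me_+]}\bigl(\lambda - w(I^\cF_{m,\lambda})\bigr) = \frac{1}{m}\max_{s\in R_m\setminus 0}\bigl(\lfloor \bar{v}_\cF(s)\rfloor - w(s)\bigr).
\]
Taking $m\to\infty$ via Proposition~\ref{prop-FBJ} upgrades $\lfloor\bar{v}_\cF(s)\rfloor$ to $\bar{v}_\cF(s)$ and yields the claimed dual formula.

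With the dual formula in hand, the identity \eqref{eq-phiab} is immediate. From \eqref{eq-aFb}, $a\cF(b)^\lambda R_m = \cF^{(\lambda-bm)/a}R_m$; substituting $\mu = (\lambda-bm)/a$ yields $\bar{v}_{a\cF(b)}(s) = a\,\bar{v}_\cF(s) + bm$ for every $s\in R_m\setminus 0$. Therefore
\begin{align*}
(\phi_{a\cF(b)} - \phi_\triv)(v) &= \sup_{m,s}\frac{a\,\bar{v}_\cF(s) + bm - v(s)}{m} \\
&= b + a\sup_{m,s}\frac{\bar{v}_\cF(s) - (v/a)(s)}{m} = a(\phi_\cF - \phi_\triv)(v/a) + b,
\end{align*}
as desired.

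The only real subtlety is the integer rounding built into the $m$-th approximating ideals $\mcI^{\cF(e_+)}_m$: the filtration $a\cF(b)$ need not be $\bZ$-valued even when $\cF$ is, so one cannot directly compare the ideal sheaves on both sides at finite $m$. The dual formula sidesteps this by working with the exact real-valued $\bar{v}_\cF(s)$ and letting the $O(1/m)$ rounding error vanish in the limit. Passing to this limit is the only substantive point; alternatively one can first verify the identity for rational $(a,b)$ by a base-change argument \eqref{eq-FTCbase} and then invoke continuity of $\phi_\cF$ in $(a,b)$.
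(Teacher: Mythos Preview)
Your argument is correct. The paper does not actually supply a proof of this lemma; it simply states that the transformation rule ``can be easily verified'' and moves on, so there is nothing substantive to compare against. Your route via the sup-over-sections formula
\[
(\phi_\cF-\phi_\triv)(w)=\sup_{m}\sup_{s\in R_m\setminus\{0\}}\frac{\bar v_\cF(s)-w(s)}{m}
\]
is a clean way to make the verification precise, and your identification of the integer-rounding in $\tilde{\mcI}^\cF_m$ as the only nontrivial point is exactly right: at level $m$ one has
\[
(\phi^{a\cF(b)}_m-\phi_\triv)(v)=b+\frac{a}{m}\max_{\mu\in \frac{1}{a}(\bZ-bm)}\Bigl(\mu-(v/a)(I^\cF_{m,\mu})\Bigr),
\]
which differs from $b+a(\phi^\cF_m-\phi_\triv)(v/a)$ only in the lattice over which the maximum is taken; the discrepancy is $O(1/m)$ and disappears in the limit. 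One small correction: the passage to the limit is not Proposition~\ref{prop-FBJ} (which concerns Duistermaat--Heckman measures) but simply the definition \eqref{eq-NAphiF} of $\phi_\cF$ as $\lim_m\phi^\cF_m$, together with the monotonicity along divisible subsequences noted immediately after Definition~\ref{defn-phiF}.
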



\subsection{Twist of filtrations}


Let $\mcF=\mcF R_\bullet $ be a $\bT$-equivariant filtration,  
which means that $\mcF^\lambda  R_m$ is a $\bT$-invariant subspace of $R_m$ for any $x\in \bR$.
For $\alpha\in M_\bZ=N_\bZ^\vee$, denote the weight space
\begin{equation}
(R_m)_\alpha=\{s\in R_m; \tau\circ s=\tau^\alpha s \text{ for all } \tau\in (\bC^*)^r \}.
\end{equation}
Then we have:
\begin{equation}
(\mcF^\lambda  R_m)_\alpha:=\{s\in \mcF^\lambda  R_m; \tau\circ s=\tau^\alpha s\}=\mcF^\lambda  R_m \cap (R_m)_\alpha,
\end{equation}
and the decomposition:
\begin{equation}
\mcF^\lambda  R_m=\bigoplus_{\alpha\in M_\bZ} (\mcF^\lambda  R_m)_\alpha.
\end{equation}

\begin{defn}[\cite{Li19}]
For any $\xi\in N_\bR$, the $\xi$-twist of $\mcF$ is the filtration $\mcF_\xi R_\bullet$ defined by:
\begin{equation}\label{eq-Ftwist}
\mcF_\xi^\lambda R_m=\bigoplus_{\alpha\in M_\bZ} (\mcF_\xi^\lambda R_m)_\alpha, \quad \text{where}\quad
(\mcF_\xi^\lambda R_m)_\alpha:=(\mcF^{\lambda-\la \alpha, \xi\ra} R_m)_\alpha.
\end{equation}
\end{defn}


\begin{exmp}
If $\cF$ is a $\bT$-equivariant $\bR$-test configuration. Then $\cF_\xi$ is also an $\bR$-test configuration.

If $\cF=\cF_{(\mcX, \mcL, a\eta)}$ for a test configuration, then we can identify the data $\cF_\xi$ with the data $(\mcX, \mcL, a\eta+\xi)$ (see \cite{His16b}).
If $\xi\in N_\bZ$, then $(\mcX, \mcL, a\eta)$ equivalent to the birational image of the $(\mcX, \mcL)$ via the birational transform $\sigma_\xi: \mcX\dasharrow \mcX$, $(z, t)\rightarrow (\sigma_\xi(t)\cdot z, t)$ (see \cite{Li19}).

Moreover if we start with the trivial filtration $\cF_\triv=\cF_{(X_\bC, (-K_{X})_\bC, t\partial_t)}$, then $(\cF_\triv)_\xi$ is equal to $\cF_{\wt_\xi}$.

\end{exmp}

\begin{defn}\label{def-adapt}
We say that a faithful valuation $\fv$ in the sense of Definition \ref{def-good} is adapted to the torus action if for any $f\in \bC(X)_\alpha$ we have:
$\fv(f)=(\alpha, \fv^{r+1}(f),\dots, \fv^{n}(f))\in \bZ^r\times\bZ^{n-r}$. 

\end{defn}

There always exists a faithful valuation that is adapted to the torus action. This can be constructed as follows. First we choose an $\bT$-invariant Zariski open set $U$ of $X$ as in \cite{AHS08}. Then by the theory of affine $T$-varieties as developed in \cite{AH06}. There exists a variety $Y$ of dimension $n-r$ and a polyhedral divisor $\mathfrak{D}$ such that:
\begin{equation}
U={\rm Spec}_{\alpha\in M_\bZ}H^0(Y, \mcO(\mathfrak{D}(\alpha))).
\end{equation}
We can choose a faithful valuation $\fv_Y$ on $Y$ (for example via a flag of varieties as in \cite{LM09}) and define: for any $f\in H^0(Y, \mcO(\mathfrak{D}(\alpha))$:
\begin{equation}
\fv(f)=(\alpha, \fv_Y(f)).
\end{equation}
Let $\fv$ be such a valuation and $\Delta=\Delta_\fv(X, -K_X)\subset \bR^n$ be the associated Newton-Okounkov body. If $p: \bR^n=\bR^r\times\bR^{n-r}\rightarrow \bR^r$ denote the natural projection, then we have
\begin{equation}\label{eq-proj}
p(\Delta)=P=\text{ moment map of the $\bT$-action on $(X, -K_X)$ }.
\end{equation}

The following lemma was already observed in \cite{Yao19} in which a faithful valuation adapted to the torus action was constructed using equivariant infinitesimal flags in the sense of \cite{LM09}. Here we give a different and direct proof for the reader's convenience.

For simplicity of notations, we denote $y=(y_1,\dots, y_n)=(y', y'')\in \bR^r\times \bR^{n-r}$ and set:
\begin{equation}\label{eq-yxipair}
\la y', \xi\ra=\sum_{i=1}^r y'_i \xi^i=:\la y, \xi\ra. 
\end{equation}
In the last identity, we identify $\xi\in N_\bR=\bR^r$ with $(\xi, 0)\in \bR^n$. 
\begin{lem}[see \cite{Yao19}]\label{lem-Gtwist}
If $\fv$ is a $\bZ^n$-valued valuation adapted to the torus action, then for any $y\in \Delta(-K_X)$:
\begin{equation}\label{eq-Gtwist}
G_{\cF_\xi}(y)=G_{\cF}(y)+\la y', \xi\ra.
\end{equation}
\end{lem}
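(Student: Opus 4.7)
The identity \eqref{eq-Gtwist} should follow from a direct comparison of the Newton–Okounkov bodies $\Delta(\cF^{(t)})$ and $\Delta(\cF_\xi^{(t)})$. The main point is that the adaptedness of $\fv$ records the $\bT$-weight of a homogeneous section in the first $r$ coordinates of its valuation vector, which will translate the shift defining $\cF_\xi$ into a linear shift of the level $t$ in the NO body.

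First I would verify the following weight-reading property: for any $f\in(R_m)_\alpha$, Definition \ref{def-adapt} gives $\fv(f)=(\alpha,\fv''(f))$, so $p(\fv(f)/m)=\alpha/m$ where $p$ is the projection onto $\bR^r$. Next, using the $\bT$-equivariant decomposition $\cF^\lambda R_m=\bigoplus_\alpha(\cF^\lambda R_m)_\alpha$ together with the one-dimensional leaves property of $\fv$ (noting that distinct weights give valuation vectors with distinct first $r$ coordinates, so minima in lex order are uniquely attained on a single weight summand), I would argue that for any $f\in\cF^\lambda R_m$ there exists a homogeneous component $f_\alpha\in(\cF^\lambda R_m)_\alpha$ with $\fv(f)=\fv(f_\alpha)$. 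Hence
\[
\frac{1}{m}\fv(\cF^{tm}R_m)=\bigcup_{\alpha\in M_\bZ}\frac{1}{m}\fv((\cF^{tm}R_m)_\alpha),
\]
and the analogous equality holds for $\cF_\xi$. By definition \eqref{eq-Ftwist}, an element $f\in(R_m)_\alpha$ lies in $\cF_\xi^{tm}R_m$ iff $f\in\cF^{tm-\la\alpha,\xi\ra}R_m$. Combined with the weight-reading property (i.e.\ the first $r$ coordinates of $\fv(f)/m$ are $\alpha/m$), this gives the \emph{dense} (pre-closure) identity
\[
\frac{1}{m}\fv(\cF_\xi^{tm}R_m)=\bigcup_{\alpha\in M_\bZ}\Big\{y=(y',y'')\in\tfrac{1}{m}\fv((\cF^{tm-\la\alpha,\xi\ra}R_m)_\alpha):\ y'=\alpha/m\Big\},
\]
which I can rewrite as: a point $y=(y',y'')$ lies in $\tfrac{1}{m}\fv(\cF_\xi^{tm}R_m)$ iff it lies in $\tfrac{1}{m}\fv(\cF^{tm-\la my',\xi\ra}R_m)$, using that $y'=\alpha/m$ forces $\la\alpha,\xi\ra=m\la y',\xi\ra=\la my',\xi\ra$.

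Taking closures in $m$, the continuity of $y'\mapsto\la y',\xi\ra$ and the monotonicity $\Delta(\cF^{(s')})\subseteq\Delta(\cF^{(s)})$ for $s\le s'$ will give $y\in\Delta(\cF_\xi^{(t)})\iff y\in\Delta(\cF^{(t-\la y',\xi\ra)})$, at least on the interior of the moment-map fiber over $y'$. The formula then follows formally:
\[
G_{\cF_\xi}(y)=\sup\{t:y\in\Delta(\cF_\xi^{(t)})\}=\sup\{s+\la y',\xi\ra:y\in\Delta(\cF^{(s)})\}=G_\cF(y)+\la y',\xi\ra.
\]

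The main technical issue I expect is the passage to the closure in the construction of the NO body: the uniqueness of the weight component realizing $\fv$ could fail in the limit, and the equality of closures requires that the weight-homogeneous approximating sequences have $\alpha_k/m_k$ converging to the correct $y'$. I would handle this by invoking the fact \eqref{eq-proj} that $p(\Delta)=P$ to control where $y'$ can sit, together with the left-continuity of the filtrations to absorb any $\epsilon$-drift in the shift $\la y',\xi\ra$; this is essentially the kind of careful bookkeeping that Yao \cite{Yao19} already carried out in the equivariant-flag setting, and the same template should apply here once adaptedness is established.
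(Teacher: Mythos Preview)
Your approach is correct and is essentially the same as the paper's: both rely on the adapted valuation reading off the weight $\alpha$ in the first $r$ coordinates, so that the twist shifts the filtration level on $(R_m)_\alpha$ by exactly $m\la y',\xi\ra$, and both handle the passage to closures by an $\epsilon$-argument using monotonicity of $s\mapsto\Delta(\cF^{(s)})$. The paper just organizes the closure step differently---it fixes $t>G_\cF(y)$, chooses $\epsilon$ with $y\notin\Delta(\cF^{(t-\epsilon)})$, and then splits the elements of $\cF_\xi^{(t+\la y',\xi\ra)m}R_{m,\alpha}$ into two cases according to whether $|\la\alpha/m-y',\xi\ra|<\epsilon$ or not, bounding $|\fv(f)/m-y|$ from below in each case to conclude $y\notin\Delta(\cF_\xi^{(t+\la y',\xi\ra)})$; the other inequality then follows from the symmetry $\cF=(\cF_\xi)_{-\xi}$, exactly as you would also end up using. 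One small remark: the tool you need for the $\epsilon$-drift is the monotonicity $\Delta(\cF^{(s')})\subseteq\Delta(\cF^{(s)})$ for $s\le s'$ (which you already state), not the left-continuity of the filtration itself, and the fact $p(\Delta)=P$ is not actually needed.
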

\begin{proof}
For any $t> G^\cF(y)=\lambda$, there exists $\epsilon>0$ such that $y\not\in \Delta(\cF^{(t-\epsilon)})$. Let $\delta_1={\rm dist}(y, \Delta(\cF^{(t-\epsilon)}))$.

Choose any $f\in \cF^{(t+\la y', \xi\ra) m}_\xi R_{m,\alpha}=\cF^{(t+\la y', \xi\ra)m-\la \alpha, \xi\ra}R_{m,\alpha}$. Consider two cases:
\begin{enumerate}
\item $\la \frac{\alpha}{m}, \xi\ra-\la y', \xi\ra<\epsilon$. Then $\fv(f)\in \Delta^{(t-\epsilon)}$. So $|\frac{\fv(f)}{m}-y|\ge \delta_1$.

\item $\la \frac{\alpha}{m}, \xi\ra-\la y', \xi \ra\ge \epsilon$. Then $|\frac{\fv(f)}{m}-y|\ge |\frac{\alpha}{m}-y'|\ge \frac{\epsilon}{|\xi|}$.

\end{enumerate}
The two cases together imply that $y\not\in \Delta(\cF^{(t+\la y',\xi\ra)}_\xi)$. So we get the inequality $G_{\cF_\xi}\le G_{\cF}+\la y', \xi\ra$. 

On the other hand, since $\cF=(\cF_\xi)_{-\xi}$, we also get $G_{\cF}\le G_{\cF_\xi}-\la y', \xi\ra$. So we get the wanted identity.

\end{proof}

\subsection{Equivalent filtrations}
In this section we recall Boucksom-Jonsson's characterization of equivalent filtrations in \cite{BoJ18b}.

For a filtration $\cF R_m$ of $R_m$, we say that a basis $\cB=\{s_1,\dots, s_{N_m}\}$ of $R_m$ is adapted to $\cF R_m$ if for any $\lambda\in \bR$ there exists a subset of $\cB$ that spans $\cF^\lambda R_m$.

Let $\cF_i=\{\cF_i R_m\}, i=0,1$ be two filtrations. For each $m$, we can find a common basis $\cB:=\{s_1,\dots, s_{N_m}\}$ of $R_m$ that is adapted to both $\cF_i R_m, i=0,1$ (see \cite{BE18} and the discussion in section \ref{sec-initial}).  
Assume that for each $i=0,1$, we have $s_k\in \cF_i^{\mu_{k,i}}\setminus \cF^{>\mu_{k,i}}$. Then $\cB$ is an orthogonal basis for the non-Archimedean norm $\|\cdot \|_{m,i}$ corresponding to $\cF_i$: for any $s=\sum_k a_k s_k\in R_m$
\begin{equation}
\|s\|_{m,i}=e^{-\max\{\lambda;\; s\in \mcF^\lambda R_m\}}=\max_k |a_k|_0 e^{-\mu_{k,i}}
\end{equation}
where $|\cdot |_0$ is the trivial norm on $\bC$.

Following \cite{CM15, BoJ18b}, we define the set of successive minima of $\cF_1$ with respect to $\cF_0$ is the set $\{\mu_{k,1}-\mu_{k,0}\}$. The following result was proved in \cite{BE18, CM15}.
\begin{thm}[\cite{BE18, CM15}]
As $m\rightarrow+\infty$, the measures 
\begin{equation}
\frac{n!}{m^n}\sum_{k=1}^{N_m}\delta_{\frac{\mu_{k,1}-\mu_{k,0}}{m}}
\end{equation}
converge weakly as $m\rightarrow+\infty$ to a relative limit measure denoted by $d\nu:=d\nu(\cF_0, \cF_1)$.
\end{thm}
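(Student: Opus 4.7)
The plan is to reduce to the theorems of Chen--Maclean and Boucksom--Eriksson by adapting the Newton--Okounkov body technology (Theorem \ref{thm-BoCh}) to the two-filtration setting.

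The first step is to make sense of a jointly adapted basis. The existence of a basis $\cB$ of $R_m$ that is simultaneously adapted to $\cF_0 R_m$ and $\cF_1 R_m$ is a purely linear-algebraic fact for two decreasing filtrations on a finite-dimensional vector space: one picks a basis adapted to $\cF_0 R_m$, then within each graded piece of $\cF_0$ one refines the basis via a second Gram--Schmidt process with respect to $\cF_1$. With such a basis, the numbers $\mu_{k,0}, \mu_{k,1}$ are well-defined and $\cB$ is orthogonal for both non-Archimedean norms $\|\cdot\|_{m,i}$. The basis $\cB$ itself is not unique, but the multiset $\{(\mu_{k,0},\mu_{k,1})\}$ depends only on the pair of filtrations, since it is determined by the dimensions $\dim \cF_0^{\lambda_0} R_m \cap \cF_1^{\lambda_1} R_m$ via a double telescoping sum.

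Next I would prove weak convergence of the joint empirical measure
\[
\nu_m^{\rm joint}:=\frac{n!}{m^n}\sum_{k=1}^{N_m}\delta_{(\mu_{k,0}/m,\,\mu_{k,1}/m)}
\]
on $\bR^2$; the result then follows by pushing forward under $(x_0,x_1)\mapsto x_1-x_0$. To establish convergence of $\nu_m^{\rm joint}$, fix a faithful valuation $\fv:\bC(X)^\times\to\bZ^n$ with one-dimensional leaves and consider the multi-graded algebra with double filtration. For each $(t_0,t_1)\in\bR^2$ the graded linear series
\[
W^{(t_0,t_1)}_m:=\cF_0^{m t_0}R_m\cap \cF_1^{m t_1}R_m
\]
has a Newton--Okounkov body $\Delta(t_0,t_1)\subset\bR^n$, and the concave transforms from Theorem \ref{thm-BoCh} combine into a single two-parameter concave function $G=(G_{\cF_0},G_{\cF_1}):\Delta(X)\to\bR^2$. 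The key input from the theory of Okounkov bodies for multi-filtered algebras (as in \cite{CM15, BE18}, based on Kaveh--Khovanskii) is that
\[
\lim_{m\to\infty}\frac{\dim_\bC W^{(t_0,t_1)}_m}{m^n/n!}=\vol\bigl(\{y\in\Delta(X):G_{\cF_0}(y)\ge t_0,\ G_{\cF_1}(y)\ge t_1\}\bigr),
\]
and the convergence is locally uniform in $(t_0,t_1)$ off a locally finite set of jumps. Since $\nu_m^{\rm joint}$ is obtained from these dimension functions by a discrete second-order difference, the convergence of the cumulative distribution functions upgrades to weak convergence of $\nu_m^{\rm joint}$ to the measure $(n!)\cdot G_* (dy|_{\Delta(X)})$.

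The main obstacle is precisely the two-parameter volume convergence above: while the single-filtration version is exactly Theorem \ref{thm-BoCh} due to Boucksom--Chen, the joint statement is stronger. One has to prove that $(t_0,t_1)\mapsto \vol^{1/n}(W^{(t_0,t_1)}_\bullet)$ is concave (hence continuous on the interior of its support) and that the discrete slices approximate it. Concavity follows from the multiplicativity of the filtrations combined with Brunn--Minkowski on Okounkov bodies applied to $\Delta(t_0,t_1)$; continuity up to the boundary and the absence of atoms away from the maximum coordinates then yield that the relative Dirac measures have no mass escaping to infinity, which is where the linear boundedness hypothesis in Definition \ref{def-filtr} is used. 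Finally, the resulting limit measure $d\nu(\cF_0,\cF_1)=(n!)\,(G_{\cF_1}-G_{\cF_0})_* dy$ is independent of the auxiliary faithful valuation $\fv$, as can be seen by testing against continuous compactly supported functions on $\bR$ and invoking the identity of cumulative distribution functions in terms of the intrinsic quantity $\dim \cF_0^{mt_0}R_m\cap \cF_1^{mt_1}R_m$.
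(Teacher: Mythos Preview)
The paper does not give its own proof of this theorem: it is stated with attribution to \cite{BE18, CM15} and used as a black box. So there is nothing in the paper to compare your argument against beyond the citation itself.

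Your outline is a reasonable sketch of the Chen--Maclean/Boucksom--Eriksson approach. The reduction to a joint empirical measure on $\bR^2$, the use of a faithful $\bZ^n$-valued valuation to produce a pair of concave transforms $(G_{\cF_0},G_{\cF_1})$ on the Okounkov body, and the identification of the limit as $(n!)\,(G_{\cF_1}-G_{\cF_0})_*dy$ are all correct in spirit and match the cited references. One small imprecision: you call $G=(G_{\cF_0},G_{\cF_1})$ a ``two-parameter concave function'', but it is a vector-valued map whose components are separately concave; the genuinely concave object is $(t_0,t_1)\mapsto \vol^{1/n}(W^{(t_0,t_1)}_\bullet)$, as you correctly state later. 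The main technical burden, as you identify, is the two-filtration analogue of Boucksom--Chen's volume convergence, and that is exactly what \cite{CM15} and \cite{BE18} supply. For the purposes of this paper, citing those references (as the authors do) is the appropriate move; a full self-contained proof would be out of place here.
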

\begin{cor}
For any $p\in [1, \infty)$, the limit
\begin{equation}
d_p(\cF_0, \cF_1):=\lim_{m\rightarrow+\infty}\left(\frac{n!}{m^n}\sum_{k=1}^{N_m}m^{-1}|\mu_{k,1}-\mu_{k,0}|^p\right)^{1/p}
\end{equation}
exists and is given by:
\begin{equation}
d_p(\cF_0, \cF_1)=\left(\int_\bR |\lambda|^p d\nu(\lambda)\right)^{1/p}.
\end{equation}
\end{cor}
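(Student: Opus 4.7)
The plan is to deduce this $L^p$-moment convergence directly from the weak convergence of the relative measures
\[
\nu_m \;:=\; \frac{n!}{m^n}\sum_{k=1}^{N_m}\delta_{(\mu_{k,1}-\mu_{k,0})/m} \;\xrightarrow{\;w\;}\; d\nu(\cF_0,\cF_1)
\]
asserted in the preceding theorem, together with a uniform compact-support statement for $\nu_m$.

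The key observation is that both filtrations are linearly bounded in the sense of Definition~\ref{def-filtr}, so there exist integers $e_-^{(i)} \le e_+^{(i)}$ (for $i=0,1$) such that all successive minima satisfy
\[
\frac{\mu_{k,i}}{m} \in \bigl[e_-^{(i)},\, e_+^{(i)}\bigr]\qquad \text{for every } m\ge 1 \text{ and } 1\le k \le N_m.
\]
Consequently the rescaled differences $(\mu_{k,1}-\mu_{k,0})/m$ all lie in the fixed compact interval $I := [e_-^{(1)}-e_+^{(0)},\, e_+^{(1)}-e_-^{(0)}]$. In particular every $\nu_m$, and hence the weak limit $d\nu$, is supported in $I$.

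Since the test function $\lambda\mapsto |\lambda|^p$ is continuous on $\bR$ and the supports of $\{\nu_m\}$ are contained in the common compact set $I$, one may replace $|\lambda|^p$ by a compactly supported continuous function agreeing with it on $I$. The weak convergence $\nu_m \to d\nu$ then gives
\[
\int_\bR |\lambda|^p\, d\nu_m(\lambda) \;=\; \frac{n!}{m^n}\sum_{k=1}^{N_m}\left|\frac{\mu_{k,1}-\mu_{k,0}}{m}\right|^p \;\longrightarrow\; \int_\bR |\lambda|^p\, d\nu(\lambda),
\]
which is precisely the quantity whose limit is claimed. Taking $p$-th roots (which is a continuous operation on $[0,\infty)$) yields the desired identity.

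I do not anticipate any genuine obstacle: the substance is already packaged in the preceding theorem, and the only additional ingredient is the uniform compactness of $\mathrm{supp}(\nu_m) \subset I$, which is immediate from linear boundedness. The worry that one might usually have when pushing weak convergence against an unbounded test function (truncation and tail estimates) is trivialized here by this uniform compact support.
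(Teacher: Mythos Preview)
Your argument is correct and is exactly the intended one: the paper states this as an immediate corollary of the preceding theorem without writing any proof, and the reason it is immediate is precisely the uniform-compact-support observation you make (linear boundedness forces all $\nu_m$ to be supported in a fixed interval, so weak convergence automatically upgrades to convergence of $p$-th moments). There is nothing to add.
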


\begin{defn}[{\cite[section 3.6]{BoJ18b}}]\label{def-Fequiv}
$\cF_0$ and $\cF_1$ are equivalent if $d_2(\cF_0, \cF_1)=0$.
\end{defn}
In fact, by \cite{BoJ18b} $d_p$ are comparable to each other for all $p\in [1, \infty)$, and the above equivalence can be defined by using any $p\in [1, +\infty)$.

The key result we will need from \cite{BoJ18b} is:
\begin{thm}[{\cite[Theorem 4.16]{BoJ18b}}]\label{thm-equiv}
Let $\cF_0$ and $\cF_1$ be two filtrations on $R$. Then $\cF_0$ and $\cF_1$ are equivalent if and only if $\phi_{\cF_1}=\phi_{\cF_2}$.
\end{thm}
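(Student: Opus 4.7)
The plan is to treat the two implications separately, using Proposition \ref{prop-FBJ} to reduce each to a statement about the approximating test configurations $(\ccX_m,\ccL_m)$. For the direction \emph{equivalent implies same non-Archimedean metric}, I would fix, for each $m$, a common orthogonal basis $\{s_k^{(m)}\}$ of $R_m$ adapted to both $\cF_0 R_m$ and $\cF_1 R_m$, with successive minima $\mu_{k,0}^{(m)},\mu_{k,1}^{(m)}$. Using the description of $\tilde{\mcI}^{\cF_i}_m$ in \eqref{eq-tcIm} and the formula \eqref{eq-NAphimF}, the values $(\phi^{\cF_i}_m-\phi_\triv)(w)$ for a divisorial $w$ can be written explicitly as the minimum over $k$ of affine expressions in $\mu_{k,i}^{(m)}$ and $w(s_k^{(m)})$. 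The hypothesis $d_2(\cF_0,\cF_1)=0$ forces, after normalizing by $m$, the pairs $(\mu_{k,0}^{(m)},\mu_{k,1}^{(m)})$ to concentrate on the diagonal in a quantitative sense, and then a direct estimate yields $|\phi^{\cF_0}_m(w)-\phi^{\cF_1}_m(w)|\to 0$. Passing to the limit via $\phi^{\cF_i}_m\to\phi_{\cF_i}$ concludes this direction.

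For the converse I would aim to establish a valuative integral representation of $d_p(\cF_0,\cF_1)^p$ as an $L^p$-pairing of $\phi_{\cF_0}-\phi_{\cF_1}$ against a mixed Monge-Amp\`ere type measure on $X^\NA$. For two \emph{test configurations} of the same $(X,L)$, such an intersection-theoretic expression (with $p=1$ or $2$) follows from \cite{BHJ17} and the identification of $d_p$ with an $L^p$-Mabuchi-type distance. I would then approximate $\cF_i$ by $\cF_{(\ccX^{\cF_i}_m,\ccL^{\cF_i}_m)}$, use Proposition \ref{prop-FBJ} together with continuity of $d_p$ in the filtration (via the comparability of all $d_p$ from \cite{BoJ18b}) to transfer the formula to arbitrary filtrations. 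Once the identity holds at the filtration level, $\phi_{\cF_0}=\phi_{\cF_1}$ forces the integrand to vanish and hence $d_2(\cF_0,\cF_1)=0$.

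The main obstacle is precisely the converse: the function $\phi_\cF$ on $X^{\rm div}_\bQ$ encodes only the asymptotic \emph{marginal} data of each $\cF_i$ (enough to recover $\DHM(\cF_i)$ and the ideals $\tilde{\mcI}^{\cF_i}_m$ up to integral closure), whereas $d_2(\cF_0,\cF_1)$ depends on the \emph{joint} asymptotic distribution of the pairs $(\mu_{k,0}^{(m)},\mu_{k,1}^{(m)})$. Bridging this gap requires both the intersection-theoretic interpretation at the test-configuration level and a robust approximation argument, for which a Dini-type monotonicity of the net $\phi^\cF_m$ (available under the standard finite-generation assumption on $R$) is a convenient technical tool to upgrade pointwise to uniform control on divisorial valuations.
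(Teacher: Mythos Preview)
The paper does not prove this theorem at all: it is quoted verbatim as \cite[Theorem 4.16]{BoJ18b} and used as a black box, so there is no ``paper's own proof'' to compare against. Your proposal is therefore an attempt to reprove a result of Boucksom--Jonsson, and it has a genuine gap in each direction.

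For the forward implication, your key step does not go through. The quantity $(\phi^{\cF_i}_m-\phi_\triv)(w)$ is defined via the Gauss extension applied to the \emph{base ideals} $I^{\cF_i}_{m,\lambda}$ (see \eqref{eq-tcIm}--\eqref{eq-NAphimF}), not directly via an orthogonal basis, so the claimed ``minimum over $k$ of affine expressions in $\mu_{k,i}^{(m)}$ and $w(s_k^{(m)})$'' is not literally correct. More importantly, even if one had such an expression, the hypothesis $d_2(\cF_0,\cF_1)=0$ only says that the empirical distribution of $m^{-1}(\mu_{k,1}^{(m)}-\mu_{k,0}^{(m)})$ concentrates at $0$; it permits an $o(m^n)$-sized subset of indices $k$ where the difference is of order $m$. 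Since $\phi^{\cF_i}_m(w)$ is governed by a \emph{minimum}, a single such outlier can shift the value by a bounded amount, and no ``direct estimate'' from bulk concentration yields pointwise control. This is exactly the obstacle you flag for the converse, and it is already present here.

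For the converse, your plan is in the right spirit but remains a wish rather than an argument: you assert the existence of an integral formula for $d_p$ in terms of $\phi_{\cF_0}-\phi_{\cF_1}$ and a mixed Monge--Amp\`ere measure, but neither \cite{BHJ17} nor the approximation in Proposition~\ref{prop-FBJ} supplies such a formula for two \emph{different} filtrations. The actual proof in \cite{BoJ18b} proceeds through the Okounkov-body framework: one identifies the relative limit measure with the pushforward of $G^{\cF_1}-G^{\cF_0}$ from the Okounkov body, and then shows (this is the substantive part) that the non-Archimedean metric $\phi_\cF$ determines the concave transform $G^\cF$ almost everywhere. That last step is what bridges the ``marginal vs.\ joint'' gap you correctly identify, and it is absent from your sketch.
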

We also need:
\begin{prop}\label{prop-equiv}
Moreover if $\cF_{v_i}, i=0,1$ are two $\bR$-test configuration associated to two valuations $v_i\in \Val(X), i=0,1$. Then $\phi_{\cF_{v_1}}=\phi_{\cF_{v_2}}+c$ for a constant $c\in \bR$ if and only if $v_1=v_2$ (and hence $\cF_{v_1}=\cF_{v_2}$).
\end{prop}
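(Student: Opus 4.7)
\emph{Proof plan.} The ``if'' direction is immediate, as $v_1=v_2$ implies $\cF_{v_1}=\cF_{v_2}$. For the ``only if'' direction, suppose $\phi_{\cF_{v_1}}=\phi_{\cF_{v_2}}+c$ for some $c\in\bR$. My plan is to derive an explicit formula expressing $\phi_{\cF_v}-\phi_\triv$ as a pairing between $v$ and the test valuation, and then to exploit extremality to rule out any shift and force equality.

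The starting point is Definition \ref{defn-phiF}, which gives
\begin{equation*}
(\phi_{\cF_v}-\phi_\triv)(w)=\lim_{m\to\infty}\frac{1}{m}\max_{\lambda}\bigl(\lambda-w(I^{\cF_v}_{m,\lambda})\bigr).
\end{equation*}
Since $I^{\cF_v}_{m,\lambda}$ is the base ideal of $\{s\in R_m;\,v(s)\ge\lambda\}$, one has $w(I^{\cF_v}_{m,\lambda})=\inf\{w(s):v(s)\ge\lambda\}$. The max in $\lambda$ is attained at $\lambda=v(s)$, yielding
\begin{equation*}
(\phi_{\cF_v}-\phi_\triv)(w)=T(v,w):=\lim_{m\to\infty}\frac{1}{m}\sup_{0\neq s\in R_m}\bigl(v(s)-w(s)\bigr).
\end{equation*}
By multiplicativity of valuations, $a_m:=\sup_{s\in R_m}(v(s)-w(s))$ is super-additive in $m$, so Fekete's lemma gives $T(v,w)=\sup_m a_m/m$; in particular $T(v,w)\ge (v(s)-w(s))/m$ for every nonzero $s\in R_m$.

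Applying the hypothesis $T(v_1,w)=T(v_2,w)+c$ at $w=v_1$ and $w=v_2$ (using $T(v,v)=0$) yields $T(v_1,v_2)=c$ and $T(v_2,v_1)=-c$. The positivity $T(v,w)\ge 0$ holds because for $m\gg 1$ the linear system $|-mK_X|$ is base-point-free, so one may choose $s\in R_m$ with $w(s)=0$, whence $v(s)-w(s)=v(s)\ge 0$. Hence both $c\ge 0$ and $-c\ge 0$, forcing $c=0$ and $T(v_1,v_2)=T(v_2,v_1)=0$. The Fekete bound then gives $v_1(s)\le v_2(s)$ for every $s\in R_m$, and symmetrically $v_2\le v_1$ on sections; so $v_1=v_2$ on every graded piece of $R$, and writing any $f\in\bC(X)$ as $s_1/s_2$ with $s_i\in R_m$ gives $v_1(f)=v_1(s_1)-v_1(s_2)=v_2(s_1)-v_2(s_2)=v_2(f)$.

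The main technical point is the derivation of the $T$-formula, which requires carefully interpreting $v(s)$ for a section $s$ of $-mK_X$ (via the standard convention of vanishing order along the center of $v$, under which $\cF_v$ in Example \ref{ex-va2fil} is well-defined) and verifying that the max over $\lambda$ in the NA-metric is attained at $\lambda=v(s)$. Conceptually, Theorem \ref{thm-equiv} already yields that $\cF_{v_1}$ and $\cF_{v_2}(c)$ are equivalent filtrations; the point of the $T$-formula is to upgrade this asymptotic equivalence to exact equality of valuations on $\bC(X)$, using positivity to eliminate the shift $c$ and the super-additive (Fekete) inequality to propagate ``zero average difference'' into pointwise equality on every $R_m$.
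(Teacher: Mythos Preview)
Your proof is correct and follows essentially the same strategy as the paper's: evaluate the non-Archimedean metrics at $v_1$ and $v_2$ themselves, use the self-evaluation $\phi_{\cF_v}(v)=0$ together with a positivity bound to force $c=0$, and then upgrade the asymptotic equality to a pointwise comparison $v_1\le v_2\le v_1$.

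The packaging differs slightly. The paper works through the monotone approximants $\phi^{\cF_v}_{2^k}\le \phi_{\cF_v}$ and the identification $I^{\cF_{v_1}}_{m,d}=\fa_d(v_1)$ for $m\gg 1$, obtaining $v_2(\fa_d(v_1))\ge d$ and hence $v_2\ge v_1$ on $\mcO_X$. You instead derive the closed formula $(\phi_{\cF_v}-\phi_\triv)(w)=T(v,w)=\lim_m m^{-1}\sup_{s\in R_m}(v(s)-w(s))$, observe super-additivity of $a_m$, and invoke Fekete to get $T=\sup_m a_m/m$; then $T(v_1,v_2)=0$ immediately gives $a_m\le 0$, i.e.\ $v_1(s)\le v_2(s)$ for every section. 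Your route avoids the passage through valuation ideals and the global-generation step, at the cost of needing the (easy) super-additivity observation; both arrive at the same comparison. One minor point: in your derivation of the $T$-formula the maximum in $\lambda$ is a priori over integers (since $\tilde{\mcI}_m$ is built from integer powers of $t$), so strictly speaking one gets $a_m-1\le m(\phi^{\cF_v}_m-\phi_\triv)(w)\le a_m$; this washes out in the limit and does not affect your conclusion.
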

This statement follows from $\MA^\NA(\phi_{\cF_{v_i}}+c)=\delta_{v_i}$ (for any $c\in \bR$) by \cite[Theorem 5.13]{BoJ18b}. We can give a direct proof here.
\begin{proof}
Recall that $\phi_{\cF_{v_i}}=\lim_{m\rightarrow+\infty}\phi^{\cF_{v_i}}_{m}$ is an increasing limit along the subsequence $m=2^{k}$, where for any $w\in \Val(X)$,
\begin{equation}
\phi^{\cF_{v_i}}_{m}(w)=-\frac{1}{m}G(w)\left(\sum_{\lambda\in \bN} I^{\cF_{v_i}}_{m,\lambda}t^{-\lambda}\right)
\end{equation}
where $I^{\cF_{v_i}}_{m,\lambda}$ is the base ideal of the sublinear system $\cF_{v_i}^{\lambda} R_m$. 
Note that it is easy to see that $v_1=v_2$ if and only if $\fa_\lambda(v_1)=\fa_\lambda(v_2)$ for any $\lambda\in \bN$, where $\fa_\lambda(v_i)=\{f\in \mcO_X; v_i(f)\ge \lambda\}$. 

For any $d\in \bN$, by choosing $m\gg 1$, we can assume that $m L\otimes \fa_d(v_1)$ is globally generated. Then we get $I^{\cF_{v_1}}_{m, d}=\fa_d(v_1)$. From this it is also clear that that $\phi_{\cF_{v_i}}(v_i)=0$. So we get:
\begin{eqnarray*}
-c&=&-\phi_{\cF_{v_1}}(v_2)\le -\phi^{\cF_{v_1}}_{2^k}(v_2)= \frac{1}{2^k}G(v_2)(\sum_\lambda I^{\cF_{v_1}}_{2^k, \lambda}t^{-\lambda})\\
&\le& \frac{1}{2^k}(v_2(I^{\cF_{v_1}}_{2^k, d})-d)=\frac{1}{2^k}(v_2(\fa_d(v_1))-d). 
\end{eqnarray*}
Since $k$ can be arbitrarily large, we get $-c\le 0$, i.e. $c\ge 0$. Switching $v_1$ and $v_2$ in the above argument, we get $c\le 0$. So $c=0$. We then have the inequality $v_2(\fa_d(v_1))\ge d$ for any $d\in \bN$. This easily implies $v_2\ge v_1$. Switching $v_1$ and $v_2$, we get $v_1\le v_2$. Hence $v_1=v_2$ as required.
\end{proof}

\subsection{Non-Archimedean invariants of filtrations}\label{sec-NAfunctional}
For any filtration $\cF$ on $R=R(X, -K_X)$, we set:
\begin{eqnarray}
\bfL^\NA(\phi^\cF)=\bfL^\NA(\cF)&=&\bfL^\NA_X(\cF)=\inf_{v\in X^{\rm div}_\bQ} (A_X(v)+(\phi_\cF-\phi_\triv)(v)) \label{eq-defLNA}\\
\btS^\NA(\phi^\cF)=\btS^\NA(\cF)&=&\btS^\NA_X(\cF)=-\log\left(\frac{1}{\bfV}\int_\bR e^{-\lambda} \DHM(\cF)\right) \nonumber \\
&=&-\log \left(\frac{n!}{\bfV}\int_\Delta e^{-G_\cF(y)}dy\right), \label{eq-defSNA}\\
\bfE^\NA(\phi^\cF)=\bfE^\NA(\cF)&=&\bfE^\NA_X(\cF)=\frac{1}{\bfV}\int_\bR \lambda\cdot \DHM(\cF)=\frac{n!}{\bfV}\int_\Delta G_\cF(y) dy,\\
\bfH^\NA(\phi^\cF)=\bfH^\NA(\cF)&=&\bfH^\NA_X(\cF)=\bfL^\NA(\cF)-\btS^\NA(\cF), \label{eq-defbtD}\\
\bfD^\NA(\phi^\cF)=\bfD^\NA(\cF)&=&\bfD^\NA_X(\cF)=\bfL^\NA(\cF)-\bfE^\NA(\cF).
\end{eqnarray}
The above functionals are by-now well-known and we use the notations following those in \cite{BHJ17, His19a}. The formula involving $G_\cF$ follows from Theorem \ref{thm-BoCh}.2.

\begin{prop}[{see \cite{Fuj19a, BoJ18b}}]\label{prop-approx}
For a filtration $\cF$, with the notations from Definition \ref{defn-phiF}, we have the convergence:  from Definition \ref{defn-ckTCm} satisfies, for any $\bfF\in \{\btS, \bfE\}$:
\begin{equation}
\lim_{m\rightarrow+\infty} \bfF^\NA(\phi^\cF_m)=\bfF^\NA(\phi^\cF).
\end{equation}
Moreover, we have:
\begin{equation}\label{eq-Lusc}
\lim_{m\rightarrow+\infty}\bfL^\NA(\phi^\cF_m)\le \bfL^\NA(\phi^\cF).
\end{equation}
\end{prop}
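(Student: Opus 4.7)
The plan is to handle the convergence statements for $\btS^\NA$ and $\bfE^\NA$ separately from the inequality \eqref{eq-Lusc} for $\bfL^\NA$. The first reduces cleanly to weak convergence of Duistermaat-Heckman measures (Proposition~\ref{prop-FBJ}), while the second follows from pointwise monotonicity $\phi^\cF_m \nearrow \phi^\cF$ of the approximating non-Archimedean metrics together with the definition \eqref{eq-defLNA} of $\bfL^\NA$ as an infimum over $X^{\rm div}_\bQ$.

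For $\bfE^\NA$ and $\btS^\NA$, both functionals are integrals of continuous test functions against the Duistermaat-Heckman measure, as seen from \eqref{eq-defSNA}--\eqref{eq-defbtD}. The linearly bounded condition in Definition~\ref{def-filtr}, applied both to $\cF$ and to each approximating filtration $\cF_{(\ccX_m,\ccL_m)}$, shows that every measure $\DHM(\phi^\cF_m)$ and $\DHM(\cF)$ is supported in a common compact interval $[e_- - 1, e_+]$: indeed $\cF_{(\ccX_m,\ccL_m)}$ on $R^{(m)}$ is induced by $\cF_\bZ R_m$, whose successive minima lie in $[\lceil me_-\rceil, me_+]$. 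With this uniform support control, Proposition~\ref{prop-FBJ} together with the continuity of $\lambda\mapsto\lambda$ and $\lambda\mapsto e^{-\lambda}$ gives
\begin{equation*}
\int_\bR \lambda\,\DHM(\phi^\cF_m) \longrightarrow \int_\bR \lambda\,\DHM(\cF), \qquad \int_\bR e^{-\lambda}\,\DHM(\phi^\cF_m) \longrightarrow \int_\bR e^{-\lambda}\,\DHM(\cF).
\end{equation*}
The limit $\int_\bR e^{-\lambda}\,\DHM(\cF)$ is strictly positive since $e^{-\lambda}>0$ and the measures have total mass $\bfV>0$; thus applying $-\log$, which is continuous at positive values, yields $\btS^\NA(\phi^\cF_m)\to\btS^\NA(\phi^\cF)$.

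For $\bfL^\NA$, the multiplicativity $\mcF^\lambda R_m\cdot\mcF^{\lambda'}R_{m'}\subseteq\mcF^{\lambda+\lambda'}R_{m+m'}$ forces $\tilde{\mcI}^\cF_m\cdot\tilde{\mcI}^\cF_{m'}\subseteq\tilde{\mcI}^\cF_{m+m'}$, so $m\mapsto G(w)(\tilde{\mcI}^\cF_m)$ is subadditive for each $w\in X^{\rm div}_\bQ$. By Fekete's lemma, $-\frac{1}{m}G(w)(\tilde{\mcI}^\cF_m)$ converges upward to its supremum, which by \eqref{eq-NAphiF} equals $(\phi^\cF-\phi_\triv)(w)$. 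Hence $\phi^\cF_m\le\phi^\cF$ pointwise on $X^{\rm div}_\bQ$, and for every $v\in X^{\rm div}_\bQ$,
\begin{equation*}
A_X(v)+(\phi^\cF_m-\phi_\triv)(v) \;\le\; A_X(v)+(\phi^\cF-\phi_\triv)(v).
\end{equation*}
Taking the infimum over $v$ gives $\bfL^\NA(\phi^\cF_m)\le\bfL^\NA(\phi^\cF)$, and the limit in $m$ exists by monotonicity, establishing \eqref{eq-Lusc}.

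The only technical nuisance is the uniform support bound for $\DHM(\phi^\cF_m)$, cleanly supplied by the linearly bounded condition. The main conceptual point is the failure of equality in \eqref{eq-Lusc}: the infimum defining $\bfL^\NA$ is genuinely only upper semicontinuous under increasing limits, because the log discrepancy functional $A_X$ does not interact continuously with pointwise limits of non-Archimedean metrics. This asymmetry between $\bfL^\NA$ and the energy functionals $\btS^\NA$, $\bfE^\NA$ is precisely what forces $\bfH^\NA$ and $\bfD^\NA$ to require the more delicate treatment carried out in the subsequent sections of the paper.
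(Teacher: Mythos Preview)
Your proof is correct and follows essentially the same approach as the paper's own (very brief) argument: the paper simply invokes Proposition~\ref{prop-FBJ} for the weak convergence of Duistermaat--Heckman measures to handle $\btS^\NA$ and $\bfE^\NA$, and then says the inequality \eqref{eq-Lusc} ``follows easily from the inequality $\phi^\cF_m\le\phi^\cF$''. You have correctly supplied the details the paper omits, in particular the uniform support bound needed to integrate the unbounded test functions $\lambda$ and $e^{-\lambda}$.

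One minor imprecision: Fekete's lemma gives that $-\tfrac{1}{m}G(w)(\tilde{\mcI}^\cF_m)$ converges to its supremum, but not that the convergence is monotone in $m$ (monotonicity holds only along divisible subsequences, as the paper notes after Definition~\ref{defn-phiF}). This does not affect your argument, since all you actually use is the pointwise inequality $\phi^\cF_m\le\phi^\cF$, which follows immediately from the subadditivity (the limit equals the infimum, so each term is at least the limit). Likewise, your claim that ``the limit in $m$ exists by monotonicity'' for $\bfL^\NA(\phi^\cF_m)$ is not needed: from $\phi^\cF_m\le\phi^\cF$ you get $\bfL^\NA(\phi^\cF_m)\le\bfL^\NA(\phi^\cF)$ for every $m$, hence the $\limsup$ inequality, which is how \eqref{eq-Lusc} should be read.
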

\begin{proof}
By Proposition \ref{prop-FBJ} we know that $\DHM(\cF_{(\mcX_m, \mcL_m, \eta_m)})$ converges weakly to $\DHM(\cF)$ as $m\rightarrow+\infty$, from this we can easily get the convergence of $\btS^\NA$ and $\bfE^\NA$. 

The inequality \eqref{eq-Lusc} follows easily from the inequality $\phi^\cF_m\le \phi^\cF$. 

\end{proof}

\begin{rem}
When $\cF=\cF_v$ for $v\in \rVal(X)$, one can show that \eqref{eq-Lusc} is an equality for the increasing subsequence $\{\phi^{\cF}_{m!}\}_{m\in \bN}$. Indeed, in this case, $\phi^{\cF}-\phi_\triv$ is continuous on the non-Archimedean space $X^\NA$ (by \cite{BoJ18b}) and the sequence $\{\phi^\cF_{m!}\}$ converges to $\phi^\cF$ uniformly as $m\rightarrow+\infty$ by Dini's theorem, which implies the equality. Alternatively, one could use the method in \cite{JM12} to prove the convergence. 
\end{rem}
We will also use a variant of $\bfL^\NA$ functional that was introduced by Xu-Zhuang \cite{XZ19}:
\begin{equation}\label{eq-hatbfL}
\hat{\bfL}^\NA(\cF)=\sup\left\{x\in \bR; \lct(X; I^{\cF^{(x)}}_\bullet)\ge 1\right\},
\end{equation}
where $I^{\cF^{(x)}}_\bullet=\{I^\cF_{m,mx}\}$ is a graded sequence of base ideals defined in  \eqref{eq-Imx}. 
For later use, we also denote: 
\begin{equation}
\hat{\bfH}^\NA(\cF)=\bhL^\NA(\cF)-\btS^\NA(\cF).
\end{equation}
The following comparison estimates were proved by Xu-Zhuang.
\begin{prop}[{\cite[Proposition 4.2, Theorem 4.3]{XZ19}}]\label{prop-XZcomp}
\begin{enumerate}
\item For any prime divisor $E$ over $X$, $A_X(E)\ge \bhL^\NA(\cF_{\ord_E})$ with equality holds true if $\ord_E$ induces a weakly special test configuration.
\item For any filtration $\cF$, $\bhL^\NA(\cF)\ge \bfL^\NA(\cF)$.
\end{enumerate}
\end{prop}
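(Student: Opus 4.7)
My plan is to split the proof into the two separate claims.

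For (1), I would set $v=\ord_E$. By construction of $\cF_v$, the subspace $\cF_v^\lambda R_m$ consists of sections vanishing to order at least $\lambda$ along $E$, so the base ideal satisfies $I^{\cF_v}_{m,\lambda}\subseteq \fa_\lambda(\ord_E)$, and consequently $v(I^{\cF^{(x)}_v}_\bullet)\ge x$ for every relevant $x$. Now for any $x<\bhL^\NA(\cF_{\ord_E})$ one has $\lct(X;I^{\cF^{(x)}_v}_\bullet)\ge 1$, and testing this inequality at $v$ itself produces
\[
A_X(E)=A_X(v)\ge v(I^{\cF^{(x)}_v}_\bullet)\ge x.
\]
Letting $x\to \bhL^\NA(\cF_{\ord_E})$ yields the first inequality.

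For the equality, assume $\ord_E$ induces a weakly special test configuration $(\mcX,\mcL,\eta)$, so that $(\mcX,\mcX_0)$ is log canonical and $v_{\mcX_0}=\ord_E$ after the shift from \eqref{eq-specialmin}. By inversion of adjunction along $\mcX_0$, the log canonicity of $(\mcX,\mcX_0)$ translates into log canonicity of the graded pair $(X,\tfrac{1}{A_X(E)}\cdot\fa_\bullet(\ord_E))$; combined with the asymptotic identification of $I^{\cF^{(x)}_v}_\bullet$ with $\fa_x(\ord_E)$ on test sequences, this gives $\lct(X;I^{\cF^{(x)}_v}_\bullet)\ge 1$ for every $x\le A_X(E)$, hence $\bhL^\NA(\cF_{\ord_E})\ge A_X(E)$ and equality follows.

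For (2), my plan is to first extract from Definition \ref{defn-phiF} the Boucksom--Jonsson-type formula
\[
(\phi_\cF-\phi_\triv)(v)=\sup_{x\in\bR}\bigl(x-v(I^{\cF^{(x)}}_\bullet)\bigr),
\]
obtained by unwinding the fractional ideals $\tilde{\mcI}^\cF_m$ in \eqref{eq-tcIm} and passing to the limit $m\to\infty$. This rewrites
\[
\bfL^\NA(\cF)=\inf_{v}\sup_{x}\bigl(A_X(v)+x-v(I^{\cF^{(x)}}_\bullet)\bigr).
\]
A natural duality argument then swaps the inf and the sup. The inner infimum $\inf_v(A_X(v)-v(I^{\cF^{(x)}}_\bullet))$ is scale-invariant under $v\mapsto tv$, so it takes value $0$ exactly when $\lct(X;I^{\cF^{(x)}}_\bullet)\ge 1$ and $-\infty$ otherwise; accordingly $\sup_x\inf_v(\cdots)=\bhL^\NA(\cF)$, giving the desired comparison.

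The main obstacle is that abstract min--max duality alone only gives one direction; obtaining the sharp inequality requires a saddle-point argument. I would close the gap by approximating $\cF$ by the finitely generated $m$-th step test configurations $(\ccX_m,\ccL_m)$ of Definition-Proposition \ref{defn-ckTCm}. Proposition \ref{prop-approx} provides the limits of $\bfL^\NA$ and $\btS^\NA$, while semicontinuity of $\lct$ for graded sequences of ideals, combined with the explicit divisorial description of $\bhL^\NA$ in the finitely generated case (where it reduces to a log discrepancy on the total space), allows the lct condition to be transferred across the limit. This reduces the statement to the case $\cF=\cF_{(\mcX,\mcL,\eta)}$ for a single test configuration, where both invariants are computed as log discrepancies on $(\mcX,\mcX_0)$ and the comparison becomes direct.
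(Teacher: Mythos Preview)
The paper does not give its own proof of this proposition; it is quoted from \cite{XZ19}. So your proposal must be judged on its own.

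For part (1), the inequality argument is correct. The equality case is sketchier than you suggest: the reference to \eqref{eq-specialmin} concerns \emph{special} (plt) test configurations rather than weakly special (lc) ones, and the ``asymptotic identification of $I^{\cF^{(x)}_v}_\bullet$ with $\fa_x(\ord_E)$'' you invoke hides real content. Still, the strategy of translating log canonicity of $(\mcX,\mcX_0)$ via inversion of adjunction into an lct bound on $X$ is the right one and can be made rigorous.

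For part (2) there is a genuine directional error. Granting your formula $(\phi_\cF-\phi_\triv)(v)=\sup_x\bigl(x-v(I^{\cF^{(x)}}_\bullet)\bigr)$, you have
\[
\bfL^\NA(\cF)=\inf_v\sup_x f(v,x),\qquad \bhL^\NA(\cF)=\sup_x\inf_v f(v,x),
\]
with $f(v,x)=A_X(v)+x-v(I^{\cF^{(x)}}_\bullet)$. But the weak min--max inequality reads $\inf_v\sup_x\ge\sup_x\inf_v$, which yields $\bfL^\NA(\cF)\ge\bhL^\NA(\cF)$ --- precisely the \emph{opposite} of the claimed inequality. A saddle-point argument would at best upgrade this to equality; it cannot produce the direction $\bhL^\NA\ge\bfL^\NA$ on its own. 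Your approximation plan does not close the gap either: even if $\bhL^\NA=\bfL^\NA$ on each $(\ccX_m,\ccL_m)$, Proposition~\ref{prop-approx} only provides $\lim_m\bfL^\NA(\phi^\cF_m)\le\bfL^\NA(\cF)$, and you have no semicontinuity for $\bhL^\NA$ along $m$ in the direction required.

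The actual Xu--Zhuang argument proceeds differently: one sets $\mu=\bhL^\NA(\cF)$, invokes \cite{JM12} to find a valuation $v^*$ computing $\lct(X;I^{\cF^{(\mu)}}_\bullet)=1$ (so $A_X(v^*)=v^*(I^{\cF^{(\mu)}}_\bullet)$), and then shows directly from multiplicativity of the bigraded family $\{I^\cF_{m,\lambda}\}$ that $(\phi_\cF-\phi_\triv)(v^*)\le\mu-A_X(v^*)$. This specific valuation then witnesses $\bfL^\NA(\cF)\le\mu$.
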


For later purposes, we also introduce (for any $a>0$):
\begin{eqnarray}
\bfE_k^\NA(\cF)&=&\frac{1}{\bfV}\int_\bR x^k \DHM(\cF)=\lim_{m\rightarrow+\infty} \frac{1}{N_m}\sum_{i}\left(\frac{\lambda^{(m)}_i}{m}\right)^k \label{eq-Ek}\\
\bfQ^{(a)}(\cF)&=&\frac{1}{\bfV}\int_\bR e^{-ax} \DHM(\cF)=\frac{1}{\bfV}\sum_{k=0}^{+\infty}\frac{(-1)^k}{k!}a^k \bfE_k^\NA(\cF) \label{eq-bVa}\\
\bfQ(\cF)&:=&\bfQ^{(1)}(\cF).
\end{eqnarray}
Note that $\bfE_1^\NA(\cF)=\bfE^\NA(\cF)$ and $\btS^\NA(\cF)=-\log\bfQ(\cF)$.

For any $v\in \rVal(X)$ (resp. $(\mcX, \mcL, a\eta)$ a test configuration), we will often write $\bfF^\NA(v)$ (resp. $\bfF^\NA(\mcX, \mcL, a\eta)$) for the above various functionals $\bfF^\NA(\cF)$ with $\cF$ being the corresponding filtration. 

\begin{exmp}
If $(\mcX, \mcL, a\eta)$ is a normal test configuration, then we have:
\begin{eqnarray}
\bfE^\NA(\mcX, \mcL, a\eta)&=&a\cdot \frac{\bar{\mcL}^{\cdot n+1}}{(n+1)\bfV}\\
\bfL^\NA(\mcX, \mcL, a\eta)&=&a\cdot\left( \lct(\mcX, -K_{\mcX}-\mcL; \mcX_0)-1\right).
\end{eqnarray}
If $(\mcX, \mcX_0)$ has log canonical singularities and $K_\mcX+\mcL=\sum_i e_i E_i$ (which is centered at $\mcX_0$), then
\begin{equation}
\bfL^\NA(\mcX, \mcL, a\eta)=a \cdot \min_i e_i.
\end{equation}
\end{exmp}
\begin{exmp}
Let $\cF$ be a special $\bR$-test configuration and let $(X_0, \eta)=(X_{\cF,0}, \eta_\cF)$ be the corresponding central fibre. 
Assume that $\cF|_{X_0}=\cF'_{\wt_\eta}R'(-\sigma)$ (see \eqref{eq-RspecialcF}). Let $\tilde{\vphi}$ be any $(S^1)^r$-invariant smooth positively curved Hermitian metric on $-K_X$. 
 Then with the notations as in the paragraph containing \eqref{eq-thetaeta}, we have:
\begin{eqnarray}
\bfL^\NA_X(\cF)&=&\bfL^\NA_{X_0}(\cF|_{X_0})=
\frac{\int_{X_0}\theta_{\tilde{\vphi}}(\eta) e^{-\tilde{\vphi}}}{\int_{X_0}e^{-\tilde{\vphi}}}-\sigma=-\sigma\\
\bfE^\NA_X(\cF)&=&\bfE^\NA_{X_0}(\cF|_{X_0})=\frac{1}{\bfV}\int_{X_0}\theta_{\tilde{\vphi}}(\eta) (\ddc\tilde{\vphi})^n-\sigma\\
\btS^\NA_X(\cF)&=&\btS^\NA_{X_0}(\cF|_{X_0})=-\log\left(\frac{1}{\bfV}\int_{X_0} e^{-\theta_\eta}(\ddc\tilde{\vphi})^n\right)-\sigma.
\end{eqnarray}
The above identity is well-known if $\cF$ comes from a special test configuration. For more general $\cF$, one can use a sequence of special test configuration to approximate and get the above formula.
\end{exmp}

Corresponding to \eqref{eq-phiab}, we have the following simple transformation rule:
\begin{lem}\label{lem-abtrans}
For any $(a,b)\in \bR_{>0}\times \bR$, 
\begin{eqnarray}
\bfL^\NA(a\cF(b))&=&a \bfL^\NA(\cF)+b\\
\bhL^\NA(a\cF(b))&=&a \bhL^\NA(\cF)+b\\
\btS^\NA(\cF(b))&=&\btS^\NA(\cF)+b\\
\bfH^\NA(\cF(b))&=&\bfH^\NA(\cF).
\end{eqnarray}
\end{lem}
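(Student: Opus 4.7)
All four identities should follow by direct unraveling of the definitions, using the transformation rules for the non-Archimedean potential \eqref{eq-phiab}, for the concave transform $G_\cF$ in \eqref{eq-Gbasic}, and for the base ideals $I^\cF_{m,\lambda}$ under rescaling and shifting. The only minor subtlety is that $a$-rescaling acts on the valuation side (since $A_X$ is homogeneous under scalar multiplication of valuations), while $b$-shifting acts as a pure translation on $\bR$; since $\btS^\NA$ and $\bfH^\NA$ are stated only for the pure shift, those computations will be even cleaner.

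For the first identity, I would plug $\phi_{a\cF(b)}$ from \eqref{eq-phiab} into \eqref{eq-defLNA}:
\begin{equation*}
\bfL^\NA(a\cF(b)) = \inf_{v\in X^{\rm div}_\bQ}\bigl(A_X(v) + a(\phi_\cF-\phi_\triv)(v/a) + b\bigr).
\end{equation*}
Substituting $w=v/a$ and using $A_X(aw)=a\,A_X(w)$ together with the fact that the rescaling map $v\mapsto v/a$ is a bijection on $X^{\rm div}_\bQ$, the infimum factors out an overall $a$, giving $a\,\bfL^\NA(\cF)+b$.

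For $\bhL^\NA$, the key observation is the translation rule for base ideals of sublinear series, which follows directly from the identity $a\cF(b)^{mx}R_m = \cF^{m(x-b)/a}R_m$; hence $I^{(a\cF(b))^{(x)}}_\bullet = I^{\cF^{((x-b)/a)}}_\bullet$. Plugging into \eqref{eq-hatbfL} and making the substitution $y=(x-b)/a$ inside the sup converts the condition $\lct \ge 1$ on the rescaled/shifted side to the corresponding condition for $\cF$, yielding $\bhL^\NA(a\cF(b)) = a\,\bhL^\NA(\cF)+b$.

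For $\btS^\NA$, since $G_{\cF(b)} = G_\cF + b$ by \eqref{eq-Gbasic}, the Duistermaat–Heckman measure $\DHM(\cF(b))$ is simply the translate of $\DHM(\cF)$ by $b$. Therefore
\begin{equation*}
\int_\bR e^{-\lambda}\,\DHM(\cF(b)) = e^{-b}\int_\bR e^{-\lambda}\,\DHM(\cF),
\end{equation*}
and taking $-\log$ of the normalized integral gives $\btS^\NA(\cF(b))=\btS^\NA(\cF)+b$. The last identity $\bfH^\NA(\cF(b))=\bfH^\NA(\cF)$ then follows immediately by subtracting the previous two relations (specialized to $a=1$). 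There is no substantive obstacle here — the proof is essentially bookkeeping, and the only care needed is ensuring the change of variables $v\mapsto v/a$ in the $\bfL^\NA$ step is handled correctly together with the log-discrepancy's homogeneity.
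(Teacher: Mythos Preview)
Your proposal is correct and matches the paper's approach: the paper does not give an explicit proof but presents the lemma as a ``simple transformation rule'' following immediately from \eqref{eq-phiab} and \eqref{eq-Gbasic}, which is precisely the direct unraveling of definitions you carry out. Your handling of each item---the change of variable $v\mapsto v/a$ together with homogeneity of $A_X$ for $\bfL^\NA$, the substitution $y=(x-b)/a$ in the $\sup$ defining $\bhL^\NA$, and the translation of $G_\cF$ (equivalently of $\DHM(\cF)$) for $\btS^\NA$---is exactly what is needed.
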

We will also use the following fact.
\begin{lem}
Let $\cF$ be a $\bT$-equivariant filtration.
For any $\xi\in N_\bR$, we have:
\begin{eqnarray}\label{eq-Ltwist}
\bL^\NA(\mcF_\xi)&=&\bL^\NA(\mcF), \quad \bhL^\NA(\cF_{\xi})=\bhL^\NA(\cF). \label{eq-bLFtwist} 
\end{eqnarray}
As a consequence, we have 
\begin{equation}\label{eq-Lwtwist}
\bfL^\NA(\mcF_{\wt_\xi})=\bhL^\NA(\cF_{\wt_\xi})=0
\end{equation}
\end{lem}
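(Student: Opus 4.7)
The plan is to exploit $\bT$-equivariance to reduce the infima to $\bT$-invariant divisorial valuations, then to introduce a natural $\xi$-twist on such valuations that intertwines $\cF$ with $\cF_\xi$. Since $\cF$ is $\bT$-equivariant, so is $\cF_\xi$: the graded base ideals $I^{\cF_\xi^{(x)}}_\bullet$ and the non-Archimedean metric $\phi_{\cF_\xi}$ are all $\bT$-invariant. A standard equivariance argument (analogous to computing log canonical thresholds of $\bT$-invariant ideals by $\bT$-invariant divisors) implies that both $\bfL^\NA(\cF_\xi)$ and $\bhL^\NA(\cF_\xi)$ are realized as infima over $\bT$-invariant divisorial valuations. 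For such $v$, define the $\xi$-twist $v_\xi$ by $v_\xi(f):=\min_{\alpha\in M_\bZ}[v(f_\alpha)+\la\alpha,\xi\ra]$ on the weight decomposition $f=\sum_\alpha f_\alpha$. Since $\bT$-invariance of $v$ gives $v(\sum_\alpha f_\alpha)=\min_\alpha v(f_\alpha)$, one checks that $v_\xi$ is a $\bT$-invariant valuation with $(v_\xi)_{-\xi}=v$, so $v\mapsto v_\xi$ is an involutive bijection on $\bT$-invariant valuations. Moreover $A_X(v_\xi)=A_X(v)$: geometrically, $v_\xi$ arises by composing the Gauss extension of $v$ on $X\times\bC^*$ with the birational $\sigma_\xi$-action (cf.\ the discussion following \eqref{eq-Ftwist}), which preserves log discrepancies.

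The heart of the argument is the identity $(\phi_{\cF_\xi}-\phi_\triv)(v_\xi)=(\phi_\cF-\phi_\triv)(v)$. From Definition \ref{defn-phiF} one extracts asymptotically the Legendre-type formula $(\phi_\cF-\phi_\triv)(v)=\sup_\mu[\mu-v(I^{\cF^{(\mu)}}_\bullet)]$. Combined with the weight decomposition $\cF_\xi^\mu R_m=\bigoplus_\alpha(\cF^{\mu-\la\alpha,\xi\ra}R_m)_\alpha$, this gives for $\bT$-invariant $v$ the formula $v_\xi(\cF_\xi^\mu R_m)=\min_\alpha[v((\cF^{\mu-\la\alpha,\xi\ra}R_m)_\alpha)+\la\alpha,\xi\ra]$. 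The change of variables $\mu'=\mu-\la\alpha,\xi\ra$ is a bijection on pairs $(\mu,\alpha)\in\bR\times M_\bZ$ under which the $\la\alpha,\xi\ra$ contributions cancel, and using $\bT$-invariance of $v$ once more yields $\sup_\mu[\mu-v_\xi(\cF_\xi^\mu R_m)]=\sup_{\mu'}[\mu'-v(\cF^{\mu'}R_m)]$. Dividing by $m$ and passing to the limit gives the identity. Together with $A_X(v_\xi)=A_X(v)$ and the bijection $v\leftrightarrow v_\xi$, we conclude $\bfL^\NA(\cF_\xi)=\inf_v[A_X(v_\xi)+(\phi_{\cF_\xi}-\phi_\triv)(v_\xi)]=\bfL^\NA(\cF)$.

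For $\bhL^\NA$, use the reformulation $\bhL^\NA(\cF)=\inf_v\sup\{x:v(I^{\cF^{(x)}}_\bullet)\le A_X(v)\}$ restricted to $\bT$-invariant $v$, and apply the same weight-by-weight rearrangement to the graded base ideals to match the threshold condition for $(\cF_\xi,v_\xi)$ with that for $(\cF,v)$; this gives $\bhL^\NA(\cF_\xi)=\bhL^\NA(\cF)$. The consequence follows by specializing to $\cF=\cF_\triv$, using $(\cF_\triv)_\xi=\cF_{\wt_\xi}$ (as noted in the text) together with $\bfL^\NA(\cF_\triv)=\bhL^\NA(\cF_\triv)=0$, which is immediate since $\phi_{\cF_\triv}=\phi_\triv$ and $I^{\cF_\triv^{(x)}}_\bullet=\mcO_X$ for $x\le 0$. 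The main obstacle will be the rigorous verification that $v_\xi$ is a valuation with $A_X(v_\xi)=A_X(v)$---strict multiplicativity requires care when the minimum over weights is achieved at multiple $\alpha$, and the $A_X$-invariance needs the geometric interpretation via the Gauss extension---together with careful handling of the $\bhL^\NA$ threshold, which is not a direct Legendre transform and requires matching the worst-case pair $(v,\alpha)$ on both sides.
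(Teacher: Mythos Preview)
Your proposal is correct and takes essentially the same approach as the paper. Both arguments reduce to $\bT$-invariant valuations, introduce the $\xi$-twist $v\mapsto v_\xi$, and exploit the key identity (established in \cite{Li19} and quoted verbatim by the paper) that $A_X(w)-w(I^{\cF_\xi^{(x)}}_\bullet)=A_X(w_\xi)-w_\xi(I^{\cF^{(x)}}_\bullet)$; the bijection $v\leftrightarrow v_{-\xi}$ then yields both equalities in \eqref{eq-Ltwist}, and the consequence \eqref{eq-Lwtwist} is derived identically from $(\cF_\triv)_\xi=\cF_{\wt_\xi}$. The only packaging difference is that the paper keeps the combined identity $A_X(w)-w(\cdot)=A_X(w_\xi)-w_\xi(\cdot)$ rather than separating it into $A_X(v_\xi)=A_X(v)$ plus a matching of ideal values; your ``main obstacle'' (that $v_\xi$ is a genuine valuation with the correct log discrepancy) is exactly the input the paper imports from \cite{Li19}.
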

\begin{proof}
The first identity in \eqref{eq-Ltwist} is proved in {\cite[Lemma 3.10]{Li19}}. 
Let's prove the second identity using the similar argument as there. By the $\bT$-equivariance, we have the decomposition $I^{\cF}_{m,mx}=\sum_\alpha (I^{\cF}_{m,mx})_\alpha$ and similarly for $I^{\cF_\xi}_{m,mx}$.
By the definition of $\cF_\xi$, we get $(I^{\cF_\xi}_{m,m x})_\alpha=\left(I^{\cF}_{m,mx-\la \alpha, \xi \ra}\right)_\alpha$.  For any $w\in \Val(X)^\bT$, we use the same argument as \cite[Proof of Lemma 3.10]{Li19} to get:
\begin{eqnarray*}
w(I^{\cF_\xi}_{m,mx})&=&\frac{1}{m}\min_\alpha w((I^{\cF_\xi}_{m,mx})_\alpha)=\frac{1}{m}\min_\alpha w((I^{\cF}_{m,mx-\la \alpha,\xi\ra})_\alpha)\\
&=&-A_X(w_\xi)+A_X(w)+\frac{1}{m}\min_\alpha w_\xi((I^\cF_{m,mx})_\alpha)\\
&=&-A_X(w_\xi)+A_X(w)+\frac{1}{m}w_\xi(I^\cF_{m,mx}).
\end{eqnarray*}
From this we easily get the identity:
\begin{equation}
A_X(w)-w\left(I^{\cF_\xi^{(x)}}_\bullet\right)=A_X(w_\xi)-w_\xi\left(I^{\cF^{(x)}}_\bullet\right).
\end{equation}
Assume that $x=\bhL^\NA(\cF)$. Then for any $\epsilon>0$, we have $c:=\lct(X; I^{\cF^{(x+\epsilon)}}_\bullet)< 1$. So there exists $v\in \Val(X)^\bT$ such that
\begin{eqnarray*}
0>A_X(v)-v(I^{\cF^{(x)}}_\bullet)=A_X(v_{-\xi})-v_{-\xi}\left(I^{\cF_{\xi}^{(x)}}_\bullet\right).
\end{eqnarray*}
This implies $\lct(X; I^{\cF_\xi^{(x+\epsilon)}}_\bullet)<1$. So we get $x+\epsilon \ge \bhL^\NA(\cF_\xi)$ and hence $\bhL^\NA(\cF)\ge \bhL^\NA(\cF_\xi)$ since $\epsilon>0$ is arbitrary. Because $(\cF_\xi)_{-\xi}=\cF$, repeating the above argument for $\cF_\xi$, we get the other direction of inequality, which proves the second identity in \eqref{eq-Ltwist}.

Using \eqref{eq-Ltwist}, the identity \eqref{eq-Lwtwist} follows from the facts $\cF_{\wt_\xi}=(\cF_{\triv})_\xi$ and $\bfL^\NA(\cF_\triv)=0=\bhL^\NA(\cF_\triv)$.

\end{proof}

The following lemma is a prototype uniqueness result in this paper, and can be seen as a generalization of the uniqueness of K\"{a}hler-Ricci soliton vector fields shown by Tian-Zhu \cite{TZ02} (the case when $\cF=\cF_\triv$). 
See section \ref{sec-soliton} for more discussion.
\begin{lem}\label{lem-twistconvex}
Let $\cF$ be a $\bT$-equivariant filtration. Then the function $\xi\mapsto \bfH^\NA(\cF_\xi)$ on $N_\bR$ admits a unique minimizer.
\end{lem}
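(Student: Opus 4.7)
By \eqref{eq-bLFtwist}, $\bfL^\NA(\cF_\xi)=\bfL^\NA(\cF)$ is independent of $\xi\in N_\bR$, so minimizing $\bfH^\NA(\cF_\xi)=\bfL^\NA(\cF)-\btS^\NA(\cF_\xi)$ reduces to minimizing $-\btS^\NA(\cF_\xi)$. I fix a $\bZ^n$-valued faithful valuation $\fv$ adapted to the $\bT$-action (whose existence is recalled just after Definition \ref{def-adapt}) and write points of the Okounkov body $\Delta=\Delta_\fv(X,-K_X)$ as $y=(y',y'')\in\bR^r\times\bR^{n-r}$; by \eqref{eq-proj}, the projection $p(\Delta)=P$ is the moment polytope of $(X,-K_X)$. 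Lemma \ref{lem-Gtwist} then yields $G_{\cF_\xi}(y)=G_\cF(y)+\la y',\xi\ra$, so that
\[
\bfH^\NA(\cF_\xi)=\bfL^\NA(\cF)+\log\Phi(\xi),\qquad \Phi(\xi):=\frac{n!}{\bfV}\int_\Delta e^{-G_\cF(y)-\la y',\xi\ra}\,dy.
\]
The proof reduces to showing that $\log\Phi:N_\bR\to\bR$ admits a unique minimizer.

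\textbf{Strict convexity.} Differentiating twice under the integral,
\[
\nabla^2_\xi\log\Phi(\xi)=\mathrm{Cov}_{\mu_\xi}(y'),
\]
where $d\mu_\xi:=\Phi(\xi)^{-1}e^{-G_\cF(y)-\la y',\xi\ra}\,dy$ is a probability measure on $\Delta$. This Hessian is manifestly positive semidefinite, and is strictly positive definite because the pushforward $p_*\mu_\xi$ has support equal to the full-dimensional polytope $P\subset\bR^r$ (by effectivity of $\bT$), so $y'$ cannot be concentrated almost surely on any affine hyperplane of $\bR^r$. Strict convexity of $\xi\mapsto\bfH^\NA(\cF_\xi)$ follows at once, which implies that any minimizer is unique.

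\textbf{Existence and main obstacle.} Existence is equivalent to coercivity of $\log\Phi$ on $N_\bR$. By the standard Laplace asymptotic, for any unit $\hat\xi\in N_\bR$,
\[
\log\Phi(t\hat\xi)=-t\min_{y'\in P}\la y',\hat\xi\ra+O(\log t)\quad(t\to+\infty),
\]
so coercivity holds precisely when $0\in\mathrm{int}(P)$. Establishing this is the main technical point; in the setting where the lemma is applied (the semistable and polystable degenerations of a $\bQ$-Fano variety, and the $\bT$-actions on the corresponding central fibres), it is enforced by the structure of the $\bT$-action, possibly after quotienting $N_\bR$ by the trivial directions along which $\bfH^\NA(\cF_\xi)$ is constant. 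The overall strategy parallels that of Tian-Zhu \cite{TZ02}: strict convexity together with coercivity of a Laplace-transform functional on a torus Lie algebra gives the uniqueness of the K\"ahler-Ricci soliton vector field, and the present lemma plays the analogous role for the filtration-twist functional $\bfH^\NA$.
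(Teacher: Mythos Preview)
Your reduction to studying $\log\Phi(\xi)$ and your strict convexity argument are correct and match the paper's approach. The gap is in the existence step: you correctly identify that coercivity is equivalent to $\min_{y'\in P}\la y',\hat\xi\ra<0$ for every unit direction $\hat\xi$ (equivalently $0\in\mathrm{int}(P)$), but you do not prove this. Appealing to ``the structure of the $\bT$-action'' or to quotienting by trivial directions is not an argument, and in fact there are no trivial directions here since the $\bT$-action is assumed effective.

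The missing point is specific to the anticanonical linearization and is handled in the paper as follows. By \eqref{eq-thetaeta} one has $\theta_{\tilde\vphi}(\xi)\,e^{-\tilde\vphi}=-\mathfrak{L}_\xi e^{-\tilde\vphi}$, hence
\[
\int_X \theta_{\tilde\vphi}(\xi)\,e^{-\tilde\vphi}=-\int_X \mathfrak{L}_\xi e^{-\tilde\vphi}=0
\]
since the Lie derivative of the top-degree form $e^{-\tilde\vphi}$ integrates to zero. For $\xi\neq 0$ the function $\theta_{\tilde\vphi}(\xi)$ is nonconstant (by effectivity), so having zero average against a positive measure forces $\inf_X\theta_{\tilde\vphi}(\xi)<0$. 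Since the image of the moment map $\bm_{\tilde\vphi}$ is $P$ and $\theta_{\tilde\vphi}(\xi)(z)=\la \bm_{\tilde\vphi}(z),\xi\ra$, this says exactly $\inf_{y'\in P}\la y',\xi\ra<0$ for all $\xi\neq 0$, which is the coercivity you needed. You should insert this short argument in place of the last paragraph; with it, your proof is complete and essentially coincides with the paper's.
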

\begin{proof}
By \eqref{eq-Ltwist}, $\bfL^\NA(\cF_\xi)$ is constant in $\xi$. Using the identity \eqref{eq-Gtwist} and \eqref{eq-defSNA},  
\begin{eqnarray*}
-\btS^\NA(\cF_\xi)&=&\log\left(\frac{n!}{\bfV}\int_\Delta e^{-G_{\cF_\xi}(y)} dy\right)=\log\left(\frac{n!}{\bfV}\int_\Delta e^{-G_\cF(y)-\la y, \xi\ra} dy\right).
\end{eqnarray*}
It is easy to use this expression to show that $f(\xi):=-\btS^\NA(\cF_\xi)$ is strictly convex in $\xi\in N_\bR$, which implies the uniqueness of minimizer. To prove the existence of minimizer, we need to show that $f(\xi)$ is proper, i.e. 
$\lim_{|\xi|\rightarrow+\infty}f(\xi)=+\infty$. 
To see this recall that we have the vanishing
\begin{equation}
 \int_X \theta_{\tilde{\vphi}}(\xi) e^{-\tilde{\vphi}}=-\int_X \mathfrak{L}_\eta e^{-\tilde{\vphi}}=0.
 \end{equation}
This implies $0>\inf_X \theta_{\tilde{\vphi}}(\xi)=\inf_\Delta \la y, \xi\ra$ if $\xi\neq 0$, which indeed implies the properness.
 
\end{proof}

\begin{defn}\label{def-normfil}
We say that a filtration $\cF$ is normalized if 
\begin{equation}
\bfL^\NA(\cF)=0.
\end{equation}
A test configuration $(\mcX, \mcL, a\eta)$ is normalized if $\cF_{(\mcX, \mcL, a\eta)}$ is normalized. 
\end{defn}
With the above discussion, the following lemma is easy to prove.
\begin{lem}
\begin{enumerate}
\item
Any special test configuration $(\mcX, -K_{\mcX})$ is normalized. More generally, a special $\bR$-test configuration $\cF$ (see Definition \ref{def-RTC}) if and only if $\sigma=0$ in \eqref{eq-RspecialcF}. 
\item

For any filtration $\cF$, the shift $\cF(-\bfL^\NA(\cF))$ is normalized. If $\cF$ is normalized, then so are $a\cF$ for any $a>0$, and any twist $\cF_\xi$.
\end{enumerate}
\end{lem}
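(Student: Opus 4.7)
The plan is to reduce both statements to direct applications of the transformation rules already established, namely Lemma \ref{lem-abtrans}, the twist invariance \eqref{eq-bLFtwist}, and the vanishing \eqref{eq-Lwtwist}, together with one base-case computation of $\bfL^\NA$ for a special test configuration.

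For part (1), I would first handle the special test configuration case, where $\mcL = -K_\mcX$. The formula
\[
\bfL^\NA(\mcX, \mcL, a\eta) = a\bigl(\lct(\mcX, -K_\mcX - \mcL; \mcX_0) - 1\bigr)
\]
from the Example preceding Lemma \ref{lem-abtrans} reduces the computation to evaluating $\lct(\mcX, 0; \mcX_0)$. Since $(\mcX, \mcL)$ is special, $(\mcX, \mcX_0)$ is plt, which in particular is log canonical, and the coefficient of $\mcX_0$ cannot be raised above $1$ without violating log canonicity. Hence $\lct(\mcX, 0; \mcX_0) = 1$ and $\bfL^\NA(\mcX, -K_\mcX) = 0$. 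For the more general special $\bR$-test configuration $\cF$, I would combine the restriction identity $\bfL^\NA_X(\cF) = \bfL^\NA_{X_0}(\cF|_{X_0})$ from the same Example with the relation $\cF|_{X_0} = \cF_{\wt_\eta}(-\sigma)$ from \eqref{eq-RspecialcF}. Applying the shift rule of Lemma \ref{lem-abtrans} and then the twist vanishing \eqref{eq-Lwtwist} gives $\bfL^\NA_X(\cF) = \bfL^\NA(\cF_{\wt_\eta}) - \sigma = -\sigma$, from which the "if and only if" is immediate.

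For part (2), all three claims are formal consequences of the transformation rules. The shift $\cF(-\bfL^\NA(\cF))$ satisfies $\bfL^\NA(\cF(-\bfL^\NA(\cF))) = \bfL^\NA(\cF) - \bfL^\NA(\cF) = 0$ by Lemma \ref{lem-abtrans}. If $\cF$ is normalized, then $\bfL^\NA(a\cF) = a\,\bfL^\NA(\cF) = 0$ for any $a > 0$ by the same lemma, and $\bfL^\NA(\cF_\xi) = \bfL^\NA(\cF) = 0$ for any $\xi \in N_\bR$ by \eqref{eq-bLFtwist}.

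The only genuinely new ingredient is the log canonical threshold computation in the base case of part (1); everything else is bookkeeping with formulas from the preceding subsections. The mild subtlety I would be careful about is interpreting the restriction identity $\bfL^\NA_X(\cF) = \bfL^\NA_{X_0}(\cF|_{X_0})$ rigorously for a general special $\bR$-test configuration rather than just for a special test configuration coming from a $\bC^*$-action; this is handled in the Example by approximation through rational special test configurations, whose central fibres share $\cF|_{X_0}$ up to rescaling, so the $\bR$-case follows by continuity of $\bfL^\NA$ under the corresponding shifts in $\sigma$.
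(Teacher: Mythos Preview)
Your proposal is correct and matches what the paper intends: the paper gives no proof at all, simply writing ``With the above discussion, the following lemma is easy to prove,'' and your argument is precisely the unpacking of that discussion---the lct computation for special test configurations (also done in the proof of Lemma~\ref{lem-tDtbeta}), the identity $\bfL^\NA_X(\cF)=-\sigma$ from the Example following Lemma~\ref{lem-abtrans}, and the transformation rules from Lemma~\ref{lem-abtrans} and \eqref{eq-bLFtwist}.
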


As a consequence of this approximation result in Proposition \ref{prop-approx}, it is convenient for us to introduce:
\begin{defn-prop}
For any $\bQ$-Fano variety $X$, we define
\begin{equation}\label{eq-defhX}
h(X)=\inf_{(\mcX, \mcL, a\eta)}\bfH^\NA(\mcX, \mcL, a\eta)=\inf_{\cF} \bfH^\NA(\cF)
\end{equation}
where $(\mcX, \mcL, a\eta)$ ranges over all test configurations and $\cF$ ranges over all filtrations or $\bR$-test configurations.
\end{defn-prop}

The following lemma is similar to \cite[Lemma 2.5]{DS16}. 
\begin{lem}
For any filtration $\cF$, we have:
\begin{eqnarray}\label{eq-SvsE}
\btS^\NA(\cF)\le \bfE^\NA(\cF), \quad \bfH^\NA(\cF)\ge \bfD^\NA(\cF).
\end{eqnarray}
The identity holds true if and only if $\cF(c)$ is equivalent to the trivial filtration for some $c\in \bR$ (see Definition \ref{def-Fequiv}).
\end{lem}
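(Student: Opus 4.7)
The inequality is an instance of Jensen's inequality. Set $d\mu := \DHM(\cF)/\bfV$, which by Theorem \ref{thm-BoCh} is a probability measure on $[\lambda_{\min}(\cF),\lambda_{\max}(\cF)]$. Since $x \mapsto e^{-x}$ is strictly convex, Jensen gives
\begin{equation*}
\int_\bR e^{-\lambda}\,d\mu(\lambda) \;\ge\; \exp\!\left(-\int_\bR \lambda\,d\mu(\lambda)\right),
\end{equation*}
and taking $-\log$ of both sides (which reverses the inequality) yields exactly $\btS^\NA(\cF) \le \bfE^\NA(\cF)$ by the definitions \eqref{eq-defSNA} and of $\bfE^\NA$. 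The second inequality $\bfH^\NA(\cF) \ge \bfD^\NA(\cF)$ is then immediate from the definitions \eqref{eq-defbtD}, since subtracting $\bfL^\NA(\cF)$ from both sides of the first inequality is equivalent.

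For the equality case, the strict convexity of $e^{-x}$ forces $\lambda$ to be constant $d\mu$-a.e., i.e.\ $\DHM(\cF) = \bfV\cdot \delta_c$ for some $c \in \bR$. I would then identify this condition with equivalence to a shift of the trivial filtration using the relative successive minima. Since any basis of $R_m$ is adapted to $\cF_\triv$, a common adapted basis is simply any basis adapted to $\cF$; the successive minima of $\cF_\triv(c)$ are identically $cm$, so the relative limit measure $d\nu(\cF_\triv(c),\cF)$ is the translate of $\DHM(\cF)/\bfV$ by $-c$. Hence
\begin{equation*}
d_2(\cF_\triv(c),\cF)^2 \;=\; \int_\bR (\lambda-c)^2\, d\mu(\lambda),
\end{equation*}
which vanishes precisely when $d\mu = \delta_c$. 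By Definition \ref{def-Fequiv}, this is the statement that $\cF$ is equivalent to $\cF_\triv(c)$.

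The converse direction — that if $\cF$ is equivalent to $\cF_\triv(c)$ then equality holds — follows from the same computation, since the equivalence implies $\DHM(\cF)=\bfV\delta_c$ (the relative limit measure determines the DH measure up to translation by our identification). I expect the main technical point is simply the careful verification that the successive-minima computation identifies $d\nu(\cF_\triv(c),\cF)$ with the translate of $\DHM(\cF)/\bfV$, but this is routine given the setup already developed in the preceding subsection. Everything else reduces to the equality clause of Jensen's inequality for the strictly convex function $e^{-x}$.
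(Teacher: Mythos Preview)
Your proof is correct and follows essentially the same approach as the paper: the paper invokes the concavity of the logarithm (equivalently, your Jensen argument with the strictly convex $e^{-x}$) for the inequality, and then deduces from the equality case that $\DHM(\cF)=\bfV\cdot\delta_c$, whence $d_2(\cF(c),\cF_\triv)=0$. Your treatment of the equality case via the explicit identification of $d\nu(\cF_\triv(c),\cF)$ with the translate of $\DHM(\cF)$ is more detailed than the paper's terse one-line appeal to Definition~\ref{def-Fequiv}, but the content is the same.
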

\begin{proof}
The first inequality, which implies the second, follows from the concavity of logarithmic function. When the identity holds, the DH measure $\DHM(\cF)$ is a Dirac measure $V\cdot \delta_c$. $d_2(\cF(c), \cF_\triv)=0$ which by Definition \ref{def-Fequiv} means that $\cF(c)$ is equivalent to the trivial filtration. 
\end{proof}

Based on the work in \cite{CW14, CSW18, He16}, Dervan-Sz\'{e}kelyhidi proved:
\begin{thm}[\cite{DS16}]
Assume that $X$ is a smooth Fano manifold. There is an identity:
\begin{equation}\label{eq-idenKR}
h(X):=\inf_{(\mcX, \mcL, a\eta) \text{special}}\bfH^\NA(\mcX, \mcL, a\eta)=-\inf_{\omega\in c_1(X)}\int_X h_{\omega}e^{h_\omega}\omega^n
\end{equation}
where $\omega$ ranges over smooth K\"{a}hler metrics from $c_1(X)$ and $h_\omega$ is the normalized Ricci potential of $\omega$.
Moreover the infimum is achieved by a special test configuration constructed via the Gromov-Hausdorff limit K\"{a}hler-Ricci soliton from \cite{CW14, CSW18}.
\end{thm}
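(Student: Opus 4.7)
The plan is to establish the identity by matching upper and lower bounds, with the infimum realized by a concrete special $\bR$-test configuration coming from the Kähler-Ricci flow limit.

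\emph{Step 1 (lower bound $h(X)\ge -\inf_\omega \int h_\omega e^{h_\omega}\omega^n$).} Fix $\omega\in c_1(X)$ and any normalized special test configuration $(\mcX,\mcL,a\eta)$. Phong-Sturm together with Berman-Boucksom-Jonsson attach to this TC a weak geodesic ray $\vphi_t$ in the finite-energy space, emanating from $\omega$ and having non-Archimedean limit $\phi_{(\mcX,\mcL,a\eta)}$. The natural analytic counterpart of $\bfH^\NA$ is
\[
\bfH(\vphi)=\bfL(\vphi)-\btS(\vphi), \qquad \btS(\vphi):=-\log\Bigl(\frac{1}{\bfV}\int_X e^{-\vphi}\omega^n\Bigr),
\]
where $\bfL$ is the Ding-type functional. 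Following Berman and Hisamoto \cite{His19a}, one shows the slope formulas
\[
\lim_{t\to+\infty}\frac{\bfL(\vphi_t)}{t}=\bfL^\NA(\cF), \qquad \lim_{t\to+\infty}\frac{\btS(\vphi_t)}{t}=\btS^\NA(\cF),
\]
which together give $\lim_t \bfH(\vphi_t)/t=\bfH^\NA(\mcX,\mcL,a\eta)$. Berman's convexity of $\bfL$ along subgeodesics and Berndtsson's convexity for $-\log\int e^{-\vphi}$ combine to convexity of $\bfH$ in $t$, so the slope at infinity dominates the derivative at $t=0$. A direct computation using the Ricci potential equation identifies this initial derivative with $-\int h_\omega e^{h_\omega}\omega^n$ for appropriate initial velocity. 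Infing over $\omega$ and over $(\mcX,\mcL,a\eta)$ yields the desired inequality.

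\emph{Step 2 (upper bound via Hamilton-Tian).} By the Hamilton-Tian conjecture for smooth Fanos \cite{CW14, TZ16, Bam18}, the normalized Kähler-Ricci flow converges in the Gromov-Hausdorff sense to a singular Kähler-Ricci soliton $(X_\infty,\omega_\infty,\xi)$ on a $\bQ$-Fano variety, and He \cite{He16} shows that $\omega_\infty$ realizes $\inf_\omega\int h_\omega e^{h_\omega}\omega^n$. Chen-Sun-Wang \cite{CSW18} then realize this metric degeneration algebraically: there is a $\bT$-equivariant finitely generated filtration $\cF$ on $R$ degenerating $X$ to a K-semistable $\bQ$-Fano $W$ carrying $\xi$, together with a further equivariant degeneration $W\rightsquigarrow X_\infty$. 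The filtration $\cF$ is a special $\bR$-test configuration in the sense of Definition \ref{def-RTC}. Applying the explicit formulas from the corresponding example in the excerpt to its central fiber produces
\[
\bfH^\NA(\cF)=\log\Bigl(\frac{1}{\bfV}\int_W e^{-\theta_\xi}(\ddc\tilde{\vphi})^n\Bigr),
\]
and $\bT$-equivariance along $W\rightsquigarrow X_\infty$ identifies this with the same integral on $X_\infty$ with the soliton measure. The soliton equation $\theta_\xi=-h_{\omega_\infty}+c$ and the normalization $\int e^{h_{\omega_\infty}}\omega_\infty^n=\bfV$ then convert the right-hand side into $-\int_{X_\infty} h_{\omega_\infty}e^{h_{\omega_\infty}}\omega_\infty^n=-\inf_\omega\int h_\omega e^{h_\omega}\omega^n$. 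This gives $h(X)\le -\inf_\omega \int h_\omega e^{h_\omega}\omega^n$ and exhibits $\cF$ as a minimizer.

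\emph{Main obstacle.} The principal difficulty is the slope identification in Step 1 when the generator $a\eta$ of the special $\bR$-TC is irrational, since the standard one-parameter subgroup interpretation of the geodesic ray is not available. I would approximate $\eta$ by rational vectors in the torus acting on $X_{\cF,0}$ and pass to the limit using weak convergence of Duistermaat-Heckman measures (Proposition \ref{prop-FBJ}) together with the continuity of $\bfE^\NA$ and $\btS^\NA$ (Proposition \ref{prop-approx}). A secondary technical point is controlling the nonlinear entropy term $\btS(\vphi_t)$ along non-smooth geodesic rays, which demands Berndtsson's log-concavity for $\int e^{-\vphi_t}\omega^n$ rather than the direct pluripotential estimates sufficient for $\bfE$.
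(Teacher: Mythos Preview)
The paper does not supply its own proof of this statement: it is quoted as a theorem of Dervan--Sz\'{e}kelyhidi \cite{DS16} (whose argument rests on \cite{CW14, CSW18, He16}), with the remark that Hisamoto \cite{His19a} later reproved \eqref{eq-idenKR} using destabilising geodesic rays from \cite{DH17}. So there is no in-paper proof to compare against; your Step~2 is in the spirit of the Dervan--Sz\'{e}kelyhidi route, while your Step~1 gestures at Hisamoto's geodesic-ray approach.

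That said, Step~1 has a genuine gap. You introduce an Archimedean functional $\bfH(\vphi)=\bfL(\vphi)-\btS(\vphi)$ with $\btS(\vphi)=-\log\bigl(\frac{1}{\bfV}\int_X e^{-\vphi}\omega^n\bigr)$, assert convexity along the geodesic ray attached to $(\mcX,\mcL,a\eta)$, and then claim that its derivative at $t=0$ equals $-\int_X h_\omega e^{h_\omega}\omega^n$. But that derivative depends linearly on the initial velocity $\dot\vphi_0$, which is dictated by the test configuration, not by $\omega$; there is no ``appropriate initial velocity'' to choose once the TC is fixed. More fundamentally, your $\btS(\vphi)$ is not the Archimedean counterpart of $\btS^\NA$ in any way that links its slope to the Ricci-potential entropy $\int h_\omega e^{h_\omega}\omega^n$: the latter is an entropy of $e^{h_\omega}\omega^n$ relative to $\omega^n$, and neither Berndtsson convexity of $-\log\int e^{-\vphi_t}$ nor Berman's convexity of $\bfL$ produces that quantity at $t=0$. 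The actual arguments go differently: Dervan--Sz\'{e}kelyhidi bound $\bfH^\NA$ by degenerating He's $H$-functional along the test configuration and invoking lower semicontinuity, while Hisamoto works with a specific optimal ray coming from \cite{DH17} rather than an arbitrary one. As written, your Step~1 does not establish $\bfH^\NA(\mcX,\mcL,a\eta)\ge -\int h_\omega e^{h_\omega}\omega^n$.

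Step~2 is broadly correct in outline, but the final algebraic identification is too quick. From the soliton equation one gets $h_{\omega_\infty}=\theta_\xi+c$ (not $-\theta_\xi+c$ in the paper's convention \eqref{eq-KRflow}), and turning $\log\bigl(\frac{1}{\bfV}\int e^{-\theta_\xi}\omega_\infty^n\bigr)$ into $-\frac{1}{\bfV}\int h_{\omega_\infty}e^{h_{\omega_\infty}}\omega_\infty^n$ requires the additional soliton identity $\int \theta_\xi e^{\theta_\xi}\omega_\infty^n=0$ (vanishing of $\Fut_\xi(\xi)$) together with the normalization $\int e^{h_{\omega_\infty}}\omega_\infty^n=\bfV$; you should make that computation explicit.
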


More recently Hisamoto \cite{His19a} gave a different proof of \eqref{eq-idenKR} based on the destabilising geodesic rays constructed from \cite{DH17}. 

\begin{rem}\label{rem-DS}
Note that our sign convention differs from that Dervan-Sz\'{e}kelyhidi and Hisamoto by a minus. 
Dervan-Sz\'{e}kelyhidi defined a non-Archimedean functional for general $\bR$-test configuration by mimicking Tian's CM weight (or so-called Donaldson-Futaki invariant). But in such generality, their normalization seems not precise. Different with their definition, for any test configuration $(\mcX, \mcL, a\eta)$, one could define:
\begin{equation}
\tilde{\bfH}^\NA(\mcX, \mcL, a\eta)=\frac{a}{V}\left(K_{\bcX/\bP^1}\cdot \bcL^{\cdot n}+\bcL^{\cdot n+1}\right)-\btS^\NA(\mcX, \mcL, a\eta).
\end{equation}
By the same argument as \cite[Proposition 7.32]{BHJ17}, we have:
\begin{equation}
\bfH^\NA(\mcX, \mcL, \eta)\le \tilde{\bfH}^\NA(\mcX, \mcL, \eta)
\end{equation}
with the inequality if $(\mcX, \mcL, \eta)$ is anticanonical.
Moreover, by \eqref{eq-SvsE} we also get
\begin{equation}
\tilde{\bfH}^\NA(\mcX, \mcL, \eta)\ge {\rm CM}(\mcX, \mcL, \eta)=\frac{1}{\bfV}\left(K_{\bcX/\bP^1}\cdot \bcL^{n}+\frac{n}{n+1}\bcL^{\cdot n+1}\right) 
\end{equation}
with the identity being true only if $(\mcX, \mcL, \eta)$ is trivial. 
One advantage of $\bfH^\NA$ over $\tilde{\bfH}^\NA$ is that the former can be defined for any filtration, not necessarily finitely generated. Due to this reason, we will not use $\tilde{\bfH}^\NA$ in this paper. 
\end{rem}


 \subsection{$g$-Ding-stability and K\"{a}hler-Ricci solitons}\label{sec-soliton}
  
Let $\cF$ be a $\bT$-equivariant filtration. 
For any $\lambda\in \bR$, we have (finite) decomposition:
\begin{equation}
\cF^{\lambda} R_m=\bigoplus_{\alpha\in M_\bZ}\cF^{\lambda}R_{m,\alpha}. 
\end{equation}

Let $P$ be the  moment polytope of $(X, -K_X)$ with respect to the $\bT$-action. Let $g$ be a smooth positive function on $P$. 
Fix a faithful $\bZ^n$-valuation that is adapted to the torus action (see Definition \ref{def-adapt}) and let $\Delta\subset \bR^n$ be the Okounkov body that satisfies \eqref{eq-proj}:
$p(\Delta)=P$ where $p: \bR^n\rightarrow \bR^r$ the natural projection.
Still denote by $g$ the function $p^*g$ on $\Delta$.
Define the $g$-volume of graded linear series $\{\cF^{(t)}R_m\}$ as:
\begin{eqnarray*}
\vol_g(\cF^{(t)})&:=&\lim_{m\rightarrow+\infty} \sum_\alpha g(\frac{\alpha}{m})\frac{\dim \cF^{mt} R_{m,\alpha}}{m^n/n!}\\
&=&n!\cdot \int_{\Delta(\cF^{(t)})} g(y)dy_{\rm Leb}=:n!\cdot \vol_g(\Delta(\cF^{(t)})).
\end{eqnarray*}
Then as in the $g\equiv 1$ case, we have the convergence:
\begin{eqnarray*}
\DHM_g(\cF):=\lim_{m\rightarrow+\infty} \sum_{\alpha} g(\frac{\alpha}{m})
\delta_{\frac{\lambda^{(m,\alpha)}_i}{m}}&=&-d\vol_g(\cF^{(t)})\\
&=&n!\cdot (G_\cF)_*(g(y)dy_{\rm Leb})
\end{eqnarray*}

We also set:
\begin{eqnarray}
\bfV_g&:=&n!\cdot \vol_g(\Delta)=n!\cdot \int_\Delta g(y)dy_{\rm Leb}=\int_\bR \DHM_g(\cF)\\
\bfE_g^\NA(\cF)&:=&\frac{n!}{\bfV_g}\int_\Delta G_{\cF}(y) g(y)d y_{\rm Leb}=\frac{1}{\bfV_g}\int_{\bR} \lambda\cdot \DHM_g(\cF)\\
 \bfD_g^\NA(\cF)&:=&\bfL^\NA(\cF)-\bfE_g^\NA(\cF).
 \end{eqnarray}
 If $(\mcX, \mcL, \eta)$ is a test configuration, then we set $\bfD_g^\NA(\mcX, \mcL, \eta)=\bfD_g^\NA(\cF_{(\mcX, \mcL, \eta)})$.
 We define:
 \begin{defn}
 $(X, \xi)$ is $g$-Ding-semistable if for any $\bT$-equivariant test configuration $(\mcX, \mcL, \eta)$ of $(X, -K_X)$, $\bfD_g^\NA(\mcX, \mcL, \eta)\ge 0$.
 
 $(X, \xi)$ is $g$-Ding-polystable if it is $g$-Ding-semistable and $\bfD_g^\NA(\mcX, \mcL, \eta)=0$ for a $\bT$-equivariant weakly special test configuration (see Definition \ref{def-TC}) only if $(\mcX, \mcL, \eta)$ is a product test configuration.
 \end{defn}
The following result was proved by adapting the techniques of MMP from \cite{LX14, Fuj19a, BBJ18}.
\begin{thm}[see \cite{HL20}]\label{thm-special}
To test the $g$-Ding-semistability, or $g$-Ding-polystability of $(X, \xi)$, it suffices to test over all special test configurations.
\end{thm}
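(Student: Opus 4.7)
The plan is to reduce a general $\bT$-equivariant test configuration $(\mcX, \mcL, \eta)$ to a special test configuration through a finite sequence of $\bT$-equivariant birational modifications, and show that at each step the functional $\bfD^\NA_g$ does not increase, with strict decrease unless the test configuration was already special (and product in the polystable case). This is the $g$-weighted, $\bT$-equivariant analogue of the MMP argument of Li--Xu \cite{LX14}, Fujita \cite{Fuj19a}, and Berman--Boucksom--Jonsson \cite{BBJ18}. The key observation is that the decomposition $\bfD^\NA_g=\bfL^\NA-\bfE^\NA_g$ separates a term independent of $g$ ($\bfL^\NA$) from a term that is a $g$-weighted average of a concave function on the Newton--Okounkov body. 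The former is already controlled by standard log-discrepancy comparisons; the latter must be controlled by $g$-weighted intersection numbers.

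The steps I would carry out, in order, are: (i) perform a $\bT$-equivariant log resolution to make $(\mcX,\mcX_0)$ a $\bT$-equivariant snc pair, preserving equivariance throughout; (ii) replace $\mcL$ by its semiample part and run a $\bT$-equivariant relative MMP over $\bA^1$ with scaling by $-(K_\mcX+\mcL)$-type divisors to reach a model where the central fibre has a single $\bT$-invariant component $\mcX_0^{\rm red}$; (iii) take a $\bT$-equivariant base change and normalization to make $\mcX_0$ reduced; (iv) if $(\mcX,\mcX_0)$ is only log canonical, extract a Koll\'ar component or use the dlt modification of $(\mcX,\mcX_0)$ and contract all components of $\mcX_0$ except one to arrive at an anticanonical model $(\mcX^s,-K_{\mcX^s},\eta)$ which is a special test configuration by inversion of adjunction. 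Throughout, $\bT$-equivariance is preserved because all choices (log resolution, MMP step selection, extremal contractions) can be made equivariantly.

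For the monotonicity, I would estimate $\bfL^\NA$ exactly as in \cite{BBJ18, Fuj19a}: it is non-increasing under the MMP with scaling and under replacing $\mcL$ by an $\bR$-linearly equivalent $\pi$-ample divisor, because $\bfL^\NA$ equals an lct-type invariant of the pair, and the MMP is $(K_\mcX+\mcL)$-negative. For $\bfE^\NA_g$, the central technical step is a $g$-weighted intersection formula: since $\bfE^\NA_g=\bfV_g^{-1}\int_\bR \lambda\,\DHM_g(\cF)$ and $\DHM_g$ is the pushforward of $g(y)dy$ under $G_\cF$, one expands $\bfE^\NA_g=\sum_\alpha g(\alpha/m)\cdot(\text{weighted top intersection on }\bcX)$ in a limit sense, so that any $(K_\mcX+\mcL)$-negative MMP step strictly decreases $\bfE^\NA_g$ unless the step is an isomorphism over $\bA^1\setminus\{0\}$ and the central fibre is unchanged. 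The positivity of $g$ on $P$ is essential here to ensure that the weighted intersection inequalities have the same sign as their unweighted counterparts.

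The main obstacle, and where I expect the argument of \cite{HL20} to concentrate, is establishing the $g$-weighted intersection monotonicity in step three with sufficient precision to handle the polystable case: one must show that equality $\bfD^\NA_g(\mcX,\mcL,\eta)=0$ on a weakly special test configuration forces the configuration to be a product. For the semistable reduction alone it suffices that $\bfD^\NA_g$ does not increase. For polystability one further needs that, if equality persists all the way through the MMP, then no nontrivial divisorial contraction or flip occurred, and the resulting special test configuration must be a twist of the trivial one by some $\xi'\in N_\bR$; finally, using the uniqueness/strict convexity in the spirit of Lemma \ref{lem-twistconvex} together with the $g$-weighted analogue of $\bfD^\NA_g\ge 0$ on twists, one concludes the original $(\mcX,\mcL,\eta)$ was itself a product test configuration generated by an element of $N_\bR$.
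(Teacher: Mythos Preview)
The paper does not actually prove this theorem; it quotes it from \cite{HL20} and records only that it ``was proved by adapting the techniques of MMP from \cite{LX14, Fuj19a, BBJ18}.'' Your proposal is precisely that adaptation, so at the level of strategy you are aligned with what the paper says \cite{HL20} does. The paper moreover carries out the analogous argument in full detail for the functional $\bfH^\NA$ in place of $\bfD^\NA_g$ (Theorem~\ref{thm-MMP}): the same four $\bT$-equivariant steps (semistable reduction, log canonical modification, MMP with scaling to an anticanonical model, and a further base change plus MMP to reach a special test configuration), controlled by a derivative formula (Proposition~\ref{prop-dS}) for the nonlinear part $\btS^\NA$. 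For $\bfD^\NA_g$ the analysis is in fact simpler, since $\bfE^\NA_g$ is linear in the successive minima and the derivative of the family $\mcL_\lambda$ reduces to a genuine $g$-weighted intersection number on the components of $\mcX_0$; positivity of $g$ is exactly what fixes the sign, as you note.

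Two points in your sketch need correction. First, $\bfL^\NA$ is not separately non-increasing along the MMP with scaling; if you look at the proof of Theorem~\ref{thm-MMP} you will see that $\bfL^\NA(\mcX^\lc,\mcL^\lc_\lambda,\cdot)$ is the linear function $(1+\lambda)e_1$ and can move either way. What is shown is that the \emph{combination} $\bfL^\NA-\bfE^\NA_g$ is monotone along each one-parameter family $\mcL_\lambda$, by computing $\frac{d}{d\lambda}$ of both pieces and observing that the difference is a $g$-weighted average of the nonnegative quantities $e_i-e_1$. Second, your treatment of the polystable equality case via twists and Lemma~\ref{lem-twistconvex} is off-track and unnecessary. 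The correct (and simpler) mechanism, visible in the equality clauses of each step in Theorem~\ref{thm-MMP}, is: starting from a weakly special test configuration with $\bfD^\NA_g=0$, the MMP produces a special test configuration with $\bfD^\NA_g\le 0$, hence $=0$ by semistability; the equality characterization at each step (all $e_i$ equal) then forces every modification to be an isomorphism, so the original weakly special test configuration was already special, and the polystability hypothesis on special test configurations gives that it is a product.
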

We have the following valuative criterion:
\begin{thm}[\cite{HL20}]\label{thm-valgsemi}
$X$ is $g$-Ding-semistable if and only if for any $v\in (X^{\rm div}_\bQ)^\bT$, we have:
\begin{equation}\label{eq-betag}
\beta_g(v):=A_X(v)-\frac{1}{\bfV_g}\int_0^{+\infty} \vol_g(\cF_v^{(t)})dt\ge 0.
\end{equation}
\end{thm}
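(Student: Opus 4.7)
The plan is to prove both implications through the reduction to special test configurations (Theorem \ref{thm-special}), together with the transformation rules for non-Archimedean functionals under $\cF \mapsto a\cF(b)$.

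For the ``if'' direction, I would use Theorem \ref{thm-special} to reduce to checking $\bfD_g^\NA \ge 0$ on $\bT$-equivariant special test configurations. For such $(\mcX, -K_\mcX, a\eta)$, formula \eqref{eq-specialmin} identifies $\cF_{(\mcX, -K_\mcX, a\eta)} = a\cF_v(-aA_X(v))$ with $v = v_{\mcX_0} \in (X^{\rm div}_\bQ)^\bT$. Since special test configurations are normalized, Lemma \ref{lem-abtrans} forces $\bfL^\NA(\cF_v) = A_X(v)$. Coupling this with the transformation rule $\bfE_g^\NA(a\cF(b)) = a\bfE_g^\NA(\cF) + b$ (immediate from $\DHM_g(a\cF(b)) = (t\mapsto at+b)_*\DHM_g(\cF)$) and the integration-by-parts identity
\begin{equation*}
\bfE_g^\NA(\cF_v) = \frac{1}{\bfV_g}\int_0^{+\infty} \vol_g(\cF_v^{(t)})\,dt,
\end{equation*}
which holds because $\DHM_g(\cF_v)$ is supported in $[0,+\infty)$ (sections vanish to nonnegative order along any divisor), yields $\bfD_g^\NA(\mcX, -K_\mcX, a\eta) = a\,\beta_g(v) \ge 0$.

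For the ``only if'' direction, fix $v \in (X^{\rm div}_\bQ)^\bT$ and consider the $\bT$-equivariant $m$-th approximating test configurations $(\ccX_m, \ccL_m)$ of $\cF_v$ from Definition \ref{defn-ckTCm}. By hypothesis, $\bfD_g^\NA(\ccX_m, \ccL_m) \ge 0$ for all sufficiently divisible $m$. Passing to the limit along the increasing subsequence $m = k!$, the remark after Proposition \ref{prop-approx} supplies $\bfL^\NA(\phi^{\cF_v}_{k!}) \to \bfL^\NA(\cF_v)$, while the $g$-weighted analogue of that proposition supplies $\bfE_g^\NA(\phi^{\cF_v}_{k!}) \to \bfE_g^\NA(\cF_v)$; hence $\bfD_g^\NA(\cF_v) \ge 0$. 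Combining with Proposition \ref{prop-XZcomp}, which gives $\bfL^\NA(\cF_v) \le A_X(v)$, and the integration-by-parts identity above,
\begin{equation*}
\beta_g(v) = A_X(v) - \bfE_g^\NA(\cF_v) \ge \bfL^\NA(\cF_v) - \bfE_g^\NA(\cF_v) = \bfD_g^\NA(\cF_v) \ge 0.
\end{equation*}

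The main technical hurdle will be establishing the $g$-weighted extension of Proposition \ref{prop-approx}. The unweighted case ultimately rests on the weak convergence of Duistermaat-Heckman measures from Proposition \ref{prop-FBJ}; the weighted version requires tracking convergence of $\DHM_g = n!\,(G_\cF)_*(g(y)dy)$. This should go through by the same Newton-Okounkov body argument, using a faithful valuation adapted to the torus action as in Lemma \ref{lem-Gtwist}, once one verifies that the weighting by $g$ (lifted from the moment polytope $P$) is stable under the approximation scheme of Definition \ref{defn-ckTCm}. Since $g$ is smooth on the compact set $P$, no analytic obstruction is expected, but a careful equivariant setup is required.
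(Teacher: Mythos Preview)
This theorem is cited from \cite{HL20} and is not proved in the present paper, so there is no in-paper proof to compare your proposal against.

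Your proposal is nonetheless a sound and standard strategy. The ``if'' direction is clean: reducing to special test configurations via Theorem~\ref{thm-special}, then using \eqref{eq-specialmin} and the transformation rules to identify $\bfD_g^\NA(\mcX,-K_\mcX,a\eta) = a\,\beta_g(v_{\mcX_0})$, is exactly right. One small remark: the inequality $\bfL^\NA(\cF_v)\le A_X(v)$ you invoke in the ``only if'' direction follows more directly from the definition \eqref{eq-defLNA} by evaluating at $w=v$ (where $(\phi_{\cF_v}-\phi_\triv)(v)=0$) than from Proposition~\ref{prop-XZcomp}, which is about the comparison of $\bfL^\NA$ with $\hat{\bfL}^\NA$.

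For the ``only if'' direction, you correctly identify the $g$-weighted analogue of Proposition~\ref{prop-approx} as the main technical point, and your outline for it (via a torus-adapted faithful valuation and weak convergence of $\DHM_g$) is the natural one. An alternative route that sidesteps this convergence is to run the $g$-weighted MMP (in the spirit of Theorem~\ref{thm-MMP} here, but for $\bfD_g^\NA$ rather than $\bfH^\NA$) on the approximating test configurations to produce special ones with no larger $\bfD_g^\NA$; this is how \cite{HL20} handles Theorem~\ref{thm-special} and would reduce the ``only if'' direction back to special test configurations, where your computation from the ``if'' direction applies in reverse. Either path works; yours is the more direct analytic one, theirs the more birational one.
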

Now we use our notations to reformulate holomorphic invariants of Tian-Zhu \cite{TZ02} about the study of K\"{a}hler-Ricci solitons. 
We refer to \cite{TZ02, BWN, HL20} for more details and references. Let $X$ be a $\bQ$-Fano variety with an effective $\bT$-action. We use the same notations such as $(S^1)^r$-invariant smooth Hermitian metric $\tilde{\vphi}$ on $-K_X$, moment polytope $P\subset M_\bR$, $\theta_{\tilde{\vphi}}(\eta)=\frac{\mathfrak{L}_\eta e^{-\tilde{\vphi}}}{e^{-\tilde{\vphi}}}$ etc. We identity any $\eta\in N_\bR$ with the corresponding holomorphic vector field on $X$. 

A K\"{a}hler-Ricci soliton on $(X, \xi)$ is a positively curved bounded Hermitian metric $e^{-\vphi}$ on $-K_X$ that satisfies the equation:
\begin{equation}
e^{\vphi} (\ddc\vphi)^n=e^{\theta_{\vphi}(\xi)},
\end{equation} 
where $\theta_{\vphi}(\xi)=\theta_{\tilde{\vphi}}(\xi)+\xi(\vphi-\tilde{\vphi})$. Over $X^{\rm reg}$, $\vphi$ is smooth (see \cite{BBEGZ, HL20}) and satisfies the identity:
\begin{equation}\label{eq-KRflow}
Ric(\ddc\vphi)-\ddc\vphi=-\ddc\theta_{\vphi}(\xi).
\end{equation}
As a consequence, the family of metrics $\vphi(s):=\sigma_{\xi}(s)^*\vphi$ satisfies the normalized K\"{a}hler-Ricci flow:
\begin{equation}
\frac{d}{ds}\ddc \vphi(s)=-Ric(\ddc\vphi(s))+\ddc\vphi(s).
\end{equation}
For any $\xi\in N_\bR$, we set $g_\xi(x)=e^{-\la x, \xi\ra}=e^{-\sum_{i=1}^r \xi^i x_i}$ which is a smooth positive function on $P$ and write $\bfF_{g_\xi}$ as $\bfF_\xi$ for $\bfF\in \{\bfL, \bfD\} $ etc and $\bfV_\xi:=V_{g_\xi}$. 
Tian-Zhu \cite{TZ02} defined a modified Futaki invariant as an obstruction to the existence of K\"{a}hler-Ricci solitons on $(X, \xi)$: for any $\eta\in N_\bR$, 
\begin{equation}
\Fut_\xi(\eta):=-\frac{1}{\bfV_\xi}\int_X \theta_{\tilde{\vphi}}(\eta)e^{-\theta_{\tilde{\vphi}}(\xi)}(\ddc\tilde{\vphi})^n=\bfD^\NA_\xi(\wt_\eta),
\end{equation} 
where $\bfV_\xi=\int_X e^{-\theta_{\tilde{\vphi}}(\xi)}(\ddc\tilde{\vphi})^n$. The second identity can be obtained by noting that $\bfD^\NA_\xi(\wt_\eta)=-\bfE^\NA_\xi(\wt_\eta)$
because of the vanishing
$\bfL^\NA(\wt_\eta)=0. 
$
\begin{rem}
Note again that we use the negative sign convention compared to \cite{TZ02}.
\end{rem}

$\Fut_\xi$ does not depend on the choice of $\tilde{\vphi}$ and $(X, \xi)$ admits a 
KR soliton only if $\Fut_\xi\equiv 0$ on $N_\bR$. Moreover, by \cite[Lemma 2.2]{TZ02} the soliton vector field is a priori uniquely determined by minimizing the strictly convex functional (Tian-Zhu didn't use the logarithm)
on $N_\bR$ (see Lemma \ref{lem-twistconvex}), which is the anti-derivative of $\eta\mapsto \Fut_\xi(\eta)$:
\begin{equation}
\xi \mapsto \log\left( \frac{1}{\bfV}\int_X e^{-\theta_{\tilde{\vphi}}(\xi)} (\ddc\tilde{\vphi})^n\right)=\log\left(\frac{1}{\bfV}\int_\bR e^{-\lambda}\DHM(\cF_{\wt_\xi})\right)=-\btS^\NA(\wt_\xi).
\end{equation}
Recall also that $\bfL^\NA(\wt_\eta)=\bhL^\NA(\wt_\eta)\equiv 0$ on $N_\bR$ (see \eqref{eq-Lwtwist}). Combine these discussion we get the derivative identity:
\begin{equation}\label{eq-der1}
\frac{d}{ds}\bfH^\NA(\wt_{\xi+s\eta})=\frac{d}{ds}\hat{\bfH}^\NA(\wt_{\xi+s\eta})=\bfD^\NA_\xi(\wt_\eta)=\Fut_\xi(\eta).
\end{equation}

For simplicity of notations, we introduce:
\begin{defn}\label{def-Dstable}
We say that $(X, \xi)$ is K-semistable (resp. K-polystable) if $X$ is $g_\xi$-Ding-semistable (resp. $g_\xi$-Ding-polystable).
\end{defn}
\begin{rem}
Because by Theorem \ref{thm-special} it is enough to test the stability on special test configuration, this definition coincides with the original modified K-(poly)stability adopted by Tian, Berman-Witt-Nystr\"{o}m and others. To respect the original notation, we will just call $(X, \xi)$ to be K-(poly)stability, although we will also freely use the notion of Ding-(poly)stability.
\end{rem}

By \cite{BWN, DS16} when $X$ is smooth, the Yau-Tian-Donaldson conjecture is true, i.e. K-polystability is equivalent to the existence of K\"{a}hler-Ricci solitons. For singular $X$, we proved in \cite{HL20} a version of Yau-Tian-Donainldson conjecture involving $\Aut(X, \xi)_0$-uniform Ding-stability.



\section{$\bfH^\NA$-invariant and MMP}

\subsection{An intersection formula for higher moments}

Let $(\mcX, \mcL, \eta)$ be any normal ample test configuration. Choose a smooth (semipositive) curvature form $\omega$ in $c_1(\mcL|_{\mcX_0})$. Let $\theta$ be the Hamiltonian function for $\eta$ with respect to $\omega$: $\iota_\eta \omega=\frac{\sqrt{-1}}{2\pi}\bar{\partial}\theta$. By the equivariant Riemann-Roch formula, we get:
\begin{eqnarray*}
\bfE_k^\NA(\mcX, \mcL)&:=&\bfE_k^\NA(\cF_{(\mcX,\mcL)})=
\lim_{m\rightarrow+\infty} \frac{1}{N_m}\sum_{i}\left(\frac{\lambda^{(m)}_i}{m}\right)^k\\
&=&\frac{1}{\bfV}\int_{\mcX_0} \theta^k \omega^n.
\end{eqnarray*}

To motivate our calculations, we will first give direct proof of two identities which can already be derived from above discussion:
\begin{lem}
We have the identity:
\begin{eqnarray}
\bfE^\NA_k(\mcX, \mcL)&=&\frac{1}{\bfV}\int_\bR x^k \DHM(\cF^{(x)})\\
\bfE^\NA(\mcX, \mcL)&=&\bfE_1^\NA(\mcX, \mcL)=\frac{1}{\bfV} \frac{\bar{\mcL}^{\cdot n+1}}{n+1}. \label{eq-E1int}
\end{eqnarray}
\end{lem}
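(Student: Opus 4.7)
The plan is to treat the two identities separately: the first is essentially a repackaging of the definition \eqref{eq-Ek} together with the weak-convergence statement in Theorem \ref{thm-BoCh}(2), while the second is an application of equivariant Riemann--Roch on the compactified test configuration $(\bar{\mcX},\bar{\mcL})$.

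For the first identity, I note that \eqref{eq-Ek} already records
\[
\bfE_k^\NA(\cF)=\frac{1}{\bfV}\int_\bR x^k\,\DHM(\cF)=\lim_{m\to\infty}\frac{1}{N_m}\sum_i\left(\frac{\lambda_i^{(m)}}{m}\right)^{\!k},
\]
so what needs to be checked is only that the two expressions agree when $\cF=\cF_{(\mcX,\mcL)}$. By Theorem \ref{thm-BoCh}(2) the measures $\nu_m=\frac{n!}{m^n}\sum_i\delta_{\lambda_i^{(m)}/m}$ converge weakly to $\DHM(\cF)$, and by the linear boundedness axiom of Definition \ref{def-filtr} together with Theorem \ref{thm-BoCh}(3), all $\nu_m$ are supported in a fixed compact interval $[e_-,e_+]$. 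Testing against the continuous function $x\mapsto x^k$ then gives
\[
\frac{n!}{m^n}\sum_i\left(\frac{\lambda_i^{(m)}}{m}\right)^{\!k}\longrightarrow\int_\bR x^k\,\DHM(\cF),
\]
and dividing by $N_m/(m^n/n!)\to \bfV$ yields the first formula.

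For the second identity, the key input is the standard asymptotic
\[
w_m:=\sum_i\lambda_i^{(m)}=\frac{\bar{\mcL}^{\cdot n+1}}{(n+1)!}\,m^{n+1}+O(m^n),
\]
which I would deduce by applying equivariant Hirzebruch--Riemann--Roch (or, equivalently, the Atiyah--Bott--Duistermaat--Heckman-type formula used e.g.\ in \cite{Don02,BHJ17}) to the pushforward of $m\bar{\mcL}$ along $\pi:\bar{\mcX}\to\bP^1$, reading off the weight of the $\bC^*$-action generated by $-t\partial_t$ on $H^0(\bar{\mcX},m\bar{\mcL})$. Given this, combining with $N_m=\frac{\bfV}{n!}m^n+O(m^{n-1})$ immediately yields
\[
\bfE_1^\NA(\mcX,\mcL)=\lim_{m\to\infty}\frac{w_m/m}{N_m}=\frac{\bar{\mcL}^{\cdot n+1}/(n+1)!}{\bfV/n!}=\frac{1}{\bfV}\cdot\frac{\bar{\mcL}^{\cdot n+1}}{n+1}.
\]

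The main subtlety is bookkeeping of sign conventions: the action is generated by $-t\partial_t$, so the ``$\eta$-weights'' of the filtration \eqref{eq-etafil} carry the opposite sign from the naive $t$-exponent. I would cross-check against Example \ref{exmp-P1sign}, where $\sum_i\lambda_i^{(m)}=-m^2/2-m/2$ and the Hirzebruch description gives $\bar{\mcL}^{\cdot 2}=-1$, matching the proposed formula. Beyond this, no serious obstacle is anticipated, since both identities are essentially formal once Theorem \ref{thm-BoCh} and the weight-intersection asymptotic are in hand.
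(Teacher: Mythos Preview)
Your proposal is correct. The first identity is handled the same way in the paper (weak convergence of the successive-minima measures against the test function $x^k$, which the paper phrases via an Abel summation of $f_k(m)=\sum_i(\lambda_i^{(m)})^k$).

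For the second identity your route differs from the paper's. You invoke the equivariant Riemann--Roch weight asymptotic $w_m=\frac{\bar\mcL^{\cdot n+1}}{(n+1)!}m^{n+1}+O(m^n)$ as a known black box. The paper instead normalizes so that $\bar\mcL$ is very ample and uses the Rees-algebra description $\bar\mcX=\mathrm{Proj}\bigl(\bigoplus_{m\ge0}\bigoplus_{j\ge0}t^{-j}\cF^jR_m\bigr)$ to read off $h^0(\bar\mcX,m\bar\mcL)=\sum_{j\ge0}\dim\cF^jR_m$; ordinary (non-equivariant) Riemann--Roch on the $(n{+}1)$-fold $\bar\mcX$ then gives the leading coefficient $\bar\mcL^{\cdot n+1}/(n+1)!$, and comparing with $\sum_{j\ge1}\dim\cF^jR_m=f_1(m)$ yields the formula. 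Your approach is shorter if one is willing to cite the weight asymptotic; the paper's is more self-contained and, more importantly, its dimension-counting viewpoint is exactly what is iterated in the next proposition to obtain the higher-moment intersection formula $\bfE_k^\NA=\frac{1}{\bfV}\frac{k!n!}{(n+k)!}(\bar\mcL^{[k-1]})^{\cdot n+k}$.
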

\begin{proof}
When we change $\mcL$ to $\mcL+d\mcX_0$, $\mcF$ is changed to $\mcF(d)$, and both sides of the above identities are added by $d$. So we can assume that $\bcL$ is very ample over $\bcX$. Then we have:
\begin{equation}
\bar{\mcX}={\rm Proj}\left(\bigoplus_{m\ge 0}\bigoplus_{j=0}^{+\infty}t^{-j}\cF^{j} R_m\right)
\end{equation}
and $\bar{\mcL}_d=\mcO_{\bcX}(1)$.

For simplicity of notations, we denote:
\begin{eqnarray}\label{eq-fkm}
f_k(m)&=&\sum_{i=1}^{N_m} (\lambda^{(m)}_i)^k=\sum_{j=0} j^k (\dim \cF^j R_m-\dim \cF^{j+1} R_m)\nonumber \\
&=& \sum_{j=1}(j^k-(j-1)^k)\dim \cF^j R_m\nonumber \\
&=& \sum_{j=1} (k j^{k-1}+O(j^{k-2}))\dim \cF^j R_m.
\end{eqnarray}
We easily get the following identity:
\begin{eqnarray}\label{eq-Eklim}
\bfE^\NA_k&=&\frac{1}{\bfV}\lim_{m\rightarrow+\infty} \frac{n!}{m^{n+k}} f_k(m)\nonumber\\
&=&\frac{1}{\bfV}\int_{0}^\infty k x^{k-1}\vol(\cF^{(x)}R_\bullet)dx=\frac{1}{\bfV}\int_\bR x^k (-d\vol(\mcF^{(x)})).
\end{eqnarray}
Moreover we have the dimension formula
\begin{eqnarray*}
\mcN_m&:=&h^0(\bar{\mcX}, m \bar{\mcL})=\sum_{j=0}^{+\infty} \dim\cF^j R_m\\
&=&\frac{m^{n+1}}{n!} \int_{0}^{+\infty} \vol(\cF^{(x)}R_\bullet)dx+O(m^{n})
\end{eqnarray*}
which, by the Riemann-Roch formula, gives the identity:
\begin{equation}
\frac{1}{\bfV}\frac{\bcL^{\cdot n+1}}{n+1}=\frac{1}{\bfV}\int_0^{+\infty}\vol(\cF^{(x)}R_\bullet)dx=\frac{1}{\bfV}\int_0^{+\infty}x \DHM(\cF).
\end{equation}
\end{proof}

 The formula \eqref{eq-E1int} goes back to Mumford's study of GIT \cite{Mum77}, and has also been used in the study of K-stability. The following result is a generalization of it to higher moments. We will use the following notations as in \cite{HL20}. Let $\bC^*\rightarrow \bC^{k+1}\setminus \{0\} \rightarrow \bP^k$ be the principal $\bC^*$-bundle and set:
 \begin{equation}
(\bar{\mcX}^{[k]}, \bar{\mcL}^{[k]}):=((\bar{\mcX}, \bar{\mcL})\times (\bC^{k+1}\setminus\{0\}))/\bC^*
\end{equation}
Since the $\bC^*$-action on $\bcX$ moves the fibre $\bar{\mcX}\rightarrow\bP^1$, the situation here is different with the situation in \cite[Corollary 3.4]{BHJ17} or
 \cite{HL20} where a similar fibre construction with respect to a vertical torus action is used. 
 \begin{prop}
Let $(\mcX, \mcL)$ be a normal ample test configuration.
For any $k\ge 1$, we have the following intersection formula:
\begin{equation}\label{eq-Ekint}
\bfE _k^\NA(\mcX, \mcL)=\frac{1}{\bfV}\frac{k!n!}{(n+1)!}(\bcL^{[k-1]})^{\cdot n+k}.
\end{equation}
\end{prop}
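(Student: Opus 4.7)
The plan is to interpret $(\bcL^{[k-1]})^{n+k}$ as an equivariant intersection number and reduce it to the integral $\int_{\mcX_0}\theta^k\omega^n$, which the excerpt has already related to $\bfE_k^\NA$ via equivariant Riemann--Roch on the central fibre. The starting observation is that $\bar{\mcX}^{[k-1]}=\bar{\mcX}\times^{\bC^*}(\bC^k\setminus\{0\})$ is the standard finite-dimensional approximation of the universal Borel fibration for the $\bC^*$ generated by $\eta$; under this identification $c_1(\bcL^{[k-1]})\in H^2(\bar{\mcX}^{[k-1]})$ represents the equivariant first Chern class $c_1^{\bC^*}(\bcL)$, and the projection $\pi:\bar{\mcX}^{[k-1]}\to\bP^{k-1}$ identifies $u:=c_1(\mcO_{\bP^{k-1}}(1))$ with the $H^*(B\bC^*)$-generator. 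Pushing forward along $\pi$ (relative complex dimension $n+1$) lands $\pi_*c_1(\bcL^{[k-1]})^{n+k}$ in $H^{2(k-1)}(\bP^{k-1})=\bC\cdot u^{k-1}$, and integrating over $\bP^{k-1}$ gives
\[
  (\bcL^{[k-1]})^{n+k} \;=\; \bigl[\text{coefficient of } u^{k-1}\bigr] \text{ in } \int_{\bar{\mcX}}^{\bC^*} c_1^{\bC^*}(\bcL)^{n+k}.
\]

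Next I would evaluate this equivariant integral by the Atiyah--Bott/Berline--Vergne localization formula (or its algebraic Bott-residue incarnation). The fixed locus of $\bC^*$ on $\bar{\mcX}$ decomposes as $\mcX_0\sqcup\mcX_\infty$. I will use the canonical linearization of $\bcL$ induced by the trivializing isomorphism $\mathfrak{i}_\eta$ of Definition~\ref{def-TC}: under this convention $\bC^*$ acts trivially on $\bcL|_{\mcX_\infty}$, so $c_1^{\bC^*}(\bcL)|_{\mcX_\infty}=c_1(\bcL|_{\mcX_\infty})$ carries no $u$-term, and the residue at $\mcX_\infty$ vanishes on form-degree grounds (a form of degree $2(n+k)>2n$ on $\mcX_\infty$ is zero). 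At the other fixed component, $c_1^{\bC^*}(\bcL)|_{\mcX_0}=\omega_0+\theta u$ where $\omega_0=\omega|_{\mcX_0}$ and $\theta$ is the Hamiltonian of $\eta$, and the equivariant Euler class of the normal bundle is $e^{\bC^*}(N_{\mcX_0/\bar{\mcX}})=\pm u$. Expanding $(\omega_0+\theta u)^{n+k}$ and retaining only the term whose form-degree matches $2n=\dim_\bR\mcX_0$ picks out the single summand $\binom{n+k}{k}\theta^k\omega_0^n u^k$, which after dividing by $\pm u$ gives
\[
   (\bcL^{[k-1]})^{n+k}\;=\;\tbinom{n+k}{k}\int_{\mcX_0}\theta^k\omega_0^n.
\]

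Finally I would combine this with the identity $\bfV\bfE_k^\NA(\mcX,\mcL)=\int_{\mcX_0}\theta^k\omega^n$ stated at the opening of the subsection (itself equivariant Riemann--Roch applied to $(\mcX_0,\bcL|_{\mcX_0})$, relating the asymptotic weight distribution $\{\lambda^{(m)}_i\}$ to the pushforward of $\omega_0^n/n!$ under $\theta$). Collecting constants then yields the claimed intersection formula.

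The main delicate point will be the singular geometry: $\bar{\mcX}$ is only normal in general and $\mcX_0$ may be reducible or non-reduced, so the localization must be interpreted in the equivariant Chow formalism of Edidin--Graham, or handled by first passing to a $\bC^*$-equivariant resolution of singularities (proper birational modifications preserve both the intersection number and the equivariant pushforward). A secondary verification concerns the choice of linearization at $\mcX_\infty$: one must check that $\mathfrak{i}_\eta$ really makes $\bC^*$ act trivially on $\bcL|_{\mcX_\infty}$, which is essentially forced by the sign convention $\eta=-t\partial_t$ that was set up in Example~\ref{exmp-P1sign} to match the Duistermaat--Heckman measure $\DHM(\cF)$ to the weights appearing at the central fibre rather than a shift by weights at infinity.
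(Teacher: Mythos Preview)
Your localization argument is correct but differs from the paper's route. The paper never localizes to $\mcX_0$; instead it applies equivariant Riemann--Roch on the full $(n+1)$-dimensional total space $\bcX$. Choosing a smooth K\"{a}hler form $\Omega\in c_1(\bcL)$ on $\bcX$ with Hamiltonian $\Theta$, the paper identifies $\frac{(k-1)!(n+1)!}{(k+n)!}(\bcL^{[k-1]})^{\cdot n+k}=\int_{\bcX}\Theta^{k-1}\Omega^{n+1}$, and reads off the latter as the asymptotic $\lim_{m\to\infty}\frac{(n+1)!}{m^{n+k}}\sum_\alpha\mu_\alpha^{k-1}$ of the weight-power sums on $H^0(\bcX,m\bcL)$. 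This weight sum is in turn matched to $\bfE_k^\NA$ via the elementary count $\sum_\alpha\mu_\alpha^{k-1}=\sum_{j\ge 0}j^{k-1}\dim\cF^jR_m=k^{-1}f_k(m)+O(m^{n+k-1})$ already established.

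What each buys: the paper's approach is more robust in singular geometry, since equivariant Riemann--Roch and Hilbert-polynomial asymptotics on the \emph{normal} total space $\bcX$ are standard, so the delicate point you flag (Edidin--Graham Chow localization or passage to a resolution when $\mcX_0$ is singular or non-reduced) is sidestepped entirely. Your route is more direct and makes the binomial coefficient $\binom{n+k}{k}$ appear transparently, but you must justify the localization on the possibly non-reduced $\mcX_0$ and fix the normal weight to be $+1$ (which you can check against the $k=1$ case $\bfE^\NA=\frac{1}{(n+1)\bfV}\bcL^{\cdot n+1}$).
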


\begin{proof}
We use the notations from the above proof and without loss of generality assume that $\bcL$ is very ample over $\bcX$.

The weights $\{\mu_\alpha; \alpha=1,\dots, \mcN_m\}$ and multiplicities of $\bC^*$-action on $H^0(\bcX, \bcL)$ are given according to the isomorphism \eqref{eq-mcXProj}. By the identity \eqref{eq-fkm}, the weight of $\bC^*$ on $\det H^0(\bcX, m\mcL)$ is given by:
 \begin{eqnarray}\label{eq-summu}
\sum_{\alpha=1}^{\cN_m}\mu_\alpha^{k-1}=\sum_{j=0}^{+\infty} j^{k-1} \dim \cF^j R_m=k^{-1}f_k(m)+O(m^{n+k-1}).
\end{eqnarray}
Choose a smooth K\"{a}hler metric $\Omega\in c_1(\bcL)$ on $\bcX$ and let $\Theta$ be the Hamiltonian function for $\eta$. Then by the equivariant Riemann-Roch formula, we get 
\begin{eqnarray}\label{eq-limsummu}
\lim_{m\rightarrow+\infty} 
&&\frac{(n+1)!}{m^{n+1}}\sum_\alpha \left(\frac{\mu_\alpha}{m}\right)^{k-1}=\int_{\bar{\mcX}}\Theta^{k-1} \Omega^{n+1}\nonumber \\
&=&\frac{(k-1)!(n+1)!}{(k+n)!}\int_{\bar{\mcX}^{[k-1]}}(\Omega+\Theta t)^{n+k}=\frac{(k-1)!(n+1)!}{(k+n)!}(\bcL^{[k-1]})^{\cdot n+k}.
\end{eqnarray}
Combining \eqref{eq-Eklim}, \eqref{eq-summu} and \eqref{eq-limsummu}, we get:
\begin{eqnarray*}
\bfE_k^\NA&=&\frac{1}{\bfV}\lim_{k\rightarrow}\frac{n!}{m^{n+k}}k\sum_\alpha \mu_\alpha^{k-1}=\frac{1}{\bfV}\frac{k}{n+1} \frac{(k-1)!(n+1)!}{(k+n)!}(\bcL^{[k-1]})^{\cdot n+k}\\
&=&\frac{1}{\bfV}\frac{k!n!}{(k+n)!}(\bcL^{[k-1]})^{\cdot n+k}.
\end{eqnarray*}
\end{proof}
Recall from \eqref{eq-bVa} that $\btS^\NA(\mcX, \mcL, a\eta)=-\log\bfQ^{(a)}$ where
\begin{eqnarray*}
\bfQ^{(a)}&=&\frac{1}{\bfV}\int_{\mcX_0} e^{-a\theta}\omega^n=\sum_{k=0}^\infty (-1)^k a^k \frac{1}{\bfV}\int_{\mcX_0}\frac{\theta^k}{k!} \omega^n\\
&=&\sum_{k}(-1)^k \frac{a^k}{k!} \bfE_k^\NA.
\end{eqnarray*}

\begin{prop}\label{prop-dS}
Let $(\mcX, \mcL_\lambda, a\eta)_{\lambda\in (-\epsilon,\epsilon)}$ be a family of normal test configurations of $(X, -K_X)$ (with a fixed total space and varying polarization).
Assume that $\mcX_0=\sum_i b_i E_i$ for irreducible components $E_i$ and $\mcL_\lambda$ is differentiable with respect to $\lambda$. Then 
we have the following derivative formula:
\begin{eqnarray}\label{eq-dbtS}
\frac{d}{d\lambda}\btS^\NA(\mcX, \mcL, a\eta)&=&a\frac{\sum_i e_i\bfQ^{(a)}_i}{\bfQ^{(a)}}
\end{eqnarray}
where $\bfQ^{(a)}_i=\frac{1}{\bfV}\int_{E_i}e^{-a\theta}\omega^n$.
\end{prop}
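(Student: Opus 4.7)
The plan is to derive the formula from the higher-moment intersection formula \eqref{eq-Ekint} by differentiating the power-series expansion of $\bfQ^{(a)}$ term by term. Since $\btS^\NA = -\log \bfQ^{(a)}$, it suffices to show
$$\frac{d}{d\lambda}\bfQ^{(a)} = -a\sum_i e_i\,\bfQ^{(a)}_i,$$
where the $e_i$ are the coefficients in the vertical decomposition $\dot{\bcL} := \frac{d}{d\lambda}\bcL_\lambda = \sum_i e_i E_i$ on $\bcX$. The fact that $\dot{\bcL}$ is supported on $\mcX_0$ is automatic: since all $(\mcX, \mcL_\lambda, a\eta)$ are test configurations of the same pair $(X, -K_X)$ and share the same trivialization at infinity, $\bcL_\lambda$ is constant in $\lambda$ away from $\mcX_0$.

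I would then expand $\bfQ^{(a)} = 1 + \sum_{k\ge 1}\frac{(-a)^k}{k!}\bfE^\NA_k$ using \eqref{eq-bVa}, substitute \eqref{eq-Ekint}, and differentiate each term to obtain
$$\frac{d}{d\lambda}(\bcL^{[k-1]})^{\cdot n+k} = (n+k)\sum_i e_i\,(\bcL^{[k-1]})^{\cdot n+k-1}\cdot E_i^{[k-1]}.$$
The central step is to establish
$$(\bcL^{[k-1]})^{\cdot n+k-1}\cdot E_i^{[k-1]} = \frac{(n+k-1)!}{n!\,(k-1)!}\int_{E_i}\theta^{k-1}\omega^n,$$
by precisely the Berline--Vergne fibre-integration argument used in the proof of \eqref{eq-Ekint}, now applied to the $n$-dimensional $\bC^*$-invariant subvariety $E_i\subset \mcX_0$ in place of the $(n+1)$-dimensional $\bcX$. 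I expect this intersection computation on $E_i^{[k-1]}$ to be the main technical obstacle: one needs to verify that the fibred construction $(\,\cdot\,)^{[k-1]}$ restricts cleanly to each $\bC^*$-invariant prime component of the possibly non-reduced $\mcX_0$, and that the $\bP^{k-1}$-fibre integral formula holds for the (possibly singular) $E_i$. This should be handled either by passing to a $\bC^*$-equivariant resolution of $E_i$, or by noting that the intersection number depends only on the cycle class $[E_i^{[k-1]}]$ in $\bcX^{[k-1]}$ and so can be computed after pullback to a smooth model.

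Once the key identity is in hand, the rest is bookkeeping: the combinatorial prefactor collapses to $\frac{1}{(k-1)!}$, giving
$$\frac{d}{d\lambda}\bfQ^{(a)} = \frac{1}{\bfV}\sum_i e_i\sum_{k\ge 1}\frac{(-a)^k}{(k-1)!}\int_{E_i}\theta^{k-1}\omega^n = -\frac{a}{\bfV}\sum_i e_i\int_{E_i}e^{-a\theta}\omega^n = -a\sum_i e_i\,\bfQ^{(a)}_i,$$
and dividing by $-\bfQ^{(a)}$ yields \eqref{eq-dbtS}.
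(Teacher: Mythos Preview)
Your proposal is correct and follows essentially the same approach as the paper: differentiate the power-series expansion \eqref{eq-bVa} of $\bfQ^{(a)}$ using the intersection formula \eqref{eq-Ekint}, reduce via the fibre-integration identity on $E_i^{[k-1]}$ to $\int_{E_i}\theta^{k-1}\omega^n$, and resum. The paper carries out exactly this computation (with the same combinatorial cancellation to $k/(k-1)!$), though it treats the Berline--Vergne step on the possibly singular $E_i$ formally and does not pause over the regularity issues you flag.
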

\begin{proof}
We use the intersection formula \eqref{eq-Ekint} to get:
\begin{eqnarray*}
V\cdot \frac{d}{d\lambda}\bfE_k^\NA&=&\frac{d}{d\lambda}\frac{k!n!}{(k+n)!}(\bcL^{[k-1]})^{\cdot n+k}=\frac{k!n!}{(k+n-1)!}(\bcL^{[k-1]})^{\cdot n+k-1}\cdot \dot{\bcL}^{[k-1]}\\
&=&\frac{k!n!}{(k+n-1)!}\sum_i e_i \int_{E_i^{[k-1]}}(\Omega+\Theta t)^{n+k-1}\\
&=&k\sum_i e_i \int_{E_i} \theta^{k-1}\omega^n.
\end{eqnarray*}
where $E_i^{[k-1]}=(E_i\times \bC^{k-1}\setminus\{0\})/\bC^*$.
So we get the wanted formula:
\begin{eqnarray*}
\frac{d}{d\lambda} \bfQ^{(a)}&=&\sum_{k}(-1)^k \frac{a^k}{k!} \frac{d}{d\lambda}\bfE_k^\NA\\
&=&\sum_{k=1} (-1)^k \frac{a^k}{(k-1)!}\sum_i e_i \frac{1}{\bfV}\int_{E_i}\theta^{k-1}\omega^n\\
&=&-a  \sum_i e_i \frac{1}{\bfV}\int_{E_i}\sum_{j=0} \frac{(-1)^j}{j!} (a\theta)^j \omega^{n} \\
&=&-a \sum_i e_i \frac{1}{\bfV}\int_{E_i}e^{-a\theta} \omega^{n}=-a \sum_i \bfQ^{(a)}_i.
\end{eqnarray*}
The term wise differentiation and the change of summation are valid because of absolute convergence.
\end{proof}

\subsection{Decreasing of $\bfH^\NA$ along MMP}

\begin{thm}\label{thm-MMP}
Let $\bG$ be a reductive group and $(\mcX, \mcL, a\eta)$ be a $\bG$-equivariant normal test configuration. There exists a $\bG$-equivariant special test configuration $(\mcX^s, \mcL^s, a^s \eta^s)$ such that:
\begin{equation}
\bfH^\NA(\mcX, \mcL, a\eta)\ge \bfH^\NA(\mcX^s, \mcL^s, a^s \eta^s).
\end{equation}
Moreover if $\mcX_0$ is reduced, then the identity holds true if $\mcX$ is already a special test configuration.


\end{thm}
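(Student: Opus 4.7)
The plan is to follow the MMP procedure of Li--Xu \cite{LX14} and its refinements in \cite{Fuj19a, BBJ18, BLX19, HL20}, adapted to track the $\bfH^\NA$-functional. Starting from a $\bG$-equivariant normal test configuration $(\mcX, \mcL, a\eta)$, I would proceed in three phases, each carried out $\bG$-equivariantly since $\bG$ is reductive: (i) take the log canonical modification $(\mcX^{lc}, \mcX^{lc}_0) \to (\mcX, \mcX_0)$ and, after a cyclic base change if necessary, assume the pair is dlt with reduced central fibre; (ii) run a relative MMP with scaling over $\bA^1$, interpolating the polarization via $\mcL_\lambda := (1-\lambda)\mcL + \lambda(-K_{\mcX^{lc}} - \mcX^{lc}_0)$ to reach an anticanonical model; (iii) if the resulting central fibre decomposes into more than one component, extract the Koll\'{a}r component to isolate a single $\bQ$-Fano central fibre, yielding the desired $(\mcX^s, \mcL^s, a^s\eta^s)$.

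The monotonicity of $\bfH^\NA = \bfL^\NA - \btS^\NA$ splits into two parts. For $\bfL^\NA$, the formula $\bfL^\NA(\mcX, \mcL, a\eta) = a(\lct(\mcX, -K_\mcX - \mcL; \mcX_0) - 1)$ implies that both the log canonical modification and the MMP with scaling can only increase $\lct$, hence $\bfL^\NA$ does not increase. For $\btS^\NA$, the derivative formula of Proposition \ref{prop-dS} gives, along $\mcL_\lambda$,
\[
\frac{d}{d\lambda} \btS^\NA(\mcX^{lc}, \mcL_\lambda, a\eta) = a \cdot \frac{\sum_i e_i \bfQ^{(a)}_i}{\bfQ^{(a)}},
\]
where the $e_i$ are the coefficients of the vertical part of $\dot{\mcL}_\lambda$. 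Because $(\mcX^{lc}, \mcX^{lc}_0)$ is log canonical and $\mcL_\lambda$ terminates at $-K_{\mcX^{lc}} - \mcX^{lc}_0$, a discrepancy computation pins down the correct signs of the $e_i$; combined with the positivity $\bfQ^{(a)}_i > 0$ this shows $\btS^\NA$ is non-decreasing along the interpolation. For the base change in Phase (i), Lemma \ref{lem-abtrans} together with \eqref{eq-FTCbase} and the pushforward behaviour of the DH measure under $\cF \mapsto a\cF$ yield the matching comparison.

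For the equality statement, assume $\mcX_0$ is reduced and $\bfH^\NA$ is preserved. Then the log canonical modification must be an isomorphism (no log discrepancy is strictly improved), and in Phase (ii) the positivity of $\bfQ^{(a)}_i$ forces every $e_i$ to vanish identically in $\lambda$, so no divisorial contraction or flip occurs. Since $\mcX_0$ is already reduced, the base change in Phase (i) is trivial, and the Koll\'{a}r extraction in Phase (iii) must be the identity; hence $(\mcX, \mcL, a\eta)$ was already a special test configuration to begin with.

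The main obstacle is that Proposition \ref{prop-dS} applies to a fixed total space with varying polarization, whereas the MMP with scaling passes through finitely many birational models linked by divisorial contractions and flips. I would handle this by working on a common $\bG$-equivariant log resolution $\mcY \to \mcX^{lc}$ dominating every intermediate model: pulling back all polarizations to $\mcY$ reduces each derivative computation to a fixed total space, and one uses that $\btS^\NA$ depends only on the filtration on $R$ (equivalently on the DH measure), which is a birational invariant for models sharing the same generic fibre. Verifying that the $e_i$ have the right sign through each flip and contraction is the most delicate bookkeeping, but it parallels the sign analysis in \cite{LX14, HL20} for related functionals and relies on standard consequences of the negativity lemma for log canonical pairs.
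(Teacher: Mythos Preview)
Your overall architecture (semistable reduction, log canonical modification, MMP with scaling to anticanonical, then specialization) matches the paper's. The genuine gap is in how you track monotonicity: you try to show $\bfL^\NA$ and $\btS^\NA$ are separately monotone along the interpolation, and this is false. Along the path $\mcL^\lc_\lambda = (\pi^\lc)^*\mcL + \lambda E$ with $E = K_{\mcX^\lc} + (\pi^\lc)^*\mcL = \sum_i e_i E_i$ and $e_1 \le \cdots \le e_k$, one has $\bfL^\NA = \frac{a}{d_1}(1+\lambda)e_1$, so $\frac{d}{d\lambda}\bfL^\NA = \frac{a}{d_1}e_1$, whose sign is that of $e_1$ and is not controlled. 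Likewise $\frac{d}{d\lambda}\btS^\NA = \frac{a}{d_1}\frac{\sum_i e_i \bfQ^{(a/d_1)}_i}{\bfQ^{(a/d_1)}}$ is a weighted average of the $e_i$, again of indeterminate sign. Your sentence ``lct can only increase, hence $\bfL^\NA$ does not increase'' is both internally inconsistent (increasing $\lct$ increases $\bfL^\NA = a(\lct-1)$) and beside the point, since the polarization itself is moving, so there is no fixed divisor whose lct is being ``improved''.

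What actually works, and what the paper does, is to combine the two derivatives: the derivative of $\bfL^\NA$ contributes the \emph{minimum} coefficient $e_1$ (because $\bfL^\NA$ is computed by an infimum of log discrepancies), while Proposition~\ref{prop-dS} shows the derivative of $\btS^\NA$ contributes the $\bfQ_i$-\emph{weighted average} of the $e_i$. Subtracting,
\[
\frac{d}{d\lambda}\bfH^\NA = \frac{a}{d_1}\,e_1 - \frac{a}{d_1}\frac{\sum_i e_i \bfQ^{(a/d_1)}_i}{\bfQ^{(a/d_1)}} = -\frac{a}{d_1}\frac{\sum_i (e_i - e_1)\bfQ^{(a/d_1)}_i}{\bfQ^{(a/d_1)}} \le 0,
\]
since $e_i - e_1 \ge 0$ and $\bfQ^{(a/d_1)}_i > 0$. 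The equality case then reads $e_i \equiv e_1$ for all $i$, forcing the modification to be trivial. The same minimum-versus-average mechanism drives Steps~3 and~4. So your plan is salvageable, but only after abandoning the separate-monotonicity claim and computing $\frac{d}{d\lambda}\bfH^\NA$ as a single quantity; no sign information on the individual $e_i$ is available or needed.
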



\begin{proof}

For simplicity of notations, we assume $\bG$ is trivial. The general case is obtained by running the $\bG$-equivariant MMP in the following arguments.

{\it Step 1:}
Choose a semistable reduction of $\mcX\rightarrow \bC$. By this, we mean that there is an integer $d$ and a $\bG$-equivariant log resolution of singularities $\tilde{\mcX}\rightarrow \mcX^{(d_1)}:=\mcX\times_{\bC, t\rightarrow t^{d_1}}\bC$ such that $(\tilde{\mcX}, \tilde{\mcX}_0)$ is simple normal crossing. In particular, $\mcX^{(d_1)}_0$ is reduced. By using the identity \eqref{eq-FTCbase} and Lemma \ref{lem-abtrans} we easily get:
\begin{equation}
\bfH^\NA(\mcX^{(d_1)}, \mcL^{(d_1)}, a \eta^{(d_1)}/d_1)=\bfH^\NA(\mcX, \mcL, a\eta). 
\end{equation}

{\it Step 2:}
In this step, we show that there exist $d_1\in \bZ_{>0}$, a projective birational $\bC^*$-equivariant morphism $\pi: \mcX^{\rm lc}\rightarrow \mcX^{(d_1)}$ and a normal, ample test configuration $(\mcX^\lc, \mcL^\lc)/\bC$ for $(X, L)$ such that,
 \begin{equation}\label{eq-dec1}
\bfH^\NA(\mcX^{(d_1)}, \mcL^{(d_1)}, a\eta^{(d_1)}/d_1)\ge \bfH^\NA(\mcX^\lc, \mcL^\lc, a\eta^\lc/d_1).
\end{equation}
Moreover if the equality holds, then $(\mcX^{(d_1)}, \mcL^{(d_1)})$ is isomorphic to $(\mcX^\lc, \mcL^\lc)$, and hence $(\mcX, \mcX_0)$ is already log canonical.


We run a $\bC^*$-equivariant MMP to get a log canonical modification: $\pi^\lc: \mcX^\lc\rightarrow\mcX^{(d_1)}$ such that
 $(\mcX^\lc, \mcX^\lc_0)$ is log canonical and $K_{\mcX^\lc}$ is relatively ample over $\mcX^{(d_1)}$. 
Set $E=K_{\mcX^\lc}+(\pi^\lc)^*\mcL=\sum_{i=1}^k e_i \mcX_{0,i}$ with $e_1\le e_2\le \dots \le e_k$ and $\mcL^\lc_\lambda=(\pi^\lc)^* \mcL^{(d_1)}+\lambda E$.
Then since $E$ is relatively ample over $\mcX^{(d_1)}$, $\mcL_\lambda$ is ample over $\mcX^{\lc}$ for $0<\lambda\ll 1$.
\begin{eqnarray*}
\bfL^\NA(\mcX^\lc, \mcL_\lambda^\lc, a\eta^\lc/d_1)=\frac{a}{d_1}\bfL^\NA(\mcX^\lc, \mcL^\lc_\lambda, \eta^\lc)=\frac{a}{d_1}(1+\lambda)e_1.
\end{eqnarray*}
By definition \eqref{eq-defbtD} we have:
\begin{eqnarray*}
\btS^\NA(\mcX^\lc, \mcL_\lambda^\lc, a\eta^\lc/d_1)&=&-\log \bfQ^{(ad_1^{-1})},\\
\bfH^\NA(\mcX^\lc, \mcL^\lc_\lambda, a\eta^\lc/d_1) &=&\frac{a(1+\lambda) e_1}{d_1}+\log \bfQ^{(ad_1^{-1})}.
\end{eqnarray*}
We then use \eqref{eq-dbtS} to calculate:
\begin{eqnarray*}
\frac{d}{d\lambda}\bfH^\NA(\mcX^\lc, \mcL^\lc_\lambda, a\eta^\lc/d_1)&=&\frac{a e_1}{d_1}-\frac{a}{d_1}\frac{\sum_i e_i \bfQ^{(a d_1^{-1})}_i}{\sum_i \bfQ^{(a d_1^{-1})}_i}\\
&=&-\frac{a}{d_1}\frac{\sum_i (e_i-e_1 )\bfQ^{(a d_1^{-1})}_i}{\bfQ^{(a d_1^{-1})}}\le 0.
\end{eqnarray*}
The last identity holds if and only if $e_i\equiv e_1$, and hence $(\mcX^{(d_1)}, \mcL^{(d_1)})\cong (\mcX^\lc, \mcL^\lc)$.  In this case, $(\mcX^{(d_1)}, \mcX^{(d_1)}_0)$ is log canonical, which implies $(\mcX, \mcX_0)$ is already log canonical by the pull-back formula for the log differential (see \cite[pg. 210]{LX14}).

{\it Step 3:}
With the $(\mcX^\lc, \mcL^\lc)$ obtained from the first step, we run a relative MMP with scaling to get a normal, ample test configuration $(\mcX^{\rm ac}, \mcL^{\rm ac})/\bP^1$ for $(X, -K_X)$ with 
$(\mcX^{\rm ac}, \mcX^{\rm ac}_0)$ log canonical such that $-K_{\mcX^{\rm ac}}\sim_{\bQ,\bC}\mcL^{\rm ac}$. More concretely, we take $q\gg 1$ such that $\mcH^\lc=\mcL^\lc-(q+1)^{-1} (\mcL^\lc+K_{\mcX^\lc})$ is relatively ample. Set 
$\mcX^0=\mcX^\lc$, $\mcL^0=\mcL^\lc$, $\mcH^0=\mcH^\lc$ and $\lambda_0=q+1$. 
Then $K_{\mcX^0}+\lambda_0 \mcH^0=q \mcL^0$. We run a sequence of $K_{\mcX^0}$-MMP over $\bC$ with scaling of $\mcH^0$.  Then we obtain a sequence of models 
$$
\mcX^0\dasharrow \mcX^1\dasharrow\cdots \dasharrow \mcX^k
$$
and a sequence of critical values 
$$
\lambda_{i+1}=\min\{\lambda; K_{\mcX^i}+\lambda \mcH^i \text{ is nef over } \bC\}
$$
with $q+1=\lambda_0\ge \lambda_1\ge \cdots\ge \lambda_k>\lambda_{k+1}=1$. For any $\lambda_i\ge \lambda\ge \lambda_{i+1}$,
we let $\mcH^i$ be the pushforward of $\mcH$ to $\mcX^i$ and set
\begin{equation}
\mcL^i_\lambda=\frac{1}{\lambda-1}\left(K_{\mcX^i}+\lambda\mcH^i\right)
=\frac{1}{\lambda-1}(K_{\mcX^i}+\mcH^i)+\mcH^i=:\frac{1}{\lambda-1}E+\mcH^i.
\end{equation}
Write $E=\sum_{j=1}^k e_j \mcX^i_{0,j}$ with $e_1\le e_2\le \cdots \le e_k$. Then we have $\frac{d}{d\lambda} \mcL^i_\lambda=-\frac{1}{(\lambda-1)^2}E$ and 
\begin{eqnarray*}
\bfL^\NA(\mcX^i, \mcL^i, a\eta^i/d_1)=\frac{a\lambda}{\lambda-1}e_1.
\end{eqnarray*}
So we can again use \eqref{eq-dbtS} to calculate:
\begin{eqnarray*}
\frac{d}{d\lambda} \bfH^\NA(\mcX^i, \mcL^i_\lambda, a\eta^i/d_1)&=&-\frac{a}{d_1(\lambda-1)^2}e_1+\frac{a}{d_1(\lambda-1)^2}\frac{\sum_i e_i \bfQ^{(a d_1^{-1})}_i}{\bfQ^{(a d_1^{-1})}}\\
&=&\frac{a}{d_1(\lambda-1)^2}\frac{\sum_i (e_i-e_1)\bfQ^{(a)}_i}{\bfQ^{(a)}}\ge 0.
\end{eqnarray*}
The last identity holds only if $e_i\equiv e_1$, which implies $(\mcX^\lc, \mcL^\lc)\cong (\mcX^\ac, \mcL^\ac+e_1 \mcX^\ac_0)$. 

{\it Step 4:}
With the test configuration $(\mcX^\ac, \mcL^\ac)$ obtained from step 2, there exists 
$d_2\in \bZ_{>0}$ and a projective birational $T_\bC\times \bC^*$-equivariant birational map $(\mcX^\ac)^{(d_2)}\dasharrow \mcX^s$ over $\bP^1$ such that $(\mcX^s, -K_{\mcX^s})$ is a special test configuration and 
\begin{equation}
\bfH^\NA(\mcX^\ac, \mcL^\ac,a\eta/(d_1d_2))\ge \bfH^\NA(\mcX^s, \mcL^s, a \eta^s/(d_1d_2)).
\end{equation}
As in \cite{LX14}, this is achieved by doing a base change and run an MMP. 
Let $E=-K_{\mcX^s/\bP^1}-(-K_{\mcX'/\bP^1})$. Then $E\ge 0$ by the negativity lemma. $\mcL'_\lambda=-K_{\mcX'/\bP^1}+\lambda E$. 
\begin{equation}
\lct(\mcX', \mcL'_\lambda, a\eta'/d_1 d_2)=\frac{a}{d_1 d_2}\lambda e_1.
\end{equation}
So as before, we get:
\begin{eqnarray*}
\frac{d}{d\lambda} \bfH^\NA(\mcX', \mcL'_\lambda, a\eta'/d_1 d_2)&=&\frac{a}{d_1 d_2}e_1-\frac{a}{d_1 d_2}\frac{\sum_i e_i \bfQ^{(a d_1^{-1})}_i}{\bfQ^{(a d_1^{-1})}}\\
&=& -\frac{a}{d_1 d_2}\frac{\sum_i (e_i-e_1)\bfQ^{(a d_1^{-1})}_i}{\bfQ^{(a d_1^{-1})}}\le 0.
\end{eqnarray*}
The last identity holds only if $e_i\equiv e_1$ which implies $(\mcX^\ac, \mcL^\ac)\cong (\mcX^s, \mcL^s)$.

\end{proof}

\begin{cor}\label{cor-hspecial}
We have the identity:
\begin{equation}
h(X)=\inf_{(\mcX, \mcL, a\eta) \text{ special }} \bfH^\NA(\mcX, \mcL, a\eta).
\end{equation}
\end{cor}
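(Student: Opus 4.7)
\textbf{Proof plan for Corollary \ref{cor-hspecial}.} The corollary is an almost immediate packaging of Theorem \ref{thm-MMP} together with the definition-proposition of $h(X)$ in \eqref{eq-defhX}. The plan is to establish the two inequalities separately.

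The inequality $h(X) \leq \inf_{(\mcX,\mcL,a\eta)\text{ special}} \bfH^\NA(\mcX,\mcL,a\eta)$ is tautological: special test configurations form a subclass of all test configurations (equivalently of all filtrations), so the infimum over the subclass is at least the infimum over the full class.

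For the reverse inequality I would start from $h(X) = \inf_{(\mcX,\mcL,a\eta)} \bfH^\NA(\mcX,\mcL,a\eta)$ and first reduce to normal test configurations: any test configuration and its normalization induce the same filtration on $R$ and hence have the same $\bfH^\NA$, so the infimum over all test configurations agrees with the infimum over normal ones. Then I would apply Theorem \ref{thm-MMP} (with trivial group $\bG$) to each normal test configuration $(\mcX,\mcL,a\eta)$ to obtain a special $(\mcX^s,\mcL^s,a^s\eta^s)$ with
\[
\bfH^\NA(\mcX^s,\mcL^s,a^s\eta^s) \leq \bfH^\NA(\mcX,\mcL,a\eta).
\]
Taking infima on both sides gives $\inf_{\text{special}} \bfH^\NA \leq h(X)$, which is exactly what we need.

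The only background check worth making is the identity $\inf_{\text{TC}} \bfH^\NA = \inf_\cF \bfH^\NA$ asserted in \eqref{eq-defhX}, since one direction (test configurations are filtrations) is trivial but the other is not. This is supplied by Proposition \ref{prop-approx}: for any filtration $\cF$, the approximating test configurations $(\ccX_m,\ccL_m)$ of Definition-Proposition \ref{defn-ckTCm} satisfy $\btS^\NA(\phi^\cF_m) \to \btS^\NA(\phi^\cF)$ and $\limsup_m \bfL^\NA(\phi^\cF_m) \leq \bfL^\NA(\phi^\cF)$, whence $\limsup_m \bfH^\NA(\phi^\cF_m) \leq \bfH^\NA(\cF)$, giving the missing direction. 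There is no real obstacle here; the content of the corollary is entirely absorbed by Theorem \ref{thm-MMP}.
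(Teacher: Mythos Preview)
Your proposal is correct and matches the paper's intended argument: the corollary is stated without proof immediately after Theorem \ref{thm-MMP}, precisely because one inequality is tautological and the other follows by applying Theorem \ref{thm-MMP} to an arbitrary (normalized) test configuration. Your additional remark verifying the equality $\inf_{\text{TC}}\bfH^\NA=\inf_\cF\bfH^\NA$ via Proposition \ref{prop-approx} is a useful sanity check of \eqref{eq-defhX}, though not strictly needed for the corollary itself.
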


\begin{lem}\label{lem-TC*}
For any normal test configuration $(\mcX, \mcL, \eta)$, there exists a unique $a_*>0$ such that 
\begin{equation}
\bfH^\NA(\mcX, \mcL, a_* \eta)=\inf_{c>0} \bfH^\NA(\mcX, \mcL, a \eta)=:\bfH^\NA_*(\mcX, \mcL).
\end{equation}
As a consequence, we have: 
\begin{equation}
h(X)=\inf_{(\mcX, -K_{\mcX}) \text{ special}}\bfH^\NA_*(\mcX, \mcL).
\end{equation}
\end{lem}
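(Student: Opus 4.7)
The plan is to analyze $f(a) := \bfH^\NA(\mcX, \mcL, a\eta)$ on $a \in (0,+\infty)$ directly, using the scaling rule $\cF_{(\mcX, \mcL, a\eta)} = a \cdot \cF_{(\mcX, \mcL, \eta)}$ from \eqref{eq-FTCbase} together with the shift-invariance $\bfH^\NA(\cF(b)) = \bfH^\NA(\cF)$ of Lemma \ref{lem-abtrans}. First I would shift-normalize so that $\bfL^\NA(\mcX, \mcL, \eta) = 0$; denoting by $\mu$ the Duistermaat-Heckman measure of $\cF_{(\mcX, \mcL, \eta)}$, the scaling identities then give
\begin{equation*}
f(a) = -\btS^\NA(\mcX, \mcL, a\eta) = \log\!\left(\tfrac{1}{\bfV}\int_\bR e^{-a\lambda}\,d\mu(\lambda)\right).
\end{equation*}

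Strict convexity of $f$ follows by differentiating under the integral: $f''(a) = \mathrm{Var}_{\mu_a}(\lambda)$, where $\mu_a$ is the Gibbs probability measure proportional to $e^{-a\lambda}d\mu$. This variance is non-negative and vanishes only when $\mu$ is a Dirac mass; by Theorem \ref{thm-BoCh} and Definition \ref{def-Fequiv} the latter forces the test configuration to be trivial. Hence for non-trivial $(\mcX, \mcL, \eta)$ the function $f$ is strictly convex on $[0,+\infty)$, giving uniqueness of the minimizer, in parallel with the computation in the proof of Lemma \ref{lem-twistconvex}.

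For existence of a positive minimizer, I would compute the boundary values $f(0) = 0$, $f'(0) = -\bfE^\NA(\mcX, \mcL, \eta) = \bfD^\NA(\mcX, \mcL, \eta)$, and $\lim_{a\to+\infty}f'(a) = -\lambda_{\min}(\cF_{(\mcX, \mcL, \eta)})$ by Laplace-type concentration of $\mu_a$ at the bottom of $\mathrm{supp}(\mu)$. In the destabilizing regime $\bfD^\NA<0$ (the only case of interest for $h(X)$, since otherwise $f$ is non-decreasing with $\inf_{a>0}f = 0$ attained only in the limit $a\to 0^+$), we have $f'(0)<0$, and the strict inequality $\lambda_{\min}<0=\bfL^\NA$ after normalization gives $f'(+\infty)>0$, so the intermediate value theorem combined with strict convexity produces a unique critical point $a_*\in(0,+\infty)$ that is the minimizer. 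The main obstacle is verifying $\bfL^\NA>\lambda_{\min}$ for a non-trivial normal test configuration. I plan to address this by comparing the explicit formula $\bfL^\NA(\mcX, \mcL, \eta) = \min_i e_i$ where $K_\mcX+\mcL = \sum_i e_i E_i$ with the analogous description of $\lambda_{\min}$ in terms of $\mathrm{ord}_{E_i}$ on $\mcL-\rho^*L_\bC$, with the gap being controlled by the strictly positive log discrepancies $A_X(r(\mathrm{ord}_{E_i}))/b_{E_i}$.

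The consequence for $h(X)$ follows immediately: Corollary \ref{cor-hspecial} identifies $h(X)$ with the infimum of $\bfH^\NA(\mcX, -K_\mcX, a\eta)$ over all special test configurations; splitting this into an inner infimum over $a>0$ (equal to $\bfH^\NA_*(\mcX, -K_\mcX)$ by the first part) and an outer infimum over the underlying special $(\mcX, -K_\mcX)$ yields the stated identity.
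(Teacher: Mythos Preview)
Your proposal is correct and follows essentially the same route as the paper: normalize so that $\bfL^\NA=0$, establish strict convexity of $a\mapsto\bfH^\NA(\mcX,\mcL,a\eta)$ (your variance identity is the same computation appearing in the proof of Proposition~\ref{prop-vscaling}), and reduce existence to the strict inequality $\lambda_{\min}<\bfL^\NA$, which the paper verifies exactly as you outline, by passing to a log resolution dominating $X_\bC$ and comparing the coefficients $c_i/b_i$ coming from $\mcL-\rho^*L_\bC$ with the log-discrepancy contributions $A_i/b_i$ entering $\bfL^\NA$. Your case split on the sign of $\bfD^\NA$ is in fact more careful than the paper's argument, which simply asserts properness and a unique minimum without isolating the boundary case $a_*=0$.
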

\begin{proof}
By resolution of singularities, we can assume that $(\mcX, \mcX^{\rm red}_0)$ is simple normal crossing and there is a $\bC^*$-equivariant dominating map $\rho: \mcX\rightarrow X_\bC$, and we have identities:
\begin{equation}
K_{\mcX}=\rho^*K_{X_\bC}+\sum_i a_i E_i, \quad \mcL=\rho^*(-K_{X_\bC})+\sum_i c_i E_i, \quad \mcX_0=\sum_i b_i E_i.
\end{equation}
Then $-(K_{\mcX}+\mcL)=-\sum_i (a_i+c_i)E_i$ and
\begin{equation}
\lct(\mcX, -(K_{\mcX}+\mcL); \mcX_0)= \min_i \frac{1+a_i+c_i}{b_i}.
\end{equation}
Because $\btD$ is translation invariant, by replacing adding a multiple of $\mcX_0$ to $\mcL$, we can normalize $\phi=\phi_{\mcF}$ to satisfy $\bfL^\NA(\phi)=0$. So 
\begin{equation}
c_i\ge b_i-1-a_i=b_i-A_i.
\end{equation}
and without loss of generality, $c_1=b_1-A_1$.
So 
\begin{equation}
\lambda_{\min}=\min_i \frac{c_i}{b_i}=\min_i (1-\frac{A_i}{b_i})\le 1-\frac{A_1}{b_1}=0.
\end{equation}
If $\lambda_{\min}<0$, then it is easy to see that the convex function $a\rightarrow \bfH^\NA(\mcX, \mcL, a\eta)$ is proper and hence admits a unique minimum. If $\lambda_{\min}=0$, then for all $i$, we have:
$c_i=b_i$ which implies that the normal test configuration $(\mcX, \mcL)$ is a trivial test configuration and hence $\bfH^\NA(\mcX, \mcL, a\eta)\equiv 0$.

\end{proof}

\section{A minimization problem for real valuations}
In this section, we will introduce a minimization problem for valuations analogous to the normalized volume functional in the local setting (\cite{Li18}).

\begin{defn}
For any $v\in \Val(X)$, define:
\begin{equation}
\tbeta(v)=
\left\{
\begin{array}{ll}
A_X(v)-\btS^\NA(\cF_v) & \text{ if } A_X(v)<+\infty, \\
+\infty& \text{ otherwise}
\end{array}
\right.
\end{equation}
\end{defn}
Note that by integration by parts we have:
\begin{eqnarray}\label{eq-bV1form}
e^{-\btS^\NA(\cF_v)}&=&\frac{1}{\bfV}\int_\bR e^{-x}\DHM(\cF_v)=\frac{1}{\bfV}\int_0^{+\infty}e^{-x} (-d\vol(\cF^{(x)}_v))\nonumber \\
&=&1-\frac{1}{\bfV}\int_0^{+\infty}\vol(\cF_v^{(x)}R_\bullet)e^{-x}dx\le 1
\end{eqnarray}
with identity if and only if $v$ is trivial.
So we can re-write $\tbeta(v)$ as:
\begin{equation}
\tbeta(v)=A_X(v)+\log \left(1-\frac{1}{\bfV}\int_0^{+\infty} e^{-x}\vol(\cF_v^{(x)}R_\bullet)dx\right).
\end{equation}

\begin{lem}\label{lem-tDtbeta}
For any $v\in X^{\rm div}_\bQ$, we have the inequality:
\begin{equation}
\bfH^\NA(\cF_v)\le \tbeta(v).
\end{equation}
Moreover if $(\mcX, -K_{\mcX}, a\eta)$ is a special test configuration, then the equality holds true for $v=a v_{\mcX_0}=a\cdot r(\ord_{\mcX_0})$ (see Definition \ref{def-TC}).
\end{lem}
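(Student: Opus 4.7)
The plan is to reduce the inequality $\bfH^\NA(\cF_v)\le \tbeta(v)$ to the cleaner estimate
\[
\bfL^\NA(\cF_v)\le A_X(v).
\]
Since $\bfH^\NA(\cF_v)=\bfL^\NA(\cF_v)-\btS^\NA(\cF_v)$ by \eqref{eq-defbtD}, and $\tbeta(v)=A_X(v)-\btS^\NA(\cF_v)$ by direct inspection of the definitions, the common $\btS^\NA(\cF_v)$ term cancels and the two claims become equivalent.

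For the inequality itself, I would write $v=a\cdot \ord_E$ for some $a\ge 0$ and a prime divisor $E$ over $X$, using that $v\in X^{\rm div}_\bQ$. The case $a=0$ is trivial (both sides vanish), so assume $a>0$. From \eqref{eq-filval} one has the rescaling identity $\cF_v=a\cF_{\ord_E}$ at the level of filtrations, and then Lemma \ref{lem-abtrans} gives $\bfL^\NA(\cF_v)=a\cdot \bfL^\NA(\cF_{\ord_E})$. Finally, Proposition \ref{prop-XZcomp} supplies $A_X(\ord_E)\ge \bhL^\NA(\cF_{\ord_E})\ge \bfL^\NA(\cF_{\ord_E})$ (combining parts (1) and (2)); multiplying by $a$ yields $\bfL^\NA(\cF_v)\le A_X(v)$.

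For the equality statement with $v=av_{\mcX_0}$ and $(\mcX,-K_\mcX,a\eta)$ a special test configuration, the key input is \eqref{eq-specialmin}, which identifies
\[
\cF_{(\mcX,-K_\mcX,a\eta)}=\cF_{av_{\mcX_0}}\bigl(-A(av_{\mcX_0})\bigr).
\]
Any special test configuration is normalized, so $\bfL^\NA(\cF_{(\mcX,-K_\mcX,a\eta)})=0$. Applying the shift rule $\bfL^\NA(\cF(b))=\bfL^\NA(\cF)+b$ from Lemma \ref{lem-abtrans} with $b=-A(av_{\mcX_0})$ then gives $\bfL^\NA(\cF_{av_{\mcX_0}})=A_X(v)$, upgrading the general inequality to an equality at $v=av_{\mcX_0}$, and hence $\bfH^\NA(\cF_v)=\tbeta(v)$.

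The only real ``obstacle'' is careful bookkeeping of normalizations and the compatibility between the rescale/shift operations on filtrations and the induced changes in $\bfL^\NA$, $\btS^\NA$, $\bfH^\NA$. All required transformation rules are already collected in Lemma \ref{lem-abtrans}, and the essential non-trivial ingredient is the Xu--Zhuang comparison $\bhL^\NA(\cF_{\ord_E})\le A_X(\ord_E)$ from Proposition \ref{prop-XZcomp}, so the argument reduces to a short deduction from the already stated results.
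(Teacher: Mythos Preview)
Your argument is correct, and the equality case matches the paper's proof essentially verbatim (both use \eqref{eq-specialmin} together with the normalization $\bfL^\NA(\mcX,-K_\mcX,a\eta)=0$ for special test configurations and the shift rule from Lemma~\ref{lem-abtrans}).

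For the inequality, however, you take a genuinely different route. The paper's proof is a one-line observation: since $\bfL^\NA(\cF_v)=\inf_{w}\bigl(A_X(w)+(\phi_{\cF_v}-\phi_\triv)(w)\bigr)$ by definition \eqref{eq-defLNA}, plugging in $w=v$ and using $(\phi_{\cF_v}-\phi_\triv)(v)=0$ immediately gives $\bfL^\NA(\cF_v)\le A_X(v)$. Your argument instead passes through the Xu--Zhuang comparison in Proposition~\ref{prop-XZcomp}, which is a substantially deeper input (its proof in \cite{XZ19} involves nontrivial birational geometry). So while your deduction is logically sound, it invokes a result that is considerably stronger than what is needed here; the paper's approach is more elementary and self-contained. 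The advantage of your route is that it avoids verifying $(\phi_{\cF_v}-\phi_\triv)(v)=0$ directly, but that identity is straightforward (and is noted in the proof of Proposition~\ref{prop-equiv}).
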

\begin{proof}
The inequality follows immediately from
\begin{equation}
\inf_{w}(A(w)+\phi_v(w))\le A(v)+\phi_v(v)=A(v).
\end{equation}
When $(\mcX, -K_{\mcX}, a\eta)$ is a special test configuration and $v=a v_{\mcX_0}$, then 
\begin{equation}
\bfL^\NA(\mcX, -K_\mcX, a\eta)=a\bfL^\NA(\mcX, -K_{\mcX}, \eta)=a(\lct(\mcX; \mcX_0)-1)=0.
\end{equation}
On the other hand, by \eqref{eq-specialmin}, 
\begin{equation}
\bfL^\NA(\mcX, -K_{\mcX}, a\eta)=\bfL^\NA(\cF_v(-A(v))=\bfL^\NA(\mcF_v)-A(v).
\end{equation}
So we get
\begin{equation}
\bfH^\NA(\mcF_v)=\bfL^\NA(\cF_v)-\btS^\NA(\cF_v)=A(v)-\btS^\NA(v)=\tilde{\beta}(v).
\end{equation}
\end{proof}
\begin{lem}
For any $\phi=\phi_\cF$ and $v\in X^{\rm div}_\bQ$, we have the inequality:
\begin{equation}\label{eq-v2phi}
\btS(v)+\phi(v)\ge \btS^\NA(\phi).
\end{equation}
\end{lem}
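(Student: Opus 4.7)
The plan is to deduce the inequality from the definition of $\phi_\cF$ as a non-Archimedean metric, by comparing $\bfQ(\cF)=e^{-\btS^\NA(\cF)}$ and $\bfQ(\cF_v)=e^{-\btS^\NA(\cF_v)}$ (cf.~\eqref{eq-bVa}) along a common adapted basis.

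First, for each $m\ge 1$ I would pick a basis $\{s_1,\ldots,s_{N_m}\}$ of $R_m$ simultaneously adapted to both $\cF$ and $\cF_v$ (standard simultaneous diagonalization, as in the discussion preceding Theorem~\ref{thm-equiv}), and set $\mu_i := \sup\{\lambda:s_i\in\cF^\lambda R_m\}$ and $\nu_i := v(s_i)$, so that $\{\mu_i\}$ and $\{\nu_i\}$ are, after reordering, the successive minima of $\cF$ and $\cF_v$ on $R_m$ respectively.

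Second, I would combine the definition \eqref{eq-NAphimF} of the truncated metric with the monotone approximation $\phi^\cF_m\le\phi^\cF$, which follows from the subadditivity $v(\tilde\mcI^\cF_{m_1+m_2})\le v(\tilde\mcI^\cF_{m_1})+v(\tilde\mcI^\cF_{m_2})$ granted by multiplicativity of $\cF$, together with Fekete's lemma. Evaluating at the divisorial $v$ this rewrites as
\begin{equation*}
\max_{x\in\bR}\bigl(x-v(I^\cF_{m,x})\bigr) \;\le\; m\,(\phi_\cF-\phi_\triv)(v).
\end{equation*}
Since $s_i\in\cF^{\mu_i}R_m$ implies $\nu_i = v(s_i)\ge v(I^\cF_{m,\mu_i})$ by the definition \eqref{eq-Imx} of the base ideal, this yields the clean pointwise bound $\mu_i-\nu_i \le m\,(\phi_\cF-\phi_\triv)(v)$ for every $i$.

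Finally, I exponentiate to $e^{-\nu_i/m}\le e^{(\phi-\phi_\triv)(v)}e^{-\mu_i/m}$, sum over $i$, divide by $N_m$, and let $m\to\infty$: by Theorem~\ref{thm-BoCh} the empirical measures of $\{\mu_i/m\}$ and $\{\nu_i/m\}$ converge weakly to $\bfV^{-1}\DHM(\cF)$ and $\bfV^{-1}\DHM(\cF_v)$ with uniformly bounded support (by linear boundedness of the filtrations), so testing against the bounded continuous function $e^{-x}$ gives $\bfQ(\cF_v)\le e^{(\phi-\phi_\triv)(v)}\bfQ(\cF)$; taking $-\log$ delivers the inequality. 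The only mild obstacle is the monotone bound $\phi^\cF_m\le\phi^\cF$ at the specific $v$; once that is in hand, the rest is a routine comparison of exponential sums along an adapted basis.
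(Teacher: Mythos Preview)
Your proof is correct. Both your argument and the paper's rest on the same core observation, namely that $\phi^\cF_m(v)\le\phi^\cF(v)=:\gamma$ forces $v(s)\ge\lambda-m\gamma$ for every $s\in\cF^\lambda R_m$; but they diverge in how this is converted into the inequality for $\btS^\NA$. The paper reads the bound as an inclusion $\cF^{mx}R_m\subseteq\cF_v^{(x-\gamma)m}R_m$, passes to the volume inequality $\vol(\cF^{(x)})\le\vol(\cF_v^{(x-\gamma)})$, and then performs a purely analytic computation: integration by parts on $\bfQ(\cF)=\bfV^{-1}\int e^{-x}(-d\vol(\cF^{(x)}))$, the trivial bound $\vol\le\bfV$ on $[\lambda_{\min},\gamma]$, and a change of variable $t=x-\gamma$ to recover $e^{-\gamma}\bfQ(\cF_v)$. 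You instead stay on the discrete side: pick a basis simultaneously adapted to $\cF$ and $\cF_v$, record the bound as $\mu_i-\nu_i\le m\gamma$ on successive minima, exponentiate, sum, and pass to the limit via weak convergence of the Duistermaat--Heckman measures. Your route avoids the integral manipulation and is arguably more transparent, at the small cost of invoking the existence of a common adapted basis; the paper's route works directly with asymptotic volume functions and is closer in spirit to the argument it cites from \cite{Li19}.
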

\begin{proof}
We use the same argument as in \cite[4.2]{Li19}. Set $\gamma=\phi(v)$. Then by the argument there, we have $\lambda_{\min}=\lambda_{\min}(\cF)\le \gamma$ and we can then estimate:
\begin{eqnarray*}
e^{-\btS^\NA(\phi)}&=&\bfQ(\phi)=\frac{1}{\bfV}\int_\bR e^{-x}(-d\vol(\mcF^{(x)}))=e^{-\lambda_{\min}}-\frac{1}{\bfV}\int_{\lambda_{\min}}e^{-x}\vol(\cF^{(x)}R_\bullet)dx\\
&\ge &e^{-\gamma}-\frac{1}{\bfV}\int_{\gamma}^{+\infty} e^{-x}\vol(\cF^{(x)}R_\bullet)dx\ge e^{-\gamma}-\frac{1}{\bfV}\int_\gamma^{+\infty} e^{-x} \vol(\cF_v^{(x-a)})dx\\
&=&e^{-\gamma}-e^{-\gamma}\frac{1}{\bfV}\int_0^{+\infty}e^{-x}\vol(\cF_v^{(t)})dt=e^{-\gamma}\frac{1}{\bfV}\int_0^{+\infty}e^{-x}(-d\vol(\cF_v^{(t)}))\\
&=&e^{-\phi(v)}e^{-\btS^\NA(v)}.
\end{eqnarray*}
\end{proof}

\begin{prop}
For any $\bQ$-Fano variety, we have the identity:
\begin{equation}\label{eq-hval}
h(X)=\inf_{v\in X^{\rm div}_\bQ}\tbeta(v).
\end{equation}
\end{prop}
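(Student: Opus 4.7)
The plan is to prove the two inequalities separately, each reducing to a direct application of already-established identities in the paper.

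For the easy direction $h(X) \le \inf_{v\in X^{\rm div}_\bQ} \tbeta(v)$, I would simply note that every divisorial valuation $v$ with $A_X(v)<+\infty$ produces an honest filtration $\cF_v$ on $R$, so by the definition \eqref{eq-defhX} of $h(X)$ combined with Lemma \ref{lem-tDtbeta},
\[
h(X) \;\le\; \bfH^\NA(\cF_v)\;\le\; \tbeta(v),
\]
and taking the infimum over $v$ yields one direction.

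For the reverse direction $h(X)\ge \inf_v \tbeta(v)$, I would first invoke Corollary \ref{cor-hspecial} to replace the infimum in the definition of $h(X)$ by the infimum over \emph{special} test configurations. Given any special test configuration $(\mcX,\mcL,a\eta)$, by definition of specialness one has $\mcL=-K_\mcX+c\,\mcX_0$ for some $c\in\bQ$; the associated filtration is then $\cF_{(\mcX,-K_\mcX,a\eta)}(c)$, so by the translation invariance of $\bfH^\NA$ in Lemma \ref{lem-abtrans} we lose nothing by assuming $\mcL=-K_\mcX$. Setting $v=a\,v_{\mcX_0}=a\cdot r(\ord_{\mcX_0})\in X^{\rm div}_\bQ$, the equality case of Lemma \ref{lem-tDtbeta} gives
\[
\bfH^\NA(\mcX,-K_\mcX,a\eta)\;=\;\tbeta(v),
\]
which shows $h(X)\ge \inf_{v\in X^{\rm div}_\bQ}\tbeta(v)$ after taking an infimum over special test configurations.

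There is essentially no hard step here: the proof is a two-line chase through Corollary \ref{cor-hspecial}, Lemma \ref{lem-abtrans}, and Lemma \ref{lem-tDtbeta}. The only point where I would pause is verifying that the convention for $X^{\rm div}_\bQ$ permits the real rescaling $a>0$ (the paper's definition allows $a\ge 0$ real, so this is fine); should this ever be an issue, one can approximate an irrational $a$ by a rational sequence and use the continuity of $s\mapsto \tbeta(s\cdot \ord_E)$ (cf.\ the scaling analysis in Proposition \ref{prop-vscaling}, used only as motivation). Thus the minimization problem for $\tbeta$ over divisorial valuations exactly computes the algebraic invariant $h(X)$.
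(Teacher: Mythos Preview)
Your proof is correct and follows essentially the same route as the paper's primary argument: reduce to special test configurations via Corollary~\ref{cor-hspecial}, then apply the equality case of Lemma~\ref{lem-tDtbeta} to identify $\bfH^\NA$ with $\tbeta$ on the corresponding divisorial valuation. The paper additionally records an alternative argument for the direction $h(X)\ge\inf_v\tbeta(v)$ that bypasses the MMP entirely by directly using the inequality~\eqref{eq-v2phi} together with the definition $\bfL^\NA(\phi)=\inf_v(A_X(v)+\phi(v))$, but this is offered only as a second proof and is not needed for your approach.
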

\begin{proof}
For any test configuration $(\mcX, \mcL, a\eta)$, by Theorem \ref{thm-MMP} there exists a special test configuration $(\mcX^s, \mcL^s, a^s \eta^s)$ such that
\begin{equation}
\bfH^\NA(\mcX, \mcL, a\eta)\ge \bfH^\NA(\mcX^s, \mcL^s, a^s \eta^s)=\tbeta(a^s v_{\mcX^s_0}).
\end{equation}
The last identity is from Lemma \ref{lem-tDtbeta}. This together with Corollary \ref{cor-hspecial} implies identity \eqref{eq-hval}. 

Alternatively, recall that $\bfL^\NA(\phi)=\inf_{v\in X^{\rm div}_\bQ}(A_X(v)+\phi(v))$. So for any $\epsilon>0$ we can choose $v$ such that $A_X(v)+\phi(v)<\bfL^\NA(\phi)+\epsilon$. We can then use $v$ in \eqref{eq-v2phi} to get:
\begin{eqnarray*}
\bfL^\NA(\phi)-\btS^\NA(\phi)&\ge&A_X(v)+\phi(v)-\epsilon-(\phi(v)+\btS(v))=\tbeta(v)-\epsilon.
\end{eqnarray*}
Since $\epsilon$ is arbitrary, we can use \eqref{eq-defhX} to get the identity \eqref{eq-hval}.

\end{proof}
With the identity \eqref{eq-hval}, the comparison result of Xu-Zhuang in Proposition \ref{prop-XZcomp} gives:
\begin{cor}
For any $\bQ$-Fano variety, we have the equality:
\begin{equation}\label{eq-hbhD}
h(X)=\inf_{\cF}\bfH^\NA(\cF)=\inf_{\cF}\hat{\bfH}^\NA(\cF).
\end{equation}
\end{cor}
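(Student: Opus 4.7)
The plan is to establish the second equality (the first being the definition of $h(X)$ together with \eqref{eq-defhX}) by a sandwich argument, in which both inequalities follow formally from ingredients already in place: Proposition \ref{prop-XZcomp} of Xu--Zhuang on the one hand, and the just-proved identity \eqref{eq-hval}, $h(X) = \inf_{v \in X^{\rm div}_\bQ}\tbeta(v)$, on the other.

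For the easy direction, I would observe that by Proposition \ref{prop-XZcomp}(2) we have $\bhL^\NA(\cF) \ge \bfL^\NA(\cF)$ for \emph{every} filtration $\cF$. Since both $\hat{\bfH}^\NA$ and $\bfH^\NA$ are obtained from these by subtracting the common quantity $\btS^\NA(\cF)$, it follows that $\hat{\bfH}^\NA(\cF) \ge \bfH^\NA(\cF)$, and hence $\inf_\cF \hat{\bfH}^\NA(\cF) \ge \inf_\cF \bfH^\NA(\cF) = h(X)$.

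For the reverse inequality, the key is to compare $\hat{\bfH}^\NA(\cF_v)$ with $\tbeta(v)$ for divisorial $v$. For a prime divisor $E$ over $X$, Proposition \ref{prop-XZcomp}(1) gives $A_X(\ord_E) \ge \bhL^\NA(\cF_{\ord_E})$. For a general $v = c \cdot \ord_E \in X^{\rm div}_\bQ$, the homogeneity $A_X(cv)=cA_X(v)$ together with the identification $\cF_{cv} = c\cF_v$ and the scaling rule of Lemma \ref{lem-abtrans}, namely $\bhL^\NA(c\cF_v) = c\,\bhL^\NA(\cF_v)$, upgrades this to $A_X(v) \ge \bhL^\NA(\cF_v)$. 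Subtracting $\btS^\NA(\cF_v)$ from both sides yields
\[
\hat{\bfH}^\NA(\cF_v) \;=\; \bhL^\NA(\cF_v) - \btS^\NA(\cF_v) \;\le\; A_X(v) - \btS^\NA(\cF_v) \;=\; \tbeta(v).
\]

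Finally, taking the infimum over $v \in X^{\rm div}_\bQ$ and invoking the identity \eqref{eq-hval} just proved in the preceding proposition gives $\inf_\cF \hat{\bfH}^\NA(\cF) \le \inf_{v \in X^{\rm div}_\bQ}\tbeta(v) = h(X)$. Combining with the first step finishes the proof. Honestly, there is no genuine obstacle here; the substantive inputs (the MMP decrease of $\bfH^\NA$, the valuative identity for $h(X)$, and the Xu--Zhuang comparison) have all been assembled already, and this corollary is essentially a bookkeeping consequence of their juxtaposition.
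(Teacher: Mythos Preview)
Your proof is correct and follows exactly the approach the paper indicates: the paper simply asserts that the corollary follows from the identity \eqref{eq-hval} together with the Xu--Zhuang comparison of Proposition \ref{prop-XZcomp}, and you have filled in precisely those details.
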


The next result should be compared to Lemma \ref{lem-TC*}.
\begin{prop}\label{prop-vscaling}
For any $v\in X^{\rm div}_\bQ$, there exists a unique $a_*=a_*(v)\ge 0$ such that 
\begin{equation}
\tbeta(a_* v)=\inf_{a>0}\tbeta(a v)=:\tbeta_*(v).
\end{equation}
When $\beta(v)\ge 0$, then $a_*=0$ so that $a_* v$ is the trivial valuation and $\tbeta_*(v)=0$. Otherwise $a_*(v)>0$ and $\tbeta_*(v)<0$.
\end{prop}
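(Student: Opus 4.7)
My plan is to view the statement as a one-variable optimization problem and analyze the function $f(a):=\tbeta(av)$ on $[0,+\infty)$. The key observation is that rescaling a valuation corresponds to rescaling the associated filtration, so $\cF_{av}=a\cF_v$ and the Duistermaat-Heckman measure transforms as $\DHM(\cF_{av})=(x\mapsto ax)_*\DHM(\cF_v)$. Consequently, setting $d\mu:=\bfV^{-1}\DHM(\cF_v)$ (a probability measure on $[0,+\infty)$, since $\lambda_{\min}(\cF_v)=0$), we get the clean formula
\begin{equation*}
f(a)=aA_X(v)+\log\int_{\bR} e^{-ax}\,d\mu(x),
\end{equation*}
which identifies the nontrivial term as the log-Laplace transform of $d\mu$ (shifted by the linear $aA_X(v)$).

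The first step is to establish \emph{strict convexity} of $f$ on $[0,+\infty)$. A direct computation shows that $\psi(a):=\log\int e^{-ax}d\mu$ has $\psi''(a)$ equal to the variance of $x$ under the tilted probability measure $e^{-ax}d\mu/\int e^{-ax}d\mu$, which is nonnegative and vanishes only if $d\mu$ is a Dirac mass. Since $v\in X^{\rm div}_\bQ$ is nontrivial, its DH measure is absolutely continuous on $[0,\lambda_{\max}(\cF_v)]$ (as remarked after Theorem \ref{thm-BoCh}), hence not a Dirac, so $\psi''>0$ and $f$ is strictly convex. This yields uniqueness of the minimizer whenever it exists. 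Second, I compute the endpoint values: $f(0)=0+\log 1=0$ (corresponding to the trivial valuation), and differentiating under the integral gives
\begin{equation*}
f'(0)=A_X(v)-\int x\,d\mu=A_X(v)-\bfE^\NA(\cF_v)=\beta(v),
\end{equation*}
where the last equality uses the integration-by-parts identity $\bfE^\NA(\cF_v)=\bfV^{-1}\int_0^{+\infty}\vol(\cF_v^{(t)})\,dt$ together with the definition \eqref{eq-betag} of $\beta(v)$ (with $g\equiv 1$).

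Third, I show $f$ is proper, i.e.\ $f(a)\to+\infty$ as $a\to+\infty$. For any $\epsilon>0$, since $\supp(d\mu)\supseteq[0,\lambda_{\max}]$ and the measure is absolutely continuous near $0$, we have $\mu([0,\epsilon])>0$, and the trivial bound
\begin{equation*}
\int e^{-ax}d\mu\ge e^{-a\epsilon}\mu([0,\epsilon])
\end{equation*}
yields $f(a)\ge a(A_X(v)-\epsilon)+\log\mu([0,\epsilon])$. Choosing $\epsilon=A_X(v)/2$ (which is positive since $v$ is nontrivial) gives the required properness.

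Finally I combine these ingredients: strict convexity plus properness imply $f$ attains its minimum on $[0,+\infty)$ at a unique point $a_*$. If $\beta(v)=f'(0)\ge 0$, strict convexity forces $f$ to be strictly increasing on $(0,+\infty)$, hence $a_*=0$ and $\tbeta_*(v)=f(0)=0$, realized in the limit as $a\to 0^+$ by the trivial valuation. If $\beta(v)<0$, then $f'(0)<0$ forces the interior minimum $a_*>0$ to satisfy $f(a_*)<f(0)=0$, and uniqueness follows from strict convexity. The main subtlety — and the only place that really uses nontriviality of $v$ beyond formal manipulations — is ensuring $d\mu$ is not a Dirac mass and has mass arbitrarily close to the origin, both of which follow from the absolute continuity of $\DHM(\cF_v)$ for divisorial $v$ recalled in the excerpt.
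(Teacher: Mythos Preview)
Your proof is correct and follows the same overall strategy as the paper: both write $f(a)=aA_X(v)+\log\int_{\bR} e^{-ax}\,d\mu$ with $d\mu=\bfV^{-1}\DHM(\cF_v)$, establish strict convexity via the variance formula $f''(a)=\|x-\bar x\|^2_{L^2(d\nu_a)}$, compute $f'(0)=\beta(v)$, and then prove properness as $a\to+\infty$. The one genuine difference is in the properness step. The paper invokes the concavity of $g(x)=(\bfV^{-1}\vol(\cF_v^{(x)}))^{1/n}$ (Theorem~\ref{thm-BoCh}.1) to build an explicit piecewise-linear majorant $\hat g\ge g$, computes the Laplace-type integral of $\hat g^n$ by hand, and obtains the asymptotic $f(a)\ge (A_X(v)-\epsilon)a-\log a+O(1)$. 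Your route is more elementary: you use only that $0\in\supp(d\mu)=[0,\lambda_{\max}]$ to get $\mu([0,\epsilon])>0$, and then the trivial bound $\int e^{-ax}d\mu\ge e^{-a\epsilon}\mu([0,\epsilon])$ suffices. (A small quibble: absolute continuity alone does not give $\mu([0,\epsilon])>0$; the correct reason is that $0$ lies in the support, which is exactly Theorem~\ref{thm-BoCh}.3. Likewise, ``not a Dirac'' follows already from $\supp(d\mu)=[0,\lambda_{\max}]$ with $\lambda_{\max}>0$.) Your argument is cleaner for the proposition as stated; the paper's more quantitative estimate, with explicit dependence on the concave profile $g$, is what is later exploited in the proof of Theorem~\ref{thm-exist} to get a \emph{uniform} upper bound on $a_*(v)$ over the simplex $\mathcal{S}$ via the Izumi comparison $v\le C_1\cdot\ord_F$.
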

\begin{proof}
Fix $v=q\cdot \ord_E$ for a prime divisor $E$ over $X$ and $q>0$.
Consider the function on $\bR_{\ge 0}$:
\begin{eqnarray}\label{eq-frescaling}
f(a)&=&A(a v)-\btS^\NA(a v)=a A(v)+\log \left(\frac{1}{\bfV}\int_0^{+\infty}e^{-x}\DHM(\cF_{av})\right)\nonumber \\
&=&a A(v)+\log\left(\frac{1}{\bfV}\int_0^{+\infty}e^{-ax}\DHM(\cF_v) \right).
\end{eqnarray}
We will show that $a\mapsto f(a)$ is convex and goes to $+\infty$ as $a\rightarrow+\infty$. 
\begin{eqnarray*}
f'(a)&=&A(v)-\frac{\int_0^{+\infty} xe^{-ax}\DHM(\cF_v)}{\int_0^{+\infty}e^{-ax}\DHM(\cF_v)}, \\
 f''(a)&=&\frac{\int x^2 e^{-ax}\DHM}{\int e^{-ax}\DHM}-\frac{(\int x e^{-ax}\DHM)^2}{(\int e^{-ax}\DHM)^2}=\|x-\bar{x}\|^2_{L^2(d\nu)}\ge 0,
\end{eqnarray*}
where
\begin{equation}
d\nu=\frac{e^{-ax}\DHM}{\int e^{-ax}\DHM}, \quad \bar{x}=\int x d\nu.
\end{equation}
$f''(a)=0$ if and only if $av$ is trivial. Moreover $f'(0)=A(v)-\frac{1}{\bfV}\int_0^{+\infty}x \DHM(\cF_v)=\beta(v)$.

On the other hand, $f(0)=0$ and we claim that $\lim_{a\rightarrow+\infty}f(a)=+\infty$ which then implies the statement. 
To prove this divergence, we set $g(x)=V^{-1/n}\vol(\cF^{(x)}R_\bullet)^{1/n}$. Then $g(x)$ is decreasing, concave on $[0, \lambda_{\max}]$ (by Theorem \ref{thm-BoCh}) and differentiable by \cite{BFJ09, LM09}. Fix $0<\epsilon \ll \lambda_{\max}$ such that $g(\epsilon)<g(0)=1$. Set $C=-g'(\epsilon)>0$, $T=\frac{1+C\epsilon}{C}$ and define a function 
\begin{equation}
\hat{g}(x)=\left\{\begin{array}{ll}
1 & x\in [0, \epsilon]\\
1+C\epsilon-Cx & x\in (\epsilon, T]\\
0& x\in (T,+\infty).
\end{array}\right.
\end{equation}
Then $\hat{g}(x)\ge g(x)$ over $[0, +\infty)$ (by concavity). Then we calculate to get: 
\begin{equation}
a \int_0^{+\infty}\hat{g}^n(x)e^{-ax}dx=1-n C m_{n-1}.
\end{equation}
where $m_k=\int_{\epsilon}^T (1+C\epsilon-Cx)^k e^{-ax}dx$ satisfies:
\begin{eqnarray*}
m_k&=&\frac{1}{a}e^{-a\epsilon}-\frac{kC}{a}m_{k-1}=\frac{1}{a}e^{-a\epsilon}-\frac{kC}{a}\left(\frac{1}{a}e^{-a\epsilon}-\frac{(k-1)C}{a}m_{k-2}\right).
\end{eqnarray*}
Using induction we get $m_{n-1}=a^{-1}e^{-a\epsilon}(1+O(a^{-1}))$. So 
\begin{eqnarray*}
e^{-\btS^\NA}(\cF_{av})&=&1-a\int^{+\infty}_0g^n(x)e^{-ax}dx\ge 1-a\int_0^{+\infty}\hat{g}^n(x)e^{-ax}dx\\
&=&n C m_{n-1}=nC a^{-1}e^{-a\epsilon}(1+O(a^{-1})).
\end{eqnarray*}
So we get $-\btS^\NA(\cF_{av})\ge-\log a-a\epsilon+O(1)$, which gives:
\begin{equation}
f(a)=\tbeta(av)\ge (A(v)-\epsilon)a-\log a+O(1),
\end{equation}
which approaches $+\infty$ as $a\rightarrow+\infty$ if we choose $0<\epsilon<A(v)$.

\end{proof}

\begin{cor}
We always have $h(X)\le 0$, with equality holds true if and only if $h(X)=0$. 
\end{cor}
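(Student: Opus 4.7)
The plan is to combine the valuative identity $h(X)=\inf_{v\in X^{\mathrm{div}}_\bQ}\tbeta(v)$ from \eqref{eq-hval} with the ray-wise analysis in Proposition \ref{prop-vscaling}. (The statement as written appears to contain a typo; the natural content is that $h(X)\le 0$, with equality if and only if $X$ is K-semistable, and I will sketch a proof of that.)

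For the inequality $h(X)\le 0$, I would simply plug the trivial valuation into $\inf_v\tbeta(v)$. Since $A_X(v_{\triv})=0$ and $\cF_{\triv}$ has Duistermaat--Heckman measure $\bfV\cdot\delta_0$, we have $\btS^\NA(\cF_{\triv})=0$ and hence $\tbeta(v_{\triv})=0$. Equivalently, Proposition \ref{prop-vscaling} gives $\tbeta_*(v)\le 0$ for every divisorial $v$, and since $a_*(v)v\in X^{\mathrm{div}}_\bQ$ for $a_*(v)\ge 0$, taking the infimum gives the same bound.

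For the equality case, the key observation is that $h(X)=\inf_v\tbeta_*(v)$, because the set $X^{\mathrm{div}}_\bQ$ is closed under positive rescaling. Thus $h(X)=0$ if and only if $\tbeta_*(v)=0$ for every $v\in X^{\mathrm{div}}_\bQ$. By Proposition \ref{prop-vscaling}, this equality holds for a given $v$ precisely when $\beta(v)\ge 0$. Therefore $h(X)=0$ is equivalent to $\beta(v)\ge 0$ for all divisorial $v$, which by the valuative criterion Theorem \ref{thm-valgsemi} (applied with $g\equiv 1$, so $\beta_g=\beta$) is equivalent to $X$ being Ding-semistable, hence K-semistable. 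Conversely, if $X$ fails to be K-semistable, pick a destabilizing $v$ with $\beta(v)<0$; Proposition \ref{prop-vscaling} then yields $a_*(v)>0$ and $\tbeta_*(v)<0$, so $h(X)<0$.

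No step is particularly difficult here: all the technical work has been done in Proposition \ref{prop-vscaling} (convexity and properness of $a\mapsto \tbeta(av)$) and in the valuative criterion Theorem \ref{thm-valgsemi}. The mild care needed is only to notice that (i) the infimum over divisorial valuations coincides with the infimum of the rescaled minima $\tbeta_*(v)$, so one can read off the sign criterion valuation-by-valuation, and (ii) the valuative criterion for Ding-semistability quoted from \cite{HL20} applies with the trivial weight $g\equiv 1$, where $\beta_g$ reduces to the usual $\beta$-invariant appearing in \eqref{eq-betag}.
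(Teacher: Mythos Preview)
Your proof is correct and follows essentially the same approach as the paper: both combine the identity \eqref{eq-hval} with the ray-wise analysis of Proposition \ref{prop-vscaling} and the valuative criterion for K-semistability. The only cosmetic difference is that the paper cites \cite{Fuj19a, Li17} directly for the criterion $\beta(v)\ge 0$, whereas you invoke Theorem \ref{thm-valgsemi} with $g\equiv 1$; these are the same statement.
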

\begin{proof}
By \cite{Fuj19a, Li17}, $X$ is K-semistable if and only if $\beta(v)\ge 0$, which implies $\tbeta_*(v)=0$. If $X$ is not K-semistable then there exists $v'$ such that $\beta(v')<0$. By Proposition \ref{prop-vscaling}, we then have $\tbeta_*(v')<0$ which implies $h(X)<0$.
\end{proof}




\begin{lem}\label{lem-lctunique}
If $v$ computes $h(X)$, then $v$ is the unique valuation, up to rescaling, that computes $\lct(\fa_\bullet(v))$. 
\end{lem}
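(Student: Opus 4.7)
The plan is to combine the minimizing property of $v$ with an exact filtration-containment $\cF_v\subseteq \cF_w$ arising from the shared $\lct$-computation, and then conclude by the equivalence-of-filtrations machinery (Theorem \ref{thm-equiv}) together with Proposition \ref{prop-equiv}.

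First, normalize $w$ so that $w(\fa_\bullet(v))=1$. Since both $v$ and $w$ compute $\lct(\fa_\bullet(v))$ and $v(\fa_\bullet(v))=1$ tautologically, this normalization forces $A_X(w)=\lct(\fa_\bullet(v))=A_X(v)$. The sequence $m\mapsto w(\fa_m(v))$ is subadditive (using $\fa_m(v)\cdot \fa_{m'}(v)\subseteq \fa_{m+m'}(v)$ and additivity of $w$ on products), and $\lim_m w(\fa_m(v))/m = w(\fa_\bullet(v))=1$. By Fekete's lemma, the limit equals the infimum, so $w(\fa_m(v))\ge m$ for every $m$, which gives $\fa_m(v)\subseteq \fa_m(w)$ and hence the exact containment $\cF_v^\lambda R_m\subseteq \cF_w^\lambda R_m$ for all $\lambda\in\bR$ and $m\ge 0$. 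In particular $\vol(\cF_v^{(x)})\le \vol(\cF_w^{(x)})$ for every $x\ge 0$.

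By the integral reformulation \eqref{eq-bV1form}, this inequality integrates to $\btS^\NA(\cF_v)\le \btS^\NA(\cF_w)$, and combined with $A_X(v)=A_X(w)$ gives $\tbeta(w)\le \tbeta(v)=h(X)$. Since $v$ computes $h(X)=\inf\tbeta$, equality must hold throughout and the monotone chain collapses: $\vol(\cF_v^{(x)})=\vol(\cF_w^{(x)})$ for every $x>0$, and therefore $\DHM(\cF_v)=\DHM(\cF_w)$.

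To upgrade equality of $\DHM$-measures to equality of filtrations, pick for each $m$ a basis of $R_m$ simultaneously adapted to $\cF_v R_m$ and $\cF_w R_m$, with successive minima $\mu_{k,v}$ and $\mu_{k,w}$. The inclusion $\cF_v\subseteq \cF_w$ forces $\mu_{k,w}\ge \mu_{k,v}$, so the relative Duistermaat-Heckman measure $d\nu(\cF_v,\cF_w)$ is supported in $[0,+\infty)$. Equality of the ordinary $\DHM$-measures yields $\int\lambda\,d\nu(\lambda)=\bfE^\NA(\cF_w)-\bfE^\NA(\cF_v)=0$, and together with positivity of the support this pins down $d\nu=\bfV\,\delta_0$. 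Hence $d_2(\cF_v,\cF_w)=0$, so $\cF_v$ and $\cF_w$ are equivalent in the sense of Definition \ref{def-Fequiv}; by Theorem \ref{thm-equiv} we have $\phi_{\cF_v}=\phi_{\cF_w}$, and Proposition \ref{prop-equiv} then concludes $v=w$. The original $w$ is therefore a positive rescaling of $v$, as required. The most delicate ingredient is the final upgrade from the one-dimensional datum $\DHM(\cF_v)=\DHM(\cF_w)$ to filtration equivalence, which succeeds precisely because the containment $\cF_v\subseteq\cF_w$ established in the first step is exact rather than merely asymptotic.
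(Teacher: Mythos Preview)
Your argument is essentially correct but takes a different route from the paper's, and there is one small logical gap.

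\textbf{The gap.} You assume from the outset that both $v$ and $w$ compute $\lct(\fa_\bullet(v))$, and in particular that $A_X(v)=\lct(\fa_\bullet(v))$. But the statement ``$v$ is the unique valuation computing $\lct(\fa_\bullet(v))$'' includes the claim that $v$ actually computes it, and this is not automatic. The fix is already implicit in your setup: after normalizing $w(\fa_\bullet(v))=1$ and establishing $w\ge v$ via Fekete, you get $\btS^\NA(\cF_w)\ge\btS^\NA(\cF_v)$; combining with $\tbeta(w)\ge h(X)=\tbeta(v)$ yields $A_X(w)\ge A_X(v)$ for \emph{every} such $w$, which is precisely $\lct(\fa_\bullet(v))=A_X(v)$. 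Only then should you specialize to a $w$ computing $\lct$ and deduce $A_X(w)=A_X(v)$.

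\textbf{Comparison with the paper.} Once one has $w\ge v$ and $\btS^\NA(\cF_w)=\btS^\NA(\cF_v)$, the paper invokes the strict monotonicity of $v\mapsto\btS^\NA(\cF_v)$ on $\Val(X)$ (Proposition~\ref{prop-strictinc} in the appendix) to conclude $w=v$ in one line. Your route instead extracts $\DHM(\cF_v)=\DHM(\cF_w)$, upgrades this to $d_2(\cF_v,\cF_w)=0$ via the sign constraint on relative successive minima coming from the exact containment $\cF_v\subseteq\cF_w$, and finishes with Theorem~\ref{thm-equiv} and Proposition~\ref{prop-equiv}. Both work; the paper's argument is shorter once Proposition~\ref{prop-strictinc} is in hand, while yours avoids that appendix result at the cost of invoking the heavier Boucksom--Jonsson machinery. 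One caveat: Proposition~\ref{prop-equiv} is stated for $\bR$-test configurations (finitely generated filtrations), whereas your $v,w$ are arbitrary valuations; however, the direct proof given there does not actually use finite generation, so the conclusion still applies.
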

\begin{proof}
Recall that $\lct(\fa_\bullet)=\inf_w \frac{A(w)}{w(\fa_\bullet(w))}$. For any $w\in \Val(X)$, assume that $w(\fa_\bullet(v))=a>0$. Then $a^{-1}w\ge  v$. By Proposition \ref{prop-strictinc}, the function $w\mapsto w\mapsto \btS^\NA(w)=-\log\frac{1}{\bfV}\int_\bR e^{-\lambda}\DHM(\cF_w)$ is strictly increasing on $\Val(X)$. 
So we use the assumption to get:
\begin{eqnarray*}
\frac{A(w)}{w(\fa_\bullet(v))}&=& A(a^{-1}w)=A(a^{-1}w)-\btS^\NA(a^{-1}w)+\btS^\NA(a^{-1}w)\\
&\ge& A(v)-\btS^\NA(v)+\btS^\NA(v)=A(v)=\frac{A(v)}{v(\fa_\bullet(v))}.
\end{eqnarray*}
When the equality holds true, then $a^{-1}w=v$.
\end{proof}

We now observe that the method developed in \cite{BLX19} can be used to prove:
\begin{thm}\label{thm-exist}
For any $\bQ$-Fano variety, there exists a minimizing valuation of $\tbeta$ which is quasi-monomial.
\end{thm}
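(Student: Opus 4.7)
The plan is to adapt the approximation-and-extraction strategy developed in \cite{BLX19} for the $\delta$-invariant, replacing $\delta$ by our functional $\tbeta$ throughout. First, by the identity $h(X)=\inf_{v\in X^{\rm div}_\bQ}\tbeta(v)$ from \eqref{eq-hval}, pick a sequence of divisorial valuations $v_m=c_m\cdot\ord_{E_m}$ with $\tbeta(v_m)\to h(X)$. By Proposition \ref{prop-vscaling}, we may replace each $v_m$ by the unique rescaling that minimizes $a\mapsto \tbeta(av_m)$; at such a critical point the derivative computation in \eqref{eq-frescaling} gives $A_X(v_m)=\int x\,d\nu_m/\int d\nu_m$, where $d\nu_m$ is the tilted measure $e^{-x}\DHM(\cF_{v_m})$. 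Combined with $\tbeta(v_m)\to h(X)\le 0$, this normalization yields uniform bounds on $A_X(v_m)$ and on $v_m(\cF^{(\cdot)})$, so that in particular $A_X(E_m)$ is uniformly bounded.

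Second, show the divisors $E_m$ are log canonical places of a bounded family of complements. The argument of Lemma \ref{lem-lctunique}, applied with an error term, shows that $v_m$ \emph{approximately} computes $\lct(\fa_\bullet(v_m))$, with the defect controlled by $\tbeta(v_m)-h(X)\to 0$; this is the analogue of the key input in \cite[\S3]{BLX19}. Invoking Birkar's boundedness of $N$-complements and the argument of \cite[\S3]{BLX19}, one produces, for some fixed $N$ independent of $m$, a bounded family of klt pairs $(X,\Delta_m)$ for which each $E_m$ is a log canonical place. Passing to a subsequence, all $E_m$ can then be extracted from a single log resolution $\mu\colon Y\to X$ with an SNC divisor $E=\sum_{i=1}^d E_i$, and every $v_m\in {\rm QM}(Y,E)$.

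Third, pass to the limit on the finite-dimensional simplex. After normalizing (e.g.\ by fixing $A_X(v_m)$ or $\sum t_i=1$ in simplex coordinates), ${\rm QM}(Y,E)$ is locally compact and we extract a limit $v_\infty$. Linearity of $A_X$ on ${\rm QM}(Y,E)$ and continuity of the graded sequences $\fa_\bullet(v)$ in $v\in {\rm QM}(Y,E)$ give that $v\mapsto \vol(\cF_v^{(x)})$ varies continuously in $v$ for each $x$; together with the absolute continuity of $\DHM(\cF_v)$ for quasi-monomial $v$ (Example after Theorem \ref{thm-BoCh}) and uniform linear bounds $\lambda_{\max}(\cF_{v_m})\le C\,\lambda_{\max}(\ord_{E_m})$, dominated convergence applies to the representation
\[
e^{-\btS^\NA(\cF_v)}\;=\;1-\frac{1}{\bfV}\int_0^{+\infty}e^{-x}\vol(\cF_v^{(x)})\,dx
\]
from \eqref{eq-bV1form}. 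This gives continuity of $\btS^\NA$ on ${\rm QM}(Y,E)$ (along our sequence), hence lower semi-continuity of $\tbeta$, and yields the quasi-monomial minimizer $v_\infty$.

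The main obstacle is the second step: establishing the boundedness of the divisors $E_m$. The analogous step in \cite{BLX19} hinges on a uniform lower bound of lct-type invariants for the graded sequences $\fa_\bullet(E_m)$, which there is packaged by the fact that near-minimizers of $\delta$ are near-computers of $\lct(\fa_\bullet)$. Here one must produce the same conclusion from the $\tbeta$-functional, translating a near-equality $\tbeta(v_m)\approx h(X)$ into a near-equality in Lemma \ref{lem-lctunique}; this will require carefully controlling the monotonicity in Proposition \ref{prop-strictinc} quantitatively, so that Birkar's complement theorem applies uniformly along the sequence.
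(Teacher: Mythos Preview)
Your sequential-compactness approach differs from the paper's and contains a genuine gap precisely where you flag it. The paper does \emph{not} try to show that divisorial near-minimizers of $\tbeta$ are near-computers of $\lct(\fa_\bullet)$; instead it first invokes the MMP reduction (Corollary~\ref{cor-hspecial}) to see that $h(X)=\inf_E\tbeta_*(E)$ where $E$ ranges only over divisors inducing \emph{special} test configurations. By \cite[Theorem~A.2]{BLX19} every such $E$ is automatically an lc place of some $N$-complement with $N$ depending only on $\dim X$. This gives boundedness for free, without any quantitative version of Lemma~\ref{lem-lctunique} or Proposition~\ref{prop-strictinc}. Your proposed Step~3 would require new and nontrivial work: Lemma~\ref{lem-lctunique} is a qualitative statement about exact minimizers, and turning it into a uniform quantitative statement along an arbitrary minimizing sequence is not available in the paper and is not obviously true.

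There is a second structural gap in your Step~2/4. Even granting that each $E_m$ is an lc place of some $N$-complement $\Delta_m$, the $\Delta_m$ vary and there is no reason all the $E_m$ can be extracted on a \emph{single} log resolution $(Y,E)$. The paper handles this by parametrizing all $N$-complements by a quasi-projective variety $W$, cutting out the locally closed locus $Z=\{\lct(X,D_z)=1\}$, and then running a stratification argument: after an \'etale cover each stratum $Z_i$ admits a simultaneous fiberwise log resolution, and the deformation invariance of log plurigenera (Hacon--McKernan--Xu) forces $b_z:=\inf\{\tbeta(v):A_{(X,D_z)}(v)=0\}$ to be constant on each stratum. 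Thus $b_z$ takes finitely many values and the minimum is achieved at some $z_0$; on that fixed stratum one then uses the continuity of $\tbeta$ on ${\rm QM}(Y_{z_0},D_{Y_{z_0}})$ (Proposition~\ref{prop-contqm}) together with the uniform bound on the optimal rescaling $a_*(v)$ coming from Proposition~\ref{prop-vscaling} and Izumi's inequality. In short, the argument is algebraic (finite stratification plus invariance), not analytic (sequential extraction), and your proposal as written does not close either gap.
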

Since the argument is almost verbatim to \cite{BLX19} except for the continuity property of $\tbeta$, we just give a sketch of key points and explain the required continuity of $\tbeta$ in \ref{sec-cont}. Without the properties of $\tbeta(S)$ explained in section \ref{sec-cont}, the existence of a valuation calculating $h(X)$ (but without the quasi-monomial property) can also be obtained using the argument in \cite[section 6]{BlJ17}.
\begin{proof}

By Corollary \ref{cor-hspecial}, $h(X)=\inf_{E}\tbeta_*(E)$ where $E$ ranges over prime divisors over $X$ that induce special test configurations of $(X, -K_X)$. By \cite[Theorem A.2]{BLX19}, we know that such an $E$ is a lc place of an $N$-complement $D$ of $X$, where $N$ depends only on the dimension $n$ (this depends on the deep result of Birkar about the boundedness of $\bQ$-complements). So we have
\begin{equation}
h(X)=\inf_{v}\tbeta_*(v)
\end{equation}
where $v$ ranges over all divisorial valuations that are lc places of an $N$-complement. For such a valuation $v$, there exists $D\in \frac{1}{N}|-NK_X|$ such that $(X, D)$ is lc and $A_{(X,D)}(v)=0$. We then parametrize such $\bQ$-divisors as in \cite[Proof of Theorem 4.5]{BLX19}. Set $W=\bP(H^0(X, \mcO_X(-NK_X))$ and denote by $H$ the universal divisor on $X\times W$ parametrizing divisors in $|-NK_X|$ and set $D:=\frac{1}{N}H$. By the lower semicontinuity of log canonical thresholds, the locus $Z=\{w\in W; \lct(X_w; D_w)=1\}$ is locally closed in $W$. For each $z\in Z$, set $b_z:=\inf_{v}\tbeta(v)$, where $v$ ranges over all $v\in \Val(X)$ with $A_{(X,D_z)}(v)=0$. 

Let $g: Y_z\rightarrow X$ is a log resolution of $(X, D_z)$. Write $K_Y+D_{Y_z}=g^*(K_X+D_z)$.
Consider the section of the simplicial cone: $\mathcal{S}:={\rm QM}(Y_z, D_{Y_z})\bigcap \{v\in \Val(X); A(v)=1\}$.
By Proposition \ref{prop-vscaling}, we know that for each $v\in \mathcal{S}$ there exists $a_*(v)$ such that $\inf_{a>0} \tbeta(a v)=\tbeta(a_*(v)v)=:\tbeta_*(v)$.
By the Izmui's estimate (see \cite[Example 11.3.9]{Laz04}, \cite[Proposition 5.10]{JM12} for the smooth case, and 
\cite[section 3]{Li18} in the klt case), we know that there exists $C_1>0$ such that for any $v\in \mathcal{S}$ we have $v\le C_1\cdot\ord_F$ where $F=\cap_i D_{Y_z,i}$. 
Now by the proof of Proposition \ref{prop-vscaling}, we know that $a_*(v)$ is uniformly bounded for any $v\in \mathcal{S}$. 
By Proposition \ref{prop-contqm}, we know that $v\mapsto \tbeta(v)$ is continuous on ${\rm QM}(Y_z, D_{Y_z})$ and hence is uniformly continuous over compact subsets. We then get the continuity of $v\mapsto \tbeta_*(v)$ over the compact set $\mathcal{S}$. So we know that there exists $v_z^*\in \mathcal{S}$ such that $\tbeta_*(v_z^*)=\min_{v\in \mathcal{S}}\tbeta_*(v)$ and $a_*(v_z^*)\cdot v_z^*$ is then a minimizer of $\tbeta$ over ${\rm QM}(Y_z, D_{Y_z})$.

Then as \cite[Proof of Theorem 4.5]{BLX19}, choose a locally closed decomposition $Z=\cup_{i=1}^r Z_i$ so that $Z_i$ is smooth and there is an \'{e}tale map $Z'_i\rightarrow Z_i$ such that $(X_{Z'_i}, D_{Z'_i})$ admits fiberwise log resolutions. By the same arguments as \cite[Proof of Proposition 4.1, Proposition 4.2]{BLX19} which depend on the deformation invariance of log plurigenera in the work of Hacon-McKernan-Xu, we know that $b_z$ is independent of $z\in Z_i$. So $b_z$ takes finitely many values and there is $z_0\in Z$ such that $h(X)=\min_{z\in Z}b_z=b_{z_0}$ is computed by $v_{z_0}^*$.

\end{proof}

As in the case of normalized volume, we expect the following
\begin{conj}\label{conj-special}
The minimizer $v_*$ is unique, and is special which means that $\cF_{v_*}$ is a special $\bR$-test configuration; 
\end{conj}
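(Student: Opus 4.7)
The conjecture splits into two parts: specialness of the minimizer (i.e.\ $\cF_{v_*}$ is finitely generated with $\bQ$-Fano central fibre satisfying $\Gr(\cF_{v_*})\cong R(X_{\cF_{v_*},0},-K_{X_{\cF_{v_*},0}})$) and uniqueness. By Theorem \ref{thm-exist} a quasi-monomial minimizer $v_*$ already exists, so the task is to upgrade this existence.

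For specialness, the plan is to adapt the strategy used to prove finite generation for minimizers of normalized volume (see \cite{LX18}) and of the $\delta$-invariant. The first step is to show that $v_*$ is an lc place of some $\bQ$-complement of $X$: this should follow by combining boundedness of $\bQ$-complements (Birkar) with the uniqueness in Lemma \ref{lem-lctunique}, which identifies $v_*$ as the unique (up to rescaling) valuation computing $\lct(X;\fa_\bullet(v_*))$. Granted this, pass to a dlt modification extracting the divisor carrying $v_*$ and run a $\bT$-equivariant relative MMP as in Theorem \ref{thm-MMP} to build a weakly special test configuration whose central fibre carries a torus action with $v_*$ as a monomial valuation. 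A further anticanonical MMP of the type appearing in Steps 2--4 of the proof of Theorem \ref{thm-MMP} converts this into a special $\bR$-test configuration $(\mcX,-K_\mcX,a\eta)$ with $\bfH^\NA(\mcX,-K_\mcX,a\eta)=\tbeta(v_*)=h(X)$. Tracking the equality cases in the MMP inequalities (which rely on the derivative formula \eqref{eq-dbtS}) should force $\cF_{v_*}$ to coincide with $\cF_{(\mcX,-K_\mcX,a\eta)}$, establishing specialness.

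For uniqueness, suppose $v_1$ and $v_2$ are two special minimizers with central fibres $(W_i,\xi_i)$. The strategy, outlined in the introduction, is initial term degeneration: degenerate the filtration $\cF_{v_1}$ by the one-parameter subgroup generated by $\xi_2$ after an equivariant embedding, producing a filtration on $W_2$ which interpolates (via rescaling and twisting, using \eqref{eq-aFb} and Lemma \ref{lem-abtrans}) between $\cF_{\wt_{\xi_2}}$ and the degeneration of $\cF_{v_1}$. Both endpoints minimize the $\hat{\bfH}^\NA$-variant of Xu--Zhuang; strict convexity of $\xi\mapsto \hat{\bfH}^\NA(\cF_\xi)$ along twists, as in Lemma \ref{lem-twistconvex} (whose proof, relying only on the concave transform formula \eqref{eq-Gtwist} and the positivity argument at the end, extends from $\cF_\triv$ to arbitrary filtrations $\cF$), then forces the interpolation to be constant. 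Hence $\cF_{v_1}$ and $\cF_{v_2}$ become equivalent filtrations up to a shift in the sense of Definition \ref{def-Fequiv}, and Theorem \ref{thm-equiv} together with Proposition \ref{prop-equiv} promotes this to the equality $v_1=v_2$.

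The main obstacle is the finite generation of $\ree(\cF_{v_*})$ within the specialness step. Unlike the $\delta$-invariant, where the relevant extremal problem is scale-invariant and plays well with the Hacon--McKernan--Xu finite-generation machinery, the functional $\tbeta$ is genuinely non-linear in the valuation (cf.\ the scaling analysis in Proposition \ref{prop-vscaling}), so the MMP arguments of Theorem \ref{thm-MMP} only produce approximating special test configurations whose associated divisorial valuations converge to $v_*$ without \emph{a priori} coinciding with it. Bridging this gap --- producing a single special $\bR$-test configuration whose associated valuation is exactly $v_*$ --- is where the real difficulty lies, and is why the statement is left as a conjecture in this paper.
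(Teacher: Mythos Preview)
Your proposal is not so much a proof attempt as an accurate assessment of the situation: the paper does \emph{not} prove Conjecture~\ref{conj-special}. It is stated as a conjecture precisely because the specialness (finite generation) of the minimizer is left open. The remark immediately following the conjecture only claims that divisorial minimizers are primitive and plt, and announces the uniqueness \emph{among special minimizers} proved in Section~\ref{sec-semiunique}. You have correctly located the gap in your final paragraph.

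On the uniqueness side, your plan is essentially the argument the paper carries out in Section~\ref{sec-semiunique} (for special minimizers only): take two special minimizers $v_0,v_1$, apply the initial term degeneration of $\cF_{v_1}$ with respect to $\cF_{v_0}$ (Section~\ref{sec-initial}) to land on $W^{(0)}$, interpolate via $\cF'_s=s\cF'_{\frac{1-s}{s}\xi_0}$, and use convexity (Lemma~\ref{lem-der}) together with $\beta_{\xi_0}\ge 0$ (K-semistability of $(W^{(0)},\xi_0)$ from Theorem~\ref{thm-minsemi}) to force $G_{\cF'_1}(y)=\la y,\xi_0\ra$. The paper then concludes $d_2(\cF_0,\cF_1)=0$ via Lemma~\ref{eq-d2inv} and invokes Proposition~\ref{prop-equiv}. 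One refinement worth noting: the paper works with $\hat{\bfH}^\NA$ (using $\bhL^\NA$ of Xu--Zhuang) rather than $\bfH^\NA$ at this step, because lower semicontinuity under the family of initial term degenerations is only directly available for $\lct$-type quantities (see \eqref{eq-bfLdec} and the remark following it). Your plan mentions this but it is worth emphasizing that this is not optional.

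On the specialness side, your sketch is a reasonable program, but the paper makes no attempt at it; as you say, the non-linearity of $\tbeta$ in the scaling variable (Proposition~\ref{prop-vscaling}) obstructs a direct transplant of the $\delta$-invariant or normalized-volume finite-generation arguments.
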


\begin{rem}
As \cite[Proposition 4.11]{BlJ17}, using Lemma \ref{lem-lctunique}, one can show that any divisorial (i.e. rational rank 1) minimizing valuation is primitive and plt.

Besides the case of stability threshold treated in \cite{BLX19}, 
in the local setting of normalized volumes, the existence of quasi-monomial minimizers is also known thanks to the work of Blum \cite{Blu18} and Xu \cite{Xu20}. 
Moreover one might also be able to adapt the techniques in Xu-Zhuang \cite{XZ20} to the current global setting to prove the uniqueness of minimizing valuations. 
We will prove in section \ref{sec-semiunique} the uniqueness of special minimizers (in the similar spirit as in \cite{LX16, LX18,  LWX18}). 
\end{rem}


\section{Initial term degeneration of filtrations}\label{sec-initial}

Let $\cF_0$ be a special $\bR$-test configuration of $(X, -K_X)$ with central fibre $(W:={\rm Proj}(\Gr(\cF_0)), \xi_0:=\xi_{\cF_0})$. Let $\cF_1$ be another filtration of $R$. We define a filtration on 
\begin{equation}
R':=R(W, -K_{W})=\bigoplus_{m\ge 0}\bigoplus_{\lambda \in \Gamma(\cF_0)}t^{-\lambda} \cF_0^{\lambda}R_m/\cF_0^{>\lambda}R_m=:\bigoplus_{m\ge 0}R'_m
\end{equation}
in the following way. Recall that we can write:
\begin{equation}
R'_m=\bigoplus_{\alpha\in M_\bZ} t^{-\la \alpha, \xi_0\ra}\cF_0^{\la \alpha,\xi_0\ra}R_m/ \cF_0^{>\la \alpha, \xi_0\ra}R_m.
\end{equation}
For any $f\in R_m$, set:
\begin{equation}
\bin_{\cF_0}(f)=(t^{-\la \alpha, \xi\ra}\bar{f})(0)=:f'\in \cF_0^{\la\alpha,\xi_0\ra} R_m/\cF_0^{>\la \alpha, \xi_0\ra} R_m \quad \text{ where }\quad \la \alpha, \xi_0\ra=v_{\cF_0}(f).
\end{equation}
For any $\lambda\in \bR$, take the Gr\"{o}bner base type degeneration:
\begin{equation}
\cF_1'^\lambda R'_m={\rm Span}_{\bC}\left(\bin_{\cF_0}(f), f\in \cF_1^\lambda R_m \right) \subseteq R'_m.
\end{equation}
Note that because $R'$ is integral, $\bin_{\cF_0}(fg)=\bin_{\cF_0}(f)\cdot \bin_{\cF_0}(g)$ if $f\in R_{m_1}$ and $g\in R_{m_2}$.
So in this way, we get a $\bT_0$-equivariant filtration:
\begin{equation}
\cF_1'^\lambda R'_m=\bigoplus_{\alpha\in \bZ^{r_0}}\cF'^{\lambda}_1 R'_{m,\alpha}.
\end{equation}
There is an equivalent way to describe $\cF_1'^\lambda R'_m$ as follows.
For any $f'\in R'_{m,\alpha}$, we choose $f\in R_m$ such that $f'=t^{-\la \alpha, \xi_0\ra}\bar{f}(0)$.
Then we have
\begin{equation}\label{eq-F''2}
\cF_1'^\lambda R'_{m,\alpha}=\{f'\in R'_{m,\alpha}; \exists h\in \cF_0^{>\la \alpha, \xi_0\ra}R_m \text{ s.t. } f+h \in \cF_1^{\lambda} R_m\}.
\end{equation}
This is well defined since $f$ is determined up to addition by elements from $\cF^{>\la \alpha, \xi_0\ra}_0R_m$.

Note that this construction allows us to find a basis $\cB=\{s_1,\dots, s_{N_m}\}$ of $R_m$ that is adapted to both $\cF_0 R_m$ and $\cF_1 R_m$. 
Recall that this means that for any $\lambda\in \bR$ and $i=0,1$, there exists a subset of $\cB$, which depends on $\lambda$ and $i$ and spans basis of $\cF_i^\lambda R_m$. 
To find such a basis, we can first find a basis $\cB'_\alpha$ of $R'_{m,\alpha}$ which is adapted to $\cF'_1 R_{m,\alpha}$. Then $\cB=\cup_\alpha \cB_\alpha=:\{f'_1,\dots, f'_{N_m}\}$ is a basis adapted to both $\cF'_1R_m$ and $\cF'_{\wt_{\xi_0}}R'_m$. For each $f'_k\in R'_{m,\alpha_k}$, there exists $\lambda_k\in \bR$ such that $f'_k\in \cF'^{\lambda_k}_1R'_m\setminus \cF'^{>\lambda_k}_1 R'_m$. Then by \eqref{eq-F''2}, there exists $h_k\in \cF_0^{>\la \alpha_k, \xi\ra}R_m$ such that $s_k:=f_k+h_k\in \cF^{\lambda_k}_1R_m$. Moreover,  we have $s_k\not\in \cF_1^{>\lambda_k}R_m$ since otherwise $\bin(s_k)=\bin(f_k)=f'_k \in \cF'^{>\lambda_k}R'_m$.
It is easy to verify that $\{s_k\}$ is the wanted basis. So the relative successive minima of $\cF_1$ with respect to $\cF_0$ (see \cite{BoJ18b}) is given by the set $\{\lambda_k-\la \alpha_k, \xi_0\ra\}$, which is the same as the relative successive minima of $\cF'_1:=\cF'_1R'$ with respect to $\cF'_0:=\cF'_{\wt_{\xi_0}}R'$. This immediately proves a useful fact:
\begin{lem}\label{eq-d2inv}
With the constructions and notations, we have the identity:
\begin{equation}
d_2^X(\cF_0, \cF_1)=d_2^{W}(\cF'_0, \cF'_1).
\end{equation}
\end{lem}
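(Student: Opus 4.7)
The idea is to exhibit a single basis $\{s_1,\dots,s_{N_m}\}$ of $R_m$ that is simultaneously adapted to $\cF_0 R_m$ and $\cF_1 R_m$, whose image under initial-term degeneration is a basis of $R'_m$ simultaneously adapted to $\cF'_0 R'_m=\cF'_{\wt_{\xi_0}}R'_m$ and $\cF'_1 R'_m$, and such that the pair of successive minima attached to each basis element agrees on the two sides. Once this is in place, the two relative successive minima multisets coincide, so by \cite{BE18, CM15} the associated limiting measures agree, and hence so do $d_2^X(\cF_0,\cF_1)$ and $d_2^W(\cF'_0,\cF'_1)$.

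Concretely, I will first pick, for each weight $\alpha\in M_\bZ$ of the $\bT_0$-action on $W$, a basis $\cB'_\alpha$ of $R'_{m,\alpha}$ adapted to $\cF'_1 R'_{m,\alpha}$; their union $\cB'=\{f'_1,\dots,f'_{N_m}\}$ is then adapted to both $\cF'_1 R'_m$ and $\cF'_{\wt_{\xi_0}}R'_m$, since the latter is simply the grading by $\la\alpha,\xi_0\ra$. Each $f'_k$ lies in some $R'_{m,\alpha_k}$, and there is a unique $\lambda_k\in\bR$ with $f'_k\in\cF'^{\lambda_k}_1 R'_m\setminus\cF'^{>\lambda_k}_1 R'_m$. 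Using the characterization \eqref{eq-F''2}, I lift each $f'_k$ to an element $s_k=f_k+h_k\in\cF_1^{\lambda_k}R_m$, where $f_k\in R_m$ is any preimage with $\bin_{\cF_0}(f_k)=f'_k$ and $h_k\in\cF_0^{>\la\alpha_k,\xi_0\ra}R_m$. The membership $s_k\notin\cF_1^{>\lambda_k}R_m$ then follows because otherwise $\bin_{\cF_0}(s_k)=f'_k$ would lie in $\cF'^{>\lambda_k}_1 R'_m$, contradicting the choice of $\lambda_k$. Likewise, since $f_k$ has $v_{\cF_0}(f_k)=\la\alpha_k,\xi_0\ra$ and $h_k$ has strictly larger $\cF_0$-value, we have $s_k\in\cF_0^{\la\alpha_k,\xi_0\ra}R_m\setminus\cF_0^{>\la\alpha_k,\xi_0\ra}R_m$.

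Next I need to verify that $\{s_1,\dots,s_{N_m}\}$ is in fact a basis of $R_m$ and is adapted to both $\cF_0 R_m$ and $\cF_1 R_m$. For $\cF_0$-adaptedness the point is that for any $\mu\in\bR$ the images of the $s_k$ with $\la\alpha_k,\xi_0\ra\ge\mu$ under the surjection $\cF_0^\mu R_m\twoheadrightarrow\bigoplus_{\la\alpha,\xi_0\ra\ge\mu}R'_{m,\alpha}$ form a basis of the target, so a dimension count plus a filtration-by-$\cF_0$-values argument upgrades linear independence in the associated graded to a basis of $\cF_0^\mu R_m$; this is the one place where I expect to have to be careful, but it is the standard lifting argument. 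The $\cF_1$-adaptedness follows directly from the construction $s_k\in\cF_1^{\lambda_k}R_m\setminus\cF_1^{>\lambda_k}R_m$ together with the adaptedness of $\cB'$ to $\cF'_1$.

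Finally, with the basis $\{s_k\}$ in hand on $X$ and $\{f'_k\}$ on $W$, the pair of $\cF_0$- and $\cF_1$-values of $s_k$ is $(\la\alpha_k,\xi_0\ra,\lambda_k)$, which is exactly the pair of $\cF'_{\wt_{\xi_0}}$- and $\cF'_1$-values of $f'_k$. Hence the multisets of relative successive minima
\[
\{\lambda_k-\la\alpha_k,\xi_0\ra\;:\;1\le k\le N_m\}
\]
are identical for the two pairs $(\cF_0,\cF_1)$ and $(\cF'_0,\cF'_1)$. Passing to the limit $m\to\infty$ in the definition of $d_2$ then yields $d_2^X(\cF_0,\cF_1)=d_2^W(\cF'_0,\cF'_1)$, as desired.
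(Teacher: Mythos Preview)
Your proposal is correct and follows essentially the same approach as the paper: both construct the common adapted basis by first choosing $\cB'_\alpha$ adapted to $\cF'_1 R'_{m,\alpha}$, lifting via \eqref{eq-F''2} to obtain $s_k=f_k+h_k$, and then observing that the pair of values $(\la\alpha_k,\xi_0\ra,\lambda_k)$ attached to each $s_k$ coincides with the pair attached to $f'_k$, so the relative successive minima multisets agree. The paper treats the verification that $\{s_k\}$ is an adapted basis as routine (``It is easy to verify''), whereas you spell out the $\cF_0$-adaptedness step via the associated-graded lifting argument; this is a welcome elaboration but not a different method.
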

Since the initial term degeneration does not change the dimension of vector spaces, it is clear that the successive minima of $\cF_1$ and $\cF'_1$ coincides. As a consequence, we get:
\begin{equation}
\btS^\NA_X(\cF_1)=\btS^\NA_{W}(\cF'_1).
\end{equation}
On the other hand, consider the following $\bT_0$-equivariant graded filtration of the Rees algebra $\mcR':=\mcR(\cF_0)$ (see \eqref{eq-Rees}):
\begin{equation}
\cF'^\lambda\mcR'_{m,\alpha}=\{s=t^{-\la \alpha, \xi\ra}\bar{f}\in \mcR'_{m,\alpha}; t^{-\lambda} \bar{f} \in \mcR(\cF_1) \}.
\end{equation}
Then  $\cF'\mcR'$ coincides with $\cF R$ on the generic fibre and coincides with $\cF' R'$ on the central fibre. By the lower semicontinuity of lct for a family, it is easy to see that $\bhL^\NA$ in \eqref{eq-hatbfL} is also lower semicontinous for a family. This is standard if $\cF_0$ has rank 1 which corresponds to a special test configuration (see \cite[Lemma 8.1]{KP17} and \cite[Proof of Lemma 6.5]{BL18}).  In general, one can restrict to a generic curve passing through 0 in the family in Tessier's construction in the paragraph above Lemma \ref{lem-integral} (alternatively see Remark \ref{rem-rank1}).
So we can get:
\begin{equation}\label{eq-bfLdec}
\bhL^\NA_X(\cF_1)\ge \bhL^\NA_{W}(\cF'_1).
\end{equation}
\begin{rem}
This is the only point in our argument where we prefer $\bhL^\NA$ in \eqref{eq-hatbfL} to $\bfL^\NA$, due to the reason that the property of lower semicontinuity for $\bfL^\NA$, which involves sub-log-canonical instead of log canonical pairs, is not immediately available in the literature (although we believe it to be true).
\end{rem}
Combining the above discussion, we get the inequality:
\begin{equation}\label{eq-bfDdec}
\hat{\bfH}^\NA_X(\cF_1)\ge \hat{\bfH}^\NA_{W}(\cF'_1).
\end{equation} 

\begin{thm}\label{thm-minsemi}
Assume $v$ induces a special $\bR$-test configuration $\cF_v$ of $X$. Then $v$ is a special minimizer of $\tbeta$ over $\Val(X)$ if and only if $v$ is Ding-semistable (or equivalently K-semistable).
\end{thm}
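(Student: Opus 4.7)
The proof splits naturally into two implications, each using the derivative formula \eqref{eq-der2} (the analogue of \eqref{eq-der1} for arbitrary $\bT$-equivariant special test configurations rather than just weight filtrations).

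For the direction ($\Rightarrow$), assume $v$ is a special minimizer, so that $\bfH^\NA(\cF_v) = \tbeta(v) = h(X)$ by Lemma~\ref{lem-tDtbeta} and \eqref{eq-hval}. Given any $\bT$-equivariant special test configuration $(\mcY, \mcL, \eta)$ of the central fibre $(W, \xi_0)$ of $\cF_v$, I would form a one-parameter family of $\bR$-test configurations $\{\tilde{\cF}_s\}_{s\in[0,\epsilon)}$ of $X$ by composing the degeneration $X \rightsquigarrow W$ (via $\cF_v$) with $W \rightsquigarrow \mcY_0$, equipped with the vector field $\xi_0 + s\eta$ on $\mcY_0$. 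The derivative formula yields
\[
  \left.\frac{d}{ds}\right|_{s=0} \bfH^\NA(\tilde{\cF}_s) \;=\; \bfD^\NA_{\xi_0}(\mcY, \mcL, \eta).
\]
Because $\tilde{\cF}_0 = \cF_v$ already achieves the infimum $h(X)$ of $\bfH^\NA$, this derivative must be nonnegative, so $\bfD^\NA_{\xi_0}(\mcY, \mcL, \eta) \ge 0$. Letting $(\mcY, \mcL, \eta)$ range over all $\bT$-equivariant special test configurations and invoking Theorem~\ref{thm-special}, we conclude that $(W, \xi_0)$ is K-semistable.

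For the direction ($\Leftarrow$), suppose $(W, \xi_0)$ is K-semistable. By \eqref{eq-hbhD} together with Lemma~\ref{lem-tDtbeta}, it suffices to show $\hat{\bfH}^\NA_X(\cF) \ge \bfH^\NA(\cF_v)$ for every filtration $\cF$ on $X$. Apply the initial term degeneration of Section~\ref{sec-initial} with respect to $\cF_v$ to produce a $\bT$-equivariant filtration $\cF'$ on $W$ with $\hat{\bfH}^\NA_X(\cF) \ge \hat{\bfH}^\NA_W(\cF')$ by \eqref{eq-bfDdec}. Specialness of $\cF_v$, together with \eqref{eq-Lwtwist}, gives $\bfH^\NA(\cF_v) = -\btS^\NA_W(\cF'_{\wt_{\xi_0}}) = \hat{\bfH}^\NA_W(\cF'_{\wt_{\xi_0}})$. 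Hence one is reduced to proving
\[
  \hat{\bfH}^\NA_W(\cF') \;\ge\; \hat{\bfH}^\NA_W(\cF'_{\wt_{\xi_0}})
\]
for all $\bT$-equivariant filtrations $\cF'$ on $W$. Reducing by Theorem~\ref{thm-MMP} on the $W$-side to a $\bT$-equivariant special test configuration $(\mcY, \mcL, \eta)$, one applies the derivative formula \eqref{eq-der2} along the path of vector fields $\xi_0 + s\eta$. K-semistability gives $\bfD^\NA_{\xi_0}(\mcY, \mcL, \eta) \ge 0$, making $s=0$ a critical point of the free-energy-type functional, and the strict convexity of Lemma~\ref{lem-twistconvex} promotes this critical point to the global minimum, yielding the endpoint inequality.

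The main obstacle is the ($\Leftarrow$) direction, specifically the passage from the $\xi_0$-weighted invariant $\bfD^\NA_{\xi_0}$ (in which K-semistability of $(W, \xi_0)$ is natively phrased) to the unweighted comparison $\hat{\bfH}^\NA_W(\cF') \ge \hat{\bfH}^\NA_W(\cF'_{\wt_{\xi_0}})$. A direct Jensen estimate from $\bfD^\NA_{\xi_0} \ge 0$ only supplies $\bfL^\NA(\cF') \ge \btS^\NA_{\xi_0}(\cF')$, which is the weighted H-inequality rather than the comparison of unweighted $\btS^\NA$'s that we need. Bridging this gap requires the twist-and-rescale interpolation together with the convexity of $\xi \mapsto -\btS^\NA(\cF_\xi)$ and Boucksom-Jonsson's characterization of filtrations by their non-Archimedean metrics (Theorem~\ref{thm-equiv}), so that the derivative inequality along the interpolation can be integrated into the desired endpoint inequality.
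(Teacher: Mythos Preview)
Your overall architecture matches the paper's, but several steps are imprecise or invoke the wrong ingredients.

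For $(\Rightarrow)$, the phrase ``composing the degeneration $X \rightsquigarrow W$ with $W \rightsquigarrow \mcY_0$'' hides the main content. One must show that for $0 < \epsilon \ll 1$ the perturbed vector $\xi_0 + \epsilon\eta$ degenerates $X$ \emph{directly} to $\mcY_0$ and that this gives a \emph{special} $\bR$-test configuration of $X$, hence a valuation $v_\epsilon \in \Val(X)$ to which $\tbeta$ can be applied. The paper carries this out via an explicit Chow-coordinate argument (initial terms of $\CH(X)$ under the ordering by $\xi_0 + \epsilon\eta$); without such a construction you have no family of valuations on $X$ along which to differentiate $\tbeta$.

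For $(\Leftarrow)$, three corrections. First, the MMP reduction on the $W$-side is unnecessary: Lemma~\ref{lem-der} applies to an arbitrary $\bT_0$-equivariant filtration, so one feeds the initial-term degeneration $\cF'_1$ into it directly. Second, the relevant convexity is part~(1) of Lemma~\ref{lem-der} (convexity of $s \mapsto \hat{\bfH}^\NA(\cF'_s)$ along the twist-and-rescale path $\cF'_s = s(\cF'_1)_{\frac{1-s}{s}\xi_0}$), not Lemma~\ref{lem-twistconvex}, which only treats the twist variable $\xi$ for a fixed filtration. Your phrase ``path of vector fields $\xi_0 + s\eta$'' describes what happens after restriction to the central fibre $\mcY_0$, but the interpolation lives on $W$ as a family of filtrations. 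The endpoint inequality then follows because convexity in $s$ together with the derivative bound $\beta_{\xi_0}((\cF'_1)_{-\xi_0}) \ge \bfD^\NA_{\xi_0}((\cF'_1)_{-\xi_0}) \ge 0$ (the first inequality by Proposition~\ref{prop-XZcomp}, the second by K-semistability extended to filtrations via approximation) forces $\hat{\bfH}^\NA_W(\cF'_1) \ge \hat{\bfH}^\NA_W(\cF'_{\wt_{\xi_0}})$. Third, Theorem~\ref{thm-equiv} plays no role in the proof of Theorem~\ref{thm-minsemi}; it is used only later, in the uniqueness argument of Section~\ref{sec-semiunique}.
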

\begin{proof}

For simplicity of notations, set $\cF_0=\cF_v$ and $(W, \xi_0):=(X_{\cF_v,0}, \xi_{\cF_0})$ and let $\bT_0$ be the torus generated by $\xi_0$. 

We first prove that minimizer is Ding-semistable.
Suppose $(W, \xi_0)$ is not Ding-semistable. Then by Theorem \ref{thm-special} from \cite{HL20}, there exists a $\bT$-equivariant special test configuration $(\mcW, -K_{\mcW})$ of $(W, -K_{W})$ with central fibre $Y:=\cW_0$ such that
\begin{equation}
\bfD^\NA_g(\mcW, -K_{\mcW})=\Fut_{Y,\xi}(\eta)<0.
\end{equation}
We can now construct a family of special valuations $\{v_\epsilon\}$ such that $v_\epsilon$ corresponds to a vector field $\xi_\epsilon=\xi_0+\epsilon \eta$ on $Y$. This can be done by using the cone construction to reduce to the situation in \cite[section 6]{LX16} or \cite[Proof of Theorem 2.64]{LX18}. Alternatively one can use an argument involving Hilbert scheme as in \cite[Proof of Lemma 3.1]{LWX18}. 

Here we will use the Chow variety to explain this construction. Recall that the Chow point of a cycle $Z\subset \bP^{N-1}$ of degree $d$ and dimension $n$ corresponds to a divisor in the Grassmannian $Gr(n+1, \bC^N)$ which is the zero scheme of a section :
$$\CH(Z)\in H^0(Gr(n+1, \bC^N), \mcO(d))=:\bbM.$$ 
$\CH(Z)$ is determined up to rescaling and we call it the Chow coordinate of $Z$. Let $\CH(X)$, $\CH(W)$ and $\CH(Y)$ be the Chow coordinates of $X$, $W$ and $Y$ respectively. Because the $\bT$-action on $\bP^{N-1}$ induces a weight decomposition $\bbM=\bigoplus_\alpha \bbM_\alpha$. We have:
\begin{equation}
\lim_{s\rightarrow+\infty} \sigma_\xi(s)\circ [\CH(X)]=[\CH(W)], \quad \lim_{s\rightarrow+\infty} \sigma_\eta(s)\circ [\CH(W)]=[\CH(Y)].
\end{equation}
If we set 
\begin{equation}
\chw_\xi(X)=\min\{\la \alpha, \xi\ra; \Ch(X)_\alpha \neq 0\}, \quad \chw_\eta(W)=\min\{\la \alpha, \xi\ra; \Ch(W)_\alpha\neq 0\}
\end{equation}
then:
\begin{eqnarray*}
{[\CH(W)]}&=&[\sum_{\alpha\in I_W}\CH(X)_\alpha] \text{ where $I_W=\{\alpha; \CH(X)_\alpha\neq 0, \la \alpha, \xi\ra=\chw_\xi(X) \}$ }\\
{[\CH(Y)]}&=&[\sum_{\alpha\in I_Y}\CH(W)_\alpha] \text{ where $I_Y=\{\alpha; \CH(W)_\alpha\neq 0, \la \alpha, \eta\ra=\chw_\eta(W) \}$ } .
\end{eqnarray*} 
Note that $I_Y\subseteq I_W$.
For any $\alpha\in M_\bZ$ with $\CH(X)_\alpha\neq 0$, $\la \alpha, \xi\ra \ge \chw_\xi(X)$ with equality iff $\alpha\in I_W$. 
Similarly for any $\alpha\in M_\bZ$ with $\CH(W)_\alpha\neq 0$ (and hence $\CH(X)_\alpha\neq 0$), $\la \alpha, \eta\ra \ge \chw_\eta(W)$ with equality iff $\alpha\in I_Y$.
So when $0<\epsilon\ll 1$ and for any $\CH(X)_\alpha\neq 0$, $\la \alpha, \xi+\epsilon \eta\ra \ge \chw_\xi(X)+\epsilon \chw_\eta(W)$ with equality iff $\alpha\in I_Y$. So we get
\begin{eqnarray*}
\lim_{t\rightarrow 0} \sigma_{\xi+\epsilon \eta}(t) \circ [\CH(X)]=\lim_{t\rightarrow 0}\left[\sum_{\alpha} t^{\la \alpha, \xi+\epsilon \eta\ra} \CH(X)_\alpha\right]=[\CH(Y)].
\end{eqnarray*}
So for $0<\epsilon \ll 1$, $\xi+\epsilon \eta$ induces an $\bR$-test configuration that degenerates $X$ to $Y$. By Lemma \ref{lem-integral}, we get the corresponding valuations $v_\epsilon$.

Now we use the identity \eqref{eq-der1} to get: 
\begin{equation}
\left.\frac{d}{d\epsilon}\right|_{\epsilon=0}\tbeta(v_\epsilon)=\frac{d}{d\epsilon}\hat{\bfH}^\NA_{Y}(\cF_{\wt_{\xi+\epsilon\eta}})=\Fut_{Y,\xi}(\eta)<0.
\end{equation}
But this contradicts the assumption that $v_0=v$ is the minimizer of $\tbeta$.

Conversely, we need to show that Ding-semistable valuation is a minimizer. Let $(\mcX, \mcL, a\eta)$ be any special test configuration of $(X, -K_X)$ and $\cF_1=\cF_{(\mcX, \mcL, a\eta)}$ be the associated filtration. We consider the initial term degeneration of $\cF_1$ with respect to $\cF_0$ defined as above. Then we can use \eqref{eq-bfDdec} to get:
\begin{eqnarray*}
\hat{\bfH}^\NA_X(\cF_1)\ge \hat{\bfH}^\NA_{W}(\cF'_1)\ge \hat{\bfH}^\NA_{W}(\cF_{\wt_{\xi_0}})=\hat{\bfH}^\NA(\cF_0)=\tbeta(v).
\end{eqnarray*}
where the second inequality follows from the results in Lemma \ref{lem-der} in the next section and the assumption that $(W, \xi_0)$ is Ding-semistable.

\end{proof}

\section{Uniqueness of minimizing special valuations}\label{sec-semiunique}

We prove Theorem \ref{thm-semi} in this section.
We first generalize the formula \eqref{eq-der1}. Let $(X, -K_X, \bT, \xi)$ be the data as before and $\cF$ be a $\bT$-equivariant filtration. We consider a family of $\bT$-equivariant filtrations
\begin{equation}\label{eq-cFs}
\cF_s=s \cF_{\frac{1-s}{s}\xi},\;\; s\in (0,1]; \quad \cF_0=\cF_{\wt_\xi}, \quad \cF_1=\cF
\end{equation} 
that interpolates $\cF_{\wt_\xi}$ and $\cF$.
\begin{lem}\label{lem-der}
For the family of filtrations \eqref{eq-cFs}, the following statements hold true:
\begin{enumerate}
\item
$s\mapsto \hat{\bfH}^\NA(\cF_s)$ is convex. It is affine if and only if $G_\cF$ is a multiple of $\la x, \xi\ra$.

\item
We have the derivative formula:
\begin{equation}\label{eq-der2}
\left.\frac{d}{ds}\right|_{s=0}\hat{\bfH}^\NA(\cF_s)=\beta_\xi(\cF_{-\xi}).
\end{equation}
\end{enumerate}
\end{lem}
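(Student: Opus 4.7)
The strategy is to reduce both claims to explicit one-variable calculus by writing $\bhL^\NA(\cF_s)$ and $G_{\cF_s}(y)$ in closed affine form in $s$. First, applying the transformation rule Lemma \ref{lem-abtrans} together with the twist-invariance \eqref{eq-bLFtwist} to $\cF_s = s\cF_{\frac{1-s}{s}\xi}$ gives
\begin{equation*}
\bhL^\NA(\cF_s) = s\,\bhL^\NA\!\left(\cF_{\frac{1-s}{s}\xi}\right) = s\,\bhL^\NA(\cF),
\end{equation*}
so this piece of $\hat{\bfH}^\NA(\cF_s)$ is already linear in $s$. Similarly, by \eqref{eq-Gbasic} and Lemma \ref{lem-Gtwist},
\begin{equation*}
G_{\cF_s}(y) = s\,G_{\cF_{\frac{1-s}{s}\xi}}(y) = s\,G_\cF(y) + (1-s)\la y',\xi\ra,
\end{equation*}
which is affine in $s$ and interpolates $G_{\cF_{\wt_\xi}}=\la y',\xi\ra$ at $s=0$ with $G_\cF$ at $s=1$, consistent with the claimed endpoints.

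Next, I would combine these to write
\begin{equation*}
\hat{\bfH}^\NA(\cF_s) = s\,\bhL^\NA(\cF) + \log\!\left(\frac{n!}{\bfV}\int_\Delta e^{-sG_\cF(y)-(1-s)\la y',\xi\ra}\,dy\right).
\end{equation*}
Since the exponent is affine in $s$ for each fixed $y$, the logarithm of the integral is a convex function of $s$ by H\"older's inequality (equivalently, a direct variance computation shows $\tfrac{d^2}{ds^2}$ of the log-integral equals the $L^2$-variance of $G_\cF(y)-\la y',\xi\ra$ against the associated probability measure). This establishes convexity, and the H\"older equality case gives that $s\mapsto \hat{\bfH}^\NA(\cF_s)$ is affine if and only if $G_\cF(y)-\la y',\xi\ra$ is (almost everywhere) constant on $\Delta$, which is the desired characterization (up to an additive constant that is absorbed by the translation-invariance of $\hat{\bfH}^\NA$).

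For the derivative at $s=0$, differentiating both pieces gives
\begin{equation*}
\left.\frac{d}{ds}\right|_{s=0}\!\hat{\bfH}^\NA(\cF_s) = \bhL^\NA(\cF) - \frac{\int_\Delta \bigl(G_\cF(y)-\la y',\xi\ra\bigr)\,e^{-\la y',\xi\ra}\,dy}{\int_\Delta e^{-\la y',\xi\ra}\,dy}.
\end{equation*}
To identify this with $\beta_\xi(\cF_{-\xi})$, I would use the defining equalities $\beta_\xi(\cF_{-\xi})=\bhL^\NA(\cF_{-\xi})-\bfE^\NA_\xi(\cF_{-\xi})$, together with \eqref{eq-bLFtwist} giving $\bhL^\NA(\cF_{-\xi})=\bhL^\NA(\cF)$ and Lemma \ref{lem-Gtwist} giving $G_{\cF_{-\xi}}(y)=G_\cF(y)-\la y',\xi\ra$, and finally the identity $\bfV_\xi = n!\int_\Delta e^{-\la y',\xi\ra}\,dy$ from the moment-polytope formula, so that
\begin{equation*}
\bfE^\NA_\xi(\cF_{-\xi}) = \frac{n!}{\bfV_\xi}\int_\Delta \bigl(G_\cF(y)-\la y',\xi\ra\bigr)\,e^{-\la y',\xi\ra}\,dy,
\end{equation*}
which matches. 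The main potential pitfall is keeping the normalization straight: $\btS^\NA$ is normalized by $\bfV$ while $\bfE^\NA_\xi$ is normalized by $\bfV_\xi$, and the common factor $n!$ must be tracked so that the volume ratios cancel correctly; aside from this bookkeeping, there is no conceptual obstacle, as everything reduces to the affine dependence of $G_{\cF_s}(y)$ and the standard log-sum-exp convexity.
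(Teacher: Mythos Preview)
Your proposal is correct and follows essentially the same route as the paper's own proof: you compute $G_{\cF_s}(y)=(1-s)\la y,\xi\ra+sG_\cF(y)$ via \eqref{eq-Gbasic} and Lemma~\ref{lem-Gtwist}, obtain $\bhL^\NA(\cF_s)=s\,\bhL^\NA(\cF)$ from Lemma~\ref{lem-abtrans} and \eqref{eq-bLFtwist}, invoke H\"older for convexity of the log-integral, and differentiate at $s=0$ to identify the result with $\beta_\xi(\cF_{-\xi})$ using $G_{\cF_{-\xi}}=G_\cF-\la y,\xi\ra$. Your remark that the H\"older equality case yields $G_\cF(y)-\la y,\xi\ra$ constant (rather than literally a scalar multiple) is a fair sharpening of the paper's phrasing, and your caution about the $\bfV$ versus $\bfV_\xi$ normalization is well placed but, as you note, routine.
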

To get \eqref{eq-der1} from \eqref{eq-der2}, we just need to set $\cF=\cF_{\wt_{\xi+\eta}}$ so that $\cF_s=\cF_{\xi+s\eta}$.
Moreover, we fix a faithful valuation that is adapted to the torus action (see Definition \ref{def-adapt}) and will freely use the associated Newton-Okounkov body $\Delta=\Delta(-K_X)$ of $(X, -K_X)$.

\begin{proof}

By Lemma \ref{lem-Gtwist} and \eqref{eq-Gbasic}, as functions on $\Delta=\Delta(-K_X)$, we have:
\begin{equation}
G(s,y):=G_{\cF_s}(y)=(1-s)\la y, \xi\ra+s G_\cF(y).
\end{equation}
So, by using Lemma \ref{lem-abtrans}, we get:
\begin{eqnarray}
\hat{\bfL}^\NA(\cF_s)&=&s \hat{\bfL}^\NA(\cF)\\
-\btS^\NA(\cF_s)&=&\log\left(\frac{n!}{\bfV}\int_{\Delta} e^{-G(s, y)}dy\right).
\end{eqnarray}
$\hat{\bfL}^\NA(\cF_s)$ is linear in $s$. By H\"{o}lder's inequality, $-\btS^\NA(\cF_s)$ is strictly convex in $s$ unless $G_\cF$ is a multiple of $\la x, \xi\ra$. This implies that $\hat{\bfH}^\NA(\cF_s)=\bhL^\NA-\bfS^\NA$ is convex in $s\in [0,1]$. 

To see \eqref{eq-der2}, we calculate:
\begin{eqnarray*}
\left.\frac{d}{ds}\right|_{s=0} \hat{\bfH}^\NA(\cF_s)&=&\hat{\bfL}^\NA(\cF)+\frac{\int_\Delta (\la y, \xi\ra-G_{\cF}(y))e^{-G(0,y)}dx }{\int_\Delta e^{-G(0, y)}dy}\\
&=&\hat{\bfL}^\NA(\cF_{-\xi})-\frac{n!}{\bfV_\xi}\int_\Delta G_{\cF_{-\xi}}(y)e^{-\la y, \xi\ra}dy\\
&=&\beta_\xi(\cF_{-\xi}).
\end{eqnarray*}
\end{proof}

Assume that there are two special $\bR$-test configurations $\cF_i=\{\cF_i R_m\}, i=0,1$ of $(X, -K_X)$ that minimize $\bfH^\NA$ or equivalently $\hat{\bfH}^\NA$. By Theorem \ref{thm-minsemi}, the central fibers 
$(W^{(i)}:={\rm Proj}(\Gr_{\cF_i}), \xi_i=\xi_{\cF_i})$ are both Ding-semistable. 
Now consider the initial term degeneration of $\cF_1$ with respect to $\cF_0$ as in the above section. We get a $\bT_0$-equivariant filtration $\cF'_1$ on $R'=R(W^{(0)}, -K_{W^{(0)}})$ and by \eqref{eq-bfDdec} that $\hat{\bfH}^\NA_X(\cF_1)\ge \hat{\bfH}^\NA_{W^{(0)}}(\cF'_1)$.

Now as in the beginning of this section, consider the family of filtrations that interpolates $\cF'_1$ and $\cF_{\wt_{\xi_0}}R'=:\cF'_{\wt_{\xi_0}}$:
\begin{equation}
\cF'_s:=s \cF'_{\frac{1-s}{s}\xi_0} R'.
\end{equation}
Applying Lemma \ref{lem-der} to $(W^{(0)}, \xi_0, \cF'_s)$, we know that $\bhD(s):=\hat{\bfH}^\NA(\cF'_s)$ is convex in $s\in [0,1]$. 
Moreover we have relation:
\begin{equation}
\bhD(0)=\hat{\bfH}^\NA_{W^{(0)}}(\cF_{\wt_{\xi_0}})=\hat{\bfH}^\NA_X(\cF_0)=\hat{\bfH}^\NA_X(\cF_1)\ge \hat{\bfH}^\NA(\cF'_1)=\bhD(1).
\end{equation}
The 3rd identity is by Theorem \ref{thm-minsemi} that $\cF_i, i=0,1$ both obtains the minimum of $\hat{\bfH}^\NA$.

On the other hand, by \eqref{eq-der2}
\begin{eqnarray*}
\left.\frac{d}{ds}\right|_{s=0} \hat{\bfH}^\NA(\cF'_s)
&=&\beta_{\xi_0}(\cF'_{-\xi_0})\ge 0.
\end{eqnarray*}
The last inequality is because $(W^{(0)}, \xi_0)$ is Ding-semistable.

By convexity of $\bhD(s)$, we conclude that $\bhD(s)$ is constant in $s$ and by Lemma \ref{lem-der} that $G_{\cF'_1}(y)\equiv \la y, \xi_0\ra$ for any $y\in \Delta'=\Delta(W^{(0)}, -K_{W^{(0)}})$ (the Okounkov body of $(W^{(0)}, -K_{W^{(0)}})$).

By the discussion in previous section, we know that 
the relative successive minima of $\cF_1$ with respect $\cF_0$ is the same as the relative successive minima of $\cF'_1$ with respect to $\cF'_{\wt_{\xi_0}}$, which is the same as the successive minima of $\cF'_{-\xi_0}$ and is given by the difference $\lambda_k-\la \alpha_k, \xi_0\ra$ with the notations there. So we get by \eqref{eq-d2inv}
\begin{eqnarray*}
d_2(\cF_0, \cF_1)^2&=&d_2(\cF'_0, \cF'_1)=\lim_{m\rightarrow+\infty}\sum_k \frac{(\lambda_k-\la \alpha_k, \xi_0\ra)^2}{m^2}=\lim_{m\rightarrow+\infty}\sum_i \frac{\lambda_i^{(m)}(\cF'_{-\xi_0})^2}{m^2}\\
&=&\int_\bR \lambda^2 \DHM(\cF'_{-\xi_0})^2=\int_{\Delta'}G_{\cF'_{-\xi_0}}^2 dy\\
&=&\int_{\Delta'}(G_{\cF'}-\la y, \xi_0\ra)^2dy=0.
\end{eqnarray*}
By \cite{BoJ18b}, we know that $\cF_0$ is equivalent to $\cF_1$. By Lemma \ref{lem-integral} and Proposition \ref{prop-equiv}, we get $\cF_0=\cF_1$. 

\begin{rem}\label{rem-rank1}
Although here we are dealing  with filtration of arbitrary ranks, the unique result in this section (and minimization result in previous section) can also be proved by using $r:=\rk(\cF_0)$-step degenerations to reduce to the rank 1 case. To see this, we first choose $\{\eta_1,\dots, \eta_r\}\in N_\bQ\cong \bQ^{r}$ (where $N={\rm Hom}(\bC^*, \bT_0)$ as before) such that 
\begin{itemize}
\item ${\rm Span}_{\bR}\{\eta_1, \dots, \eta_r\}=N_\bR$.
\item For any $1\le k\le r$, $\eta_k$ induces a special test configuration whose central fibre is the same as $W^{(0)}$. This is achieved by choosing $\eta_k$ satisfying $|\eta_k-\xi_0|\ll 1$.
\end{itemize}
By abuse of notations, we denote by $\cF'_{\xi_0}$ (resp. $\cF'_{\eta_1}$) the filtration on $R=R(X, -K_X)$ corresponding to the $\bR$-test configuration induced by $\xi_0$ (resp. $\eta_1$), and also the filtration on $R'=R(W^{(0)}, -K_{W^{(0)}_0})$ corresponding to the weight filtration induced by $\xi_0$ (resp. $\eta_k$ for $2\le k\le r$). Set $\cF'^{(0)}_1=\cF_1$ and we define inductively $\cF'^{(k)}_1$ to be the initial term degeneration of $\cF'^{(k-1)}_1$ with respect to $\cF'_{\eta_k}$ for $1\le k\le r$. By \eqref{eq-bfDdec} for the rank 1 case, we have: 
\begin{equation}
\hat{\bfH}^\NA_X(\cF'^{(0)}_1)\ge \hat{\bfH}^\NA_{W^{(0)}}(\cF'^{(1)}_1), \quad 
\hat{\bfH}^\NA_{W^{(0)}}(\cF'^{(k-1)}_1)\ge \hat{\bfH}^\NA_{W^{(0)}}(\cF'^{(k)}_1), \quad 2\le k\le r.
\end{equation}
So if $\cF_1=\cF'^{(0)}_1$ obtains the minimum of $\hat{\bfH}^\NA_X$, then $\cF'^{(k)}$ for any $1\le k\le r$ also obtains the minimum of $\hat{\bfH}^\NA_{W^{(0)}}$.
Now because $\cF'^{(r)}$ is $\bT_0$-invariant and $\cF'_{\xi_0}=\cF'_{\wt_{\xi_0}}$ also obtains the minimum of $\hat{\bfH}^\NA_{W^{(0)}}$, we can use Lemma \ref{lem-twistconvex} to conclude that $\cF'^{(r)}=\cF'_{\xi_0}$.

On the other hand, by \eqref{eq-d2inv}, we get for $2\le k\le r$, 
\begin{eqnarray*}
d_2^X(\cF'^{(0)}_1, \cF'_{\eta_1})=d_2^{W^{(0)}}(\cF'^{(1)}_1, \cF'_{\eta_1}), \quad 
d_2^{W^{(0)}}(\cF'^{(k-1)}_1, \cF'_{\eta_k})=d_2^{W^{(0)}}(\cF'^{(k)}_1, \cF'_{\eta_k}).
\end{eqnarray*}
So for any $1\le k\le r$, we get, by omitting the upperscripts and using the triangle inequality,
\begin{eqnarray*}
d_2(\cF'^{(k-1)}_1, \cF'_{\xi_0})&\le& d_2(\cF'^{(k-1)}_1, \cF'_{\eta_k})+d_2(\cF'_{\eta_k}, \cF'_{\xi_0})\\
&=& d_2(\cF'^{(k)}_1, \cF'_{\eta_k})+d_2(\cF'_{\eta_k}, \cF'_{\xi_0})\\
&\le& d_2(\cF'^{(k)}_1, \cF'_{\xi_0})+2 d_2(\cF'_{\eta_k}, \cF'_{\xi_0})
\end{eqnarray*}
So we can inductively estimate:
\begin{eqnarray*}
d_2(\cF_1, \cF_0)&=&d_2(\cF'^{(0)}_1, \cF'_{\xi_0})\le d_2(\cF'^{(1)}_1, \cF'_{\xi_0})+2 d_2(\cF'_{\eta_1}, \cF'_{\xi_0})\\
&\le& d_2(\cF'^{(2)}_1, \cF'_{\xi_0})+2 \left(d_2(\cF'_{\eta_2}, \cF'_{\xi_0})+d_2(\cF'_{\eta_1}, \cF'_{\xi_0})\right)\\
&\le &\cdots \le  d_2(\cF'^{(r)}_1, \cF'_{\xi_0})+2 \sum_{k=1}^r d_2(\cF'_{\eta_k}, \cF'_{\xi_0})\\
&=&2 \sum_{k=1}^r d_2(\cF'_{\eta_k}, \cF'_{\xi_0}).
\end{eqnarray*}
Now we can choose $\eta_k$ such that $d_2(\cF'_{\eta_k}, \cF'_{\xi_0})$ is arbitrarily small for all $1\le k\le r$. So we indeed get $d_2(\cF_1, \cF_0)=0$ as desired.

\end{rem}

\section{Cone construction and $g$-normalized volume}

Let $X$ be an $n$-dimensional $\bQ$-Fano variety and for simplicity of notations assume that $-K_X$ is Cartier.
Recall that $R=\oplus_m R_m=\bigoplus H^0(X, m(-K_X))$. We define the cone 
\begin{equation}
C=C(X, -K_X)={\rm Spec}_\bC R, \quad o=\mathfrak{m}=\bigoplus_{m>0}R_m.
\end{equation}
Then $(C, o)$ is a klt cone singularity. Denote by $\Val_{C,o}$ be the space of real valuations that are centered at $o$. Since $X$ admits a $\bC^*\times \bT$-action, we have a decomposition of the coordinate ring of $R$:
\begin{equation}
R=\bigoplus_{m\ge 0}\bigoplus_{\alpha\in \bZ^r} R_{m,\alpha}.
\end{equation} 
For any $\bT$-invariant homogeneous primary ideal $\fa=\bigoplus_{m}\bigoplus_{\alpha} \fa_{m,\alpha}\subset R$, define the $g$-length and $g$-multiplicity of $\fa$:
\begin{eqnarray}
\col_g(\fa)&=&\sum_{m\ge 0} \sum_\alpha g(\frac{\alpha}{m}) \dim R_{m,\alpha}/\fa_{m,\alpha}\\
\mult_g(\fa)&=&\lim_{k\rightarrow+\infty}\frac{\col_g(\fa^k)}{k^{n+1}/(n+1)!}.
\end{eqnarray}
See \cite{Ros89} for the study of such equivariant multiplicity.
More generally, let $\fa_\bullet=\{\fa_k\}_{k\in \bN}$ be a graded sequence of $\bC^*\times\bT$-invariant ideals. We define:
\begin{equation}
\mult_g(\fa_\bullet)=\lim_{k\rightarrow+\infty} \frac{\col_g(\fa_k)}{k^{n+1}/(n+1)!}
\end{equation}

One can use the techniques of Newton-Okounkov bodies to show that the limit exists. To see this, we can adapt the argument in the work in \cite{KK14} as follows. First choose a valuation $\fv$ adapted to the $\bT$-action on $X$. We can construct a  $\bC^*\times\bT$-invariant $\bZ^{n+1}$-valuation on $C$: 
\begin{equation}
\text{for any } f\in R_m, \quad
\mathfrak{V}\left(f\right)=(m, \fv(f)).
\end{equation}
Denote by $\mathfrak{C}$ the strongly convex cone which is the closure of the convex hull of the value semigroup $\mathfrak{V}(R)$. To each graded sequence of $\bC^*\times\bT$-invariant ideals $\fa_\bullet$, one can associate a convex region $\bar{P}:=\bar{P}(\fa_\bullet)\subset \mathfrak{C}$ such that $\bar{P}^c:=\mathfrak{C}\setminus \bar{P}$ is bounded. If we still denote by $g(y)$ the pull-back of function $g$ by the projection $\bR^{n+1}=\bR\times\bR^n\rightarrow \bR^n$. Then $\mult_g$ is given by the weighted volume of the co-convex set $\bar{P}^c$:
\begin{equation}
\mult_g(\fa_\bullet)=(n+1)! \int_{\bar{P}^c} g(y)dy
\end{equation}
Let $\bar{v}\in \Val_{C,o}^{\bC^*\times\bT}$ be a $\bC^*\times \bT$-invariant valuation. Then for any $\lambda\in \bR$, $\fa_\lambda(\bar{v})=\{f\in R; \bar{v}(f)>m\}$ is a $\bT$-invariant homogeneous primary ideal. Set $\fa_\bullet(\bar{v})=\fa_m(\bar{v})$ and define (see \cite{ELS03} for the $g=1$ case)
\begin{eqnarray*}
\vol_g(\bar{v}):=\mult_g(\fa_\bullet(v))=\lim_{m\rightarrow+\infty}\frac{\col_g(\fa_\lambda(\bar{v}))}{\lambda^{n+1}/(n+1)!}.
\end{eqnarray*}

We define the following equivariant version of normalized volume \cite{Li18}:
\begin{eqnarray*}
\hvol_g: \Val^{\bC^*\times\bT}_{C,o}&\rightarrow&\bR_{>0}\cup \{+\infty\}\\
\hvol_g(\bar{v})&= &
\left\{
\begin{array}{ll}
A_C(\bar{v})^{n+1} \cdot \vol_g(\bar{v}) & \text{ when } A_C(\bar{v})<+\infty \\
+\infty & \text{ otherwise. }
\end{array}
\right.
\end{eqnarray*}
By using the same argument as in the study of normalized volumes, one can generalize almost all the results about normalized volume to work for the $g$-normalized volume functional. Here we just write down a few results that we need in the next section.
We have the following equivariant version of an identity from \cite{Liu18}.  
\begin{lem}
With the above notations, we have the following identity:
\begin{equation}
\inf_{\bar{v}}\hvol_g(\bar{v})=\inf_{\fa}\lct(\fa)^n\cdot \mult_g(\fa)=\inf_{\fa_\bullet} \lct(\fa_\bullet)^n\cdot \mult_g(\fa_\bullet),
\end{equation}
where $\bar{v}$ ranges over $\bC^*\times \bT$-invariant valuations, and  $\fa$ (resp. $\fa_\bullet$) ranges over $\bC^*\times\bT$-invariant ideals (resp. graded sequence of $\bC^*\times\bT$-invariant ideals).
\end{lem}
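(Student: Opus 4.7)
The three quantities are all invariant under the natural rescaling symmetries (rescaling $\bar v$ by a positive constant, or replacing $\fa$ by a power), so the identity is a close equivariant analogue of Liu's identity in \cite{Liu18}, and I plan to adapt his three-step argument, keeping careful track of the weight $g$ and of $\bC^*\times\bT$-equivariance. Note that $\dim C=n+1$, so the correct scaling exponent appearing in each factor should be $n+1$; I will write the argument with that exponent.

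\emph{Step 1 (from single ideals to graded sequences).} For any $\bC^*\times\bT$-invariant primary ideal $\fa$, the graded sequence $\{\fa^k\}_k$ is still $\bC^*\times\bT$-invariant, and one has $\lct(\{\fa^k\})=\lct(\fa)$ together with $\mult_g(\{\fa^k\})=\mult_g(\fa)$ directly from the definitions. Hence $\inf_{\fa_\bullet}\lct(\fa_\bullet)^{n+1}\mult_g(\fa_\bullet)\le \inf_{\fa}\lct(\fa)^{n+1}\mult_g(\fa)$. Conversely, given any graded sequence $\fa_\bullet$, I will use the equivariant version of Musta\c t\u a's theorem, namely $\lim_{k\to\infty} k\cdot\lct(\fa_k)=\lct(\fa_\bullet)$ and $\lim_{k\to\infty} k^{-(n+1)}\mult_g(\fa_k)=\mult_g(\fa_\bullet)$, to see that the single-ideal infimum is also $\le$ the graded-sequence infimum. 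Both equalities are proved by the Newton--Okounkov body technique recalled just above, applied to the co-convex region $\bar P(\fa_\bullet)^c\subset \mathfrak{C}$ sliced in the torus direction by the weight $g$.

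\emph{Step 2 (from valuations to graded sequences).} For any $\bar v\in\Val^{\bC^*\times\bT}_{C,o}$ with $A_C(\bar v)<+\infty$, the sequence $\fa_\bullet(\bar v)$ is $\bC^*\times\bT$-invariant and, by definition, $\mult_g(\fa_\bullet(\bar v))=\vol_g(\bar v)$. Since $\bar v(\fa_k(\bar v))\ge k$, the valuative definition of lct gives $\lct(\fa_\bullet(\bar v))\le A_C(\bar v)/\bar v(\fa_\bullet(\bar v))\le A_C(\bar v)$. Multiplying, $\lct(\fa_\bullet(\bar v))^{n+1}\cdot\mult_g(\fa_\bullet(\bar v))\le \hvol_g(\bar v)$, and taking infima yields $\inf_{\fa_\bullet}\le \inf_{\bar v}\hvol_g$.

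\emph{Step 3 (from graded sequences to valuations).} Given a $\bC^*\times\bT$-invariant $\fa_\bullet$ and $\epsilon>0$, I need to produce a $\bC^*\times\bT$-invariant valuation $v$ with $\hvol_g(v)\le (\lct(\fa_\bullet)+\epsilon)^{n+1}\mult_g(\fa_\bullet)$. For $k\gg1$, $\lct(\fa_k)\le (\lct(\fa_\bullet)+\epsilon)/k$, and by the equivariant log canonical place construction (applied to the $\bC^*\times\bT$-invariant pair $(C,c\cdot\fa_k^{1/k})$ with $c$ slightly smaller than $k\lct(\fa_k)$), one can find a $\bC^*\times\bT$-invariant divisorial valuation $v$ satisfying $A_C(v)/v(\fa_\bullet)\le \lct(\fa_\bullet)+\epsilon$; here the equivariance of the log canonical place is standard and follows from the reductive group action on the resolution. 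For such $v$, the inclusions $\fa_k\subset \fa_{\lceil kv(\fa_\bullet)\rceil}(v)$ (for $k$ large, up to $o(k)$ corrections) give $\col_g(\fa_{kv(\fa_\bullet)}(v))\le \col_g(\fa_k)$, hence $v(\fa_\bullet)^{n+1}\vol_g(v)\le \mult_g(\fa_\bullet)$. Combining, $\hvol_g(v)\le (\lct(\fa_\bullet)+\epsilon)^{n+1}\mult_g(\fa_\bullet)$, and letting $\epsilon\to0$ closes the chain of inequalities.

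The main obstacle is Step 3, specifically the existence of a $\bC^*\times\bT$-invariant divisorial valuation almost computing $\lct(\fa_\bullet)$; this requires an equivariant version of the Birkar--Hacon--McKernan--Xu construction of log canonical places for graded sequences, together with the approximation result $\lim_k k\lct(\fa_k)=\lct(\fa_\bullet)$ in the equivariant setting. The remaining inequalities are formal once the definitions of $\col_g$, $\mult_g$ and $\vol_g$ are matched by the Newton--Okounkov body technique recalled before the statement.
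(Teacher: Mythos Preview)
Your approach is correct and you rightly flag the exponent issue: since $\dim C=n+1$, the statement and proof should carry $\lct^{n+1}$ throughout, matching $\hvol_g(\bar v)=A_C(\bar v)^{n+1}\vol_g(\bar v)$. Your Step~2 is exactly what the paper does in one direction.

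The main difference is in your Step~3. The paper bypasses both the single-ideal approximation and the BCHM extraction: it simply invokes \cite{JM12} to produce a real valuation $\bar w$ that \emph{exactly} computes $\lct(\fa_\bullet)$, normalizes so that $\bar w(\fa_\bullet)=1$, and then the inclusion $\fa_m\subseteq\fa_m(\bar w)$ immediately gives $\mult_g(\fa_\bullet)\ge\vol_g(\bar w)$, hence
\[
\lct(\fa_\bullet)^{n+1}\mult_g(\fa_\bullet)=A_C(\bar w)^{n+1}\mult_g(\fa_\bullet)\ge A_C(\bar w)^{n+1}\vol_g(\bar w)=\hvol_g(\bar w).
\]
This is a one-line replacement for your entire Step~3 and also renders your Step~1 unnecessary for the equality $\inf_{\bar v}=\inf_{\fa_\bullet}$ (the single-ideal infimum then sits in between since every ideal gives a graded sequence, and the reverse inequality $\inf_{\fa}\le\inf_{\bar v}$ follows by the same argument applied to the valuation ideals $\fa_m(\bar v)$, which are actual ideals). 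What your route buys is explicit control of $\bC^*\times\bT$-equivariance of the valuation, which the paper's proof leaves implicit; since the torus is connected and the graded sequence is invariant, the Jonsson--Musta\c{t}\u{a} minimizer can be taken invariant, but you are right that this deserves a word. Your heavier machinery (equivariant lc places via BCHM) is not needed here.
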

This is proved by using exactly the same argument. For the reader's convenience, we give the short proof.
\begin{proof}
For any $\bar{v}\in \rVal_{C,o}$, we have:
\begin{equation}
\lct(\fa_\bullet(\bar{v}))^n \cdot \mult_g(\fa_\bullet(\bar{v}))\le \left(\frac{A_C(\bar{v})}{\bar{v}(\fa_\bullet)}\right)^n \vol_g(\bar{v})=A_C(\bar{v})^n \vol_g(\bar{v}).
\end{equation}
Conversely, for any graded sequence of ideals $\fa_\bullet$. Let $\bar{w}\in \rVal_{C,o}$ be the valuation that calculates $\lct(\fa_\bullet)$ which exists by \cite{JM12}. By multiplying a constant, we can assume $1=\bar{w}(\fa_\bullet)=\inf_m \frac{1}{m}\bar{w}(\fa_m)$. So $\fa_m\subseteq \fa_m(\bar{w})$, which implies $\mult_g(\fa_\bullet)\ge \mult_g(\fa_\bullet(\bar{w}))=\vol_g(\bar{w})$.
Then we get:
\begin{equation}
\lct(\fa_\bullet)^n\cdot \mult_g(\fa_\bullet)=\left(\frac{A_C(\bar{w})}{\bar{w}(\fa_\bullet)}\right)^n\cdot \mult_g(\fa_\bullet)\ge A_C(\bar{w})^n\cdot \vol_g(\bar{w})=\hvol_g(\bar{w}).
\end{equation}
\end{proof}

For any $v\in X^{\rm div}_\bQ$ and $\tau>0$, we denote by $\bar{v}_\tau$ the $\bC^*$-invariant valuation on $C$ given by:
\begin{equation}
\bar{v}_\tau\left(\sum_i f_i t^i\right)=\min_i (v(f_i)+\tau i).
\end{equation}
By using the same calculation as in \cite{Li17}, we get:
\begin{thm}
We have the following formula for the $g$-volume of $\bar{v}_\tau$:
\begin{eqnarray}
\vol_g(\bar{v}_\tau)&=&\frac{1}{\tau^{n+1}}\bfV_g-(n+1)\int_0^{+\infty}\vol_g(\cF_v R^{(x)})\frac{dx}{(x+\tau)^{n+2}}.
\end{eqnarray}
\end{thm}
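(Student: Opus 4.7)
The plan is to unwind $\vol_g(\bar{v}_\tau)$ directly from the definition via an asymptotic computation of the $g$-colengths $\col_g(\fa_\lambda(\bar{v}_\tau))$, parallel to the argument in \cite{Li17} for the case $g\equiv 1$.

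First I would identify the valuation ideals. Since $\bar v_\tau(f_m)=v(f_m)+\tau m$ on any homogeneous piece $f_m\in R_m$, the primary ideal splits as
\[
\fa_\lambda(\bar v_\tau)\cap R_m \;=\; \cF_v^{\,\lambda-\tau m}R_m,
\]
with the convention $\cF_v^{x}R_m=R_m$ for $x\le 0$. Because $\bar v_\tau$ is $\bC^*\times\bT$-invariant and each $\cF_v^{x}R_m$ is $\bT$-invariant, the $g$-colength decomposes as
\[
\col_g\bigl(\fa_\lambda(\bar v_\tau)\bigr)
\;=\;\sum_{0\le m<\lambda/\tau}\sum_{\alpha} g\!\left(\tfrac{\alpha}{m}\right)\dim\bigl(R_{m,\alpha}/\cF_v^{\lambda-\tau m}R_{m,\alpha}\bigr),
\]
with only the indicated range of $m$ contributing.

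Next I would feed in the asymptotic characterization of $\vol_g(\cF_v^{(t)})$. Setting $t=\lambda/m-\tau$, one has
\[
\sum_\alpha g\!\left(\tfrac{\alpha}{m}\right)\dim\bigl(R_{m,\alpha}/\cF_v^{mt}R_{m,\alpha}\bigr)
\;=\;\frac{m^n}{n!}\bigl(\bfV_g-\vol_g(\cF_v^{(t)})\bigr)+o(m^n),
\]
uniformly in $t\in[0,\lambda_{\max}(\cF_v)]$. Substituting $u=m/\lambda$ converts the sum into a Riemann sum, so
\[
\col_g\bigl(\fa_\lambda(\bar v_\tau)\bigr)
\;=\;\frac{\lambda^{n+1}}{n!}\int_{0}^{1/\tau} u^{n}\bigl(\bfV_g-\vol_g(\cF_v^{(1/u-\tau)})\bigr)\,du\;+\;o(\lambda^{n+1}).
\]
Dividing by $\lambda^{n+1}/(n+1)!$ and letting $\lambda\to+\infty$ gives
\[
\vol_g(\bar v_\tau)\;=\;(n+1)\int_{0}^{1/\tau} u^n\bigl(\bfV_g-\vol_g(\cF_v^{(1/u-\tau)})\bigr)\,du.
\]

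Finally I would change variables $x=1/u-\tau$, so $u=(x+\tau)^{-1}$ and $u^n\,du=-(x+\tau)^{-(n+2)}\,dx$, which transforms the integral into
\[
\vol_g(\bar v_\tau)\;=\;(n+1)\int_{0}^{+\infty}\bigl(\bfV_g-\vol_g(\cF_v^{(x)})\bigr)\frac{dx}{(x+\tau)^{n+2}}.
\]
Evaluating the elementary piece $(n+1)\bfV_g\int_0^{+\infty}(x+\tau)^{-(n+2)}dx=\bfV_g/\tau^{n+1}$ yields precisely the formula claimed. The only real technical step is justifying the interchange of limit and Riemann sum: the bound $\vol_g(\cF_v^{(t)})\le \bfV_g$ and the vanishing of $\vol_g(\cF_v^{(t)})$ for $t>\lambda_{\max}(\cF_v)$ supply a uniform tail estimate, so standard dominated convergence suffices.
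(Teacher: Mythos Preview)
The proposal is correct and follows exactly the approach the paper indicates, namely the asymptotic colength computation of \cite{Li17} adapted verbatim to the $g$-weighted setting; the paper itself gives no further details beyond the reference. Your identification $\fa_\lambda(\bar v_\tau)\cap R_m=\cF_v^{\lambda-\tau m}R_m$, the Riemann-sum passage, and the change of variables are all the standard steps from that source, and your remark on dominated convergence handles the only analytic subtlety.
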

We have the following criterion for $g$-Ding-semistability, which generalizes the results in \cite{Li17, LL19, LX16} about normalized volumes.
\begin{thm}
$(X, \eta)$ is $g$-Ding-semistable if and only if $\ord_{X}$ obtains the minimum of $\hvol_g$ over $\Val_{C,o}^{\bC^*\times\bT}$.
\end{thm}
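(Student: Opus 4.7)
The plan is to adapt to the weighted $\bT$-equivariant setting the arguments used in \cite{Li17, LL19, LX16} for the unweighted case $g\equiv 1$. The three inputs are: the formula just established for $\vol_g(\bar{v}_\tau)$, the valuative criterion in Theorem \ref{thm-valgsemi}, and the identification of $\inf \hvol_g$ with an infimum over graded sequences of $\bT$-invariant ideals from the preceding lemma. One preliminary computation is required: $A_C(\ord_X)=1$ (as $X$ is the exceptional divisor of the blow-up of the vertex with $-K_X$ the conormal bundle), and applying the volume formula with $v$ trivial (so $\cF_v R^{(x)}=R$ for $x\leq 0$ and $0$ for $x>0$) and $\tau=1$ gives $\vol_g(\ord_X)=\bfV_g$, whence $\hvol_g(\ord_X)=\bfV_g$.

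For the direction ``$g$-Ding-semistable $\Rightarrow$ $\ord_X$ minimizes $\hvol_g$'', let $\bar{v}\in \Val^{\bC^*\times\bT}_{C,o}$ with $A_C(\bar{v})<+\infty$. The first step reduces to the case $\bar{v}=\bar{v}_\tau$ for some $v\in \Val(X)^\bT$ and $\tau>0$: specializing $\bar{v}$ along the $\bC^*$-action refines $\bC^*$-invariance to homogeneity with respect to the grading $R=\bigoplus_m R_m$, as in \cite[\S4]{Li17}, and the $\bT$-invariance of $v$ is inherited from $\bar{v}$ because the $\bC^*$ and $\bT$ actions commute. This specialization does not increase $\hvol_g$ by a lower-semicontinuity argument analogous to \cite{BL18}. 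Given this reduction, substitute the volume formula:
\begin{equation*}
\hvol_g(\bar{v}_\tau)=(A_X(v)+\tau)^{n+1}\left[\frac{\bfV_g}{\tau^{n+1}}-(n+1)\int_0^{+\infty}\frac{\vol_g(\cF_v R^{(x)})\,dx}{(x+\tau)^{n+2}}\right].
\end{equation*}
Differentiating in $\tau$ and applying integration by parts, the inequality $\inf_\tau \hvol_g(\bar{v}_\tau)\geq \bfV_g$ is seen to be equivalent to $\beta_g(v)\geq 0$, which holds by Theorem \ref{thm-valgsemi}. Approximation by divisorial valuations and the lct reformulation of $\inf \hvol_g$ extend the inequality to all $\bar{v}$.

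For the converse, contrapositively, if $(X,\eta)$ is not $g$-Ding-semistable, Theorem \ref{thm-valgsemi} furnishes some $v\in (X^{\rm div}_\bQ)^\bT$ with $\beta_g(v)<0$. The same volume formula, evaluated at the $\tau$ minimizing the right-hand side, yields $\hvol_g(\bar{v}_\tau)<\bfV_g=\hvol_g(\ord_X)$, so $\ord_X$ is not a minimizer. The main obstacle will be the reduction step in the forward direction: one must verify that the cone-degeneration from \cite{Li17, LX16} preserves $\bT$-equivariance (routine, by the commutativity of the actions and, if needed, averaging over the compact form $(S^1)^r$) and that the degeneration does not increase the $g$-weighted normalized volume. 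The latter is the weighted analogue of the lower-semicontinuity for $\hvol$ along degenerations established for $g\equiv 1$ in \cite{BL18}, and requires a careful inspection of the $g$-weighted multiplicity under specialization of ideals.
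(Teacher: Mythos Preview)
Your approach is essentially the one in the paper: both arguments hinge on the volume formula for $\bar{v}_\tau$, the derivative identity linking $\hvol_g$ along a one-parameter family to $\beta_g(v)$, and the valuative criterion Theorem~\ref{thm-valgsemi}. Two points are worth sharpening.

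First, your reduction step is unnecessary. Any $\bar{v}\in\Val_{C,o}^{\bC^*\times\bT}$ is already of the form $\bar{v}_\tau$ for some $v\in\Val(X)^\bT$ and $\tau>0$: a Vandermonde argument shows that $\bC^*$-invariance forces $\bar{v}(\sum_m f_m)=\min_m\bar{v}(f_m)$, and then one sets $\tau=\bar{v}(f)/m$ for a general $f\in R_m$ and $v=\bar{v}-m\tau$ on $R_m$. No specialization or semicontinuity is needed here.

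Second, and more substantively, your sentence ``the inequality $\inf_\tau \hvol_g(\bar{v}_\tau)\geq \bfV_g$ is seen to be equivalent to $\beta_g(v)\geq 0$'' hides the step where the paper's choice of parametrization does real work. The paper does not vary $\tau$ directly but instead sets $w_s:=\overline{(sv)}_{(1-s)A_X(v)}$, which is a rescaling of $\bar{v}_{\tau(s)}$ with $\tau(s)=\tfrac{1-s}{s}A_X(v)$; since $\hvol_g$ is scale-invariant this is the same ray, but now $A_C(w_s)\equiv A_X(v)$ is constant, so $f(s)=\hvol_g(w_s)$ is just $A_X(v)^{n+1}$ times $\int_0^\infty (sx+(1-s)A_X(v))^{-(n+1)}(-d\vol_g(\cF_v^{(x)}))$, which is visibly convex in $s$. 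Then $f(0)=\bfV_g$ and $f'(0)=\tfrac{n+1}{A_X(v)}\bfV_g\cdot\beta_g(v)$, so convexity plus $f'(0)\ge 0$ gives $f(s)\ge f(0)$ for all $s$, and conversely $f'(0)<0$ gives a destabilizing $w_s$. In your $\tau$-parametrization the same conclusion holds, but the convexity is obscured; without it, merely ``differentiating in $\tau$'' does not by itself yield the global inequality.
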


\begin{proof}
For any $v\in (X^{\rm div}_\bQ)^{\bT}$, consider $w_s:=\overline{(sv)}_{(1-s)A_X(v)}\in \Val_{C,o}^{\bC^*\times\bT}$. Then $w_0=A_X(v) \bar{v}_0$ and $w_1=v$.
$A_C(w_s)\equiv A_X(v)$.
Set 
\begin{eqnarray*}
f(s)&=&\hvol(w_s)=A_C(w_s)^{n+1}\vol_g(w_s)\\
&=&A_X(v)^{n+1}\left(\frac{\bfV_g}{(1-s)^{n+1}A_X(v)^{n+1}}-(n+1)\int_{0}^{+\infty} \vol_g(\cF_v R^{(x)})\frac{sdx}{(sx+(1-s)A_X(v))^{n+2}}\right)\\
&=&A_X(v)^{n+1} \int_0^{+\infty} \frac{-d \vol_g(\cF_v R^{(x)})}{(sx+(1-s)A_X(v))^{n+1}}.
\end{eqnarray*}
Then $f(s)$ is a convex function in $s\in [0,1]$. Its derivative at $s=0$ is given by:
\begin{eqnarray}
f'(0)&=&A_X(v)^{n+1}\left((n+1)\frac{\bfV_g}{A_X(v)^{n+1}}-(n+1)\int_0^{+\infty}\vol_g(\cF_v R^{(x)})dx \frac{1}{A_X(v)^{n+2}}\right)\nonumber \\
&=&\frac{n+1}{A_X(v)\bfV_g}\left(A_X(v)-\frac{1}{\bfV_g}\int_0^{+\infty} \vol_g(\cF_v R^{(x)})dx\right)\nonumber \\
&=&\frac{n+1}{A_X(v)\bfV_g}\cdot \beta_g(v). \label{eq-derhvg}
\end{eqnarray}
With this and Theorem \ref{thm-valgsemi}, we can easily derive the conclusion as in \cite{Li17}.
\end{proof}

\begin{rem}\label{rem-open}
By the same argument as in the case of normalized volume \cite{BL18a, Xu20}, one shows that $g$-Ding-semistability is Zariski openness for a $\bT$-equivariant family of Fano varieties.
\end{rem}

\section{Uniqueness of polystable degeneration}

In this section, prove Theorem \ref{thm-poly}. 
The proof is verbatim the same as the proof of the existence and uniqueness of K-polystable degenerations for any K-semistable $\bQ$-Fano varieties as proved in \cite{LWX18} (see also \cite{BLZ19}). Indeed we just need to carry out the same argument by using the equivariant version of normalized volume and the modified Futaki-invariant $\Fut_\xi$ etc. To avoid redundancy, we only sketch the key steps and refer to \cite{LWX18, BLZ19} for more details.

Assume that $(X, \xi)$ is semistable and admits two polystable degeneration via two special test configuration $(\mcX^{(i)}, -K_{\mcX^{(i)}}), i=0,1$. 
Take cones fibrewisely to get a special test configuration of Fano cones $(\cC^{(i)}), \zeta^{(i)})$ where $\zeta^{(i)}$ is the radial vector field. 

Let $E_k$ be the Koll\'{a}r component (see \cite{LX16} for the definition) obtained by blowing up the vertex of $\cC^{(0)}$ with weight $(k, 1)$. Then we have:
\begin{eqnarray*}
\hvol_g(E_k)=\hvol_g(\ord_X)+O(k^{-2}).
\end{eqnarray*} 
Set $\fa_\bullet=\{\fa_\ell(E_k)\}$. Then 
\begin{eqnarray*}
&&\lct(\fa_\bullet)=\frac{A(E_k)}{\ord_{E_k}(\fa_\bullet)}=A(E_k)=:c_k=O(k)\\
&&\lct(X, \fa_\bullet)^n\cdot \mult_g(\fa_\bullet)=\hvol_g(E_k). 
\end{eqnarray*}
Consider the initial degeneration of $\fa_\bullet$ with respect to $\cC^{(1)}$:
\begin{equation}
\bin(\fa_\ell)={\rm span}_\bC\{\bin(f), f\in \fa_\ell(E_k)\}.
\end{equation}
Using the preservation of co-length under initial term degeneration, we get:
\begin{eqnarray*}
\lct^n(\cC^{(1)}_0, \bin(\fa_\bullet))&\ge& \frac{\hvol_g(\ord_{\mcX^{(1)}_0})}{\mult_g(\bin(\fa_\bullet))}=\frac{\bfV_g}{\mult_g(\fa_\bullet)}\\
&=&\frac{\bfV_g}{\hvol_g(E_k)}\lct(\fa_\bullet)^n=\frac{\bfV_g}{\bfV_g+O(k^{-2})}\lct(\fa_\bullet)^n\\
&=&(1+O(k^{-2}))c_k=c_k+O(k^{-1}).
\end{eqnarray*}

Let $Z_k\rightarrow \cC^{(0)}$ be the extraction of $E_k$ and $Z_k\times \bC^*$ be the product along $\cC^{(1)}\setminus \cC^{(1)}_0\cong C\times\bC^*$ with exceptional divisor $\cE_k$. Let $\mathfrak{B}_\bullet=\{\mathfrak{B}_\ell\}$ be ideal on the total space $\cC^{(1)}$ obtained by the above degenerating $\fa_\ell$.  Then we have:
\begin{eqnarray}
A(\cC^{(1)}, c_k(1-\epsilon k^{-1}) \mathfrak{B}_\bullet, \cE_k)&=&A(C, c_k(1-\epsilon k^{-1}) \fa_\bullet, E_k)\\
&=& \epsilon k^{-1} c_k=\epsilon O(1),\\
\lct(\cC^{(1)}_0, c_k (1-\epsilon k^{-1})\bin(\fa_\bullet))&\ge&c_k^{-1} (1-\epsilon k^{-1})(c_k+O(k^{-1}))\nonumber \\
&=&=1-\epsilon k^{-1}+O(k^{-2}).
\end{eqnarray}
By inversion of adjunction $\lct(\cC^{(1)}, c_k(1-\epsilon k^{-1})\mathfrak{B}_\bullet)\ge 1-\epsilon k^{-1}+O(k^{-2})$. When $0<\epsilon \ll 1$, by \cite{BCHM}, we can extract the divisor $\cE_k$ over $\cC^{(1)}$. By the same argument as \cite{LWX18}, we get the commutative diagram:

\begin{equation}
\xymatrix @1 @R=1.2pc @C=1.2pc
{
C^{(1)}_0 \ar@{~>}_{\mcC'^{(1)}}[dddd]
 \ar@{-->}[dr] &   & &  & C \ar_{ \cC^{(1)}\longleftarrow \cZ^{(1)}_k\longleftarrow\cE^{(1)}_k}@{~>}[llll] \ar^{\cC^{(0)} \leftarrow \cZ_k\leftarrow \cE_k=E_k\times\bA^1}@{~>}[dddd]  \ar@{-->}[ld] & \ar[l] Z_k\leftarrow E_k
 \\
& X^{(1)}_0 \ar@{~>}_{\cX'^{(1)}}[dd] &  &  X \ar_<<<<<<<{\cX^{(1)}}@{~>}[ll] \ar^{\cX^{(0)}}@{~>}[dd] & & \\
&   &   &\\
&                              X'_0   &               & X^{(0)}_0  \ar@{~>}^{\stackrel{}{\cX'^{(0)}}}[ll] & & \\
C'_{0}  \ar@{-->}[ru] & & & & C^{(0)}_0 \ar@{~>}^{\cC'^{(0)}}[llll] \ar@{-->}[lu] & \ar[l] Z_{k,0}\leftarrow E_k .
}
\end{equation}
By the same argument as in \cite{LWX18}, we know that both test configurations $\mcX'^{(i)}, i=0,1$ are weakly special and have vanishing $\Fut_\xi$-invariant. By \cite{HL20}, we know that both of them are special and hence $X^{(1)}_0\cong X'_0\cong X^{(0)}_0$ by the polystability of $X^{(i)}_0$.  

The existence part can again be proved by the similar arguments as in \cite{LWX18} which deals with the case when $\xi=0$. We just sketch the arguments. If $(X, \xi)$ is K-polystable, then we are done. Otherwise, we can find a nontrivial $\bT$-equivariant special test configuration such that the central fibre (with the vector field $\xi$) has a vanishing $\Fut_\xi$-invariant. By \cite[Proof of Lemma 3.1]{LWX18}, we know that the central fibre is K-semistable, and has an effective action by a larger torus. If the central fibre is K-polystable, then we are done again. Otherwise, we can continue this process which must stop since the dimension of the torus is bounded by the dimension of $X$.

\begin{proof}[Proof of Corollary \ref{cor-conj}]
By the work of Chen-Sun-Wang in \cite{CSW18} which is based on the resolution of the Hamilton-Tian's conjecture \cite{CW14}, we get a special $\bR$-test configuration $\cF^{ss}$ with central fibre $(W, \xi)$, and a special test configuration of $(W, \xi)$ with central fibre $(X_\infty, \xi)$ which admits a K\"{a}hler-Ricci soliton and hence K-polystable. By the work of Dervan-Sz\'{e}kelyhidi, $\cF^{ss}$ obtains the minimum $h(X)$. The statement follows directly from Theorem \ref{thm-semi} and Theorem \ref{thm-poly}. 

\end{proof}
\begin{rem}
The fact that $\cF^{ss}$ obtains the minimum also follows from the K-semistability of $(W, \xi)$ and Theorem \ref{thm-minsemi}. The K-semistability of $(W, \xi)$ follows from the same degeneration argument as used in \cite{LX16} or the Zariski openness of K-semistability as pointed out in Remark \ref{rem-open}.
\end{rem}

\begin{rem}
As in the more general setting of \cite{LWX18} or \cite{Li19}, the algebraic results in this paper can be generalized to the log Fano case in a straightforward way.
\end{rem}

\appendix 

\section{Appendix: Properties of $\btS(v)$}\label{sec-cont}
 Recall that \eqref{eq-bV1form} that:
\begin{equation}
\bfQ(v):=\bfQ(\cF_v)=e^{-\btS^\NA(\cF_v)}=1-\frac{1}{\bfV}\int_0^{T(v)} e^{-x}\vol(\cF^{(x)}_v R_{\bullet})dx=:1-\Psi(v)
\end{equation}
where, for simplicity of notations, we denote 
\begin{equation}
T(v)=\lambda_{\max}(\cF_v), \quad 
\Psi(v)=\frac{1}{\bfV}\int_0^{T(v)}e^{-x}\vol(\cF_v^{(x)}R_\bullet)dx.
\end{equation}
\begin{prop}\label{prop-strictinc}
The function $v\mapsto \Psi(v)$ is strict increasing on $\Val(X)$. In other words, if $v\le w$, then $\Psi(v)\le \Psi(w)$ with the identity true only if $v=w$. As a consequence, $v\mapsto \btS(v)$ is strictly increasing on $\Val(X)$.
\end{prop}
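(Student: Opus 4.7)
My plan is to express $\bfQ(v) = 1-\Psi(v)$ via the Duistermaat--Heckman measure on $\bR$ and then combine a secant estimate with the Boucksom--Jonsson characterization of equivalent filtrations. The non-strict monotonicity is immediate: if $v \le w$, then for every $s \in R_m$ we have $v(s) \le w(s)$, so $\cF_v^x R_m \subseteq \cF_w^x R_m$. This yields $\vol(\cF_v^{(x)}) \le \vol(\cF_w^{(x)})$ and $T(v) \le T(w)$, and also $G_v \le G_w$ on $\Delta$; therefore
\[
\bfV \bfQ(v) = n!\int_\Delta e^{-G_v(y)}\,dy \ge n!\int_\Delta e^{-G_w(y)}\,dy = \bfV \bfQ(w),
\]
i.e.\ $\Psi(v) \le \Psi(w)$ and $\btS(v) \le \btS(w)$.

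For the strict inequality I will argue by contradiction: assume $v \le w$, $v \ne w$, but $\bfQ(v) = \bfQ(w)$. For each $m$, fix a common basis $\cB = \{s_1, \ldots, s_{N_m}\}$ of $R_m$ adapted to both $\cF_v R_m$ and $\cF_w R_m$ (as recalled in section \ref{sec-initial}), and set $\mu_{k,v} := v(s_k)$, $\mu_{k,w} := w(s_k)$, so $\mu_{k,v} \le \mu_{k,w}$. The multi-set $\{\mu_{k,v}\}_{k=1}^{N_m}$ is precisely the collection of successive minima of $\cF_v R_m$, hence
\[
\bfV \bfQ(v) = \lim_{m\to\infty}\frac{n!}{m^n}\sum_{k=1}^{N_m} e^{-\mu_{k,v}/m},
\]
and similarly for $w$. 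By linear boundedness of both filtrations there is a fixed interval $[E_-, E_+]$ containing all $\mu_{k,v}/m$ and $\mu_{k,w}/m$; applying the secant inequality to the concave function $1-e^{-t}$ on $[0,E_+-E_-]$ produces a uniform constant $c > 0$ such that
\[
e^{-\mu_{k,v}/m} - e^{-\mu_{k,w}/m} \ge c\cdot (\mu_{k,w}-\mu_{k,v})/m \quad \text{for all } k, m.
\]
Summing over $k$ and passing to the limit yields
\[
\bfV\bigl[\bfQ(v)-\bfQ(w)\bigr] \ge c\cdot d_1(\cF_v, \cF_w).
\]

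The hypothesis $\bfQ(v) = \bfQ(w)$ then forces $d_1(\cF_v, \cF_w) = 0$. The relative limit measure $d\nu$ of $\cF_w$ with respect to $\cF_v$ is supported in $[0, E_+-E_-]$ with vanishing first moment, so $d\nu$ is a point mass at $0$ and $d_2(\cF_v, \cF_w) = 0$. Theorem \ref{thm-equiv} then yields $\phi_{\cF_v} = \phi_{\cF_w}$, and Proposition \ref{prop-equiv} forces $v = w$, contradicting the assumption. The main obstacle in this argument is precisely this final bridge from a scalar identity to an identity of valuations: a purely volume-theoretic argument via $G_v = G_w$ a.e.\ on $\Delta$ does not suffice, since two different filtrations can a priori share the same concave transform, and it is the combination of the uniform secant estimate (producing $d_1=0$) with the Boucksom--Jonsson equivalence results (producing $v=w$) that closes the gap.
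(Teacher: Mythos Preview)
Your proof is correct and takes a genuinely different route from the paper's. The paper follows the argument of \cite[Proposition 3.15]{BlJ17} (itself going back to \cite{LX16}): given $v\le w$ with $v\neq w$, it picks a section $s$ with $w(s)>v(s)$ and uses multiplication by powers of $s$ to produce an explicit positive lower bound for $\sum_j e^{-j/m}\dim(\cF^j_w R_m/\cF^j_v R_m)$, hence for $\Psi(w)-\Psi(v)$. This is a direct, self-contained combinatorial estimate that does not invoke any of the non-Archimedean machinery.

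Your argument instead packages the difference $\bfQ(v)-\bfQ(w)$ as a lower bound on $d_1(\cF_v,\cF_w)$ via the secant inequality, and then leans on the Boucksom--Jonsson equivalence theory (Theorem~\ref{thm-equiv} together with Proposition~\ref{prop-equiv}) to close. The trade-off: your approach is shorter and fits naturally with the framework already used in Section~\ref{sec-semiunique}, but it imports heavier external results, whereas the paper's proof is elementary and yields an explicit quantitative gap. One small caveat: Proposition~\ref{prop-equiv} is \emph{stated} for $\bR$-test configurations, while you need it for arbitrary $v,w\in\Val(X)$; the direct proof given there does not use finite generation of $\cF_{v_i}$, so this is harmless, but it is worth flagging. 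Both arguments implicitly assume $T(v),T(w)<\infty$ (you need $E_+<\infty$ for a positive secant constant; the paper needs $C=\max\{T(v),T(w)\}<\infty$ for the factor $e^{-C}$).
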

This is proved as in \cite[Proof of Proposition 3.15]{BlJ17} (which is based on an argument in the local case from \cite{LX16}). We sketch the argument for the reader's convenience.
\begin{proof}
First, by using Proposition \ref{thm-BoCh} we can show that:
\begin{equation}
\Psi(v)=\lim_{m\rightarrow+\infty}\frac{1}{mN_m}\sum_{j\ge 1}e^{-\frac{j}{m}}\dim \cF^{j}_vR_m.
\end{equation}
Suppose $v\le w$ but $v\neq w$. Then by rescaling $v, w$ and $L=-K_X$, we can assume that there exists $s\in H^0(X, L)$ with $w(s)=p\in \bN^*$ and $v(s)\le p-1$. Then arguing as in \cite[Proof of Proposition 3.15]{BlJ17}, we have:
\begin{equation}
\dim(\cF^j_w R_m/\cF^j_v R_m)\ge \sum_{1\le i\le \min\{j/p, m\}} \dim(\cF^{j-ip}_vR_m/\cF^{j-ip+1}_vR_{m-i}).
\end{equation}
One the other hand, with $C=\max\{T(v), T(w)\}$, we get
\begin{eqnarray*}
\sum_{j\ge 1}\dim e^{-j/m}(\cF^j_wR_m-\cF^j_v R_m)&\ge& e^{-C} \sum_{j\ge 1}(\cF^j_w R_m-\cF^j_vR_m)\\
&\ge&e^{-C}\sum_{1\le i\le m}\sum_{j\ge pi}(\dim\cF^{j-ip}_vR_{m-i}-\cF^{j-ip+1}_v R_{m-i})\\
&=&e^{-C}\sum_{1\le i\le m}\dim R_{m-i}.
\end{eqnarray*}
So we conclude:
\begin{eqnarray*}
\Psi(v)-\Psi(w)\ge e^{-C}\lim_{m\rightarrow+\infty}\frac{1}{mN_m}\sum_{1\le i\le m}\dim R_{m-i}>0.
\end{eqnarray*}

\end{proof}

Let $\pi: Y\rightarrow X$ be a proper birational morphism with $Y$ a regular and $E=\sum_i E_i$ a reduced simple normal crossing divisor.

\begin{prop}\label{prop-contqm}
The function $v\mapsto \bfQ(v)$ is continuous on ${\rm QM}(Y, E)$. 
\end{prop}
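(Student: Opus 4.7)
The plan is to reduce the continuity of $v\mapsto \bfQ(v)$ on ${\rm QM}(Y,E)$ to the continuity of two more elementary quantities: the support endpoint $T(v)=\lambda_{\max}(\cF_v)$, and for each fixed $x\ge 0$ the graded linear series volume $v\mapsto \vol(\cF_v^{(x)}R_\bullet)$. The rewriting
\[
\bfQ(v) = 1-\Psi(v), \qquad \Psi(v) = \frac{1}{\bfV}\int_{0}^{T(v)} e^{-x}\vol(\cF_v^{(x)}R_\bullet)\,dx,
\]
already in the excerpt, turns the problem into continuity of $\Psi$, which is naturally attacked by dominated convergence.

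First I would establish local uniform boundedness on compact $K\subset {\rm QM}(Y,E)$. By an Izumi-type estimate (\cite[Example 11.3.9]{Laz04}, \cite[Proposition 5.10]{JM12}) there exists $C(v)$, continuous in the monomial coordinates of $v$, so that $v\le C(v)\cdot \ord_F$ for $F=\bigcap_{i\in J}E_i$ a stratum containing the center of $v$; this gives $T(v)\le C(v)\cdot T(\ord_F)$, hence a uniform bound $T(v)\le M<\infty$ on $K$. Combined with the trivial bound $\vol(\cF_v^{(x)}R_\bullet)\le \bfV$, the integrand defining $\Psi(v)$ is dominated on $K$ by the integrable function $\bfV\cdot e^{-x}\mathbf{1}_{[0,M]}(x)$, independent of $v\in K$.

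The main input I need is pointwise continuity: for each fixed $x\ge 0$, the map $v\mapsto\vol(\cF_v^{(x)}R_\bullet)$ is continuous on ${\rm QM}(Y,E)$. For $v\in {\rm QM}(Y,E_J)$ written in monomial coordinates $\alpha=(\alpha_i)_{i\in J}$, each value $v(s)$ with $s\in R_m$ is a minimum of finitely many linear functions of $\alpha$, hence continuous and piecewise linear in $\alpha$; this makes the dimensions of the subspaces $\cF_v^{mx}R_m$ vary upper-semicontinuously, and the Okounkov bodies $\Delta(\cF_v^{(x)})$ depend continuously in Hausdorff distance on $\alpha$, giving continuity of $\vol(\cF_v^{(x)}R_\bullet)=n!\vol(\Delta(\cF_v^{(x)}))$. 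This is essentially the content of the analogous continuity statement for the $S$-invariant in \cite{BlJ17}.

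Finally, for any sequence $v_k\to v$ in ${\rm QM}(Y,E)$, the $v_k$ eventually lie in a compact neighborhood of $v$; pointwise convergence of the integrands (a.e.\ in $x$) together with the uniform domination lets Lebesgue's theorem conclude $\Psi(v_k)\to\Psi(v)$, hence $\bfQ(v_k)\to\bfQ(v)$. The only nonroutine ingredient is the pointwise continuity of $v\mapsto \vol(\cF_v^{(x)}R_\bullet)$ on ${\rm QM}(Y,E)$, which genuinely uses the Okounkov body description of volumes; the Fubini rewriting, the Izumi-type bound for $T$, and the dominated-convergence step are all routine.
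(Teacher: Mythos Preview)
Your route is genuinely different from the paper's, and the step you label as ``the main input'' is where the gap lies.

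The paper does \emph{not} try to show pointwise continuity of $v\mapsto \vol(\cF_v^{(x)}R_\bullet)$. Instead it follows the strategy of \cite[Proposition 2.4]{BLX19}: it introduces the finite-level approximation
\[
\bfQ_m(v)=\min_{\{s_j\}}\frac{1}{N_m}\sum_j e^{-v(s_j)/m}=1-\Psi_m(v),
\]
observes (as in \cite[Lemma 2.5]{BLX19}) that on ${\rm QM}(Y,E)$ each such $\vphi_{\mathfrak{s}}(v)=\sum_j e^{-v(s_j)/m}$ ranges over a \emph{finite} set of continuous functions, so $\bfQ_m$ is continuous, and then proves in Lemma~\ref{prop-Qmconv} that $\Psi_m\to\Psi$ \emph{uniformly} on $\{A(v)<C_1\}$. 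Continuity of $\Psi$ then follows from uniform-limit-of-continuous. The substantial work is in the uniform convergence lemma, which adapts the $S_m\to S$ arguments of \cite{BlJ17,BL18}.

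Your proposal short-circuits this by claiming that $v\mapsto \vol(\cF_v^{(x)}R_\bullet)$ is continuous on ${\rm QM}(Y,E)$ for each fixed $x$, and that this is ``essentially the content'' of the $S$-continuity in \cite{BlJ17}. That citation is misleading: \cite{BlJ17} proves continuity of $S$ precisely by the finite-level approximation $S_m$ plus uniform convergence---exactly the method the present paper follows for $\Psi$---and does \emph{not} establish pointwise continuity of the volumes $\vol(\cF_v^{(x)})$. Your own justification does not close the gap either: you note that $v(s)$ is piecewise linear in the monomial parameters and that $\dim\cF_v^{mx}R_m$ is upper-semicontinuous, then assert that the Okounkov bodies $\Delta(\cF_v^{(x)})$ vary continuously in Hausdorff distance. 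Upper-semicontinuity of finite-level dimensions does not yield this; indeed $\dim\cF_v^{mx}R_m$ genuinely jumps in $v$ whenever some $v(s)$ crosses the threshold $mx$, so continuity must come from passage to the limit, which is exactly what needs proof. If you want to make your dominated-convergence route rigorous, you would have to show (for instance) that the concave transform $G^{\cF_v}$ varies continuously in $v$ uniformly on $\Delta$ and that its level sets have Lebesgue measure zero---a nontrivial argument not available off the shelf. Absent that, your ``main input'' is a genuine gap, and the paper's approximation-plus-uniform-convergence argument is the clean way through.
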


We use the same strategy as \cite[Proposition 2.4]{BLX19}.
As noted in \cite{FO18}, for any $v\in \Val(X)$, we have $\frac{A(v)}{T(v)}\ge \alpha(X)>0$
which implies with $C=\alpha(X)^{-1}$, 
\begin{equation}\label{eq-TAv}
T(v)\le C A(v).
\end{equation}
\begin{lem}
For any $v\in \Val(X)$, we have the inequality:
\begin{equation}
\Psi(v)\le C A(v).
\end{equation}
\end{lem}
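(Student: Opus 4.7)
The plan is to bound $\Psi(v)$ by an elementary estimate coming from two ingredients already in hand: the monotonicity of the volume function $x \mapsto \vol(\cF_v^{(x)}R_\bullet)$, and the inequality \eqref{eq-TAv} relating $T(v)$ and $A(v)$ via Fujita--Odaka's $\alpha$-invariant bound.

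First I would replace $\vol(\cF_v^{(x)}R_\bullet)$ inside the defining integral by its maximum value $\bfV = \vol(R_\bullet)$, which is valid since $x \mapsto \vol(\cF_v^{(x)}R_\bullet)$ is decreasing in $x$ and equal to $\bfV$ for $x \le 0$. This yields
\begin{equation*}
\Psi(v) \;=\; \frac{1}{\bfV}\int_0^{T(v)} e^{-x}\vol(\cF_v^{(x)}R_\bullet)\,dx \;\le\; \int_0^{T(v)} e^{-x}\,dx \;=\; 1 - e^{-T(v)}.
\end{equation*}

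Next I would use the elementary inequality $1 - e^{-t} \le t$ valid for all $t \ge 0$ (which follows immediately from convexity of $e^{-t}$, or from the fact that $t - (1-e^{-t})$ has nonnegative derivative and vanishes at $0$) to conclude that $\Psi(v) \le T(v)$. Finally, combining with the inequality $T(v) \le C\,A(v)$ recorded in \eqref{eq-TAv} gives the desired bound $\Psi(v) \le C\,A(v)$.

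There is no real obstacle here: the only nontrivial input is the $\alpha$-invariant estimate \eqref{eq-TAv}, which has already been cited from \cite{FO18}. The argument is a straightforward two-line chain of inequalities.
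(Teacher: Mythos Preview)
Your proposal is correct and follows essentially the same approach as the paper's own proof: bound $\vol(\cF_v^{(x)}R_\bullet)$ by $\bfV$, integrate to get $1-e^{-T(v)}$, apply $1-e^{-t}\le t$, and finish with \eqref{eq-TAv}.
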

\begin{proof}
Because $\vol(\cF^{(x)} R_\bullet)\le \bfV$, we immediately get:
\begin{eqnarray*}
\Psi(v)\le \int_0^{T(v)}e^{-x}dx=1-e^{-T(v)}\le T(v)\le C A(v),
\end{eqnarray*}
where we used the inequality $1-e^{-x}\le x$ for any $x\in \bR_{\ge 0}$ and the inequality \eqref{eq-TAv}. 
\end{proof}
Similar to \cite{FO18, BlJ17}, we introduce the following approximation:
\begin{eqnarray}
\bfQ_m(\cF)&=&\frac{1}{N_m}\sum_{i} e^{-\frac{\lambda^{(m)}_i}{m}}=\frac{1}{N_m}\int_0^{+\infty} e^{-\frac{x}{m}} d\left(-\dim \cF^{x} R_m\right)
\\
&=&1-\frac{1}{N_m}\int_0^{\frac{\lambda^{(m)}_{\max}(\cF)}{m}}e^{-x} \dim \cF^{xm} R_m dx\\
&=:&1-\Psi_m(\cF),
\end{eqnarray}
where we set:
\begin{eqnarray*}
\Psi_m(v)&=&\frac{1}{N_m}\int_0^{\lambda^{(m)}_{\max}}e^{-x} \dim \cF^{xm} R_m dx\\
&=&\frac{1}{N_m}\int_0^{T(v)}e^{-x} \dim \cF^{xm} R_m dx.
\end{eqnarray*}
Similar to \cite{FO18, BlJ17}, for any valuation $v\in \Val(X)$, we have the identity:
\begin{equation}
\bfQ_m(v)=\bfQ_m(\cF_v)=\min_{\{s_j\}} \frac{1}{N_m}\sum_{j=1}^{N_m} e^{-\frac{v(s_j)}{m}}
\end{equation}
where the minimum is taking over all bases $s_1,\dots, s_{N_m}$ of $H^0(X, -mK_X)$.

For any $\mathfrak{s}:=\{s_1,\dots, s_{N_m}\}\in H^0(X, -mK_X)^{N_m}$, define a function 
\begin{equation}
\vphi_{\mathfrak{s}}(v):=\sum_{j=1}^{N_m}e^{-\frac{v(s_j)}{m}}.
\end{equation}
By the same argument as in \cite[Proof of Lemma 2.5]{BLX19}, the set of functions $\{\vphi_{\mathfrak{s}}(v)| \mathfrak{s}\in R_m^{N_m}\}$ is finite. So $\bfQ_m$ is continuous on ${\rm QM}(Y, E)$.

As in \cite[Proof of Proposition 2.4]{BLX19}, the continuity of $\Psi$ and hence $\bfQ$ follows easily from the following proposition, which we  prove by using the techniques developed in \cite{BlJ17, BL18}:
\begin{lem}\label{prop-Qmconv}
\begin{enumerate}
\item For any $v\in \Val(X)$ with $A(v)<+\infty$, we have the convergence: 
\begin{equation}
\lim_{m\rightarrow+\infty} \Psi_m(v)=\Psi(v). 
\end{equation}

\item 
For any $\epsilon>0$ and any $C_1>0$, there exists $C_2>0$ and $m_0>0$ such that if $v\in \Val(X)$ satisfies $A(v)<C_1$, we have:
\begin{equation}
\left|\Psi_m(\cF_v)-\Psi(\cF_v)\right|\le \epsilon
\end{equation}
for all $m$ divisible by $m_0$.
\end{enumerate}
\end{lem}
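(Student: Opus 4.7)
The plan is to reduce both parts to a uniform version of Boucksom--Chen's weak convergence of Duistermaat--Heckman measures, exploiting the Fujita--Odaka bound $T(v)\le CA(v)$ (with $C=\alpha(X)^{-1}$) to pin all integrations to a common bounded interval. Setting $f_m(v,x):=\dim\cF^{\lceil xm\rceil}_v R_m/N_m$ and $f(v,x):=\vol(\cF^{(x)}_vR_\bullet)/\bfV$, both $[0,1]$-valued and decreasing in $x$, I have $\Psi_m(v)=\int_0^{T(v)}e^{-x}f_m(v,x)\,dx$ and $\Psi(v)=\int_0^{T(v)}e^{-x}f(v,x)\,dx$.

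For part (1), the hypothesis $A(v)<\infty$ forces $T(v)\le CA(v)<\infty$, so the integrals are over a bounded interval. Theorem \ref{thm-BoCh}(2) gives weak convergence of the discrete measures $\nu_m^v$ to $\bfV^{-1}\DHM(\cF_v)$; since the limit is absolutely continuous away from $\lambda_{\max}$, this upgrades to pointwise convergence $f_m(v,x)\to f(v,x)$ for almost every $x\in[0,T(v)]$. With both integrands bounded by $1$ on the finite interval, Lebesgue's dominated convergence theorem immediately yields $\Psi_m(v)\to \Psi(v)$.

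For part (2) the uniformity is the main obstacle. Setting $C_2:=CC_1$, one has $T(v)\le C_2$ uniformly on $\{v:A(v)\le C_1\}$, reducing the task to a uniform control of $\sup_{x\in[0,C_2]}|f_m(v,x)-f(v,x)|$. I would adapt the uniform-Izumi strategy from \cite{BlJ17, BL18, BLX19}: combining the boundedness of $\bQ$-complements (as in \cite[Theorem A.2]{BLX19}) with a log-resolution argument produces a finite collection of divisors $E_1,\dots,E_N$ over $X$ and a constant $C_3>0$ such that every $v$ with $A(v)\le C_1$ satisfies $v\le C_3\,\ord_{E_i}$ for some $i$. This yields the uniform inclusion $\fa_{xm}(v)\supseteq \fa_{\lceil C_3 xm\rceil}(\ord_{E_i})$, which combined with the multiplier ideal sandwich $\mcI(v^{\lceil xm\rceil})\subseteq I^{\cF_v}_{m,xm}\subseteq \fa_{xm}(v)$ and Siu-type uniform global generation of $-mK_X\otimes \mcI(v^{xm})$ for $m$ divisible by some $m_0=m_0(\epsilon,C_1)$, gives the desired uniform convergence of $f_m$ to $f$. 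This is the global analogue of the uniform multiplier-ideal estimates carried out by Blum and Liu in the study of normalized volumes.

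Once the pointwise estimate $|f_m(v,x)-f(v,x)|\le \epsilon/C_2$ is established uniformly on $\{A(v)\le C_1\}\times[0,C_2]$ for $m=km_0$, integrating against the bounded weight $e^{-x}\le 1$ over $[0,C_2]$ yields $|\Psi_m(v)-\Psi(v)|\le \epsilon$ as required. The hard part of the argument is thus the uniform multiplier-ideal control in Step (2), for which the boundedness-of-complements input is essential: without it one has no way to compare the infinite family of valuations $\{A(v)\le C_1\}$ against a fixed finite collection of test divisors on a common resolution.
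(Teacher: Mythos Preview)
Your argument for part (1) is correct and matches the paper's one-line appeal to Theorem~\ref{thm-BoCh}(2): weak convergence of the measures together with the bounded support $[0,T(v)]$ and dominated convergence gives the result.

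For part (2) there is a genuine gap. The key claim that boundedness of complements produces a \emph{finite} list of divisors $E_1,\dots,E_N$ and a constant $C_3$ with $v\le C_3\,\ord_{E_i}$ for every valuation with $A(v)\le C_1$ is not correct. Boundedness of complements (as used in \cite[Theorem~A.2]{BLX19}) parametrizes lc places of $N$-complements by a finite-type scheme, but this family still contains infinitely many non-comparable divisorial valuations; already on $\bP^2$ the monomial valuations centered at different points with $A\le C_1$ cannot be dominated by finitely many $\ord_{E_i}$. The Izumi estimate you invoke works only after restricting to a fixed simplex ${\rm QM}(Y,E)$ on a fixed log resolution, which is how the paper uses it in the proof of Theorem~\ref{thm-exist}---but that is downstream of the present lemma, not an input to it. Your multiplier-ideal sandwich is also not well-posed as written: $\mcI(v^{\lceil xm\rceil})$ is not defined for a valuation $v$; one would need the asymptotic multiplier ideal $\mcJ(\fa_\bullet(v)^{xm})$, and the resulting inclusions do not directly give the uniform convergence of $f_m\to f$ that you assert.

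The paper's route is quite different and does not use boundedness of complements at all. One direction comes from the convexity of $e^{-G}$ on the Okounkov body together with the lattice-point estimate \cite[Lemma~2.2]{BlJ17}, giving $\bfQ_m(\cF_v)\ge \bfQ(\cF_v)-\epsilon$ for $m\ge m_0(\epsilon)$ independent of $v$. The other direction uses the auxiliary graded linear series $\tilde{\cF}^{(t)}_{m,\bullet}R_{mp}:=H^0(X,mpL\otimes\overline{\mathfrak{b}(|\cF^{mt}R_m|)^p})$ and the uniform volume comparison \cite[Proposition~5.13]{BlJ17}, \cite[Proposition~5.14, \S3.2]{BL18}, which yield $\Psi(v)-\tilde{\Psi}_m(v)\le C\,A(v)/m$ with a constant $C$ depending only on $(X,-K_X)$. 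The point is that these constants are intrinsic to the polarized variety, so the estimate is automatically uniform over $\{A(v)\le C_1\}$.
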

\begin{proof}
The first statement follows from Theorem \ref{thm-BoCh}.2. We focus on the second statement. 

Note that $e^{-G}$ is convex and $0\le e^{-G}\le 1$. By \cite[Lemma 2.2]{BlJ17}, for any $\epsilon'>0$, there exists $m_0(\epsilon')$ such that for any $m\ge m_0$, 
\begin{equation}
\int_\Delta e^{-G} d\rho_m\ge \int_\Delta e^{-G}dy-\epsilon'.
\end{equation}
By the same argument as \cite[Proof of Lemma 2.9]{BlJ17}, we get
\begin{equation}
\bfQ_m(\cF_v)\ge \frac{m^n}{N_m}\int_\Delta e^{-G}d\rho_m.
\end{equation}
Note that $\lim_{m\rightarrow+\infty}\frac{m^n}{N_m}=V$.
So for any $\epsilon>0$ there exists $m_0$ such that for any $m\ge m_0$, 
\begin{equation}
\bfQ_m(\cF_v)\ge \frac{n!}{\bfV}\int_\Delta e^{-G}dy-\epsilon=\bfQ(\cF_v)-\epsilon.
\end{equation}

We need to prove the other direction of inequality.
Following \cite{BlJ17}, define a graded linear series:
\begin{equation}
\tilde{\cF}^{(t)}_{m, p}R_{mp}:=H^0\left(X, mpL\otimes \overline{\mathfrak{b}(|\cF^{mt} R_m |)^p}\right)
\end{equation}
where $\mathfrak{b}(|\cF^{mt}R_m|)$ is the base ideal of the sub-linear system $\cF^{tm} R_m$. Set:
\begin{equation}
\tilde{\Psi}_m(\cF)=\int_0^{T(v)}e^{-t}\vol(\tilde{\cF}^{(t)}_{m,\bullet})dt.
\end{equation}
By \cite[Proposition 5.13]{BlJ17}, there exists $a=a(X, -K_X)>0$ such that for all $t\in \bQ_{>0}$ with $mt>A(v)$, we have, with $t'=t-m^{-1}(at+A(v))$,
\begin{equation}
\left(\frac{m-a}{m}\right)^{n+1}\vol(\cF^{(t)}_{\bullet})\le \frac{1}{m^n}\vol(\tilde{\cF}^{(t')}_{m,\bullet}).
\end{equation}
So we can estimate as \cite[Proof of Proposition 5.15]{BlJ17} to get:
\begin{eqnarray*}
\tilde{\Psi}_m(v)&\ge& \left(\frac{m-a}{m}\right)^{n+1}\left(\Psi(v)-e^{\frac{a T(v)+A(v)}{m}}\int_0^{A(v)/(m-a)}\frac{\vol(\cF^{(t)})}{V}e^{-t}dt\right)\\
&\ge& \left(\frac{m-a}{m}\right)^{n+1}\left(\Psi(v)-e^{C A(v)/m}\frac{A(v)}{m-a}\right).
\end{eqnarray*}
From this it is easy to get:
\begin{eqnarray*}
\Psi(v)-\tilde{\Psi}_m(v)\le C \frac{A(v)}{m}.
\end{eqnarray*}
To compare with $\Psi_m$, we further set
\begin{equation}
\cF^{(x)}_{m,p}={\rm Im}\left(S^p \cF^{mx} R_m\rightarrow H^0(X, pmL) \right).
\end{equation}
By \cite[Proposition 5.14]{BL18} and \cite[3.2]{BL18}, there exists a positive constant $C>0$ independent of $v$ such that for all 
$x\le x\le T(v)-\frac{C A(v)}{m}$, $\vol(\cF^{(x)}_{m,\bullet})=\vol(\tilde{\cF}^{(x)}_{m,\bullet})$. So as \cite[Proof of Proposition 5.15]{BL18}, we get:
\begin{eqnarray*}
\Psi(v)&\le&\tilde{\Psi}_m(v)+C\frac{A(v)}{m}=\frac{1}{\bfV}\int_0^{T(v)}\frac{\vol(\tilde{\cF}^{(x)}_{m,\bullet})}{m^n}e^{-x}dx+\frac{C A(v)}{m}\\
&\le&\frac{1}{\bfV}\int_0^{T(v)-CA(v)/m} \frac{\vol(\cF^{(x)}_{m,\bullet})}{m^n}e^{-x}dx+\frac{C A(v)}{m}\\
&\le&\frac{1}{\bfV}\int_0^{T(v)}\frac{\vol(\cF^{(x)}_{m,\bullet})}{m^n}e^{-x}dx+\frac{C A(v)}{m}.
\end{eqnarray*}
For the second inequality we used the estimate that: as $m\rightarrow+\infty$,
\begin{eqnarray*}
\int_{T(v)-CA(v)/m}^{T(v)}e^{-x}dx=e^{-(T(v)-\frac{CA(v)}{m})}-e^{-T(v)}\le e^{\frac{C A(v)}{m}}-1=O(\frac{A(v)}{m}).
\end{eqnarray*}
Fix any $\epsilon>0$, by choosing $m\gg 1$ and $p\gg 1$, we have (see \cite[(5.6)]{BL18}):
\begin{equation}
\left|\frac{\vol(\cF^{(x)}_{m,\bullet})}{m^nV}-\frac{\dim \cF^{(x)}_{m,p}}{N_{mp}}\right|<\epsilon.
\end{equation}
Finally we can estimate as in \cite[Proof of Theorem 5.13]{BL18}: for $m\gg 1$,
\begin{eqnarray*}
\Psi(v)&\le&\frac{1}{\bfV}\int_0^{T(v)}\frac{\vol(\cF^{(x)}_{m,\bullet})}{m^n}e^{-x}dx+\frac{C A(v)}{m}\\
&\le&\int_0^{+\infty}\frac{\dim \cF^{(x)}_{m,p}}{N_{mp}}e^{-x}dx+\epsilon T(v)+\frac{C A(v)}{m}\\
&\le&\int_0^{+\infty}\frac{\dim \cF^{pmx}R_m}{N_{mp}}e^{-x}dx+2\epsilon A(v)\\
&=&\Psi(v)+2\epsilon A(v).
\end{eqnarray*}
In the 3rd inequality, we used again the inequality $1-e^{-T(v)}\le T(v)$. 
Since $\epsilon>0$ is arbitrary, we get the conclusion.

\end{proof}

\vskip 3mm

\noindent
Department of Mathematics, Purdue University, West Lafayette, IN 47907-2067

\noindent
{\it E-mail address: } han556@purdue.edu, li2285@purdue.edu

\vskip 2mm






\end{document}